       \titleformat{\chapter}[display]
             {\normalfont\Large\bfseries}{\thechapter}{11pt}{\Large}
       \titlespacing*{\chapter}{0pt}{0pt}{15pt} %left, beforesep, aftersep, right
       \titlespacing*{\section}{0pt}{3.5ex plus 1ex minus .2ex}{2.3ex plus .2ex}
\newcommand{\ev}{\mathrm{ev}}
\newcommand{\cM}{{\mathcal M}}
\newcommand{\Mbar}{\overline{\cM}}
\newcommand{\Mtilde}{\widetilde{\cM}}
\newcommand{\pqed}{\hfill\qedsymbol\\}
\newcommand{\nn}{\nonumber}
\newcommand{\Res}{\mathrm{Res}}
\newcommand{\DRT}{\mathfrak{DRT}}
\newcommand{\RDRT}{\mathfrak{RDRT}}
\newcommand{\CDRT}{\mathfrak{CDRT}}
\newcommand{\DOL}{\mathfrak{DOL}}
\newcommand{\DS}{\mathfrak{DS}}
\newcommand{\VC}{\overline{\mathrm{Cont}}}
\newcommand{\Aut}{\mathrm{Aut}}
\newcommand{\Cont}{\mathrm{Cont}}
\newcommand{\Ver}{\mathrm{Ver}}
\newcommand{\Edg}{\mathrm{Edg}}
\newcommand{\val}{\mathrm{val}}
\newcommand{\vir}{\mathrm{vir}}
\newcommand{\Nb}{\mathrm{Nb}}
\newcommand{\tM}{\widetilde{\mathcal{M}}}
\newtheorem{theorem}{Theorem}[section]
\newtheorem{theorem/definition}{Theorem/Definition}[section]
\newtheorem{proposition}{Proposition}[section]
\newtheorem{lemma}{Lemma}[section]
\newtheorem{corollary}{Corollary}[section]
\newtheorem{Conjecture}{Conjecture}
\theoremstyle{remark}
\newtheorem{remark}{Remark}[section]
\theoremstyle{definition}
\newtheorem{definition}{Definition}[section]
\newcommand{\be}{\begin{equation}}
\newcommand{\ee}{\end{equation}}
\newcommand{\bea}{\begin{eqnarray}}
\newcommand{\eea}{\end{eqnarray}}
\newcommand{\ben}{\begin{eqnarray*}}
\newcommand{\een}{\end{eqnarray*}}
\newcommand{\bet}{\begin{equation}
\begin{split}}
\newcommand{\eet}{\end{split}
\end{equation}}
\begin{document}
\title
{Localized Standard Versus Reduced Formula and  Genus One Gromov-Witten Invariants of Local Calabi-Yau Manifolds}
\author{\normalsize Xiaowen Hu}
\date{}
\maketitle

\begin{abstract}
For local Calabi-Yau manifolds which are total spaces of vector bundle over algebraic GKM manifolds, we propose a formal definition of reduced Genus one Gromov-Witten invariants, by assigning contributions from the refined decorated rooted trees. We show that this definition satisfies a localized version of the standard versus reduced formula, whose global version in the compact cases is due to A. Zinger. As an application we prove the conjecture in a previous article on the genus one Gromov-Witten invariants of local Calabi-Yau manifolds which are total spaces of concave splitting vector bundles over projective spaces. In particular, we prove the mirror formulae for genus one Gromov-Witten invariants of $K_{\mathbb{P}^{2}}$ and $K_{\mathbb{P}^{3}}$, conjectured by Klemm, Zaslow and Pandharipande. In the appendix we derive the modularity of genus one Gromov-Witten invariants for the local $\mathbb{P}^{2}$ as a consequence of the results on  Ramanujan's cubic transformation. Inspired by the localized standard versus reduced formula, we show that the ordinary genus one Gromov-Witten invariants of Calabi-Yau hypersurfaces in projective spaces can be computed by  virtual localization and quantum hyperplane property after the contribution of a genus one vertex is replaced by a modified one.

\end{abstract}

\maketitle

\section{Introduction}
The computations of the Gromov-Witten invariants of Calabi-Yau manifolds play an important role in enumerative geometry and mirror symmetry. In genus zero, for Calabi-Yau complete intersections in projective spaces, we can use the \emph{quantum hyperplane property} to write the integration of the virtual fundamental class as  a \emph{twisted Gromov-Witten invariants} of the ambient projective spaces, see \cite{CK}, \cite{Givental1}, \cite{LLY} and the references therein. \\

In genus one, this approach does not work in a straightforward way, since the quantum hyperplane property does not hold. In \cite{Zinger2}, \cite{ZingerSvR}, A. Zinger defined the reduced genus one Gromov-Witten invariants for symplectic manifolds and found a relation between the reduced and the ordinary Gromov-Witten invariants (standard versus reduced formulae, see \cite[theorem 1A, theorem 1B]{ZingerSvR}).
Furthermore, J. Li and A. Zinger showed that the reduced genus one Gromov-Witten invariants satisfy the hyperplane property for complete intersections in projective spaces (\cite{LZ}). So by the standard versus reduced formula, we can reduced the computation of the genus one Gromov-Witten invariants of a complete intersection $X$ in $\mathbb{P}^{n-1}$ to the computation of genus zero Gromov-Witten invariants of $X$, and integrations of some classes on $\Mbar_{1,k}^{0}(\mathbb{P}^{n-1},d)$, the main component of $\Mbar_{1,k}(\mathbb{P}^{n-1},d)$. The latter integrations can be computed by equivariant localizations on a natural desingularization of $\Mbar_{1,k}^{0}(\mathbb{P}^{n-1},d)$ (\cite{VZ}). Zinger completed the computations for the genus one Gromov-Witten invariants of  Calabi-Yau hypersurfaces in $\mathbb{P}^{n-1}$ by a clever use of properties of symmetric functions and the residue theorem on $S^{2}$ (\cite{Zinger1}). Following this approach, A. Popa computed the genus one  Gromov-Witten invariants of  Calabi-Yau complete intersections in $\mathbb{P}^{n-1}$ (\cite{Popa1}). To generalize this method to complete intersections in more general spaces (such as toric varieties, flag varieties, ...), there are at least two difficulties:
\begin{enumerate}
  \item{} The desingularizations in \cite{VZ} or \cite{HL} can be extended to products of projective spaces in a straightforward way, and may also be extended to toric varieties with some efforts. For Grassmannians and more general flag varieties we need some new ideas.
  \item{} The combinatorial computations in \cite{Zinger1} and \cite{Popa1} rely heavily on the $S_{n}$-symmetry of the toric geometry of $\mathbb{P}^{n-1}$. For more general spaces we have less symmetries so we can not directly make use of the properties of the symmetric rational functions.
\end{enumerate}

The above discussions concern the so called \emph{compact} cases. For a \emph{local Calabi-Yau manifold} $X$, which (in a narrow sense) means the total space of an equivariant concave vector bundle $E$ over a compact algebraic GKM (Goresky-Kottwitz-MacPherson) manifold $Y$ with $c_{1}(X)=0$, one can use virtual localization to compute the genus one Gromov-Witten invariants to any degree; the computations reduce to a purely combinatorial issue. However it is not easy to obtain a closed formula (even via the \emph{mirror map}). A natural idea is to follow Zinger's approach in the compact cases. For example, for $E=K_{\mathbb{P}^{n-1}}$,
\begin{comment}one can define the reduced Gromov-Witten invariants by integrating the obstruction bundle $R^{1}\pi_{*}f^{*}E$ against the main component $\Mbar_{1,0}^{0}(\mathbb{P}^{n-1},d)$.Then  \end{comment}
one can try to prove a result similar to \cite{LZ}, to extend the standard versus reduced formula (SvR for short) to the local case, and to make the localization computations over the desigularization of $\Mbar_{1,0}^{0}(\mathbb{P}^{n-1},d)$. This is just the approach proposed in \cite{Hu}. For example, we compute
\ben
\int_{[\Mbar^{0}_{1,0}(\mathbb{P}^{1},d)]^{\vir}}e\big(R^{1}\pi_{*}f^{*}(\mathcal{O}(-1)\oplus\mathcal{O}(-1))\big)
\een
by linearize $\mathcal{O}(-1)\oplus\mathcal{O}(-1)$ as in  \cite[section 3.1]{Hu}. Only the one-edge graphs
$$ \Gamma_{ij}=\xy
(0,0); (10,0), **@{-};
(0,0)*+{\circ};(10,0)*+{\bullet};(5,2)*+{d};
(0,3)*+{i};(10,3)*+{j};
\endxy,
$$
have nonzero contributions, where $(i,j)=(1,2)$ or $(2,1)$.  The contribution of $\Gamma_{12}$ is
\ben
&&\int_{\widetilde{\mathcal{M}}_{1,1}}\frac{\prod_{k=1}^{2}\Big((-\lambda+\alpha_{k}-\alpha_{1})
\prod_{a=1}^{d-1}(\alpha_{k}-\alpha_{2}+a\cdot\frac{\alpha_{2}-\alpha_{1}}{d})\Big)}{\frac{\alpha_{1}-\alpha_{2}}{d}-\tilde{\psi}}\\
&&\cdot\frac{\frac{\alpha_{2}-\alpha_{1}}{d}(\frac{\alpha_{2}-\alpha_{1}}{d}+\alpha_{1}-\alpha_{2})}
{d\cdot \big(\frac{d!}{d^{d}}\big)^{2}(\alpha_{1}-\alpha_{2})^{d}(\alpha_{2}-\alpha_{1})^{d}}\\
&=&\frac{d-1}{24d^{2}},
\een
where for the notations we refer the reader to \cite{Zinger3}. So we have
\bea\label{2014001}
\int_{[\Mbar^{0}_{1,0}(\mathbb{P}^{1},d)]^{\vir}}e\big(R^{1}\pi_{*}f^{*}(\mathcal{O}(-1)\oplus\mathcal{O}(-1))\big)=\frac{d-1}{12d^{2}},
\eea
which is  \cite[(2.50)]{PZ}. It is well known (see e.g., \cite{GP}) that the genus zero and genus one Gromov-Witten invariants of the resolved conifold $X=\mathcal{O}(-1)\oplus\mathcal{O}(-1)\rightarrow \mathbb{P}^{1}$ are
\ben
N_{0,d}^{X}=\frac{1}{d^{3}}, & N_{1,d}^{X}=\frac{1}{12d}.
\een
So the reduced genus one Gromov-Witten invariants of $X$ defined and computed as above do not satisfy the SvR for compact Calabi-Yau threefold
\bea\label{2014002}
N_{1,d}=N_{1,d}^{0}+\frac{1}{12}N_{0,d},
\eea
as proved in \cite{Zinger2}. But we expect  (\ref{2014002}) and more generally the SvR of Zinger to be valid for a suitable defined reduced genus one Gromov-Witten invariants for the local Calabi-Yau manifolds, and when it holds the combinatorial computations might goes as in \cite{Zinger1} to give a mirror formula.\\

 The novel point in this paper is to define the reduced genus one Gromov-Witten invariants for the \emph{algebraic} GKM  \emph{manifolds} via the localization data. More precisely, let $X$ be a local Calabi-Yau manifold, for every \emph{decorated one loop graph} and every \emph{decorated rooted tree} $\Gamma$, we associate formally a contribution $\Cont_{\Gamma}(N_{1,d}^{0;X})$, and define the \emph{formal reduced genus one Gromov-Witten invariants} of $X$ by
\bea\label{168}
N_{1,d}^{0;X}:=\sum_{\Gamma\in \DOL_{\emptyset}^{d}}\Cont_{\Gamma}(N_{1,d}^{0;X})+\sum_{\widetilde{\Gamma}\in \RDRT_{\emptyset}^{d}/\sim}\Cont_{\widetilde{\Gamma}}(N_{1,d}^{0;X}),
\eea
where $\DOL_{\emptyset}^{d}$ and $\RDRT_{\emptyset}^{d}$ represent the set of decorated one loop graphs and the set of decorated rooted trees respectively.
 The precise definition is given in section 2.3, definition \ref{26}.\\

  The key in this definition is a formal definition (\ref{24}) of the localization contribution of the obstruction bundel $e(\mathcal{U}_{1}^{\prime})$, as a substitution for the localization data of $e\big(R^{1}\pi_{*}f^{*}(\cdot)\big)$. This can be viewed as a formal analog for the exact sequence in \cite[theorem 1.2(2)]{VZ}. By this definition, for the resolved conifold, the contribution of $\Gamma_{ij}$ is
\ben
&&\int_{\widetilde{\mathcal{M}}_{1,1}}\frac{\prod_{k=1}^{2}\Big((\frac{\alpha_{2}-\alpha_{1}}{d}+\alpha_{k}-\alpha_{1})
\prod_{a=1}^{d-1}(\alpha_{k}-\alpha_{2}+a\cdot\frac{\alpha_{2}-\alpha_{1}}{d})\Big)}{\frac{\alpha_{1}-\alpha_{2}}{d}-\tilde{\psi}}\\
&&\cdot\frac{\frac{\alpha_{2}-\alpha_{1}}{d}(\frac{\alpha_{2}-\alpha_{1}}{d}+\alpha_{1}-\alpha_{2})}
{d\cdot \big(\frac{d!}{d^{d}}\big)^{2}(\alpha_{1}-\alpha_{2})^{d}(\alpha_{2}-\alpha_{1})^{d}}\\
&=&\frac{d^{2}-1}{24d^{3}}.
\een
Then
\ben
N_{1,d}^{X;0}=\frac{d^{2}-1}{12d^{3}}
\een
and (\ref{2014002}) holds.\\

 More generally, one can define formal reduced genus one Gromov-Witten invariants with insertions, and  for more general $X$, not necessarily Calabi-Yau ones.
 The first main theorem of this article is
\begin{theorem}\label{169}
Let $X$ be  the total space of an equivariant concave vector bundle $E$ over a compact algebraic GKM manifold $Y$, 
and $\mu_{1},\cdots,\mu_{|J|}\in H_{\mathbb{T}}^{*}(Y)$.
For every decorated rooted tree $\Gamma\in \DRT_{J}^{d}$, we have
\bea\label{170}
&&\Cont_{\Gamma}(\langle\mu_{1},\cdots,\mu_{|J|}\rangle_{1,J,d}^{X})\nn\\
&=&\Cont_{\Gamma}(\langle\mu_{1},\cdots,\mu_{|J|}\rangle_{1,J,d}^{0;X})
+\sum_{m\geq 1}\sum_{J^{\prime}\subset J}\Big[\frac{(-1)^{m+|J^{\prime}|}m^{|J^{\prime}|}(m-1)!}{24}\nn\\
&&
\sum_{p=0}^{n+l-1-|J^{\prime}|-2m}
\Cont_{\Gamma}\Big(\langle \eta_{p}\mathbf{c}_{n+l-1-|J^{\prime}|-2m-p}(TX);\mu_{1},\cdots,\mu_{|J|}\rangle_{(m,J-J^{\prime},d)}^{X}\Big)\Big],
\eea
where $\mathbf{c}_{k}$ denotes the equivariant Chern class.
\end{theorem}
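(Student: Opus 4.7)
The strategy is to reduce (\ref{170}) to a per-vertex identity at the distinguished genus-one vertex of $\Gamma$, and then match the resulting Hodge-integral expansion with the correction sum on the right-hand side term by term.

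First, by construction the contribution $\Cont_\Gamma(\cdot)$ is a product of local factors over vertices and edges of $\Gamma$, integrated against $\prod_v \cM_{g_v,\val(v)}$, with $\tM_{1,\val(v_0)}$ replacing $\cM_{1,\val(v_0)}$ in the reduced case per the formal definition (\ref{24}). The edge factors, the local factors at non-genus-one vertices, and all insertions $\mu_i$ attached to vertices other than $v_0$ appear identically on the two sides of (\ref{170}). Consequently, after clearing these common factors, the identity reduces to a single statement about an integral over $\cM_{1,k}$ versus $\tM_{1,k}$, where $k$ is the valence of $v_0$ plus the number of marked points placed there; this is the \emph{only} place the discrepancy between the standard and reduced versions of $e(R^{1}\pi_{*}f^{*}(\cdot))$ enters.

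Second, I would use the Vakil--Zinger blow-down $\tM_{1,k}\to\cM_{1,k}$ to push the reduced integrand forward to $\cM_{1,k}$. The difference between the standard integrand and the pushforward of the reduced integrand produces a class supported on the exceptional divisors of this blow-down, whose generic points correspond to a contracted genus-one component attached by $m\geq 1$ nodes to a rational tree carrying the remaining marked points. The formal definition (\ref{24}) of $e(\cV_{1}^{\prime})$ (or rather $e(\mathcal U_1')$ in the notation of \cite{VZ}) is designed precisely so that this pushforward difference localizes to such boundary contributions; this is the algebro-geometric analog on $\mathbb T$-localization data of the exact sequence of \cite[Theorem 1.2(2)]{VZ}.

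Third, I would evaluate the resulting boundary integrals using the Hodge integral $\int_{\overline{\cM}_{1,1}}\psi_{1}=\tfrac{1}{24}$ together with the standard $\lambda_{1}\psi^{a}$ evaluations, and expand the equivariant total Chern class of $TX$ at the fixed point $f(v_{0})$ according to its polynomial dependence on the smoothing parameter $\tilde\psi$. This expansion generates exactly the sum over $(m,J^{\prime},p)$ in (\ref{170}): $m$ indexes the number of nodes of the contracted genus-one component, $J^{\prime}\subset J$ indexes which marked points migrate onto the rational tree, the numerical factor $\tfrac{(-1)^{m+|J^{\prime}|}m^{|J^{\prime}|}(m-1)!}{24}$ absorbs the boundary intersection multiplicity, the automorphisms of the contracted component and the combinatorics of distributing marked points, and the insertion $\eta_{p}\mathbf{c}_{n+l-1-|J^{\prime}|-2m-p}(TX)$ comes from truncating the equivariant characteristic polynomial of $TX|_{f(v_{0})}$ at degree $p$. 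The main obstacle will be this final combinatorial matching: Zinger's original proof in \cite{ZingerSvR} is valid only after summing over all graphs, whereas here the equality is asserted for each individual $\Gamma$, so the signs, the factor $(m-1)!$, and the prefactor $m^{|J^{\prime}|}$ must emerge from a strictly local analysis. Since (\ref{24}) is modeled on \cite[Theorem 1.2(2)]{VZ}, this matching should reduce to a purely combinatorial lemma on Hodge integrals, but verifying it uniformly across all $\Gamma\in\DRT_{J}^{d}$ is where the technical effort concentrates.
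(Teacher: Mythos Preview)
Your first step—factoring out the edge and non-root vertex contributions to reduce (\ref{170}) to an identity at $v_0$ alone—matches the paper exactly, and correctly isolates the problem to decorated stars.

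Your second and third steps, however, do not fit the setting and miss the paper's actual mechanism. You propose a geometric pushforward along a Vakil--Zinger blow-down, but here the reduced contribution is \emph{defined formally} by (\ref{22})--(\ref{24}) as a sum over refined decorated rooted trees $\widetilde\Gamma\in\pi_{\RDRT}^{-1}(\Gamma)$; for a general GKM base $Y$ no desingularization of $\Mbar_{1,J}(Y,d)$ is available, so there is no pushforward to compute, and the identity to be proved is a combinatorial one among explicit rational functions of the weights $\omega_e=\alpha_{v_0,e}/d(e)$. The paper's argument instead writes the $\eta_p$-correction contribution for a fixed $m$-colored partition as a coefficient $[x^{n+l-1-2m}]$ of a rational function of $x$ and applies the residue theorem on $\mathbb P^1$: the residue at $x=\infty$ gives a universal expression in the $\omega_e$, while the residues at the extra poles (present exactly when several edges at $v_0$ share the same weight) reproduce, term for term, the refined-tree contributions making up $\VC^{0;X}$---compare (\ref{53}) with the last block of (\ref{55}). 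What remains is then matched to $\VC_\Gamma^X$ by a symmetric-function identity (Lemma~\ref{181}), proved by induction on the number of edges via the recursion $H_r(w_1,\ldots,w_{r-1},0)=(\sum_k w_k)\,H_{r-1}$; this lemma is where the genus-one integral $\int_{\Mbar_{1,r}}\prod_k(1-w_k\psi_k)^{-1}$ and hence the $1/24$ enter, and the factors $(m-1)!$ and $m^{|J'|}$ you flag as the main obstacle fall out directly from (\ref{89}) rather than from any boundary-divisor analysis. Your sketch contains neither the residue mechanism nor this combinatorial lemma, and without them the per-graph identity cannot be closed.
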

We refer the reader to section 3 for the precise meaning of this theorem. We call (\ref{170}) the localized standard versus reduced formula (LSvR for short). As a corollary of (\ref{168}) and (\ref{170}), we have

\begin{corollary}\label{171}
In the set-up of theorem \ref{169}, 
\bea\label{172}
&&\langle\mu_{1},\cdots,\mu_{|J|}\rangle_{1,J,d}^{X}\nn\\
&=&\langle\mu_{1},\cdots,\mu_{|J|}\rangle_{1,J,d}^{0;X}
+\sum_{m\geq 1}\sum_{J^{\prime}\subset J}\Big[\frac{(-1)^{m+|J^{\prime}|}m^{|J^{\prime}|}(m-1)!}{24}\nn\\
&&\sum_{p=0}^{n+l-1-|J^{\prime}|-2m}\langle\eta_{p}\mathbf{c}_{n+l-1-|J^{\prime}|-2m-p}(TX);\mu_{1},\cdots,\mu_{|J|}
\rangle_{(m,J-J^{\prime},d)}^{X}\Big].
\eea
\end{corollary}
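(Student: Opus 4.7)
The plan is to deduce (\ref{172}) from the graph-level identity (\ref{170}) by summation and virtual localization.

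First I apply the Graber--Pandharipande virtual localization formula to $\langle\mu_{1},\dots,\mu_{|J|}\rangle_{1,J,d}^{X}$. Because $E$ is concave and $\mathbb{T}$-equivariant over the algebraic GKM manifold $Y$, the torus-fixed loci of $\Mbar_{1,J}(Y,d)$ are parametrized by decorated graphs of arithmetic genus one, and these split as $\DOL_{J}^{d}\sqcup\DRT_{J}^{d}$. Thus
\[
\langle\mu_{1},\dots,\mu_{|J|}\rangle_{1,J,d}^{X} \;=\; \sum_{\Gamma\in\DOL_{J}^{d}}\Cont_{\Gamma}\bigl(\langle\mu_{1},\dots\rangle_{1,J,d}^{X}\bigr) + \sum_{\Gamma\in\DRT_{J}^{d}}\Cont_{\Gamma}\bigl(\langle\mu_{1},\dots\rangle_{1,J,d}^{X}\bigr),
\]
where each $\Cont_{\Gamma}$ is the usual localization contribution. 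The analog of (\ref{168}) with insertions, stated in section 2.3, decomposes $\langle\mu_{1},\dots\rangle_{1,J,d}^{0;X}$ as a sum over $\DOL_{J}^{d}$ together with $\RDRT_{J}^{d}/\sim$; the formal substitution (\ref{24}) is designed so that on any one-loop graph the reduced contribution coincides with the standard one, so the $\DOL$ pieces on the two sides match term by term.

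Next I substitute Theorem \ref{169} into the $\DRT$-sum for the standard invariant. Each summand decomposes into a reduced part $\Cont_{\Gamma}(\langle\dots\rangle^{0;X})$ and a weighted correction. Summing the reduced parts over $\Gamma\in\DRT_{J}^{d}$ and combining with the $\DOL$-contributions already accounted for should reproduce $\langle\mu_{1},\dots\rangle_{1,J,d}^{0;X}$. This requires identifying
\[
\sum_{\Gamma\in\DRT_{J}^{d}}\Cont_{\Gamma}\bigl(\langle\dots\rangle^{0;X}\bigr) \;=\; \sum_{\widetilde{\Gamma}\in\RDRT_{J}^{d}/\sim}\Cont_{\widetilde{\Gamma}},
\]
which is the combinatorial content of the refinement construction of section 2.3: a refined decorated rooted tree is an ordinary decorated rooted tree together with the extra data recording how the reduced obstruction factor $e(\mathcal{U}_{1}^{\prime})$ distributes across the flags meeting the genus-one vertex, and the equivalence $\sim$ collapses refinements yielding the same contribution. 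For the correction terms, I would pull the scalar $\frac{(-1)^{m+|J^{\prime}|}m^{|J^{\prime}|}(m-1)!}{24}$ outside the summation over $\Gamma$ and recognize the remaining sum $\sum_{\Gamma\in\DRT_{J}^{d}}\Cont_{\Gamma}\bigl(\langle\eta_{p}\mathbf{c}_{n+l-1-|J^{\prime}|-2m-p}(TX);\mu_{1},\dots\rangle_{(m,J-J^{\prime},d)}^{X}\bigr)$ as a virtual localization decomposition of the corresponding genus $m$ invariant: the rule that assigns a contribution to each $\Gamma\in\DRT_{J}^{d}$ is induced by viewing the distinguished genus-one vertex of $\Gamma$ as a genus $m$ vertex, so summation over $\Gamma$ reassembles the invariant on the natural decomposition of $\Mbar_{m,\{\bullet\}\cup(J-J^{\prime})}(Y,d)$.

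The main obstacle is the combinatorial bookkeeping just described---namely, matching $\sum_{\Gamma\in\DRT_{J}^{d}}\Cont_{\Gamma}(\langle\dots\rangle^{0;X})$ with the $\sum_{\widetilde{\Gamma}\in\RDRT_{J}^{d}/\sim}$ part of definition (\ref{168}), and the analogous reassembly of the genus $m$ correction terms into honest invariants. Both are purely combinatorial statements about how the formal contributions defined in section 2.3 interact with the refinement $\sim$ and with vertex-genus shifts; no new geometric input is needed. Once these identifications are in hand, the corollary follows formally from Theorem \ref{169} by summing (\ref{170}) over $\Gamma\in\DRT_{J}^{d}$ and adding the $\DOL$-contributions.
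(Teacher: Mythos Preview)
Your overall strategy is exactly the paper's: sum the graph-level identity of Theorem \ref{169} over $\Gamma\in\DRT_{J}^{d}$, observe that for one-loop graphs the standard and reduced contributions agree by definition (this is (\ref{38})), and then recognize the three resulting sums over graphs as the three invariants appearing in (\ref{172}) via the decompositions (\ref{34}), (\ref{29})--(\ref{30}), and (\ref{18})--(\ref{19}) respectively. All of the ``combinatorial bookkeeping'' you flag as the main obstacle is already carried out in Section~2 and requires no further work here.

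There is one genuine misunderstanding in your proposal, however. The correction invariants $\langle\eta_{p}\mathbf{c}_{q}(TX);\dots\rangle_{(m,J-J',d)}^{X}$ are \emph{not} genus-$m$ invariants on $\Mbar_{m,\{\bullet\}\sqcup(J-J')}(Y,d)$; the subscript $(m,J-J',d)$ refers to the moduli space $\Mbar_{(m,J-J')}(Y,d)$ defined in (\ref{57})--(\ref{58}), a fiber product of $m$ genus-\emph{zero} one-pointed moduli spaces over the diagonal of $Y^{m}$. Its fixed loci are indexed by $m$-colored decorated rooted trees (Section~2.2), and the way one organizes their contributions as a sum over $\DRT_{J}^{d}$ is precisely (\ref{18})--(\ref{19}), via the forgetful maps $\pi_{\CDRT}$ and $\rho_{J-J',J}$. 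So the ``vertex-genus shift'' you describe does not occur; what happens instead is that the root of $\Gamma$ is replaced by an $m$-fold splitting into genus-zero components. Once you correct this picture, the reassembly of the correction terms is immediate from (\ref{18}), and your proof goes through.
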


Let $\gamma_{1},\cdots,\gamma_{|J|}\in H^{*}(Y)$, and suppose they have  equivariant liftings $\mu_{1},\cdots,\mu_{|J|}\in H_{\mathbb{T}}^{*}(Y)$
(equivariant liftings always exist when $Y$ is \emph{equivariantly formal} \cite{GKM}). If the degrees of $\gamma_{1},\cdots,\gamma_{|J|}$ satisfy the dimension constraint of the non-equivariant genus one Gromov-Witten invariants,  $\langle\mu_{1},\cdots,\mu_{|J|}\rangle_{1,J,d}^{0;X}$ is a complex number, i.e.,  involving no equivariant parameters, so we can define the reduced genus one Gromov-Witten invariant of $X$ by
\bea\label{nonequi}
\langle\gamma_{1},\cdots,\gamma_{|J|}\rangle_{1,J,d}^{0;X}:=\langle\mu_{1},\cdots,\mu_{|J|}\rangle_{1,J,d}^{0;X}.
\eea
Then we have a non-equivariant version of (\ref{172})
\bea\label{172nonequi}
&&\langle\gamma_{1},\cdots,\gamma_{|J|}\rangle_{1,J,d}^{X}\nn\\
&=&\langle\gamma_{1},\cdots,\gamma_{|J|}\rangle_{1,J,d}^{0;X}
+\sum_{m\geq 1}\sum_{J^{\prime}\subset J}\Big[\frac{(-1)^{m+|J^{\prime}|}m^{|J^{\prime}|}(m-1)!}{24}\nn\\
&&\sum_{p=0}^{n+l-1-|J^{\prime}|-2m}\langle\eta_{p}c_{n+l-1-|J^{\prime}|-2m-p}(TX);\gamma_{1},\cdots,\gamma_{|J|}
\rangle_{(m,J-J^{\prime},d)}^{X}\Big].
\eea
Consequently, the definition (\ref{nonequi}) is independent of the choices of the equivariant liftings and the $\mathbb{T}$-actions on $Y$ and $E$.
Needless to say, it is very desirable to seek a geometric way to define the reduced genus one Gromov-Witten invariants which coincide with our formal one, and prove (\ref{172nonequi}) geometrically.\\

Note that Zinger's SvR has two (equivalent) versions, one involves the $\eta_{p}$-classes, the other the $\tilde{\eta}_{p}$-classes. Zinger's computation for the genus one GW invariants of CY hypersurfaces does not involve the $\tilde{\eta}_{p}$-classes. But our computation for local invariants does need the LSvR for $\tilde{\eta}_{p}$-classes, because our definition of reduced genus one local Gromov-Witten invariants is formal, from which we cannot prove the divisor equation in the usual way; but we can deduce the divisor equation for reduced invariants from corollary \ref{171} and the divisor equation for the ordinary local Gromov-Witten invariants and for the invariants involve $\tilde{\eta}_{p}$-classes (lemma \ref{191} and \ref{192}).\\

We prove the LSvR for arbitrary equivariant \emph{concave} vector bundles over algebraic GKM manifolds. This combinatorial  approach has  an advantage that for local Calabi-Yau manifolds it bypass the first difficulty discussed above on the desingularization of the moduli spaces of stable maps of genus one.\\

The corollary \ref{171} enables us to compute genus one Gromov-Witten invariants of
\bea\label{40}
X=\mathrm{Tot}\big(E=\bigoplus_{k=1}^{l}\mathcal{O}_{\mathbb{P}^{n-1}}(-a_{i})\rightarrow \mathbb{P}^{n-1}\big),
\eea
with $a_{k}>0$ for $1\leq k\leq l$ and $\sum_{k=1}^{l}a_{k}=n$; the crucial point is that while the computation of the lefthand side of (\ref{172}) via the virtual localization does not directly lead to a mirror formula, we can compute the righthand side of (\ref{172}) to obtain a mirror formula of the lefthand side, via Zinger's method in \cite{Zinger1}. The genus one mirror formulae for $K_{\mathbb{P}^{2}}$ and $K_{\mathbb{P}^{3}}$ have been conjectured in \cite{KZ} (see also \cite{ABK}) and \cite{KP} respectively. In \cite{Hu}, based on some observations on Zinger's formulae in compact cases, we made a conjecture (generalizing the conjecture for $K_{\mathbb{P}^{2}}$ and $K_{\mathbb{P}^{3}}$) on the genus one mirror formulae for $X$ of the form (\ref{40}), which is now the second main theorem of this article. Let
\bea\label{272}
R(w,t)=e^{wt}\sum_{d\geq 0}e^{dt}\frac{\prod_{k=1}^{l}\prod_{s=0}^{a_{k}d-1}(-a_{k}w-s)}{\prod_{s=1}^{d}(w+s)^{n}},
\eea
and for $q\ge p\geq 0$, let
\bea\label{299}
I_{p,q}(t)=\frac{d}{dt}\Big(\frac{I_{p-1,q}(t)}{I_{p-1,p-1}(t)}\Big).
\eea
Denote $I_{p}(t)=I_{p,p}(t)$ for $p\geq 0$. Let
\bea\label{300}
T=I_{0,1}(t).
\eea
\begin{theorem}\label{267}(=\textrm{Theorem \ref{297}})
\begin{multline}\label{265}
\sum_{d=1}^{\infty}e^{dT}N_{1,d}^{X}=\frac{n}{48}\Big(n-1-2\sum_{k=1}^{l}\frac{1}{a_{k}}\Big)\big(T-t\big)\\
-\left\{\begin{array}{lll}
\frac{n+l}{48}\log (1-\prod_{k=1}^{l}(-a_{k})^{a_{k}}e^{t})
+\sum_{p=l}^{\frac{n+l-2}{2}}\frac{(n+l-2p)^{2}}{8}\log I_{p}(e^{t}), & \mathrm{if}& 2\mid(n+l);\\
\frac{n+l-3}{48}\log (1-\prod_{k=1}^{l}(-a_{k})^{a_{k}}e^{t})
+\sum_{p=l}^{\frac{n+l-3}{2}}\frac{(n+l-2p)^{2}-1}{8}\log I_{p}(e^{t}), & \mathrm{if}& 2\nmid(n+l).\\
\end{array}\right.
\end{multline}
\end{theorem}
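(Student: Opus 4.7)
The plan is to apply Corollary \ref{171} with no insertions ($J=\emptyset$), which reduces to
$$N_{1,d}^{X} = N_{1,d}^{0;X} + \sum_{m\geq 1}\frac{(-1)^{m}(m-1)!}{24}\sum_{p=0}^{n+l-1-2m}\big\langle \eta_{p}\, c_{n+l-1-2m-p}(TX)\big\rangle_{(m,\emptyset,d)}^{X},$$
multiply by $e^{dT}$, sum over $d\geq 1$, and identify each of the two resulting generating series with the corresponding pieces on the right hand side of \eqref{265}.

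For the correction sum, each multi-point genus zero local invariant $\langle \eta_{p} c_{k}(TX)\rangle^{X}_{(m,\emptyset,d)}$ is a twisted genus zero Gromov-Witten invariant of the ambient $\mathbb{P}^{n-1}$ with twist by $e(E)$ (giving local invariants) and by Chern classes of $TX$. These are computable via Givental's mirror theorem: the small $J$-function of $X$ is determined by the hypergeometric $I$-function built from $R(w,t)$ in \eqref{272}, whose $w$-expansion coefficients are precisely the series $I_{p,q}(t)$ of \eqref{299}. After the mirror transform $T = I_{0,1}(t)$, each correction term for fixed $m$ becomes an explicit polynomial expression in $\log I_{p}(e^{t})$; the discriminant term $\log(1-\prod_{k}(-a_{k})^{a_{k}}e^{t})$ arises from the Hessian/Jacobian of the mirror map, in direct parallel with the manipulations of \cite{Zinger1,Popa1}.

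For the reduced part $\sum_{d\geq 1} e^{dT}N_{1,d}^{0;X}$, I would use the formal definition \eqref{168}, splitting the sum into contributions from $\DOL_{\emptyset}^{d}$ and from $\RDRT_{\emptyset}^{d}/\sim$. Each $\Cont_{\Gamma}(N_{1,d}^{0;X})$ is an equivariant integral over a product of $\Mtilde_{1,*}$ and $\Mbar_{0,*}$ factors, expressible as a rational function in the torus weights $\alpha_{1},\dots,\alpha_{n}$ on $\mathbb{P}^{n-1}$; the total must be $\alpha$-independent since the ordinary $N_{1,d}^{X}$ on the left of Corollary \ref{171} is, and so are the correction terms. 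Following Zinger's residue strategy in \cite{Zinger1}, I would regroup graph contributions by the combinatorial type of the distinguished genus one vertex, rewrite the sum as an iterated residue in the $\alpha_{i}$, and exploit the $S_{n}$-symmetry of the toric data of $\mathbb{P}^{n-1}$ to collapse the residue via the residue theorem on $\mathbb{P}^{1}$, one variable at a time. The output is expected to take the predicted form $\sum_{p}\tfrac{(n+l-2p)^{2}}{8}\log I_{p}(e^{t})$ plus a discriminant term and a linear $(T-t)$ piece.

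The chief obstacle is this last reduced-part computation. Zinger's symmetric-function argument was designed for a single hypersurface class of degree $n$ on $\mathbb{P}^{n-1}$, whereas here the integrand carries the concave local factor $e(E)$ for a splitting bundle with several $a_{k}$, together with the formal substitute for $e(\mathcal{U}_{1}^{\prime})$ in the genus one vertex. Tracking the cancellations between the one-loop graph sum and the refined rooted tree sum and matching the coefficients $\tfrac{(n+l-2p)^{2}}{8}$ (resp.\ $\tfrac{(n+l-2p)^{2}-1}{8}$) will be delicate; the parity dichotomy in $n+l$ should emerge from whether a middle $p$-term survives the pairing of residues that otherwise cancel in pairs $p \leftrightarrow n+l-p$. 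Once this is in hand, combining with the correction-term computation and cross-checking against the conjectures of \cite{KZ,KP} for $K_{\mathbb{P}^{2}}$, $K_{\mathbb{P}^{3}}$ and the resolved conifold identity \eqref{2014002} completes the verification.
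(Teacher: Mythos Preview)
Your overall architecture matches the paper's: apply the LSvR to split $N_{1,d}^{X}$ into the reduced piece and the genus-zero correction, compute the correction via Givental's mirror theorem, and compute the reduced piece by Zinger--Popa residue manipulations. But two structural points are off, and together they constitute a real gap.

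First, your expectation for what each piece yields is inverted. The correction sum does \emph{not} produce the $\log I_{p}$ and discriminant terms of the final formula; Theorem~\ref{270} shows it equals a single residue
\[
-\frac{1}{24\prod_{k}(-a_{k})}\Res_{w=0}\Bigg\{\frac{\prod_{k}(1-a_{k}w)\big((1+w)^{n}-w^{n}\big)}{w^{n+l}}\big(-Tw+\ln R(w,t)\big)\Bigg\}.
\]
Conversely, the reduced generating series (Theorem~\ref{268}) equals the desired closed formula \emph{plus} this same residue with the opposite sign. The $\log I_{p}$ sum and the $(T-t)$ term come entirely from the reduced side, specifically from the Type~A graphs $\mathcal{A}_{i}+\sum_{j}\tilde{\mathcal{A}}_{ij}$ (Proposition~\ref{249}) via the two-point function identity of Lemma~\ref{238} and Popa's Lemma~\ref{244}; the Type~B graphs $\mathcal{B}_{i}+\sum_{j}\tilde{\mathcal{B}}_{ij}$ (Proposition~\ref{266}) supply the residue term that then cancels the correction. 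Without anticipating this cancellation you will not see why the final answer is clean, and your prediction that the correction terms are polynomials in $\log I_{p}$ would lead you to look for the wrong identities.

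Second, you cannot pass directly from $\sum_{d}e^{dT}N_{1,d}^{0;X}$ to a localization sum, because the formal reduced invariants are defined graph-by-graph and there is no a priori divisor equation for them. The paper instead works with the one-pointed equivariant invariants $\langle\phi_{i}\rangle_{1,1,d}^{0;X}$, establishes the divisor equation for reduced invariants \emph{a posteriori} from the LSvR together with the $\tilde{\eta}$-version (Corollary~\ref{112}, Lemma~\ref{192}), and only then identifies $\mathfrak{F}_{0}^{0}(e^{T})=\frac{d}{dT}\sum_{d}e^{dT}N_{1,d}^{0;X}$ (equation~\eqref{294}). This step is essential and is flagged in the introduction; your proposal skips it.
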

Since $I_{p}(e^{t})=0$ for $0\leq p\leq l-1$, this theorem is equivalent to  \cite[conjecture 1]{Hu}.\\

 For general concave equivariant vector bundles over algebraic GKM manifolds, the corollary \ref{171} enables us to obtain analogs to proposition \ref{215} and proposition \ref{174}. To obtain a mirror formula, however, one needs additional techniques to overcome the second difficulty discussed above.\\

  For $X=K_{\mathbb{P}^{2}}$, the genus one Gromov-Witten potential can be written as a modular form in suitable modular coordinate on the the modular curve for $\Gamma(3)$. The derivation of this result in \cite{ABK} is a mixture of rigorous mathematics and mirror-symmetry-arguments. To make things clear, we derive this fact from the results on the Ramanujan's cubic transformation, which should have been well-known to experts.\\

It is interesting to note the interplay between the computations of Gromov-Witten invariants of global ($=$compact) and of local Calabi-Yau manifolds.
In principle, the latter should be easier, and the compuation for the local CYs helps us to understand the global CYs. For example, in \cite[section 6]{KP}, they made use of the localization computation for $K_{\mathbb{P}^{3}}$ to fix the universal behavior at the conifold and thus obtain the genus one Gromov-Witten potential for the CY hypersurface in $\mathbb{P}^{5}$ via the B-model. Conversely, we used the result on the genus one Gromov-Witten potential for the compact CY complete intersections of \cite{Zinger1} and \cite{Popa1} to fix the universal behavior at the conifolds, and further observed that formulation of the group of terms such as $\sum_{p=l}^{\frac{n+l-2}{2}}\frac{(n+l-2p)^{2}}{8}\log I_{p}(e^{t})$ in (\ref{265}) should be \emph{ubiquitous},  thus made the conjecture in \cite{Hu}. Now we have known that the standard versus reduced formula in the local case holds in a \emph{refined} way, what does it \emph{feedback} to the global theory? We investigate this topic in section 3.6 and make some interesting observations and a conjecture (see conjecture \ref{189} and the remark following it). Briefly speaking, the term in the second row of (\ref{182}) gives the \emph{defect} of the \emph{naive} quantum hyperplane theorem in genus one after virtual localization. This gives a hint to answer the question of Givental at the end of \cite{Givental}.\\

  This article  is organized as follows. In section 2 we first fix the terminology for localization computations. We recall the ordinary virtual localization in genus one, and give the localization data for equivariant integrations over $\Mbar_{(m,J)}(Y,d)$. Then we define the  formal reduced genus one local Gromov-Witten invariants. In section 3 we prove the LSvR, from simple cases to the general cases. In section 3.6 we discuss the modified virtual localization for compact Calabi-Yau manifolds. In section 4 we use Givental's result on genus zero mirror symmetry to compute the difference between the standard and the formal reduced Gromov-Witten invariants of (\ref{40}). In section 5 we compute the formal reduced Gromov-Witten invariants of (\ref{40}), following Zinger's method and using results from \cite{Popa1} and \cite{Popa2}. In the appendix we see how to deduce the modularity of $\mathcal{F}_{1}$ for $K_{\mathbb{P}^{2}}$ from Ramanujan's cubic transformation theory.\\

\noindent\textbf{Notations}:
\begin{enumerate}
  \item{} We use $[x^{k}]\Big(f(x, y_1,y_2,\cdots)\Big)$ to represent the coefficient of $x^{k}$ in the Laurent expansion of $f(x)$ at $x=0$.
  \item{} For a vector bundle $E$, let $e(E)$ be its Euler class, and for an equivariant vector bundle $E$, let $\mathbf{e}(E)$ be its equivariant Euler class. Similarly, we denote by $c_{p}(E)$ the $p$-th Chern class of $E$, and $\mathbf{c}_{p}(E)$  the $p$-th equivariant Chern class of $E$.
  \item{} Denote the set of positive integers (resp., non-negative integers ) by $\mathbb{Z}^{>0}$ (resp., $\mathbb{Z}^{\geq 0}$).
  \item{} In section 4 and section 5, we frequently feel convienient to write $e^{T}=Q$, and $e^{t}=q$. This $q$ should not be confused with the $q$ appeared in the subscript of $I_{p,q}(e^{t})$.
  \item{} When we discuss the refined decorated rooted trees, we try our best to follow the terminology in \cite{VZ}, \cite{Zinger1}. However, because we need the Greek letters $\mu$ and $\eta$ at other places, we instead denote the maps $\mu$ and $\eta$ in \cite{Zinger1} by their English analogs $\mathfrak{m}$ and $\mathfrak{e}$.
  \item{} Our notation for the invariant $\langle \eta_{p}\mu_{0};\mu_{1},\cdots,\mu_{|J|}\rangle_{(m,J-J^{\prime},d)}$ is slightly different from \cite{ZingerSvR}, see (\ref{68}).
\end{enumerate}

\emph{Acknoledgement} The author thanks Prof. Jian Zhou for his interest in this work. He thanks Huazhong Ke for a lot of discussions on this topic, and thanks him and Xiaobo Zhuang for reading a rough draft of this article and giving suggestions before they started to the USA. He thanks Prof. Aleksey Zinger for his comments on an earlier version of this article. He also thanks Huijun Fan, Xiaobo Liu, Yunfeng Jiang, Zhilan Wang and Jie Zhou for helpful discussions.

\section{Fixed Loci and localization contributions}
Let $X$ be the total space of a  vector bundle $\pi:E \rightarrow Y$ where $Y$ is a smooth projective variety over $\mathbb{C}$.  We assume that the vector bundle $E$ is concave, which means that for every non-constant map from a curve\footnote{For the purpose of this article we can slightly weaken this condition by restricting the arithmetic genus of $C$ to 0 and 1.} $f:C\rightarrow Y$ we have $H^{0}(C,f^{*}E)=0$. The total Chern class of $X$ is understood as
\bea\label{36}
c(TX)=\pi^{*}\Big(c(E)c(TY)\Big).
\eea
When $c_{1}(E)+c_{1}(TY)=0$, we call $X$ a \emph{local Calabi-Yau} manifold.
Since $E$ is concave, every non-constant map from a curve to $X$ actually map the curve into $Y$,  and when we compute various Gromov-Witten-type invariants of $X$, we are actually working on the moduli spaces related to $Y$, so it is convenient to take $c(TX)$ as a cohomology class (or equivariant cohomology class when we are in an equivariant world) over $Y$, by an abuse of notation. In the explicit computations in section 4 and section 5, we consider the cases $X$ of the form (\ref{40}).\\

Now suppose $Y$ is an \emph{algebraic} GKM \emph{manifold} \footnote{Algebraic GKM manifolds are called \emph{balloon manifolds} in \cite{LLY}.}
of dimension $n-1$, which means  the following data (see \cite{GKM},\cite{LLY}):
 \begin{itemize}
 \item{}There is a $\mathbb{T}=(\mathbb{C}^{*})^{k}$-action on $Y$ such that there are $N$ isolated fixed points, named $P_{1},\cdots,P_{N}$.
 \item{}There are finitely many invariant line  ($\cong \mathbb{P}^{1}$) connecting the fixed points. For each fixed point $P_{i}$, where $1\leq i\leq N$, there are $n-1$ invariant lines ($\cong \mathbb{P}^{1}$) connecting $P_{i}$ to other fixed points. We assume that the weights of these $n$ invariant lines at $P_{i}$ are \emph{pairwise} independent.  For every pair of distinct fixed points there is at most one invariant line connecting them\footnote{This assumption is not necessary, but will  make the presentations less complicated. }; if there exists one for $P_{i}$ and $P_{j}$, we denote it by $\overline{P_{i}P_{j}}(=\overline{P_{j}P_{i}})$, and say that $P_{i}$ and $P_{j}$ are neighboring to each other. Let $\Nb(P_{i})$ be the set of fixed points that are neighboring to $P_{i}$, and for $P_{j}\in \Nb(P_{i})$, we denote the weights of $\overline{P_{i}P_{j}}$ at $P_{i}$ by $\alpha_{i,j}$; we have $\alpha_{i,j}=-\alpha_{j,i}$.

 \end{itemize}
     Denote the equivariant cohomology ring with rational coefficients of $Y$ by $H_{\mathbb{T}}^{*}(Y)$. The equivariant cohomology ring of  a point is denoted by $\mathbb{Q}[\alpha_{1},\cdots,\alpha_{k}]$, and its quotient field by $\mathbb{Q}_{\alpha}$. Thus $\alpha_{i,j}$ are linear combinations of $\alpha_{1},\cdots,\alpha_{k}$ for $1\leq i\leq N$, $1\leq j\leq n$. For every $P_{i}$, there is an associated restriction map
\bea
|_{P_{i}}:H_{\mathbb{T}}^{*}(Y)\rightarrow \mathbb{Q}[\alpha_{1},\cdots,\alpha_{k}].
\eea
Suppose $E$ is a equivariant concave vector bundle of rank $l$ over $Y$, with a $\mathbb{T}$-linearization such that the weights of $E$ at $P_{i}$ are $\varepsilon_{i,1},\cdots,\varepsilon_{i,l}$.\\

Following \cite{BM}, let
\bea
H_{2}^{+}(Y)=\{\beta\in\mathrm{Hom}(\mathrm{Pic}(Y),\mathbb{Z}):\beta(L)\geq 0  \hspace{0.2cm}\mathrm{for} \hspace{0.2cm} \forall  \hspace{0.2cm} \mathrm{ample}\hspace{0.2cm} L\}.
\eea
For $d\in H_{2}^{+}(Y)$ , on the moduli stack of stable maps $\Mbar_{g,k}(Y,d)$, let $\ev_{i}$ be the $i$-th evaluation map, $\pi:\Mbar_{g,k+1}(Y,d)\rightarrow \Mbar_{g,k}(Y,d)$ be the universal curve and $f:\Mbar_{g,k+1}(Y,d)\rightarrow Y$ the universal stable map. Let
\bea\label{70}
\mathcal{U}_{g}=R^{1}\pi_{*}f^{*}E,
\eea
and for $k\geq 1$ let
\bea\label{71}
\mathcal{U}_{g}^{\prime}=\ev_{1}^{*}(E)\oplus R^{1}\pi_{*}f^{*}E,
\eea
which are both vector bundles over $\Mbar_{g,k}(Y,d)$. The \emph{genus $g$ Gromov-Witten invariants} for  $X$ of the form (\ref{40}) with (primary) insertions $\mu_{1},\cdots,\mu_{k}\in H^{*}(Y)$ are given by
\bea\label{165}
 \langle\mu_{1},\cdots,\mu_{k}\rangle_{g,k,d}^{X}=\Big(\bigwedge_{j=1}^{k}\ev_{j}^{*}\mu_{j}\Big)\wedge e(\mathcal{U}_{g})\cap [\Mbar_{g,k}(Y,d)]^{\vir}.
\eea
In particular, when $g=1$ and $k=0$, the genus one Gromov-Witten invariants  of $X$ are  given by
\bea\label{166}
N_{1,d}^{X}= e(\mathcal{U}_{1})\cap [\Mbar_{1,0}(Y,d)]^{\vir}.
\eea

The $\mathbb{T}$-action on $Y$ naturally induces a $\mathbb{T}$-action on the moduli stack of stable maps $\Mbar_{g,k}(Y,d)$ and some other related moduli spaces. The linearization of $E$ naturally induces linearizations of $\mathcal{U}_{g}$ and $\mathcal{U}_{g}^{\prime}$.  The \emph{equivariant genus $g$ Gromov-Witten invariants} of $X=\mathrm{Tot}(E\rightarrow Y)$ with (primary) insertions $\mu_{1},\cdots,\mu_{k}\in H_{\mathbb{T}}^{*}(Y)$ are given by
\bea\label{167}
 \langle\mu_{1},\cdots,\mu_{k}\rangle_{g,k,d}^{X}=\Big(\bigwedge_{j=1}^{k}\ev_{j}^{*}\mu_{j}\Big)\wedge \mathbf{e}(\mathcal{U}_{g})\cap [\Mbar_{g,k}(Y,d)]_{\mathbb{T}}^{\vir},
\eea
where $[\Mbar_{g,k}(Y,d)]_{\mathbb{T}}^{\vir}$ is the equivariant virtual fundamental class.\\

The $\mathbb{T}$-action on $Y$ also induces a $\mathbb{T}$-action on  some other related moduli spaces as we will see.  By \cite{AB} and \cite{GP}, the equivariant integration (against the fundamental cycle when the moduli space is smooth or the virtual moduli cycle when we have a perfect obstruction theory) of the equivariant cohomologogy classes on these spaces can be computed by \emph{(virtual) localization}, i.e., every fixed locus contributes to the integration, and summing the contributions we obtain the integration. When the integration we are computing is understood, we call the contribution coming from a fixed locus the \emph{localization contribution} of this fixed locus (or of the graph which indexes this fixed locus). \\

 In the following part of this section, we describe the fixed loci of three types of moduli spaces $\Mbar_{1,k}(Y,d)$, $\Mbar_{(m,J)}(Y,d)$, the \emph{formal fixed loci} as an analog to the truly existing fixed loci of $\tM_{1,k}(\mathbb{P}^{n-1},d)$,  and the corresponding (\emph{formal} in section 2.3) localization  contributions of several Gromov-Witten-type invariants. For convenience, we prefer to use $J$ to represent the set of marked points.

\subsection{Fixed loci on $\Mbar_{1,J}(Y,d)$ and  localization contributions}
The fixed loci on $\Mbar_{g,J}(Y,d)$ and  their localization contributions to the equivariant Gromov-Witten invariants of $X$ are well-known, and the reader may refer to, e.g., \cite{GP}, \cite{Kontsevich}. In this subsection we recall the results which are necessary for us and fix the notations.\\

The fixed loci on  $\Mbar_{g,J}(Y,d)$ are indexed by \emph{decorated graphs}. We recall the terminology for decorated graphs in \cite{Zinger1}. A decorated graph (for $Y$) is a tuple $\Gamma=(\Ver,\Edg;\mathfrak{g},\mathfrak{m},\mathfrak{d},\mathfrak{e})$, where
\begin{itemize}
\item{}$(\Ver,\Edg)$ is a graph. More precisely, $\Ver$ is a finite set, $\Edg$ is a finite set of maps, from a finite set $\mathrm{Dom}(\Edg)$ to the set of two-element subsets of $\Ver$. For a vertex $v\in\Ver$ and an edge $e\in\Edg$, if $v$ lies in the image of $e$, we call $v\in e$ by an abuse of notation. Also, if the image of an edge $e\in\Edg$ is $\{v_{1},v_{2}\}$, we call $e=\{v_{1},v_{2}\}$ by an abuse of notation; we should keep in mind that in general there may be more than one edges with the same image. However in this article we mainly discuss the trees, so no confusions arises. The edges containing the vertex $v$ is denoted by $\Edg(v)$, i.e., $\Edg(v)=\{e\in\Edg:v\in e\}$. In addition, we assume that the graph $(\Ver,\Edg)$ is connected in the usual sense. Thus the genus of $(\Ver,\Edg)$ is $\mathfrak{g}(\Ver,\Edg)=1-|\Ver|+|\Edg|$. We use $\Edg^{\Gamma}(v)$ instead of $\Edg(v)$ when we want to emphasize the underlying graph $\Gamma$.
\item{}The map
\ben
\mathfrak{g}:\Ver\rightarrow \mathbb{Z}^{\geq 0}
\een
indicates the genus of the contracted component of the domain curve that a vertex represents.
\item{}The map
\ben
\mathfrak{m}:\Ver\rightarrow [N]
\een
indicates the fixed point which the contracted component maps to. We demand that if $\{v_{1},v_{2}\}\in\Edg$, then
\ben
\mathfrak{m}(v_{1})\neq \mathfrak{m}(v_{2}).
\een
\item{}Denote the free semigroup $\sum (\mathbb{Z}^{\geq 0}\cdot\overline{P_{i}P_{j}})$ by  $\mathbb{B}(Y)$, where the sum runns over the invariant lines of $Y$. The map
\ben
\mathfrak{d}:\Edg\rightarrow \mathbb{B}(Y)
\een
for an edge $e=\{v_{1},v_{2}\}\in\Edg$ takes values in $\mathbb{Z}^{>0}\cdot\overline{P_{\mathfrak{m}(v_{1})}P_{\mathfrak{m}(v_{2})}}$,
indicating the degree of the invariant line that an edge represents. For convenience in later use, let $d(e)\in \mathbb{Z}^{>0}$ such that $\mathfrak{d}(e)=d(e)\cdot\overline{P_{\mathfrak{m}(v_{1})}P_{\mathfrak{m}(v_{2})}}$ when $e=\{v_{1},v_{2}\}$.
\item{}The \emph{label map}
\ben
\mathfrak{e}:J\rightarrow \Ver
\een
indicates on which contracted component a marked point lie.
\item{} The genus of a decorated graph $\Gamma$ is
\ben
\mathfrak{g}(\Gamma)=\mathfrak{g}(\Ver,\Edg)+\sum_{v\in \Ver}\mathfrak{g}(v)=1-|\Ver|+|\Edg|+\sum_{v\in \Ver}\mathfrak{g}(v).
\een
\item{}Let
\ben
\mathfrak{d}(\Gamma)=\sum_{e\in\Edg}\mathfrak{d}(e)\in \mathbb{B}(Y).
\een
There is a canonical map $\mathrm{d}:\mathbb{B}(Y)\rightarrow H_{2}^{+}(Y)$. For $\Gamma$ to represent a fixed locus on $\Mbar_{1,J}(Y,d)$, we need $\mathrm{d}\circ\mathfrak{d}(\Gamma)=d$; we call $\mathrm{d}\circ\mathfrak{d}(\Gamma)$ the degree of $\Gamma$.
\item{}The valence of a vertex $v\in\Ver$ is
\bea
\val(v)=|\Edg(v)|+|\mathfrak{e}^{-1}(v)|.
\eea
\item{}There is a natural projection map $\pi$ from the set of decorated graphs to the set of graphs, mapping $\Gamma$ to $(\Ver,\Edg)$. The automorphism group of $(\Ver,\Edg)$ acts naturally on the set $\pi^{-1}\big((\Ver,\Edg)\big)$, and the stable subgroup associated to $\Gamma$ is called the automorphism group of  $\Gamma$, denoted by $\Aut(\Gamma)$.

\end{itemize}

For $\Mbar_{1,J}(Y,d)$ there are two types of decorated graphs, the \emph{decorated one-loop graphs} and the \emph{decorated rooted trees}. On a decorated one-loop graph every vertex has genus zero. On a decorated rooted tree every vertex except the root has genus zero,  and the root has genus one. So we drop the map $\mathfrak{g}$ in the presentations of  decorated one-loop graphs and decorated rooted trees. We denote the set of decorated one-loop graphs (resp., decorated rooted trees ) of degree $d$ and with the set of marked points $J$ by $\DOL_{J}^{d}(Y)$ (resp., $\DRT_{J}^{d}(Y)$). In the proper context we always have a fixed $Y$, so we drop the notation for $Y$ and simply write $\DOL_{J}^{d}$ (resp., $\DRT_{J}^{d}$).\\

  Let us first consider the contributions from decorated one-loop graphs. 
For $\mu_{1},\cdots,\mu_{|J|}\in H_{\mathbb{T}}^{*}(Y)$, the localization contribution of a decorated one-loop graph $\Gamma$ to
 \bea\label{67}
 \langle\mu_{1},\cdots,\mu_{|J|}\rangle_{1,J,d}^{X}
 \eea
  can be written as a product of  contributions from edges and from vertices together with  a factor coming from the automorphism group $\Aut(\Gamma)$, i.e.,
 \bea\label{11}
 &&\Cont_{\Gamma}\big(\langle\mu_{1},\cdots,\mu_{|J|}\rangle_{1,J,d}^{X}\big)=\frac{1}{|\Aut(\Gamma)|}\nn\\
 &&\cdot\prod_{v\in \Ver}\Cont_{\Gamma;v}\big(\langle\mu_{1},\cdots,\mu_{|J|}\rangle_{1,J,d}^{X}\big)\prod_{e\in \Edg} \Cont_{\Gamma;e}\big(\langle\mu_{1},\cdots,\mu_{|J|}\rangle_{1,J,d}^{X}\big).
 \eea
 where the contribution of an vertex $v\in \Ver$ with $\mathfrak{m}(v)=i$ is
\bea\label{13}
&&\Cont_{\Gamma;v}\big(\langle\mu_{1},\cdots,\mu_{|J|}\rangle_{1,J,d}^{X}\big)=\prod_{j\in \mathfrak{e}^{-1}(v)}\mu_{j}\big|_{P_{i}}\nn\\
&&\cdot\Big(\prod_{k=1}^{l}\varepsilon_{i,k}\prod_{P_{j}\in \Nb(P_{i})}\alpha_{i,j}\Big)^{|\Edg(v)|-1}
\int_{\Mbar_{0,\val(v)}}\frac{1}{\prod_{e\in \Edg(v)}\Big(\frac{\alpha_{v,e}}{d(e)}-\psi_{(v,e)}\Big)},
\eea
where $\alpha_{(v,e)}=\alpha_{i,j}$ if $e=\{v,v^{\prime}\}$ with $\mathfrak{m}(v^{\prime})=j$, and $\psi_{(v,e)}$ is the $\psi$-class associated to the marked point on $\Mbar_{0,\val(v)}$ corresponding to the edge $e$.
 The explicit form of $\Cont_{\Gamma;e}\big(\langle\mu_{1},\cdots,\mu_{|J|}\rangle_{1,J,d}^{X}\big)$ can be computed by the \emph{holomorphic Lefschetz formula} (\cite{AS}), see e.g., \cite{YZ}; we will not spell out the general formula for this since we don't need it. For $X$ of the form (\ref{40}), the explicit form for $\Cont_{\Gamma;e}\big(\langle\mu_{1},\cdots,\mu_{|J|}\rangle_{1,J,d}^{X}\big)$ is (\ref{12}).

Note that we always adopt the convention that, the formal integrals over $\Mbar_{0,1}$ and $\Mbar_{0,2}$ are understood as extending the range of $n$ in the following identity to $r\geq 1$:
\bea\label{21}
\int_{\Mbar_{0,r}}\frac{1}{\prod_{i=1}^{r}(w_{i}-\psi_{i})}=\frac{1}{\prod_{i=1}^{r}w_{i}}\big(\sum_{i=1}^{r}\frac{1}{w_{i}}\big)^{r-3}.
\eea

  Next we consider the contributions from decorated rooted trees.  For a decorated rooted tree $\Gamma\in\DRT_{J}^{d}$ the root $v_{0}$  represents a genus one subcurve which is contracted by the stable map. The localization contribution of $\Gamma$ to (\ref{67}) can also be written as a product
\bea\label{31}
 &&\Cont_{\Gamma}\big(\langle\mu_{1},\cdots,\mu_{|J|}\rangle_{1,J,d}^{X}\big)=\frac{1}{|\Aut(\Gamma)|}\nn\\
 &&\cdot\prod_{v\in \Ver} \Cont_{\Gamma;v}\big(\langle\mu_{1},\cdots,\mu_{|J|}\rangle_{1,J,d}^{X}\big)\prod_{e\in \Edg} \Cont_{\Gamma;e}\big(\langle\mu_{1},\cdots,\mu_{|J|}\rangle_{1,J,d}^{X}\big),
 \eea
  where for an edge $e$ and a vertex $v\in \Ver\backslash\{v_{0}\}$, the contribution is the same as those in (\ref{11})  respectively, while the contribution of $v_{0}$ with $\mathfrak{m}(v_{0})=i$ is
\bea\label{14}
&&\Cont_{\Gamma;v_{0}}\big(\langle\mu_{1},\cdots,\mu_{|J|}\rangle_{1,J,d}^{X}\big)=\prod_{j\in \mathfrak{e}^{-1}(v)}\mu_{j}\big|_{P_{i}}\nn\\
&&\cdot\Big(\prod_{k=1}^{l}\varepsilon_{i,k}\prod_{P_{j}\in \Nb(P_{i})}\alpha_{i,j}\Big)^{|\Edg(v_{0})|-1}
\int_{\Mbar_{1,\val(v_{0})}}\frac{\prod_{P_{j}\in \Nb(P_{i})}\Lambda_{1}^{\vee}(\alpha_{i,j})\prod_{k=1}^{l}\Lambda_{1}^{\vee}(\varepsilon_{i,k})}{\prod_{e\in \Edg(v_{0})}\Big(\frac{\alpha_{v_{0},e}}{d(e)}-\psi_{(v_{0},e)}\Big)}.\nn\\
\eea
Summing the contributions, we have
\bea\label{34}
\langle\mu_{1},\cdots,\mu_{|J|}\rangle_{1,J,d}^{X}=\sum_{\Gamma\in \DOL_{J}^{d}}\Cont_{\Gamma}\big(\langle\mu_{1},\cdots,\mu_{|J|}\rangle_{1,J,d}^{X}\big)
+\sum_{\Gamma\in \DRT_{J}^{d}}\Cont_{\Gamma}\big(\langle\mu_{1},\cdots,\mu_{|J|}\rangle_{1,J,d}^{X}\big).\nn\\
\eea

\subsection{Fixed loci on $\Mbar_{(m,J)}(Y,d)$ and  localization contributions}
In this subsection, we recall the definition of \cite{ZingerSvR} for $\Mbar_{(m,J)}(Y,d)$ and some natural cohomology classes on it. Then we describe the fixed loci on $\Mbar_{(m,J)}(Y,d)$ and their localization contributions.\\

Let $Y$ be a smooth projective variety. For $\mathbf{d}=(d_{1},\cdots,d_{m})\in (H_{2}^{+}(Y)-\{0\})^{m}$ and finite sets $J_{1},\cdots,J_{m}$, define $\Mbar_{(m;J_{1},\cdots,J_{m})}\big(Y,\mathbf{d}\big)$ by the cartesian diagram
\bea\label{57}
\xymatrix{
  \Mbar_{(m;J_{1},\cdots,J_{m})}(Y,\mathbf{d})\ar[d]^{\ev_{0}} \ar[r]
                & \prod_{s=1}^{m}\Mbar_{0,\{0_{s}\}\sqcup J_{s}}(Y,d_{s}) \ar[d]_{\ev_{0_{1}}\times\cdots\times\ev_{0_{m}}}  \\
  Y \ar[r]^{\triangle_{Y}}
                &  Y^{m}           }.
\eea
Define
\bea\label{72}
[\Mbar_{(m;J_{1},\cdots,J_{m})}(Y,\mathbf{d})]^{\vir}=\triangle_{Y}^{!}\Bigg(\prod_{s=1}^{m}[\Mbar_{0,\{0_{s}\}\sqcup J_{s}}(Y,d_{s})]^{\vir}\Bigg),
\eea
where $\triangle_{Y}^{!}$ is the Gysin map.
\begin{remark}\label{87}
For flag varieties $Y=G/P$, since $\ev_{0_{s}}:\Mbar_{0,\{0_{s}\}\sqcup J_{s}}(Y,d_{s})\rightarrow Y$ is a smooth morphism and
$\Mbar_{0,1}(Y,d_{s})$ is smooth for every $1\leq s\leq m$, $\Mbar_{(m,J)}(Y,\mathbf{d})$ is smooth as well. Thus
$[\Mbar_{(m;J_{1},\cdots,J_{m})}(Y,\mathbf{d})]^{\vir}=[\Mbar_{(m;J_{1},\cdots,J_{m})}(Y,\mathbf{d})]$.
\end{remark}
\vspace{0.2cm}

For $d\in H_{2}^{+}(Y)-\{0\}$, let
\bea\label{58}
\Mbar_{(m,J)}(Y,d)=\coprod_{J_{1}\sqcup\cdots\sqcup J_{m}=J}\coprod_{\stackrel{d_{1}+\cdots+d_{m}=d}{d_{1},\cdots,d_{m}>0}}\Mbar_{(m;J_{1},\cdots,J_{m})}(Y,\mathbf{d}),
\eea
and
\bea\label{73}
[\Mbar_{(m,J)}(Y,d)]^{\vir}=\coprod_{J_{1}\sqcup\cdots\sqcup J_{m}=J}\coprod_{\stackrel{d_{1}+\cdots+d_{m}=d}{d_{1},\cdots,d_{m}>0}}[\Mbar_{(m;J_{1},\cdots,J_{m})}(Y,\mathbf{d})]^{\vir}.
\eea
There are natural projection maps
\bea\label{66}
\pi_{s}:\Mbar_{(m;J_{1},\cdots,J_{m})}(Y,\mathbf{d})\rightarrow \Mbar_{0,\{0_{s}\}\sqcup J_{s}}(Y,d_{s})
\eea
for $1\leq s \leq m$. Let $\psi_{0_{s}}$ be the Euler class of the cotangent line bundle on $\Mbar_{0,0_{s}\sqcup J_{s}}(Y,d_{s})$ associated to the marked point $0_{s}$. We define $\eta_{p}\in H^{2p}\big(\Mbar_{(m;J_{1},\cdots,J_{m})}(Y,\mathbf{d})\big)$ by the generating function
\bea\label{62}
\sum_{p=0}^{\infty}z^{p}\eta_{p}=\prod_{s=1}^{m}\frac{1}{1-z\pi_{s}^{*}\psi_{0_{s}}}.
\eea
Varying $d_{1}+\cdots+d_{m}=\mathbf{d}$ and $J_{1}\sqcup\cdots\sqcup J_{m}=J$, we obtain the class $\eta_{p}$ over $\Mbar_{(m,J)}(\mathbb{P}^{n-1},d)$.\\

For every $j\in J$ there is an evaluation map $\ev_{j}:\Mbar_{(m,J)}(Y,d)\rightarrow Y$ in an obvious way. There is also an evaluation map $\ev_{0}:\Mbar_{(m,J)}(Y,d)\rightarrow Y$ associated to the common \emph{0-th} marked point. For $J^{\prime}\subset J$, let
\bea\label{68}
\langle \eta_{p}\mu_{0};\mu_{1},\cdots,\mu_{|J|}\rangle_{(m,J-J^{\prime},d)}^{Y}:=
\frac{1}{m!}\eta_{p}\ev_{0}^{*}\big(\mu_{0}\prod_{j\in J^{\prime}}\mu_{j}\big)\prod_{j\in J-J^{\prime}}\ev_{j}^{*}(\mu_{j})\cap [\Mbar_{(m,J-J^{\prime})}(Y,d)]^{\vir}.
\eea

For $X=\mathrm{Tot}(E\rightarrow Y)$ where $E$ is a concave vector bundle over $Y$, to define invariants for $X$ similar to (\ref{68}), we need only to replace $Y$ by $X$ in the above definitions, and note that $\Mbar_{g,J}(X,d)=\Mbar_{g,J}(Y,d)$ and $[\Mbar_{g,J}(X,d)]^{\vir}=\mathcal{U}_{g}\cap [\Mbar_{g,J}(Y,d)]^{\vir}$. More concretely, we define
\bea\label{74}
\langle \eta_{p}\mu_{0};\mu_{1},\cdots,\mu_{|J|}\rangle_{(m,J-J^{\prime},d)}^{X}
&:=&\frac{1}{m!}\Bigg(\big(\ev_{0}^{*}e(E)\big)^{m-1}\prod_{i=1}^{m}\pi_{i}^{*}(\mathcal{U}_{0})\cdot\eta_{p}\ev_{0}^{*}\big(\mu_{0}\prod_{j\in J^{\prime}}\mu_{j}\big)\nn\\
&&\cdot\prod_{j\in J-J^{\prime}}\ev_{j}^{*}(\mu_{j})\Bigg)\cap [\Mbar_{(m,J-J^{\prime})}(Y,d)]^{\vir}.
\eea

Now let $Y$ be an algebraic GKM manifold and $X=\mathrm{Tot}(E\rightarrow Y)$ is the total space of a concave equivariant vector bundle $E$. To describe the fixed loci on $\Mbar_{(m,J)}(Y,d)$, we need to introduce some notions. For a  set $S$, let $\mathcal{P}(S)$ be its power set. The set of  \emph{$m$-colored partitions} of a finite set $S$ is defined to be
\ben
\mathcal{A}_{m}(S)=\{I\in \mathcal{P}(S)^{m}:I=(I_{(1)},\cdots,I_{(m)}), I_{(1)}\sqcup\cdots I_{(m)}=S, |I_{(i)}|>0, \hspace{0.2cm} \textrm{for}\hspace{0.2cm}1\leq i\leq m \},
\een
 the set of  \emph{nonnegative $m$-colored partitions} of a finite set $S$ is defined to be
\ben
\mathcal{A}_{m}^{0}(S)=\{I\in \mathcal{P}(S)^{m}:I=(I_{(1)},\cdots,I_{(m)}), I_{(1)}\sqcup\cdots I_{(m)}=S, |I_{(i)}|\geq 0, \hspace{0.2cm} \textrm{for}\hspace{0.2cm}1\leq i\leq m \},
\een
and the set of  \emph{$m$-colored partitions} of a pair of finite sets $(S,J)$ is defined to be
\ben
&&\mathcal{A}_{m}(S,J)=\mathcal{A}_{m}(S)\times\mathcal{A}_{m}^{0}(J).
\een
A \emph{$m$-colored decorated rooted tree} is a pair $\Gamma^{\mathfrak{c}}=(\Gamma,I,K)$, where $\Gamma$ is a decorated rooted tree with a root $v_{0}$, and $(I,K)\in \mathcal{A}_{m}(\Edg(v_{0}),\mathfrak{e}^{-1}(v_{0}))$. The notions of the degree of $\Gamma^{\mathfrak{c}}$ and the valence of a vertex  is inherited from those of $\Gamma$. The set of $m$-colored decorated rooted trees of degree $d$ and with the set of marked points $J$ is denoted by $m\CDRT_{J}^{d}$.\\

There is a canonical projection map $\pi_{\CDRT}:m\CDRT_{J}^{d}\rightarrow \DRT_{J}^{d}$ with $\pi_{\CDRT}(\Gamma, I,K)=\Gamma$. The automorphism group $\Aut(\Gamma)$ acts on $\pi_{\CDRT}^{-1}(\Gamma)$ in a natural way, the stable subgroup of $\Gamma^{\mathfrak{c}}$ is called the  automorphism  group of $\Gamma^{\mathfrak{c}}$. The fixed loci on $\Mbar_{(m, J)}(Y,d)$ are indexed by $\Gamma^{\mathfrak{c}}\in m\CDRT_{J}^{d} $ in an obvious way. Two $m$-colored decorated rooted trees $\Gamma_{1}^{\mathfrak{c}}$ and $\Gamma_{2}^{\mathfrak{c}}$ index the same fixed locus if and only if $\pi_{\CDRT}(\Gamma_{1}^{\mathfrak{c}})=\pi_{\CDRT}(\Gamma_{2}^{\mathfrak{c}})=\Gamma$ for some $\Gamma$ and $\Gamma_{1}^{\mathfrak{c}}=g.\Gamma_{2}^{\mathfrak{c}}$ for some $g\in \Aut(\Gamma)$.\\

Let $\mu_{0}, \mu_{1},\cdots,\mu_{|J|}\in H_{\mathbb{T}}^{*}(Y)$. We denote equivariant version of $\psi$-classes over $\Mbar_{0,J}(Y,d)$ and $\tilde{\eta}$-classes over $\Mbar_{(m,J)}(Y,d)$ by the same symbols in the equivariant integration; by the context, no confusion should arise. 
Now we use virtual localization to compute (\ref{74}) as a summing over graphs.
For $\Gamma^{\mathfrak{c}}=(\Gamma,I,K)\in m\CDRT_{J^{\prime}}^{d}$, the localization contribution of $\Gamma^{\mathfrak{c}}$  to $\langle\eta_{p}\mathbf{c}_{q}(TX);\mu_{1},\cdots,\mu_{|J|}\rangle_{(m,J^{\prime},d)}^{X}$ can be written as
 \begin{multline}\label{15}
 \Cont_{\Gamma^{\mathfrak{c}}}(\langle\eta_{p}\mathbf{c}_{q}(TX);\mu_{1},\cdots,\mu_{|J|}\rangle_{(m,J^{\prime},d)}^{X})
 =\frac{1}{m!}\frac{1}{|\Aut(\Gamma^{\mathfrak{c}})|}\\
 \cdot\prod_{v\in \Ver} \Cont_{\Gamma^{\mathfrak{c}};v}\big(\langle\eta_{p}\mathbf{c}_{q}(TX);\mu_{1},\cdots,\mu_{|J|}\rangle_{(m,J^{\prime},d)}^{X}\big) \\
 \cdot\prod_{e\in \Edg} \Cont_{\Gamma^{\mathfrak{c}};e}\big(\langle\eta_{p}\mathbf{c}_{q}(TX);\mu_{1},\cdots,\mu_{|J|}\rangle_{(m,J^{\prime},d)}^{X}\big).
 \end{multline}
For an edge $e$ and a vertex $v\in \Ver\backslash\{v_{0}\}$, the contribution is still the same as (\ref{11})  respectively. Suppose $\mathfrak{m}(v_{0})=i$, then the contribution of $v_{0}$ is
\bea\label{17}
&&\Cont_{\Gamma^{\mathfrak{c}};v_{0}}\big(\langle\eta_{p}\mathbf{c}_{q}(TX);\mu_{1},\cdots,\mu_{|J|}\rangle_{(m,J^{\prime},d)}^{X}\big)
=\prod_{j\in \mathfrak{e}^{-1}(v_{0})}\mu_{j}\big|_{P_{i}}\nn\\
&&\Big(\prod_{k=1}^{l}\varepsilon_{i,k}\prod_{P_{j}\in\Nb(P_{i})}\alpha_{i,j}\Big)^{|\Edg(v_{0})|-1}
\cdot[x^{q}]\Big(\prod_{k=1}^{l}(1+\varepsilon_{i,k}x)\prod_{P_{j}\in\Nb(P_{i})}(1+\alpha_{i,j}x)\Big)\nn\\
&&\cdot\int_{\Mbar_{0,I_{(1)}\sqcup K_{(1)}\sqcup \{0_{1}\}}\times\cdots\times\Mbar_{0,I_{(m)}\sqcup K_{(m)}\sqcup \{0_{m}\}}}
[x^{p}]\Bigg(\frac{1}{\prod_{e\in\Edg(v_{0})}(\frac{\alpha_{v_{0},e}}{d(e)}-\psi_{(v_{0},e)}) \prod_{s=1}^{m}(1-x\psi_{0_{s}})}\Bigg).\nn\\
\eea
We briefly explain (\ref{17}).
\begin{itemize}
\item{}The marked point $0_{s}$ for $1\leq s\leq m$ comes from the common node represented by the root $v_{0}$, which becomes a marked point when we split the domain curve with respect to the $m$ colors.
\item{}The operator $[x^{q}]$ extracts the equivariant $q$-th Chern class of $X$ restricted to the fixed point $P_{i}$. The operator $[x^{p}]$ extracts all the $p$-th monomials of $\psi_{0_{s}}$ for $1\leq s\leq m$, and the sum is $\eta_{p}$ restricted to this fixed locus, by the definition of $\eta_{p}$.
\item{}By the argument parallel to the proof of the cutting edge axiom  in \cite{Behrend}, it is easy to show that there is a natural perfect obstruction theory on $\Mbar_{(m,J^{\prime})}(Y,d)$, and the corresponding virtual fundamental cycle is the same as (\ref{73}). The localization contribution is easily   read out from this perfect obstruction theory. In particular, when $Y$ is a flag variety, by remark \ref{87} it is straightforward to obtain (\ref{17}). Note that the deformation of the domain curves should be \emph{color-preserved}, so the \emph{node-smoothing} contribution in the usual virtual localization
\ben
\int_{\Mbar_{0,\val(v)}}\frac{\cdots}{\Big(\frac{\alpha_{v,e}}{d(e)}-\psi_{(v,e)}\Big)\cdots}
\een
should be replaced by a color-preserved version, which is of the form in (\ref{17}).
\end{itemize}

For later use, we need to write the invariant $\langle\eta_{p}\mathbf{c}_{q}(TX);\mu_{1},\cdots,\mu_{|J|}\rangle_{(m,J^{\prime},d)}^{X}$ as summing over the $\DRT_{J}^{d}$. First note that for evary $\Gamma_{1}\in\DRT_{J_{1}}^{d}$ and $J_{2}\supset J_{1}$, we can attach additional $|J_{2}|-|J_{1}|$ marked points to the root $v_{0}$ of  $\Gamma_{1}$, thus obtain $\Gamma_{2}\in\DRT_{J_{2}}^{d}$. In this way, we get an injective map
\ben
\rho_{J_{1},J_{2}}:\DRT_{J_{1}}^{d}\rightarrow \DRT_{J_{2}}^{d},
\een
such that $\rho_{J_{1},J_{2}}(\Gamma_{1})=\Gamma_{2}$.\\

Then we have
\bea\label{18}
&&\langle\eta_{p}\mathbf{c}_{q}(TX);\mu_{1},\cdots,\mu_{|J|}\rangle_{(m,J^{\prime},d)}^{X}\nn\\
&&=\sum_{\Gamma\in \DRT_{J}^{d}}\Cont_{\Gamma}\big(\langle\eta_{p}\mathbf{c}_{q}(TX);\mu_{1},\cdots,\mu_{|J|}\rangle_{(m,J^{\prime},d)}^{X}\big),
\eea
where
\bea\label{19}
&&\Cont_{\Gamma}\big(\langle\eta_{p}\mathbf{c}_{q}(TX);\mu_{1},\cdots,\mu_{|J|}\rangle_{(m,J^{\prime},d)}^{X}\big)\nn\\
&=&\frac{1}{m!}\frac{1}{|\Aut(\Gamma)|}
\sum_{\Gamma^{\mathfrak{c}}\in\pi_{\CDRT}^{-1}\circ\rho_{J^{\prime},J}^{-1}(\Gamma)}\prod_{v\in \Ver} \Cont_{\Gamma^{\mathfrak{c}};v}\big(\langle\eta_{p}\mathbf{c}_{q}(TX);\mu_{1},\cdots,\mu_{|J|}\rangle_{(m,J^{\prime},d)}^{X}\big)\nn\\
&&\cdot\prod_{e\in \Edg} \Cont_{\Gamma^{\mathfrak{c}};e}\big(\langle\eta_{p}\mathbf{c}_{q}(TX);\mu_{1},\cdots,\mu_{|J|}\rangle_{(m,J^{\prime},d)}^{X}\big).
 \eea
So if $\Gamma$ is not in the image of $\rho_{J^{\prime},J}$, then $\Cont_{\Gamma}\big(\langle\eta_{p}\mathbf{c}_{q}(TX);\mu_{1},\cdots,\mu_{|J|}\rangle_{(m,J^{\prime},d)}^{X}\big)=0$.\\

Note that the automorphism factor in (\ref{19}) is $1/|\Aut(\Gamma)|$ (we have $\Aut(\Gamma)\cong\Aut(\rho_{J^{\prime},J}^{-1}\Gamma)$ when $\Gamma$ is in the image of $\rho_{J^{\prime},J}$), not $1/|\Aut(\Gamma^{\mathfrak{c}})|$, because $g_{1}.\Gamma_{\mathfrak{c}}$ and $g_{2}.\Gamma_{\mathfrak{c}}$ index the same fixed locus when $g_{1}$ and $g_{2}$ are in the same coset of $\Aut(\Gamma_{\mathfrak{c}})$ in $\Aut(\Gamma)$.\\

\subsection{Formal fixed loci, formal localization contributions and reduced genus one Gromov-Witten invariants}
For $\mathbb{P}^{n-1}$, the fixed loci on $\tM_{1,J}(\mathbb{P}^{n-1},d)$ are described in \cite{VZ}. There are two types of fixed loci. The first type is indexed by the decorated one-loop graphs. The fixed locus indexed by a one-loop graph is exactly the same as that in section 2.1, and for a hypersurface $X$ in $\mathbb{P}^{n-1}$, its localization contribution to $\langle\mu_{1},\cdots,\mu_{|J|}\rangle_{1,J,d}^{0;X}$ is the same as the localization contributions to the usual genus one Gromov-Witten invariants. The other type of fixed loci and their localization contributions are described in \cite{VZ} (see also \cite{Zinger1} in the cases $|J|=0$ or $1$). \\

For an algebraic GKM manifold $Y$, a finite set $J$ and $d\in H_{2}^{+}(Y)$,  as an analog to $\mathbb{P}^{n-1}$, we assign two types of fixed loci . The \emph{formal fixed loci of the first type} for $(Y,J,d)$ are indexed by $\DOL_{J}^{d}(Y)$, and their \emph{formal localization contributions} to the reduced genus one Gromov-Witten invariants $\langle \mu_{1},\cdots,\mu_{|J|}\rangle_{1,J,d}^{0;X}$ are the same as (\ref{11}).\\

To define the \emph{formal fixed loci of the second type}, we need to recall the definition of refined decorated rooted trees (\cite{VZ}, \cite{Zinger1}) \footnote{We highly recommend the reader who is not familiar with the definition of refined decorated rooted trees and some other related notions to refer to \cite[ section 1.4]{VZ} for the case $Y=\mathbb{P}^{n-1}$.}. A \emph{refined decorated rooted tree} is a tuple
\bea
\widetilde{\Gamma}=(\Ver,\Edg,v_{0};\Ver_{+},\Ver_{0};\mathfrak{m},\mathfrak{d},\mathfrak{e}),
\eea
where $(\Ver,\Edg,v_{0})$ is a rooted tree and\footnote{In \cite{Zinger1} the domain of the map $\mathfrak{m}$ is $\Ver-\Ver_{0}$. In this article we think it is more convenient to extend $\mathfrak{m}$ to $\Ver$.}

\begin{description}
  \item{(i)} $\Ver_{+}, \Ver_{0}\subset \Ver-\{v_{0}\}$, $\Ver_{+}\neq \emptyset$, $\Ver_{+}\cap\Ver_{0}=\emptyset$, $\{v_{0},v\}\in\Edg$ for $\forall v\in \Ver_{+}\cup\Ver_{0}$.
  \item{(ii)} $\Edg_{+}:=\{\{v_{0},v\}:v\in\Ver_{+}\}$, $\Edg_{0}:=\{\{v_{0},v\}:v\in\Ver_{0}\}$.
  \item{(iii)} There are three maps
  \ben
  \mathfrak{m}:\Ver\rightarrow [N],&\mathfrak{d}:\Edg-\Edg_{0}\rightarrow \mathbb{B}(Y),&\mathfrak{e}:J\rightarrow\Ver.
  \een
  The map
  \ben
\mathfrak{d}:\Edg-\Edg_{0}\rightarrow \mathbb{B}(Y)
\een
for an edge $e=\{v_{1},v_{2}\}\in\Edg-\Edg_{0}$ takes values in $\mathbb{Z}^{>0}\cdot\overline{P_{\mathfrak{m}(v_{1})}P_{\mathfrak{m}(v_{2})}}$. Let $d(e)\in \mathbb{Z}^{>0}$ such that $\mathfrak{d}(e)=d(e)\cdot\overline{P_{\mathfrak{m}(v_{1})}P_{\mathfrak{m}(v_{2})}}$ when $e=\{v_{1},v_{2}\}$.
Let
\ben
\mathfrak{d}(\widetilde{\Gamma})=\sum_{e\in\Edg-\Edg_{0}}\mathfrak{d}(e)\in \mathbb{B}(Y).
\een
 For $\widetilde{\Gamma}$ to represent a formal fixed locus for $(Y,J,d)$, we need $\mathrm{d}\circ\mathfrak{d}(\widetilde{\Gamma})=d$; we call $\mathrm{d}\circ\mathfrak{d}(\widetilde{\Gamma})$ the degree of $\widetilde{\Gamma}$.
  \item{(iv)} If $v_{1}\in\Ver_{+}$, $v_{2}\in\Ver-\Ver_{0}$ and $\{v_{0},v_{2}\}\in\Edg$, then
  \bea\label{75}
  \mathfrak{d}(\{v_{0},v_{1}\})=\mathfrak{d}(\{v_{0},v_{2}\})
  \Leftrightarrow v_{2}\in \Ver_{+}.
  \eea
  Note that $\mathfrak{d}(\{v_{0},v_{1}\})=\mathfrak{d}(\{v_{0},v_{2}\})$ if and only if $d(\{v_{0},v_{1}\})=d(\{v_{0},v_{2}\})$ and $\mathfrak{m}(v_{1})=\mathfrak{m}(v_{2})$.
  \item{(v)} If $\{v_{1},v_{2}\}\in\Edg$, then $\mathfrak{m}(v_{1})=\mathfrak{m}(v_{2})$ if and only if $v_{1}=v_{0}$ and
    $v_{2}\in\Ver_{0}$, or $v_{2}=v_{0}$ and $v_{1}\in\Ver_{0}$.
  \item{(vi)} If $v_{1}\in\Ver_{0}$ then $|\Edg(v_{1})|\geq 2$ and $|\val(v_{1})|=|\Edg(v_{1})|+|\mathfrak{e}^{-1}(v_{1})|\geq 3$.
\end{description}

 \begin{remark}
In the definition of the refined decorated rooted trees in this article we do not include the condition  $\sum_{e\in\Edg_{+}}d(e)\geq 2$. However, as the proof of lemma \ref{79} shows, the localization contribution of a refined decorated rooted tree which does not satisfies this condition is zero. So one can make the choice to include this condition or not; without this condition the summing over graphs becomes slightly easier.
\end{remark}

 We denote the set of refined decorated rooted trees for $(Y,J,d)$ by $\RDRT_{J}^{d}(Y)$. As before, when we are discussing a fixed $Y$ which is clear from the context, we drop the notation $Y$. \\

 Let $\widetilde{\Gamma}\in \RDRT_{J}^{d}(Y)$. For every $e\in\Edg(v_{0})$, there is an associated \emph{strand} $\mathcal{Z}_{\widetilde{\Gamma}_{e}}$, which is a decorated tree; we refer the reader to  \cite[section 1.4]{Zinger1} for the definition. We need also the stacks $\tM_{1,(I,J)}$ for finite sets $I$ and $J$, which are blow-ups of $\Mbar_{1, I\sqcup J}$; we refer the reader to \cite{ZingerSvR} for the definition of these spaces, the line bundle $\mathbb{L}$, the cohomology classes $\tilde{\psi}=c_{1}(\mathbb{L})$ and $\psi_{j}$, $j\in J$, and their integrations over $\tM_{1,(I,J)}$. The integration that we need is
 \bea\label{89}
 \int_{\tM_{1,(I,J)}}\tilde{\psi}^{|I|+|J|}=\frac{|I|^{|J|}\cdot(|I|-1)!}{24}.
 \eea
 The \emph{formal fixed locus} associated to $\widetilde{\Gamma}$ is defined to be
 \bea\label{88}
 \mathcal{Z}_{\widetilde{\Gamma}}=\tM_{1,(\Edg(v_{0}),\mathfrak{e}^{-1}(v_{0}))}
 \times \mathbb{P}^{|\Ver_{+}|-1}\times\prod_{e\in\Edg(v_{0})}\mathcal{Z}_{\widetilde{\Gamma}_{e}}.
 \eea
%However, two distinct elements of $\RDRT_{J}^{d}(Y)$ may correspond to the same formal fixed locus.
 For a refined decorated rooted tree $\widetilde{\Gamma}=(\Ver,\Edg,v_{0};\Ver_{+},\Ver_{0};\mathfrak{m},\mathfrak{d},\mathfrak{e})$, we can naturally associate a decorated rooted tree $\Gamma=(\Ver_{\Gamma},\Edg_{\Gamma},v_{0};\mathfrak{m}_{\Gamma},\mathfrak{d}_{\Gamma},\mathfrak{e}_{\Gamma})$ such that
\begin{itemize}
  \item{} $\Ver_{\Gamma}=\Ver-\Ver_{0}$.
  \item{} $\Edg_{\Gamma}=(\Edg-\Edg_{0})\sqcup \{\{v_{0},v\}:v\in \Edg(v_{1})-\{v_{0}\}$ for some $v_{1}\in\Ver_{0}\}$.
  \item{}  $\mathfrak{m}_{\Gamma}: \Ver_{\Gamma}\rightarrow [N]$
  is the the restriction the map $\mathfrak{m}$ to $\Ver-\Ver_{0}$.
  \item $\mathfrak{d}_{\Gamma}:\Edg_{\Gamma}\rightarrow\mathbb{B}(Y)$ is a map with $\mathfrak{d}_{\Gamma}(e)=\mathfrak{d}(e)$ for $e\in\Edg-\Edg_{0}$, and $\mathfrak{d}(\{v_{0},v\})=\mathfrak{d}(\{v_{1},v\})$ where $v_{1}\in\Ver_{0}\}$ and $v\in \Edg(v_{1})-\{v_{0}\}$.
  \item{} $\mathfrak{e}_{\Gamma}:J\rightarrow\Ver_{\Gamma}$ is a map with $\mathfrak{e}_{\Gamma}(j)=\mathfrak{e}(j)$ for $j\in \mathfrak{e}^{-1}(\Ver-\Ver_{0})$ and $\mathfrak{e}_{\Gamma}(j)=v_{0}$ for $j\in \mathfrak{e}^{-1}(\Ver_{0})$.
\end{itemize}
By the assumptions in the definition of refined decorated rooted trees, especially (v), we see that $\Gamma(\Ver_{\Gamma},\Edg_{\Gamma},v_{0};\mathfrak{m}_{\Gamma},\mathfrak{d}_{\Gamma},\mathfrak{e}_{\Gamma})$ is a decorated rooted tree, which we call the \emph{underlying decorated rooted tree} of $\widetilde{\Gamma}$.
There is a canonical projection map $\pi_{\RDRT}:\RDRT_{J}^{d}\rightarrow \DRT_{J}^{d}$ which send a refined decorated rooted tree to its underlying decorated rooted tree.\\

\textbf{An important modification}: for the clearness in counting graphs in the future, we now make a slight modification of the definition of $\RDRT_{J}^{d}$. For a refined decorated rooted tree $\widetilde{\Gamma}$ and the corresponding decorated rooted tree $\Gamma=\pi_{\RDRT}(\widetilde{\Gamma})$, the edges in $\Edg^{\widetilde{\Gamma}}(v_{0})$ induces a partition of $\Edg^{\Gamma}(v_{0})$. If $e\in \Edg^{\widetilde{\Gamma}}(v_{0})-\Edg_{0}$, then let $I_{e}=\{e\}$; If $e=\{v_{0},v\}\in \Edg_{0}$, then let $I_{e}=\Edg(v)-\{e\}$. Thus $\{I_{e}\}_{e\in \Edg^{\widetilde{\Gamma}}(v_{0})}$ canonically corresponds to an  \emph{unordered partition} of $\Edg^{\Gamma}(v_{0})$. Similarly, for every $e=\{v_{0},v\}\in \Edg^{\widetilde{\Gamma}}(v_{0})$, let $J_{e}=\mathfrak{e}^{-1}(v)$, together with $\mathfrak{e}^{-1}(v_{0})$, we obtain an \emph{unordered nonnegative partition} of $J$. Now we \emph{impose} the condition that the two partitions are \emph{ordered}. More precisely:
\begin{definition}
A refined decorated rooted tree is a tuple
$\widetilde{\Gamma}=(\Ver,\Edg,v_{0};\Ver_{+},\Ver_{0};\mathfrak{m},\mathfrak{d},\mathfrak{e}, I,K)$, such that $\widetilde{\Gamma}=(\Ver,\Edg,v_{0};\Ver_{+},\Ver_{0};\mathfrak{m},\mathfrak{d},\mathfrak{e})$ satisfies the conditions (i)-(vi) above, and $I\in \mathcal{A}_{m}\big(\Edg^{\Gamma}(v_{0})\big)$ and $J\in\mathcal{A}_{m+1}^{0}(J)$ are \emph{colored partitions} which are compatible with the unordered partitions associated to $\widetilde{\Gamma}=(\Ver,\Edg,v_{0};\Ver_{+},\Ver_{0};\mathfrak{m},\mathfrak{d},\mathfrak{e})$, where $m=|\Edg^{\widetilde{\Gamma}}(v_{0})|$.
\end{definition}
From now on, we adopt this definition of refined decorated rooted trees. We use $\RDRT_{J}^{d}$ to represent the set of refined decorated rooted trees in this sense. Then the automorphism group $\Aut(\Gamma)$ acts on $\pi_{\RDRT}^{-1}(\Gamma)$ in a natural way, and the stable subgroup of $\widetilde{\Gamma}$ is called the  automorphism  group of $\widetilde{\Gamma}$. Two refined decorated rooted trees $\widetilde{\Gamma}_{1}$ and $\widetilde{\Gamma}_{2}$ index the same fixed locus if and only if they have the same underlying decorated rooted tree $\Gamma$ and  $\widetilde{\Gamma}_{1}=g.\widetilde{\Gamma}_{2}$ for some $g\in \Aut(\Gamma)$, and when this happens we say $\widetilde{\Gamma}_{1}\sim\widetilde{\Gamma}_{2}$.\\

Now we define the \emph{formal localization contribution} of $\widetilde{\Gamma}$. Let $\pi_{e}:\mathcal{Z}_{\widetilde{\Gamma}}\rightarrow \mathcal{Z}_{\widetilde{\Gamma}_{e}}$ be the natural projection map for $e\in\Edg(v_{0})$. On $\mathcal{Z}_{\widetilde{\Gamma}}$ there is a universal tangent line bundle associated to the attachment at $v_{0}$, and we denote it by $L_{e}$.
Let $L_{\widetilde{\Gamma}}=\pi_{e}^{*}L_{e}$ for $e\in \Edg_{+}$; by (\ref{75}) $\pi_{e}^{*}L_{e}$ as an equivariant line bundle is independent of the choice of $e\in \Edg_{+}$. Also, let
\bea
 F_{\widetilde{\Gamma};B}^{c}=\bigoplus_{e\in\Edg(v_{0})-\Edg_{+}}\pi_{e}^{*}L_{e}.
\eea
Let $\gamma$ be the tautological line bundle on $\mathbb{P}^{|\Ver_{+}|-1}$, and $c_{1}(\gamma)=-H$. We use the same symbol for the pullbacks of $\gamma$ and $\mathbb{L}$ to $\widetilde{\mathcal{Z}}_{\widetilde{\Gamma}}$ via the natural projection maps.\\

For each $e\in\Edg(v_{0})$,
$\mathcal{Z}_{\widetilde{\Gamma}_{e}}$ is a fixed locus in an appropriate moduli space of  genus zero stable maps into $Y$, thus the virtual normal bundle  $\mathcal{N}\widetilde{\mathcal{Z}}_{\widetilde{\Gamma}_{e}}$ and the vector bundle $\mathcal{U}_{0}^{\prime}$ corresponding to $X=\mathrm{Tot}(E\rightarrow Y)$ are well-defined.\\

After these preparation, we define $\mathbf{e}(\mathcal{N}\widetilde{\mathcal{Z}}_{\widetilde{\Gamma}})$ via
\bea\label{23}
\frac{\mathbf{e}(\mathcal{N}\widetilde{\mathcal{Z}}_{\widetilde{\Gamma}})}{\mathbf{e}(T_{\mathfrak{m}(v_{0})}Y)}
:=\prod_{e\in \Edg(v_{0})}\frac{\mathbf{e}(\mathcal{N}\mathcal{Z}_{\widetilde{\Gamma}_{e}})}{\mathbf{e}(T_{\mathfrak{m}(v_{0})}Y)}
 \cdot\frac{\mathbf{e}(L_{\widetilde{\Gamma}}^{*}\otimes F_{\widetilde{\Gamma};B}^{c}
\otimes\gamma^{*})
 \mathbf{e}(\mathbb{L}\otimes L_{\widetilde{\Gamma}}\otimes\gamma)}
{\mathbf{e}\big(L_{\widetilde{\Gamma}}^{*}\otimes T_{\mathfrak{m}(v_{0})}Y
\otimes \gamma^{*}\big)}
\eea
and define $\mathbf{e}(\mathcal{U}_{1}^{\prime})$ and $ \mathbf{e}(\mathcal{U}_{1})$ by
\bea\label{24}
\mathbf{e}(\mathcal{U}_{1}^{\prime})= \mathbf{e}(\mathcal{U}_{1})\cdot \mathbf{e} (E)\big|_{P_{\mathfrak{m}(v_{0})}}
:= \prod_{e\in\Edg(v_{0})}\pi_{e}^{*}\mathbf{e}(\mathcal{U}_{0}^{\prime})
\cdot \mathbf{e}(L_{\widetilde{\Gamma}}^{*}\otimes E_{\mu(v_{0})}\otimes \gamma^{*}).
\eea

 We \emph{define} the formal localization contribution of $\widetilde{\Gamma}=\big(\Ver,\Edg,v_{0};\Ver_{+},\Ver_{0};\mathfrak{m},\mathfrak{d},\mathfrak{e}\big)$ to
 $$\langle \mu_{1},\cdots,\mu_{|J|}\rangle_{1,J,d}^{0;X}$$ by
 \bea\label{22}
 \Cont_{\widetilde{\Gamma}}\big(\langle \mu_{1},\cdots,\mu_{|J|}\rangle_{1,J,d}^{0;X}\big)&:=&
 \frac{1}{\Aut(\widetilde{\Gamma})}
 \prod_{j\in J}\mu_{j}\big|_{P_{\mathfrak{m}\circ\mathfrak{e}(j)}}
 \cdot\int_{\widetilde{\mathcal{Z}}_{\widetilde{\Gamma}}}
 \frac{\mathbf{e}(\mathcal{U}_{1})}{\mathbf{e}(\mathcal{N}\widetilde{\mathcal{Z}}_{\widetilde{\Gamma}})}\nn\\
 &=&\frac{1}{\Aut(\widetilde{\Gamma})}\prod_{j\in J}\mu_{j}\big|_{P_{\mathfrak{m}\circ\mathfrak{e}(j)}}
 \cdot
 \int_{\widetilde{\mathcal{Z}}_{\widetilde{\Gamma}}}
 \frac{\mathbf{e}(\mathcal{U}_{1}^{\prime})}{\mathbf{e}(E)\big|_{P_{\mathfrak{m}(v_{0})}}
 \mathbf{e}(\mathcal{N}\widetilde{\mathcal{Z}}_{\widetilde{\Gamma}})}.
 \eea

\begin{definition}\label{26}
Let $Y$ be an algebraic GKM manifold and $E$ a concave equivariant vector bundle over $Y$.
For the local space $X=\mathrm{Tot}(E\rightarrow Y)$ and $\mu_{1},\cdots,\mu_{J}\in H_{\mathbb{T}}^{*}(Y)$ we define
\bea\label{76}
\langle \mu_{1},\cdots,\mu_{|J|}\rangle_{1,J,d}^{0;X}&:=&\sum_{\Gamma\in \DOL_{J}^{d}}\Cont_{\Gamma}\big(\langle \mu_{1},\cdots,\mu_{|J|}\rangle_{1,J,d}^{0;X}\big)\nn\\
&&+\sum_{\widetilde{\Gamma}\in \RDRT_{J}^{d}/\sim}\Cont_{\widetilde{\Gamma}}\big(\langle \mu_{1},\cdots,\mu_{|J|}\rangle_{1,J,d}^{0;X}\big).
\eea
In particular, when $X$ is a local Calabi-Yau space, the \emph{reduced genus one degree $d$ Gromov-Witten invariants} of $X$ is defined by
\bea\label{27}
N_{1,d}^{0;X}:=\sum_{\Gamma\in \DOL_{\emptyset}^{d}}\Cont_{\Gamma}(N_{1,d}^{0;X})+\sum_{\widetilde{\Gamma}\in \RDRT_{\emptyset}^{d}/\sim}\Cont_{\widetilde{\Gamma}}(N_{1,d}^{0;X}),
\eea
where $\Cont_{\Gamma}(N_{1,d}^{0;X})=\Cont_{\Gamma}(\langle \cdot\rangle_{1,\emptyset,d}^{0;X})$ for $\Gamma\in\DOL_{\emptyset}^{d}$ and
 $\Cont_{\widetilde{\Gamma}}(N_{1,d}^{0;X})=\Cont_{\widetilde{\Gamma}}(\langle \cdot\rangle_{1,\emptyset,d}^{0;X})$ for $\widetilde{\Gamma}\in \RDRT_{\emptyset}^{d}$.
\end{definition}

By this definition, when $X$ is a local Calabi-Yau space, $N_{1,d}^{0;X}$ is \emph{a priori} an element of $\mathbb{Q}_{\alpha}$. We will see that as a corollary of the LSvR, we have in fact $N_{1,d}^{0;X}\in \mathbb{Q}$. \\

Now we write (\ref{22}) as a product of contributions of edges and vertices. Since we will finally write the summing
\ben
\sum_{\widetilde{\Gamma}\in \RDRT_{J}^{d}/\sim}\Cont_{\widetilde{\Gamma}}\big(\langle \mu_{1},\cdots,\mu_{|J|}\rangle_{1,J,d}^{0;X}\big)
\een
into a summing over $\DRT_{J}^{d}$, we need to gather the contributions from the vertices in $\Edg_{0}$ and put it into the contribution of the root $v_{0}$. For this, let us introduce some notations.\\

Suppose $\mathfrak{m}(v_{0})=i$. For $e=\{v_{0},v\}\in \Edg(v_{0})\backslash (\Edg_{0}\cup\Edg_{+})$, let $I_{e}=\{e\}$, and let $\omega_{e}=\frac{\alpha_{i,\mathfrak{m}(v)}}{d(e)}$. For $e=\{v_{0},v\}\in \Edg_{0}$, let $I_{e}=\Edg(v)\backslash\{e\}$,
and for $f=\{v,v^{\prime}\}\in\Edg(v)\backslash\{e\}$, let $\omega_{f}=\frac{\alpha_{i,\mathfrak{m}(v^{\prime})}}{d(f)}$.
Moreover, let $\omega_{+}=\frac{\alpha_{i,\mathfrak{m}(v)}}{d(e)}$ for any $e=\{v_{0},v\}\in\Edg_{+}$, which is well-defined because
$\alpha_{\mathfrak{m}(v)}$ and $d(e)$ are independent of the choice of $e\in\Edg_{+}$; thus $\omega_{+}$ is the equivariant Euler class of $L_{\widetilde{\Gamma}}$. Let
\bea\label{37}
\overline{|\Edg(v_{0})|}=|\Edg(v_{0})\backslash \Edg_{0}|+\sum_{v\in \Edg_{0}}\big(|\Edg(v)|-1\big).
\eea

Equivalently, $\overline{|\Edg(v_{0})|}$  is equal to $|\Edg^{\Gamma}(v_{0})|$, where $\Gamma=\pi_{\RDRT}(\widetilde{\Gamma})$.\\

 Then we define
\begin{multline}\label{20}
\Cont_{\widetilde{\Gamma};v_{0}}\big(\langle \mu_{1},\cdots,\mu_{|J|}\rangle_{1,J,d}^{0;X}\big):=
\prod_{j\in \mathfrak{e}^{-1}(\{v_{0}\cup\Ver_{0}\})}\mu_{j}\big|_{P_{\mathfrak{m}(v_{0})}}
\cdot\Big(\prod_{k=1}^{l}\varepsilon_{i,k}\prod_{P_{j}\in \Nb(P_{i})}\alpha_{i,j}\Big)^{\overline{|\Edg(v_{0})|}-1}\\
\cdot\int_{\tM_{1,(\Edg(v_{0}),\mathfrak{e}^{-1}(v_{0}))}\times \mathbb{P}^{|\Edg_{+}|-1}}\Bigg(\frac{\prod_{k=1}^{l}(-\omega_{+}+\varepsilon_{i,k}+H)
\prod_{P_{j}\in \Nb(P_{i})}(-\omega_{+}+\alpha_{i,j}+H)}{\omega_{+}-\tilde{\psi}-H}\\
\cdot
\prod_{e\in \Edg(v_{0})-\Edg_{+}-\Edg_{0}}\frac{1}{\omega_{e}-\omega_{+}+H}
\prod_{v\in \Ver_{0}}\\
\int_{\Mbar_{0,I_{\{v_{0},v\}}}\sqcup \{0_{s}\}\sqcup \mathfrak{e}^{-1}(v)}\frac{1}{(-\omega_{+}-\psi_{0_{s}}+H)\prod_{f\in I_{e}}(\omega_{f}-\psi_{f})}\Bigg).
\end{multline}

For edges in $\Edg\backslash\Edg_{0}$ and vertices in $\Ver\backslash(\{v_{0}\}\cup\Ver_{0})$ , the contributions is defined as the same as those in (\ref{11}) respectively. Then by (\ref{23}), (\ref{24}), and (\ref{22}), it is not hard to see that, for $\widetilde{\Gamma}\in \RDRT_{J}^{d}$,
\bea\label{28}
&&\Cont_{\widetilde{\Gamma}}\big(\langle \mu_{1},\cdots,\mu_{|J|}\rangle_{1,J,d}^{0;X}\big)=\frac{1}{\Aut(\widetilde{\Gamma})}\nn\\
&&\cdot\prod_{v\in \Ver\backslash\Ver_{0}}\Cont_{\widetilde{\Gamma};v}\big(\langle \mu_{1},\cdots,\mu_{|J|}\rangle_{1,J,d}^{0;X}\big)
\prod_{e\in \Edg\backslash\Edg_{0}}\Cont_{\widetilde{\Gamma};e}\big(\langle \mu_{1},\cdots,\mu_{|J|}\rangle_{1,J,d}^{0;X}\big).
\eea
There is a canonical bijection between $\Ver\backslash\Ver_{0}$ and $\Ver\big(\pi_{\RDRT}(\widetilde{\Gamma})\big)$, and between $\Edg\backslash\Edg_{0}$ and $\Edg\big(\pi_{\RDRT}(\widetilde{\Gamma})\big)$. Using this bijection,
from (\ref{28}) we finally obtain
\bea\label{29}
\langle \mu_{1},\cdots,\mu_{|J|}\rangle_{1,J,d}^{0;X}=\sum_{\Gamma\in \DOL_{J}^{d}}\Cont_{\Gamma}\big(\langle \mu_{1},\cdots,\mu_{|J|}\rangle_{1,J,d}^{0;X}\big)+\sum_{\Gamma\in \DRT_{J}^{d}}\Cont_{\Gamma}\big(\langle \mu_{1},\cdots,\mu_{|J|}\rangle_{1,J,d}^{0;X}\big),
\eea
where for $\Gamma\in \DRT_{J}^{d}$,
\bea\label{30}
&&\Cont_{\Gamma}\big(\langle \mu_{1},\cdots,\mu_{|J|}\rangle_{1,J,d}^{0;X}\big)=\frac{1}{\Aut(\Gamma)}\nn\\
&&\cdot\sum_{\widetilde{\Gamma}\in\pi_{\RDRT}^{-1}(\Gamma)}
\prod_{v\in \Ver(\Gamma)}\Cont_{\widetilde{\Gamma};v}\big(\langle \mu_{1},\cdots,\mu_{|J|}\rangle_{1,J,d}^{0;X}\big)
\prod_{e\in \Edg(\Gamma)}\Cont_{\widetilde{\Gamma};e}\big(\langle \mu_{1},\cdots,\mu_{|J|}\rangle_{1,J,d}^{0;X}\big).
\eea

\begin{remark}
When $Y=\mathbb{P}^{n-1}$, our formal fixed locus associated to a refined decorated rooted tree may be different from that in \cite{VZ}, but the factor $\mathbf{e}(\mathcal{N}\widetilde{\mathcal{Z}}_{\widetilde{\Gamma}})$ in the localization contribution is the same, as remarked in 
\cite[footnote 16]{Zinger1}.
\end{remark}

\section{Localized standard versus reduced formula}

\subsection{}
Let $Y$ be an  algebraic GKM manifold of dimension $n-1$, $E$ a concave equivariant vector bundle of rank $l$ over $Y$, and $X=\mathrm{Tot}(E\rightarrow Y)$.

Now we state the first main theorem of this article.
\begin{theorem}\label{32}
Let $\mu_{1},\cdots,\mu_{|J|}\in H_{\mathbb{T}}^{*}(Y)$.
For every decorated rooted tree $\Gamma\in \DRT_{J}^{d}$, we have the \emph{LSvR}
\bea\label{64}
&&\Cont_{\Gamma}(\langle\mu_{1},\cdots,\mu_{|J|}\rangle_{1,J,d}^{X})\nn\\
&&=\Cont_{\Gamma}(\langle\mu_{1},\cdots,\mu_{|J|}\rangle_{1,J,d}^{0;X})
+\sum_{m\geq 1}\sum_{J^{\prime}\subset J}\Big[\frac{(-1)^{m+|J^{\prime}|}m^{|J^{\prime}|}(m-1)!}{24}\nn\\
&&\sum_{p=0}^{n+l-1-|J^{\prime}|-2m}
\Cont_{\Gamma}\Big(\langle \eta_{p}c_{n+l-1-|J^{\prime}|-2m-p}(TX);\mu_{1},\cdots,\mu_{|J|}\rangle_{(m,J-J^{\prime},d)}^{X}\Big)\Big].
\eea
In particular, when $X$ is a local Calabi-Yau space, for every $\Gamma\in\DRT_{\emptyset}^{d}$ we have
\begin{multline}\label{33}
\Cont_{\Gamma}(N_{1,d}^{X})=\Cont_{\Gamma}(N_{1,d}^{0;X})+\frac{1}{24}\sum_{m\geq 1}\Big[(-1)^{m}(m-1)!\\
\sum_{p=0}^{n+l-1-2m}\Cont_{\Gamma}\big(\langle\eta_{p}c_{n+l-1-2m-p}(TX);\rangle_{(m,\emptyset,d)}^{X}\big)\Big].
\end{multline}
\end{theorem}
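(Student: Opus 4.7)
The plan is to reduce (\ref{64}) to a purely local identity at the root vertex $v_0$ of $\Gamma$, and then to establish that identity by invoking Zinger's standard-versus-reduced formula on a single moduli space $\Mbar_{1,k}$ together with the Vakil--Zinger desingularization $\tM_{1,k}\to\Mbar_{1,k}$. Note that the special case (\ref{33}) is obtained from (\ref{64}) by setting $J=\emptyset$ and using $\dim X=n-1+l$, so only the general statement needs to be proved. Comparing the factorisations (\ref{31}), (\ref{19}) and (\ref{30}), one sees that the contributions of edges $e\in\Edg(\Gamma)$ and of non-root vertices $v\in\Ver(\Gamma)\setminus\{v_0\}$ are identical on all three sides: they arise from the same formula (\ref{11}) in every case, and neither the refinement $\widetilde\Gamma\in\pi_{\RDRT}^{-1}(\Gamma)$ nor the colouring $\Gamma^{\mathfrak c}$ affects the subgraphs hanging off of non-root vertices. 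Stripping these common factors, (\ref{64}) becomes a statement purely about the root-vertex integrals (\ref{14}), (\ref{20}), and (\ref{17}), with the weights $\alpha_{v_0,e}/d(e)$ and the bundle weights $\varepsilon_{i,k}$ at $\mathfrak{m}(v_0)=i$ playing the role of formal parameters.

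The base case to establish is therefore the following: for formal weights $\omega_1,\dots,\omega_k$ and $w_1,\dots,w_{n+l-1}$ and a finite set $J'$ of marked points at $v_0$, the Hodge-class integral
\[
\int_{\Mbar_{1,k+|J'|}} \frac{\prod_{a}\Lambda_1^\vee(w_a)}{\prod_{i=1}^{k}(\omega_i-\psi_i)}
\]
decomposes as a sum of integrals over $\tM_{1,(I,K)}\times\mathbb{P}^{|\Ver_+|-1}$, one for each refined structure on the edge set in the style of (\ref{20}), plus $\eta_p$-corrected integrals over products $\prod\Mbar_{0,\ast}$ as in (\ref{17}). I would prove this by pulling the Hodge integral back along the blow-up of \cite{VZ}, decomposing into the proper transform of the main component and the exceptional divisors, and recognising: first, the exceptional strata as precisely the formal fixed loci $\mathcal{Z}_{\widetilde\Gamma}$ of refined trees, with normal bundle matching (\ref{23}) and Euler class $\mathbf{e}(\mathcal{U}_1')$ matching (\ref{24}); second, the proper-transform contribution as the reduced-plus-correction decomposition of \cite{ZingerSvR}, where the boundary correction arises from genus-zero bubbles attached to a ghost genus-one component and reorganises into the $\eta_p\mathbf{c}_q(TX)$ sum. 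With this base identity proved for arbitrary formal $\omega_i$, the full theorem follows by substituting $\omega_i=\alpha_{v_0,e}/d(e)$ and multiplying back the stripped-off factors.

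The principal obstacle is the combinatorial matching of graph sums with strata. I must verify that summing (\ref{30}) over $\widetilde\Gamma\in\pi_{\RDRT}^{-1}(\Gamma)$, with the colour-ordered partitions $(I,K)$ built into the modified definition of $\RDRT_J^d$, reproduces each exceptional divisor of $\tM_{1,k+|J'|}$ with the correct multiplicity; and that summing (\ref{19}) over $\Gamma^{\mathfrak c}\in\pi_{\CDRT}^{-1}\circ\rho_{J-J',J}^{-1}(\Gamma)$, with the $[x^p]$- and $[x^q]$-extractions of (\ref{17}), reproduces the $\eta_p\mathbf{c}_q(TX)$-decorated boundary term of Zinger's SvR. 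The sign and multiplicity $(-1)^{m+|J'|}m^{|J'|}(m-1)!/24$ should emerge from (\ref{89}) combined with the $m!$ in (\ref{68}), and tracking the automorphism-factor bookkeeping between $\Aut(\widetilde\Gamma)$, $\Aut(\Gamma^{\mathfrak c})$, and the $\Aut(\Gamma)$ appearing in (\ref{30}) and (\ref{19}) is where almost all the care is required.
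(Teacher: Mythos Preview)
Your reduction to a root-vertex identity is correct and matches the paper exactly: the factors from edges and non-root vertices are common to all three sides, so everything comes down to the identity (\ref{45}) for decorated stars. However, your proposed proof of that root-vertex identity diverges sharply from the paper's, and as stated it has a gap.

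The paper does \emph{not} invoke any existing SvR theorem or Vakil--Zinger blow-up geometry at the root. Instead it proves (\ref{45}) by a direct combinatorial and residue computation, in three stages. First, for \emph{simply decorated stars} (all $d(e)=1$, all $\mathfrak m(v)$ distinct) the refined-tree term $\VC_\Gamma^{0;X}$ vanishes (Lemma~\ref{79}), and the remaining identity between the Hodge integral (\ref{42}) and the $\eta_p$-sum (\ref{43}) is reduced to a purely elementary family of identities among symmetric functions (Lemma~\ref{181}), proved by induction on the number of edges using only the string and dilaton equations on $\Mbar_{1,r}$. Second, for \emph{general} stars, the $\eta_p$-side is rewritten as a residue in an auxiliary variable $x$; besides the poles at $0$ and $\infty$, new poles appear at $x=-\omega_{U_k}$ precisely when several edges share the same weight $\omega_{U_k}$, and the contribution of those extra poles is shown to match, term by term, the refined-tree contributions $\VC^{0;X}_{\Gamma_{i;(I,U,V);\vec\omega}}$ in (\ref{53}). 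This residue matching is the combinatorial heart of the argument and produces (\ref{92}); summing over $I$ and $m$ and applying Proposition~\ref{49} finishes the case $J=\emptyset$. Third, the case $J\neq\emptyset$ is a bookkeeping extension (Section~3.4), culminating in the binomial collapse (\ref{83}).

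Your plan instead proposes to pull the Hodge integral back along a blow-up and then ``invoke'' Zinger's SvR. The difficulty is that Zinger's SvR in \cite{ZingerSvR} is a statement about Gromov--Witten invariants of a fixed target symplectic manifold, not an identity of Hodge integrals on $\Mbar_{1,k}$ with arbitrary formal weights $w_a$ and $\omega_i$. There is no off-the-shelf theorem you can cite that decomposes $\int_{\Mbar_{1,k}}\prod_a\Lambda_1^\vee(w_a)/\prod_i(\omega_i-\psi_i)$ into a ``proper transform'' plus ``exceptional'' pieces matching (\ref{20}) and (\ref{17}). Indeed the paper remarks explicitly (just after Corollary~\ref{171}) that a geometric definition of the local reduced invariants and a geometric proof of the SvR in this setting would be desirable but is not what is done here; the formal definitions (\ref{23})--(\ref{24}) are introduced precisely because no such geometry is available for general concave bundles over GKM manifolds. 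So the step where you ``recognise the proper-transform contribution as the reduced-plus-correction decomposition of \cite{ZingerSvR}'' is not justified: you would have to \emph{prove} that decomposition from scratch, and the way the paper does so is the elementary induction of Lemma~\ref{181} together with the residue matching of Section~3.3.
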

Note that by definition, for $\Gamma\in\DOL_{J}^{d}$ we have
\bea\label{38}
\Cont_{\Gamma}\big(\langle\mu_{1},\cdots,\mu_{|J|}\rangle_{1,J,d}^{X}\big)
=\Cont_{\Gamma}\big(\langle\mu_{1},\cdots,\mu_{|J|}\rangle_{1,J,d}^{0;X}\big)
\eea
thus by (\ref{64}) and (\ref{34}), (\ref{18}), (\ref{27}) we have
\begin{corollary}\label{9}
\begin{multline}\label{78}
\langle\mu_{1},\cdots,\mu_{|J|}\rangle_{1,J,d}^{X}
=\langle\mu_{1},\cdots,\mu_{|J|}\rangle_{1,J,d}^{0;X}+\sum_{m\geq 1}\sum_{J^{\prime}\subset J}\Big[\frac{(-1)^{m+|J^{\prime}|}m^{|J^{\prime}|}(m-1)!}{24}\\
\sum_{p=0}^{n+l-1-|J^{\prime}|-2m}\langle\eta_{p}
\mathbf{c}_{n+l-1-|J^{\prime}|-2m-p}(TX);\mu_{1},\cdots,\mu_{|J|}\rangle_{(m,J-J^{\prime},d)}^{X}\Big].
\end{multline}
In particular, when $X$ is a local Calabi-Yau space,
\bea\label{39}
N_{1,d}^{0;X}=N_{1,d}^{0;X}+\frac{1}{24}\sum_{m\geq 1}(-1)^{m}(m-1)!
\sum_{p=0}^{n+l-1-2m}\langle\eta_{p}c_{n+l-1-2m-p}(TX);\rangle_{(m,\emptyset,d)}^{X}.
\eea
\end{corollary}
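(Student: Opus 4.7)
The corollary is essentially an unpacking of Theorem \ref{32} together with the decompositions already established in Section 2. My plan is to carry out a purely formal manipulation, summing the LSvR over $\DRT_J^d$ and then reassembling the pieces using the known expressions for the ordinary, reduced, and $\eta_p$-twisted invariants as graph sums.

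First I would start from the localization decomposition (\ref{34}), which writes
\[
\langle\mu_1,\ldots,\mu_{|J|}\rangle_{1,J,d}^{X}=\sum_{\Gamma\in\DOL_J^d}\Cont_\Gamma(\cdots)+\sum_{\Gamma\in\DRT_J^d}\Cont_\Gamma(\cdots),
\]
with the standard (non-reduced) contributions on the right. By (\ref{38}), the one-loop contributions coincide with their reduced analogs, so that sum can be replaced verbatim by $\sum_{\Gamma\in\DOL_J^d}\Cont_\Gamma(\langle\cdots\rangle^{0;X}_{1,J,d})$. For the $\DRT$ sum I apply Theorem \ref{32} tree-by-tree: each term $\Cont_\Gamma(\langle\cdots\rangle^{X}_{1,J,d})$ splits into the reduced contribution $\Cont_\Gamma(\langle\cdots\rangle^{0;X}_{1,J,d})$ plus the double sum in $m\geq 1$, $J'\subset J$, $p$, of contributions of the form $\Cont_\Gamma\bigl(\langle\eta_p\mathbf{c}_{n+l-1-|J'|-2m-p}(TX);\cdots\rangle^{X}_{(m,J-J',d)}\bigr)$.

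Next I would interchange the sum over $\DRT_J^d$ with the sums over $m$, $J'$, $p$. The leading piece—the sum over $\DRT_J^d$ of reduced contributions—combines with the already-rewritten one-loop sum to give, by definition (\ref{27})/(\ref{76}), the total reduced invariant $\langle\mu_1,\ldots,\mu_{|J|}\rangle_{1,J,d}^{0;X}$. The remaining $\eta_p$-terms, summed over $\Gamma\in\DRT_J^d$, are exactly the quantity computed by (\ref{18}), which identifies it with the global invariant $\langle\eta_p\mathbf{c}_{n+l-1-|J'|-2m-p}(TX);\mu_1,\ldots,\mu_{|J|}\rangle^{X}_{(m,J-J',d)}$. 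Assembling these yields (\ref{78}).

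The specialization (\ref{39}) is then immediate: take $J=\emptyset$, so the inner sum over $J'\subset J$ collapses to the single term $J'=\emptyset$, the prefactor $(-1)^{m+|J'|}m^{|J'|}(m-1)!$ reduces to $(-1)^m(m-1)!$, and the equivariant Chern classes $\mathbf{c}_k(TX)$ can be written as their non-equivariant counterparts $c_k(TX)$ inside a non-equivariant invariant. There is no real obstacle here—the step that requires the most care is the bookkeeping of which graphs appear in $\DRT_J^d$ versus in the image of $\rho_{J-J',J}$ when invoking (\ref{18}), but (\ref{19}) guarantees that $\Cont_\Gamma$ vanishes outside this image, so the passage from the tree-level identity to the global identity goes through without loss.
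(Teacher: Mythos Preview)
Your argument is correct and follows exactly the route the paper takes: sum the LSvR (\ref{64}) over $\Gamma\in\DRT_J^d$, use (\ref{38}) to match the one-loop contributions, and then invoke (\ref{34}), (\ref{18}), and the definition (\ref{76})/(\ref{29}) to reassemble the three global invariants. Your extra remark about (\ref{19}) handling the graphs outside the image of $\rho_{J-J',J}$ is a helpful clarification, but otherwise there is nothing to add.
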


Note that when $X$ is a local Calabi-Yau space, in our definition of $N_{1,d}^{0;X}$  we have fixed a choice of the linearization of $E$. A consequence of corollary \ref{9} is that $N_{1,d}^{0;X}$ is independent of the choice of linearization of $E$.\\

The proof of theorem \ref{32} will occupy sections 3.1-3.4.
First note that, in (\ref{31}), (\ref{19}) and (\ref{30}), the factors
\ben
\frac{1}{\Aut(\Gamma)}\prod_{v\in \Ver\backslash\{v_{0}\}}\Cont_{\Gamma;v}(\cdot)\prod_{e\in \Edg}\Cont_{\Gamma;e}(\cdot)
\een
are common; in fact by definition, for $v\in\Ver\backslash\{v_{0}\}$, $\Cont_{\Gamma;v}(\cdot)$ are equal for the three types of invariants in (\ref{33}), and so are  $\Cont_{\Gamma;e}(\cdot)$ for $e\in\Edg$. So it suffices to show
\bea\label{41}
&&\Cont_{\Gamma;v_{0}}\big(\langle\mu_{1},\cdots,\mu_{|J|}\rangle_{1,J,d}^{X}\big)=
\sum_{\widetilde{\Gamma}\in\pi_{\RDRT}^{-1}(\Gamma)}\Cont_{\widetilde{\Gamma};v_{0}}\big(\langle\mu_{1},\cdots,\mu_{|J|}\rangle_{1,J,d}^{0;X}\big)\nn\\
&&+\sum_{m\geq 1}\sum_{J^{\prime}\subset J}\frac{(-1)^{m+|J^{\prime}|}m^{|J^{\prime}|}}{24m}
\sum_{p=0}^{n+l-1-|J^{\prime}|-2m}\sum_{\Gamma^{\mathfrak{c}}\in\pi_{\CDRT}^{-1}\circ\rho_{J-J^{\prime},J}^{-1}(\Gamma)}\nn\\
&&\Cont_{\Gamma^{\mathfrak{c}};v_{0}}
\big(\langle\tilde{\eta}_{p}\mathbf{c}_{n+l-1-|J^{\prime}|-2m-p}(TX);\mu_{1},\cdots,\mu_{|J|}\rangle_{(m,J-J^{\prime},d)}^{X}\big).
\eea
This formula concerns only the root $v_{0}$ together with $\mathfrak{m}(v_{0})$, $\Edg(v_{0})$ together with their degrees, and the vertices $v$ neighbored to $v_{0}$ together with their labels $\mathfrak{m}(v)$. So we only need to show (\ref{41}) for \emph{decorated stars}. Here by a \emph{decorated star}, we mean a decorated rooted tree with $\val(v)=1$ for all $v\in \Ver\backslash\{v_{0}\}$. The set of decorated stars of degree $d$ with the set of marked points $J$ is denoted by $\DS_{J}^{d}$; we have $J=\mathfrak{e}^{-1}(v_{0})$.
Note further that the three (groups of) terms in (\ref{41}) has a common factor (recall (\ref{37}) and the statement following it)
\ben
\prod_{j\in\mathfrak{e}^{-1}(v_{0})}\mu_{j}\big|_{P_{\mathfrak{m}(v_{0})}}\cdot\Big(\prod_{P_{j}\in\Nb(P_{i})}
\alpha_{i,j}\prod_{k=1}^{l}\varepsilon_{i,k}\Big)^{|\Edg(v_{0})|-1}.
\een
Therefore, for $\Gamma\in\DS_{J}^{d}$ with $\mu(v_{0})=i$, let
\bea\label{42}
\VC_{\Gamma}^{X}=\int_{\Mbar_{1,\Edg(v_{0})\sqcup \mathfrak{e}^{-1}(v_{0})}}\frac{\prod_{k=1}^{l}\Lambda_{1}^{\vee}(\varepsilon_{i,k})
\prod_{P_{j}\in\Nb(P_{i})}\Lambda_{1}^{\vee}(\alpha_{i,j})
}{\prod_{e\in \Edg(v_{0})}\Big(\frac{\alpha_{v_{0},e}}{d(e)}-\psi_{(v_{0},e)}\Big)},
\eea

\bea\label{44}
\VC_{\Gamma}^{0;X}
&=&
\sum_{\widetilde{\Gamma}\in\pi_{\RDRT}^{-1}(\Gamma)}\int_{\tM_{1,(\Edg(v_{0},\mathfrak{e}^{-1}(v_{0}))}
\times \mathbb{P}^{|\Edg_{+}|-1}}\nn\\
&&\Bigg(\frac{\prod_{k=1}^{l}(-\omega_{+}+\varepsilon_{i,k}+H)\prod_{P_{j}\in\Nb(P_{i})}
(-\omega_{+}+\alpha_{i,j}+H)}{\omega_{+}-\tilde{\psi}-H}\nn\\
&&\cdot
\prod_{e\in \Edg(v_{0})-\Edg_{+}-\Edg_{0}}\frac{1}{\omega_{e}-\omega_{+}+H}
\prod_{v\in \Ver_{0}}\nn\\
&&\int_{\Mbar_{0,I_{\{v_{0},v\}}}\sqcup \{0_{s}\}\sqcup \mathfrak{e}^{-1}(v)}\frac{1}{(-\omega_{+}-\psi_{0_{s}}+H)\prod_{f\in I_{e}}(\omega_{f}-\psi_{f})}\Bigg),
\eea

\bea\label{43}
&&\VC_{\Gamma,J-J^{\prime}}^{(m,p,q),X}=
[x^{q}]\Big(\prod_{k=1}^{l}(1+\varepsilon_{i,k}x)\prod_{P_{j}\in\Nb(P_{i})}(1+x(\alpha_{i,j}))\Big)\nn\\
&&\cdot\sum_{\Gamma^{\mathfrak{c}}\in\pi_{\CDRT}^{-1}\circ\rho_{J-J^{\prime},J}^{-1}(\Gamma)}\int_{\Mbar_{0,I_{(1)}\sqcup K_{(1)}\sqcup \{0_{1}\}}\times\cdots\times\Mbar_{0,I_{(m)}\sqcup K_{(m)}\sqcup \{0_{m}\}}}\nn\\
&&[x^{p}]\Bigg(\frac{1}{\prod_{e\in\Edg(v_{0})}(\frac{\alpha_{v_{0},e}}{d(e)}-\psi_{(v_{0},e)}) \prod_{s=1}^{m}(1-x\psi_{0_{s}})}\Bigg).
\eea

Then to show (\ref{41}) it suffices to show for every $\Gamma\in\DS_{J}^{d}$,
\bea\label{45}
\VC_{\Gamma}^{X}=\VC_{\Gamma}^{0;X}
+\sum_{m=1}^{2m\leq n+l-1}\sum_{J^{\prime}\subset J}\frac{(-1)^{m+|J^{\prime}|}m^{|J^{\prime}|}}{24m}
\sum_{p=0}^{n+l-1-|J^{\prime}|-2m}\VC_{\Gamma,J-J^{\prime}}^{(m,p,n+l-1-|J^{\prime}|-2m-p),X}.\nn\\
\eea

  Suppose $\Gamma$ is a decorated star whose root is labeled by $i$ with $|\Edg(v_{0})|=r$,  the vertices other than $v_{0}$ are labeled by $j_{1},\cdots,j_{r}$, $\mathfrak{e}^{-1}(v_{0})=J$, and the degree of the corresponding edges are $d_{1},\cdots,d_{r}$. Let $\omega_{s}=\frac{\alpha_{i,j_{s}}}{d_{s}}$; since $Y$ is an  algebraic GKM manifold,  $\omega_{s_{1}}=\omega_{s_{2}}$ if and only if $j_{s_{1}}=j_{s_{2}}$ and $d_{s_{1}}=d_{s_{2}}$. Let $\vec{\omega}=(\omega_{1},\cdots,\omega_{r})$, and we use $\Gamma_{i;\vec{\omega};J}$ to represent this decorated star. When $J=\emptyset$, we denote $\Gamma_{i;\vec{\omega}}=\Gamma_{i;\vec{\omega};J}$ for short. When $d_{s}=1$ for $1\leq s\leq r$ and $j_{s}$ are pairwisely distinct for $1\leq s\leq r$, we call $\Gamma_{i;\vec{\omega};J}$ a \emph{simply decorated star}.\\

  In section 3.2, we prove (\ref{45}) for simply decorated stars with $J=\emptyset$. In section 3.3, we prove (\ref{45}) for all decorated stars with $J=\emptyset$. In section 3.4 we prove for $J\neq \emptyset$. In section 3.5 we state another form of LSvR and sketch a proof for it. \\

\subsection{Simply decorated stars}
Let $\Gamma_{i;\vec{\omega}}\in\DS_{\emptyset}^{d}$ be a simply decorated star with $J=\emptyset$.
\begin{lemma}
\bea\label{10}
\int_{\Mbar_{1,r}}\frac{\lambda_{1}}
{\prod_{k=1}^{r}(w_{k}-\psi_{k})}=
\frac{1}
{24\prod_{k=1}^{r}w_{k}}\Big(\sum_{k=1}^{r}\frac{1}{w_{k}}\Big)^{r-1}.
\eea
\end{lemma}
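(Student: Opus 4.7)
My plan is to expand the left-hand side as a sum of Hodge integrals on $\Mbar_{1,r}$, apply the multinomial theorem on the right-hand side, and reduce the lemma to the Hodge integral identity
\[
\int_{\Mbar_{1,r}} \lambda_1 \prod_{k=1}^r \psi_k^{a_k} = \frac{(r-1)!}{24 \prod_k a_k!} \qquad \text{whenever } \sum_k a_k = r-1,
\]
which I would then prove by induction on $r$ via the string equation.

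First I expand each factor $(w_k - \psi_k)^{-1}$ as a geometric series in $\psi_k/w_k$. Since $\dim \Mbar_{1,r} = r$ and $\lambda_1$ has cohomological degree $1$, only monomials with $\sum a_k = r-1$ contribute, so the left-hand side equals
\[
\frac{1}{\prod_k w_k} \sum_{\sum a_k = r-1} \Bigl(\prod_k w_k^{-a_k}\Bigr) \int_{\Mbar_{1,r}} \lambda_1 \prod_k \psi_k^{a_k}.
\]
Expanding $\bigl(\sum_k w_k^{-1}\bigr)^{r-1}$ on the right-hand side by the multinomial theorem produces the same basis of monomials in $w_k^{-1}$ with coefficients $(r-1)!/\prod_k a_k!$, so matching coefficients reduces the lemma to the displayed Hodge integral identity.

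For the Hodge integral identity itself, the base case $r=1$ is the classical $\int_{\Mbar_{1,1}} \lambda_1 = 1/24$. For $r \geq 2$, the constraint $\sum a_k = r-1 < r$ forces some $a_k$ to vanish; without loss of generality $a_r = 0$. Because the Hodge bundle pulls back along the forgetful map $\pi: \Mbar_{1,r} \to \Mbar_{1,r-1}$, so does $\lambda_1$, and the string equation gives
\[
\int_{\Mbar_{1,r}} \lambda_1 \prod_{k=1}^{r-1} \psi_k^{a_k} = \sum_{j:\, a_j > 0} \int_{\Mbar_{1,r-1}} \lambda_1 \, \psi_j^{a_j - 1} \prod_{\substack{k \neq j \\ k \leq r-1}} \psi_k^{a_k}.
\]
Each summand satisfies the induction hypothesis (the exponent sum drops from $r-1$ to $r-2$, matching the dimension constraint for $\Mbar_{1,r-1}$); collecting the shifted factorials and using $\sum_j a_j = r-1$ contributes an overall factor of $r-1$, promoting $(r-2)!$ to $(r-1)!$ and closing the induction.

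The only nontrivial input is the pullback property of $\lambda_1$ under the forgetful map, which is standard (the Hodge bundle depends only on the underlying arithmetic genus, not on the marked points). I do not anticipate any serious obstacles.
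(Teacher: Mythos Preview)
Your proof is correct. The paper's own proof is a one-line citation: it says the formula follows either from Givental's proposition~3.1 together with the genus-zero formula~(\ref{21}), or from the $\lambda_g$-conjecture for $g=1$. Your argument is exactly the second alternative, except that instead of invoking the $\lambda_g$-formula
\[
\int_{\Mbar_{1,r}} \lambda_1 \prod_{k=1}^r \psi_k^{a_k} = \frac{(r-1)!}{24 \prod_k a_k!}
\]
as a known result, you supply a short self-contained proof of it via the string equation and the base value $\int_{\Mbar_{1,1}}\lambda_1=1/24$. This is a genuine (if minor) improvement in self-containment over the paper; the Givental route, by contrast, would pass through a relation between $\lambda_1$-integrals on $\Mbar_{1,r}$ and descendant integrals on $\Mbar_{0,r+2}$, and then feed in the explicit genus-zero answer~(\ref{21}).
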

\begin{proof}: This follows straightforwardly from \cite[proposition 3.1]{Givental} and (\ref{21}),
or from the $\lambda_{g}$-conjecture for $g=1$.
\end{proof}

\begin{lemma}\label{79}
\bea\label{80}
\VC_{\Gamma_{i;\vec{\omega}}}^{0;X}=0.
\eea
\end{lemma}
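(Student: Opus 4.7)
The plan is to unpack the sum in (\ref{44}) for the very restrictive case $\Gamma = \Gamma_{i;\vec{\omega}}$ and show each summand vanishes because of an explicit zero factor in the numerator. First I would argue that $\pi_{\RDRT}^{-1}(\Gamma_{i;\vec{\omega}})$ is small: since every non-root vertex of $\Gamma_{i;\vec{\omega}}$ has valence $1$, condition (vi) in the definition of a refined decorated rooted tree forces $\Ver_{0} = \emptyset$, and hence $\Edg_{0} = \emptyset$. Next, condition (iv) demands that every edge in $\Edg_{+}$ have the same image $\mathfrak{d}(\cdot)$; but by the simply-decorated assumption $d_{s}=1$ and the $j_{s}$ are pairwise distinct, which (using the GKM axiom that the weights at $P_{i}$ are pairwise independent) forces $|\Edg_{+}|=1$. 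Thus the sum in (\ref{44}) is just over a choice of a single distinguished edge $e_{+} = \{v_{0},v_{s_{0}}\}$ with $s_{0} \in \{1,\ldots,r\}$.

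For each such $s_{0}$, the factor $\mathbb{P}^{|\Edg_{+}|-1}$ collapses to a point and $H = 0$, and by definition $\omega_{+} = \omega_{s_{0}} = \alpha_{i,j_{s_{0}}}/d_{s_{0}} = \alpha_{i,j_{s_{0}}}$. The integrand in (\ref{44}) then contains the factor
\[
\prod_{P_{j}\in\Nb(P_{i})}\big(-\omega_{+} + \alpha_{i,j}\big),
\]
and since the edge $e_{+}$ is the invariant line $\overline{P_{i}P_{j_{s_{0}}}}$, the fixed point $P_{j_{s_{0}}}$ lies in $\Nb(P_{i})$. The term $j = j_{s_{0}}$ in the product then contributes $\alpha_{i,j_{s_{0}}} - \omega_{+} = 0$. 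So the integrand vanishes identically on $\tM_{1,(\Edg(v_{0}),\emptyset)}$, and every summand of $\VC_{\Gamma_{i;\vec{\omega}}}^{0;X}$ is zero. Summing over the $r$ choices of $s_{0}$ gives the desired vanishing.

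The only content is the combinatorial bookkeeping in the first step; the analytic step is an immediate weight matching. I expect no real obstacle, but it is worth flagging that this same mechanism (namely $\omega_{+}$ coinciding with a weight in $\Nb(P_{i})$ whenever the marked edge has degree one) is what makes the proof in the general decorated-star case in section 3.3 more delicate, because there we will need to allow $\Edg_{0}\neq \emptyset$ and $|\Edg_{+}|\geq 1$, in which setting the simple zero-factor argument no longer kills every contribution and residue computations on the $\mathbb{P}^{|\Edg_{+}|-1}$-factor are needed.
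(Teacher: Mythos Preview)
Your core argument is correct and matches the paper's: for each $\widetilde{\Gamma}\in\pi_{\RDRT}^{-1}(\Gamma_{i;\vec{\omega}})$ one has $|\Edg_{+}|=1$, hence $\mathbb{P}^{|\Edg_{+}|-1}$ is a point, and since $d(e_{+})=1$ the weight $\omega_{+}$ equals $\alpha_{i,j_{s_{0}}}$ for some $P_{j_{s_{0}}}\in\Nb(P_{i})$, forcing the factor $(-\omega_{+}+\alpha_{i,j_{s_{0}}}+H)=H$ in the numerator of (\ref{44}) to vanish.

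One small correction: your claim that condition (vi) forces $\Ver_{0}=\emptyset$ is not right as stated. Condition (vi) constrains the valence of a vertex $v_{1}\in\Ver_{0}$ \emph{in $\widetilde{\Gamma}$}, not in $\Gamma$; such a $v_{1}$ is a new vertex inserted between $v_{0}$ and a subset of the leaves, and for $r\geq 2$ one can certainly insert $v_{1}$ with $|\Edg^{\widetilde{\Gamma}}(v_{1})|\geq 3$ (one edge to $v_{0}$, two or more to leaves). So $\pi_{\RDRT}^{-1}(\Gamma_{i;\vec{\omega}})$ is larger than you describe. Fortunately this is harmless: the paper's proof does not assert $\Ver_{0}=\emptyset$ either, and your $H$-factor argument applies verbatim to every term in the sum, since $|\Edg_{+}|=1$ and $\omega_{+}\in\{\alpha_{i,j}:P_{j}\in\Nb(P_{i})\}$ hold for all $\widetilde{\Gamma}$ in the fibre regardless of $\Ver_{0}$. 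Just drop the $\Ver_{0}=\emptyset$ step and your proof is complete.
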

\begin{proof}: Since $\Gamma_{i;\vec{\omega}}$ is simply decorated,  by the definition of refined decorated rooted trees, for every $\widetilde{\Gamma}\in\pi_{\RDRT}^{-1}(\Gamma_{i;\vec{\omega}})$ we have $|\Edg_{+}|=1$ and $\omega_{+}=\alpha_{i,j}$ for some $P_{j}\in\Nb(P_{i})$. Thus in the righthand-side (\ref{44}), the integrand in the second row has a factor $H$, but $\mathbb{P}^{|\Edg_{+}|-1}$ is a point, so the conclusion follows.
\end{proof}

By (\ref{10}), we have
\bea\label{46}
\VC_{\Gamma_{i;\vec{\omega}}}^{X}&=&\int_{\Mbar_{1,r}}\frac{ \prod_{k=1}^{l}\Lambda_{1}^{\vee}(\varepsilon_{i,k})\cdot\prod_{P_{j}\in\Nb(P_{i})}
\Lambda_{1}^{\vee}(\alpha_{i,j})}
{\prod_{k=1}^{r}(\alpha_{i,j_{k}}-\psi_{k})}\nn\\
&=&\prod_{k=1}^{l}\varepsilon_{i,k}\prod_{P_{j}\in\Nb(P_{i})}\alpha_{i,j}\int_{\Mbar_{1,r}}\frac{1}
{\prod_{k=1}^{r}(\alpha_{i,j_{k}}-\psi_{k})}\nn\\
&&-[x]\Big(\prod_{k=1}^{l}(x+\varepsilon_{i,k})\prod_{P_{j}\in\Nb(P_{i})}(x+\alpha_{i,j})\Big)\cdot\int_{\Mbar_{1,r}}\frac{\lambda_{1}}
{\prod_{k=1}^{r}(\alpha_{i,j_{k}}-\psi_{k})}\nn\\
&=&\prod_{k=1}^{l}\varepsilon_{i,k}\prod_{P_{j}\in\Nb(P_{i})}\alpha_{i,j}\int_{\Mbar_{1,r}}\frac{1}
{\prod_{k=1}^{r}(\alpha_{i,j_{k}}-\psi_{k})}\nn\\
&&-[x]\Big(\prod_{k=1}^{l}(x+\varepsilon_{i,k})\prod_{P_{j}\in\Nb(P_{i})}(x+\alpha_{i,j})\Big)\cdot
\frac{1}
{24\prod_{k=1}^{r}\alpha_{i,j_{k}}}\Big(\sum_{k=1}^{r}\frac{1}{\alpha_{i,j_{k}}}\Big)^{r-1}.\nn\\
\eea

On the other hand,
\bea\label{47}
&&\frac{(-1)^{m}}{24m}\sum_{p=0}^{n+l-1-2m}\VC_{\Gamma_{i;\vec{\omega}}}^{(m,p,n+l-1-2m-p),X}\nn\\
&=&\frac{(-1)^{m}}{24m}\sum_{I\in \mathcal{A}_{m}([r]) }
\int_{\Mbar_{0,I_{(1)}\sqcup \{0_{1}\}}\times\cdots\times\Mbar_{0,I_{(m)}\sqcup \{0_{m}\}}}\nn\\
&&[x^{n+l-1-2m}]\Bigg(\frac{\prod_{k=1}^{l}(1+\varepsilon_{i,k}x)\prod_{P_{j}\in\Nb(P_{i})}(1+\alpha_{i,j}x)}
{\prod_{k=1}^{r}(\alpha_{i,j_{k}}-\psi_{k})\cdot \prod_{s=1}^{m}(1-x\psi_{0_{s}})}\Bigg)\nn\\
&\stackrel{(*)}{=}&\frac{(-1)^{m}}{24m}\sum_{I\in \mathcal{A}_{m}([r]) }
\int_{\Mbar_{0,I_{(1)}\sqcup \{0_{1}\}}\times\cdots\times\Mbar_{0,I_{(m)}\sqcup \{0_{m}\}}}
[x^{m}]\Bigg(\frac{\prod_{k=1}^{l}(x+\varepsilon_{i,k})
\prod_{P_{j}\in\Nb(P_{i})}(x+\alpha_{i,j})}{\prod_{k=1}^{r}(\alpha_{i,j_{k}}-\psi_{k})\cdot \prod_{s=1}^{m}(x-\psi_{0_{s}})}\Bigg)\nn\\
&=&\frac{(-1)^{m}}{24m}\sum_{I\in \mathcal{A}_{m}([r]) }[x^{m}]\Bigg(\prod_{k=1}^{l}(x+\varepsilon_{i,k})\prod_{P_{j}\in\Nb(P_{i})}(x+\alpha_{i,j})\nn\\
&&\cdot
\int_{\Mbar_{0,I_{(1)}\sqcup \{0_{1}\}}\times\cdots\times\Mbar_{0,I_{(m)}\sqcup \{0_{m}\}}}
\frac{1}{\prod_{k=1}^{r}(\alpha_{i,j_{k}}-\psi_{k})\cdot \prod_{s=1}^{m}(x-\psi_{0_{s}})}\Bigg)\nn\\
&=&\frac{(-1)^{m}}{24m}\sum_{I\in \mathcal{A}_{m}([r]) }[x^{m}]\Bigg(\frac{\prod_{k=1}^{l}(x+\varepsilon_{i,k})\prod_{P_{j}\in\Nb(P_{i})}(x+\alpha_{i,j})}
{x^{m}\prod_{k=1}^{r}\alpha_{i,j_{k}}}
\cdot\prod_{s=1}^{m}\Big(\frac{1}{x}+\sum_{j\in I_{(s)}}\frac{1}{\alpha_{i,j}}\Big)^{|I_{(s)}|-2}\Bigg).\nn\\
\eea
Some attention should be paid to the equality $(*)$. For a fixed $I\in \mathcal{A}_{m}([r])$, without loss of generality we may assume $I_{(s)}=\{j_{s}\}$ for $1\leq s\leq r_{1}$ and $|I_{(s)}|\geq 2$ for $r_{1}+1\leq s\leq r$. Thus
\bea\label{65}
&&\int_{\Mbar_{0,I_{(1)}\sqcup \{0_{1}\}}\times\cdots\times\Mbar_{0,I_{(m)}\sqcup \{0_{m}\}}}
[x^{n+l-1-2m}]\Bigg(\frac{\prod_{k=1}^{l}(1+\varepsilon_{i,k}x)\prod_{P_{j}\in\Nb(P_{i})}(1+\alpha_{i,j}x)}
{\prod_{k=1}^{r}(\alpha_{i,j_{k}}-\psi_{k})\cdot \prod_{s=1}^{m}(1-x\psi_{0_{s}})}\Bigg)\nn\\
&=&[x^{n+l-1-2m}]\Bigg( \prod_{s=1}^{r_{1}}\Big(1+\alpha_{i,j_{k}}x\Big)^{-1}\nn\\
&&\cdot\int_{\Mbar_{0,I_{(r_{1}+1)}\sqcup \{0_{r_{1}+1}\}}\times\cdots\times\Mbar_{0,I_{(m)}\sqcup \{0_{m}\}}}
\frac{\prod_{k=1}^{l}(1+\varepsilon_{i,k}x)\prod_{P_{j}\in\Nb(P_{i})}(1+\alpha_{i,j}x)}
{\prod_{k=r_{1}+1}^{r}(\alpha_{i,j_{k}}-\psi_{k})\cdot \prod_{s=r_{1}+1}^{m}(1-x\psi_{0_{s}})}\Bigg).
\eea
 By the assumption that $\alpha_{i,_k}$ for $1\leq k\leq r$ are pairwisely distinct, the factor $\prod_{s=1}^{r_{1}}(1+\alpha_{i,j_k}x)$ in the denominator divides the product $\prod_{P_{j}\in\Nb(P_{i})}(1+\alpha_{i,j}x)$ in the numerator. Thus the righthand-side of (\ref{65}) as a rational function of $x$ has only poles at $x=0$ and $x=\infty$, so the equality $(*)$ follows. For the general decorated stars, the corresponding expression to the righthand-side of (\ref{65}) has other poles, from which the contributions of refined decorated stars arise. \\

 Now from the following proposition and by lemma \ref{79} we see that (\ref{45}) holds for simply decorated stars with $J=\emptyset$.
\begin{proposition}\label{49}
Let $w_{1},\cdots,w_{r}$ be independent variables. Then we have
\bea\label{50}
&&\sum_{m=1}^{r}\frac{(-1)^{m}}{24m}\sum_{I\in \mathcal{A}_{m}([r]) }[x^{m}]\Bigg(\frac{\prod_{k=1}^{l}(x+\varepsilon_{i,k})\prod_{P_{j}\in\Nb(P_{i})}(x+\alpha_{i,j)}}
{x^{m}\prod_{k=1}^{r}w_{k}}
\cdot\prod_{s=1}^{m}\Big(\frac{1}{x}+\sum_{j\in I_{(s)}}\frac{1}{w_{j}}\Big)^{|I_{(s)}|-2}\Bigg)\nn\\
&=&\prod_{k=1}^{l}\varepsilon_{i,k}\prod_{P_{j}\in\Nb(P_{i})}\alpha_{i,j}\int_{\Mbar_{1,r}}\frac{1}
{\prod_{k=1}^{r}(w_{k}-\psi_{k})}\nn\\
&&-[x]\Big(\prod_{k=1}^{l}(x+\varepsilon_{i,k})\prod_{P_{j}\in\Nb(P_{i})}(x+\alpha_{i,j})\Big)\cdot
\frac{1}
{24\prod_{k=1}^{r}w_{k}}\Big(\sum_{k=1}^{r}\frac{1}{w_{k}}\Big)^{r-1}.
\eea
\end{proposition}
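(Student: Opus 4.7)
The strategy is to show that both sides of (\ref{50}) equal
\[
\mathcal{I}:=\int_{\Mbar_{1,r}}\frac{P(-\lambda_{1})}{\prod_{k=1}^{r}(w_{k}-\psi_{k})},
\qquad
P(x):=\prod_{k=1}^{l}(x+\varepsilon_{i,k})\prod_{P_{j}\in\Nb(P_{i})}(x+\alpha_{i,j}),
\]
with $\lambda_{1}$ the first Chern class of the Hodge bundle on $\Mbar_{1,r}$.

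\emph{Identification of the RHS with $\mathcal{I}$.}  Since the Hodge bundle on $\Mbar_{1,r}$ has rank one, Mumford's relation $c(\mathbb{E})c(\mathbb{E}^{\vee})=1$ forces $\lambda_{1}^{2}=0$; hence $P(-\lambda_{1})=P(0)-P'(0)\lambda_{1}$ exactly. Applying (\ref{10}) to the $\lambda_{1}$-term and using $[x]P(x)=P'(0)$ reproduces the RHS of (\ref{50}) term by term.

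\emph{LHS via residues.}  Rewrite the outer operator as $[x^{m}](\cdot)=\Res_{x=0}x^{-m-1}(\cdot)\,dx$ and use (\ref{21}) to realise each factor $\bigl(\tfrac{1}{x}+\sum_{j\in I_{(s)}}\tfrac{1}{w_{j}}\bigr)^{|I_{(s)}|-2}$ as a genus-zero $\psi$-integral with an auxiliary point $0_{s}$ of cotangent weight $x$. Viewed as a meromorphic $1$-form on $\mathbb{P}^{1}_{x}$, the integrand has, besides the pole at $x=0$, only a pole at $x=\infty$ together with a simple pole at $x=-w_{j}$ for each $j\in[r]$ lying in a singleton block, coming from the identity $(1/x+1/w_{j})^{-1}=xw_{j}/(x+w_{j})$; non-singleton blocks are regular at every finite $x=-w_{j}$, because the $w_{k}$ are pairwise distinct (simply decorated hypothesis). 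The residue theorem on $\mathbb{P}^{1}$ then converts $[x^{m}]$ into minus the sum of residues at $x=\infty$ and at the singleton poles.

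\emph{Matching with the RHS.}  The singleton residues at $x=-w_{j}$ should re-assemble, after the logarithmic resummation induced by the $(-1)^{m}/m$ weighting over ordered partitions, into the correction $-[x]P(x)\cdot\frac{1}{24\prod w_{k}}(\sum 1/w_{k})^{r-1}$: each singleton block $\{j\}$ contributes a factor proportional to $P(-w_{j})$, the product/ordering structure telescopes as a $\log$-expansion, and only the $[x]P(x)$-linear coefficient survives after tracing through the cancellations. The residues at $x=\infty$, on the other hand, should reconstitute $P(0)\int_{\Mbar_{1,r}}(\prod(w_{k}-\psi_{k}))^{-1}$. This last identification is the main obstacle. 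I plan to prove it either by induction on $r$, reducing via the string/dilaton equations on $\Mbar_{1,r}$ to the base case $r=1$ (for which both sides are immediate from (\ref{21}) and (\ref{10})), or by recognising the colored-partition sum as the combinatorial shadow of the Vakil–Zinger boundary stratification of $\Mbar_{1,r}$ into rational-curve degenerations — the same stratification that underlies Zinger's global standard-versus-reduced formula \cite[Theorem~1A]{ZingerSvR}, here specialised to a single $\mathbb{P}^{1}$-edge based at $P_{i}$.
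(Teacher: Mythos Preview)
Your identification of the right-hand side with $\int_{\Mbar_{1,r}}P(-\lambda_{1})/\prod(w_{k}-\psi_{k})$ is fine, and rewriting $[x^{m}]$ as a residue is harmless. The gap is in the proposed split of that residue: you claim the singleton-block residues at $x=-w_{j}$ reassemble into the $[x]P(x)$ correction, while the residue at $\infty$ yields the $P(0)$ main term. Neither half is correct. The residue at $x=-w_{j}$ carries a factor $P(-w_{j})$, which depends on \emph{all} coefficients of $P$, not just the linear one; there is no mechanism in the partition sum that isolates $P'(0)$ from it. In the intended application to simply decorated stars one actually has $w_{k}=\alpha_{i,j_{k}}$, so $(x+\alpha_{i,j_{k}})\mid P(x)$ and hence $P(-w_{k})=0$: the singleton residues \emph{vanish identically} (this is exactly the point made around (\ref{65}) and Lemma~\ref{79}). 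Thus in that case the entire left-hand side, including the $[x]P(x)$ correction, must come from $\Res_{x=\infty}$ alone, so your proposed attribution cannot be right.

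The paper's route avoids the residue bookkeeping by first stripping off $P$: write $\prod_{s}(\tfrac{1}{x}+\sum_{j\in I_{(s)}}\tfrac{1}{w_{j}})^{|I_{(s)}|-2}=x^{2m-r}\prod_{s}(1+\tfrac{x}{w})^{\cdots}$, so that $[x^{m}]$ of the integrand becomes $\sum_{q}[x^{q}]P(x)\cdot [x^{r-q}]\big(\prod_{s}(1+xu_{I_{(s)}})^{|I_{(s)}|-2}\big)$ with $u_{I_{(s)}}=\sum_{j\in I_{(s)}}1/w_{j}$. The inner coefficient is now $P$-free, and the proposition reduces to Lemma~\ref{181}: the degree-$p$ part $H_{r,p}$ of $H_{r}(w_{1},\dots,w_{r})$ vanishes for $p\le r-2$, equals $-\tfrac{1}{24}(\sum w_{k})^{r-1}$ for $p=r-1$, and equals $\int_{\Mbar_{1,r}}\prod(1-w_{k}\psi_{k})^{-1}$ for $p=r$. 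That lemma is then proved by the induction on $r$ you mention, but applied to $H_{r}$ directly via the clean functional equation $H_{r}(w_{1},\dots,w_{r-1},0)=(\sum_{k<r}w_{k})H_{r-1}$, which is the string equation on the $F_{r}$ side; the degree-$r$ coefficient is pinned down by computing the $w_{1}\cdots w_{r}$-coefficient and matching it to $\int_{\Mbar_{1,r}}\psi_{1}\cdots\psi_{r}=(r-1)!/24$. So your instinct to induct on $r$ is right, but it should be applied to this $P$-independent combinatorial core rather than to a residue at infinity that still has $P$ entangled in it.
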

The proposition follows immediately from the following lemma.

\begin{lemma}\label{181}
We have
\bea\label{59}
\sum_{m=1}^{r}\frac{(-1)^{m}}{m}\sum_{I\in \mathcal{A}_{m}([r]) }[x^{r-1}]\Bigg(\prod_{s=1}^{m}\Big(1+\sum_{j\in I_{(s)}}xw_{j}\Big)^{|I_{(s)}|-2}\Bigg)=-\Big(\sum_{k=1}^{r}w_{k}\Big)^{r-1},
\eea

\bea\label{60}
&&[x^{r}]\Bigg(\sum_{m=1}^{r}\frac{(-1)^{m}}{24m}\sum_{I\in \mathcal{A}_{m}([r]) }\prod_{s=1}^{m}\Big(1+\sum_{j\in I_{(s)}}xw_{j}\Big)^{|I_{(s)}|-2}\Bigg)\nn\\
&=&\int_{\Mbar_{1,r}}\frac{1}
{\prod_{k=1}^{r}(1-w_{k}\psi_{k})},
\eea
and for  $2\leq p\leq r$,
\bea\label{61}
\sum_{m=1}^{r}\frac{(-1)^{m}}{m}\sum_{I\in \mathcal{A}_{m}([r]) }[x^{r-p}]\Bigg(\prod_{s=1}^{m}\Big(1+\sum_{j\in I_{(s)}}xw_{j}\Big)^{|I_{(s)}|-2}\Bigg)=0.
\eea
\end{lemma}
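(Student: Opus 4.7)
My plan is to unify the three identities as coefficient extractions from the rational function
\[
A_r(x) := \sum_{m=1}^{r}\frac{(-1)^m}{m}\sum_{I \in \mathcal{A}_m([r])}\prod_{s=1}^m(1+xW_{I_{(s)}})^{|I_{(s)}|-2},\qquad W_T := \sum_{j\in T}w_j,
\]
and extract them after clearing denominators. Multiplying by $\prod_{j=1}^r(1+xw_j)$ produces the polynomial $P_r(x) := \prod_j(1+xw_j)\,A_r(x) = \sum_m\frac{(-1)^m}{m}\sum_I\prod_s Q(I_{(s)})$, where $Q(S) := (1+xW_S)^{|S|-2}\prod_{j\in S}(1+xw_j)$ is a genuine polynomial with $Q(\{j\})=1$. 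In these terms the three identities (\ref{59}), (\ref{60}), (\ref{61}) together assert: $P_r$ vanishes to order at least $r-1$ at $x=0$; its leading coefficient is $[x^{r-1}]P_r = -(w_1+\cdots+w_r)^{r-1}$; and $\tfrac{1}{24}[x^r]A_r$ agrees with $\int_{\Mbar_{1,r}}\prod_k(1-w_k\psi_k)^{-1}$ after dividing $P_r$ by $\prod(1+xw_j)$.

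First I would apply the classical dictionary between ordered set partitions and powers of a multivariate exponential generating function,
\[
\sum_{I \in \mathcal{A}_m([r])}\prod_{s=1}^m \phi(I_{(s)}) = [y_1 \cdots y_r]\Bigl(\sum_{\emptyset\ne S\subseteq[r]}\phi(S)\,y^S\Bigr)^m,
\]
to rewrite $P_r(x) = -[y_1\cdots y_r]\log\bigl(1 + \sum_{S\ne\emptyset}Q(S)y^S\bigr)$. The low-order vanishing (\ref{61}) should then follow by expanding $Q(S) = 1 + (|S|-1)xW_S + O(x^2)$ order by order in $x$. The base case $P_r(0) = \sum_m(-1)^m(m-1)!\,S(r,m) = 0$ for $r \ge 2$ is the classical identity obtained from $\log(1+(e^y-1))=y$; an induction on the $x$-order (equivalently a careful power-series manipulation of the logarithm) should extend this to all coefficients $[x^k]P_r$ with $0\le k\le r-2$.

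The leading-coefficient identity (\ref{59}), namely $[x^{r-1}]P_r = -(w_1+\cdots+w_r)^{r-1}$, is a Cayley-type statement that I would prove either by a Pr\"ufer-sequence bijection with labeled trees on $[r]$, or analytically via residues: $A_r(x)$ has only simple poles at $x=-1/w_j$, and summing residues of $A_r(x)/x^{r}$ over these poles plus infinity should reproduce Cayley's weighted enumerator $(w_1+\cdots+w_r)^{r-1} = \sum_T \prod_j w_j^{d_j(T)-1}$. Finally, (\ref{60}) would follow by combining the polynomial form of $P_r$ with Lemma 3.1 (the $\lambda_1$-formula) and the standard Mumford/Hodge relations together with the string and dilaton equations on $\Mbar_{1,r}$, which convert $\lambda_1\psi^\bullet$-integrals into pure $\psi^\bullet$-integrals and allow a term-by-term matching of $\tfrac{1}{24}[x^r]A_r$ with $\int_{\Mbar_{1,r}}\prod_k(1-w_k\psi_k)^{-1}$.

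The main obstacle is the Cayley identification behind (\ref{59}) together with the systematic low-order vanishing (\ref{61}). The alternating weight $(-1)^m(m-1)!$ is precisely the M\"obius function of the partition lattice of $[r]$, so both assertions amount to claims that certain M\"obius-weighted sums over set partitions collapse to simple closed forms. Once this combinatorial kernel is in place, (\ref{60}) becomes a routine matching of $\psi$-integrals via Lemma 3.1; but the bookkeeping of the partition-lattice sums and the identification with Cayley's tree enumerator is where the real technical difficulty lies.
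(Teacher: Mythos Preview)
Your generating-function setup is correct (in particular $P_r(x)=-[y_1\cdots y_r]\log\bigl(1+\sum_{S\ne\emptyset}Q(S)y^S\bigr)$ and the Stirling-number base case), but the route you sketch is both harder than and genuinely different from the paper's, and it has a real gap at~(\ref{60}).

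The paper does not use Cayley, Pr\"ufer sequences, M\"obius inversion on the partition lattice, or the $\lambda_1$-formula at all. It proves all three identities at once by a two-line induction on $r$: writing $H_r(w_1,\dots,w_r)=\frac{1}{24}\sum_m\frac{(-1)^m}{m}\sum_{I}\prod_s(1+W_{I_{(s)}})^{|I_{(s)}|-2}$, one checks directly (by tracking whether the element $r$ sits in a singleton block or is adjoined to an existing block) that
\[
H_r(w_1,\dots,w_{r-1},0)=\Big(\sum_{k=1}^{r-1}w_k\Big)\,H_{r-1}(w_1,\dots,w_{r-1}).
\]
Since a symmetric polynomial of degree $<r$ in $r$ variables is determined by its restriction to $w_r=0$, this single recursion yields (\ref{59}) and (\ref{61}) simultaneously. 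The string equation gives the identical recursion for $F_r:=\int_{\Mbar_{1,r}}\prod_k(1-w_k\psi_k)^{-1}$, so for (\ref{60}) one only has to match the coefficient of $w_1\cdots w_r$; on the $H_r$ side this comes solely from $m=r$ and equals $(r-1)!/24$, while on the $F_r$ side it is $\int_{\Mbar_{1,r}}\psi_1\cdots\psi_r=(r-1)!/24$ by dilaton. That is the entire proof.

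The concrete gap in your plan is (\ref{60}). You propose to invoke the $\lambda_1$-formula and then ``Mumford/Hodge relations\ldots\ which convert $\lambda_1\psi^\bullet$-integrals into pure $\psi^\bullet$-integrals''. But the target $\int_{\Mbar_{1,r}}\prod_k(1-w_k\psi_k)^{-1}$ has no $\lambda_1$ in it, so the $\lambda_1$-formula is irrelevant; and trading $\lambda_1$ for boundary via $12\lambda_1=\delta$ would introduce boundary integrals you would then have to close recursively---that is not a routine matching, and you have not said how to do it. Your outlines for (\ref{61}) (``induction on the $x$-order'') and (\ref{59}) (residues/Pr\"ufer) are plausible but unexecuted and considerably heavier than the paper's recursion; even if you carry them out, you still need an independent argument for (\ref{60}), and the natural one---string/dilaton recursion for $F_r$ matched against a recursion for $H_{r,r}$---is exactly what the paper does.
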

\begin{proof}
Let
\bea\label{183}
H_{r}(w_{1},\cdots,w_{r})=\sum_{m=1}^{r}\frac{(-1)^{m}}{24m}\sum_{I\in \mathcal{A}_{m}([r]) }\Bigg(\prod_{s=1}^{m}\Big(1+\sum_{j\in I_{(s)}}w_{j}\Big)^{|I_{(s)}|-2}\Bigg),
\eea
and
\ben
F_{r}(w_{1},\cdots,w_{r})=\int_{\Mbar_{1,r}}\frac{1}
{\prod_{k=1}^{r}(1-w_{k}\psi_{k})}.
\een
Consider the Taylor expansion of $H_{r}(w_{1},\cdots,w_{r})$ at $w_{1}=\cdots=w_{r}=0$, and for $p\in \mathbb{Z}^{\geq 0}$ let $H_{r,p}(w_{1},\cdots,w_{r})$ be the degree $p$ part of this expansion, which is a symmetric polynomial in $w_{1},\cdots, w_{r}$ of degree $p$. Then what we need to prove is
\bea\label{2}
H_{r,r-1}(w_{1},\cdots,w_{r})=-\frac{1}{24}\Big(\sum_{k=1}^{r}w_{k}\Big)^{r-1},
\eea
\bea\label{3}
H_{r,r}(w_{1},\cdots,w_{r})=F_{r}(w_{1},\cdots,w_{r}),
\eea
and for $r\in \mathbb{Z}^{>0}$, and $0\leq p\leq r-2$
\bea\label{1}
H_{r,p}(w_{1},\cdots,w_{r})=0.
\eea
We prove these identities by induction on $r$. The $r=1$ case is trivial. For $r>1$, by the definition of $H_{r}$, it is not hard to see that for a fixed $I\in \mathcal{A}_{m}([r-1])$ the coefficient of
\ben
\prod_{s=1}^{m}\Big(1+\sum_{j\in I_{(s)}}w_{j}\Big)^{|I_{(s)}|-2}
\een
in  $H_{r}(w_{1},\cdots,w_{r-1},0)$ is
\ben
\frac{(-1)^{m}}{24m}\sum_{s=1}^{m}\Big(1+\sum_{j\in I_{(s)}}w_{j}\Big)+\frac{(-1)^{m+1}}{24(m+1)}\cdot (m+1)=\frac{(-1)^{m}}{24m}\sum_{k=1}^{r-1}w_{k}.
\een
Therefore
\bea\label{4}
H_{r}(w_{1},\cdots,w_{r-1},0)=\sum_{k=1}^{r-1}w_{k}\cdot H_{r-1}(w_{1},\cdots,w_{r-1}).
\eea
Note that a symmetric polynomial $f(w_{1},\cdots,w_{r})$  of degree less than $r$ is uniquely determined by $f(w_{1},\cdots,w_{r-1},0)$, so  by induction we obtain (\ref{1}) and (\ref{2}).
For (\ref{3}),  note that by the string equation, (\ref{4}) holds for $F_{r}$, i.e.,  we have
\ben
F_{r}(w_{1},\cdots,w_{r-1},0)=\sum_{k=1}^{r-1}w_{k}\cdot F_{r-1}(w_{1},\cdots,w_{r-1}).
\een
Note also that a symmetric polynomial $f(w_{1},\cdots,w_{r})$  of degree $r$ is determined by $f(w_{1},\cdots,w_{r-1},0)$ up to  the coefficient of  $w_{1}\cdots w_{r}$. So by induction, to show that (\ref{3}) holds for $r$, it suffices to show that the coefficients of $w_{1}\cdots w_{r}$ in $H_{r,r}(w_{1},\cdots,w_{r})$ and in $F_{r}(w_{1},\cdots,w_{r})$ are equal.
But by the definition of $H_{r}$, it is easy to see that the coefficient of $w_{1}\cdots w_{r}$ in it (this monomial appears only when $m=r$ and $|I_{(s)}|=1$ for $1\leq s\leq r$) is
\ben
\frac{(r-1)!}{24},
\een
which is equal to $\int_{\Mbar_{1,r}}\psi_{1}\cdots\psi_{r}$, by the dilaton equation.
\end{proof}

\begin{remark}
Although the simply decorated stars are the most simple cases, they provide a prototype of the LSvR. As we will see, the contribution from the reduced invariants are \emph{correction terms} when the assumption of simply decorated stars is not satisfied.
The formula (\ref{60}) can be viewed as a \emph{combinatorial solution} to the $n$-point function of \cite{LX} in genus one. It is interesting to find a higher genera analog of this formula, which may shed light on the computation of Gromov-Witten invariants in higher genera.
\end{remark}

\subsection{General decorated stars}
Let Let $\Gamma_{i;\vec{\omega}}\in\DRT_{\emptyset}^{d}$ be a decorated star where $\vec{\omega}=(\omega_{1},\cdots,\omega_{r})$. Similar to (\ref{46}), we have
\bea\label{90}
\VC_{\Gamma_{i;\vec{\omega}}}^{X}
&=&\prod_{k=1}^{l}\varepsilon_{i,k}\prod_{P_{j}\in\Nb(P_{i})}\alpha_{i,j}\int_{\Mbar_{1,r}}\frac{1}
{\prod_{k=1}^{r}(\omega_{k}-\psi_{k})}\nn\\
&&-[x]\Big(\prod_{k=1}^{l}(x+\varepsilon_{i,k})\prod_{P_{j}\in\Nb(P_{i})}(x+\alpha_{i,j})\Big)\cdot
\frac{1}
{24\prod_{k=1}^{r}\omega_{k}}\Big(\sum_{k=1}^{r}\frac{1}{\omega_{k}}\Big)^{r-1}.\nn\\
\eea

Now we consider contributions of the refined decorated rooted trees whose underlying decorated rooted tree is $\Gamma_{i;\vec{\omega}}$.  Define $\widetilde{\mathcal{A}}_{m}([r];\vec{\omega})$ to be the set of 3-tuples $(I,U,V)\in \mathcal{P}([r])^{m}\times [m]\times[m]$ satisfying\\
(i) Writing $I$ as $I=(I_{(1)},\cdots,I_{(m)})$, then $\bigsqcup_{s=1}^{m}I_{(s)}=[r]$;\\
(ii) $U\cap V=\emptyset$;\\
(iii) $|I_{(s)}|=1$ for $s\in U\cup V$, and $|I_{(s)}|\geq 2$ for $s\not\in U\cup V$;\\
(iv) $\omega_{s}$ are equal to each other for $s\in U$, and $\omega_{i}\neq \omega_{s}$ for $s\in U$ and $i\in V$.\\

For $(I,U,V)\in \widetilde{\mathcal{A}}_{m}([r];\vec{\omega})$, denote the common weight $\omega_{s}$ for $s\in U$ by $\omega_{U}$ and suppose $I_{(s)}=\{i_{s}\}$ for $s\in U\cup V$, $I_{U}=\bigsqcup_{s\in U}I_{(s)}$.\\

By the definition of refined decorated rooted trees, it is not hard to see that there is a natural 1-1 correspondence between $\widetilde{\mathcal{A}}_{m}([r];\vec{\omega})$ and $\pi_{\RDRT}^{-1}(\Gamma_{i;\vec{\omega}})$. Thus by (\ref{44}) and (\ref{89}) we have
\bea\label{52}
\VC_{\Gamma_{i;\vec{\omega}}}^{0;X}=\sum_{m\geq 1}\sum_{(I,U,V)\in \widetilde{\mathcal{A}}_{m}([r];\vec{\omega}) }
\VC_{\Gamma_{i;(I,U,V);\vec{\omega}}}^{0;X},
\eea
where
\bea\label{53}
&&\VC_{\Gamma_{i;(I,U,V);\vec{\omega}}}^{0;X}\nn\\
&=&\frac{1}{m!}\int_{\tM_{1,m}\times \mathbb{P}^{|U|-1}}\Bigg(
\frac{\prod_{k=1}^{l}(-\omega_{U}+\varepsilon_{i,k}+H)\prod_{P_{j}\in\Nb(P_{i})}
(-\omega_{U}+\alpha_{i,j}+H)}{\omega_{U}-\tilde{\psi}-H}\nn\\
&&\cdot\prod_{s\in V}\frac{1}{-\omega_{U}+H+\omega_{i_s}}
\prod_{s\in [m]\backslash U\cup V}\int_{\Mbar_{0,I_{(s)}\sqcup \{0_{s}\}}}\frac{1}{(-\omega_{U}-\psi_{0_{s}}+H)\prod_{j\in I_{(s)}}(\omega_{j}-\psi_{j})}\Bigg)\nn\\
&=&\frac{(-1)^{m+1}(m-1)!}{24m!}\int_{\mathbb{P}^{|U|-1}}
\Bigg(\frac{\prod_{k=1}^{l}(H-\omega_{U}+\varepsilon_{i,k})\prod_{P_{j}\in\Nb(P_{i})}(H-\omega_{U}+\alpha_{i,j})}
{(H-\omega_{U})^{2m+1-|U|}\prod_{k\in [r]\backslash I_{U}}\omega_{k}}\nn\\
&&\cdot\prod_{s\in[m]\backslash U}\Big(\frac{1}{H-\omega_{U}}+\sum_{j\in I_{(s)}}\frac{1}{\omega_{j}}\Big)^{|I_{(s)}|-2}\Bigg)\nn\\
&=&\frac{(-1)^{m+1}}{24m}[y^{|U|-1}]\Bigg(\frac{\prod_{k=1}^{l}(y-\omega_{U}+\varepsilon_{i,k})
\prod_{P_{j}\in\Nb(P_{i})}(y-\omega_{U}+\alpha_{i,j})}
{(y-\omega_{U})^{2m+1-|U|}\prod_{k\in [r]\backslash I_{U}}\omega_{k}}\nn\\
&&\cdot\prod_{s\in[m]\backslash U}\Big(\frac{1}{y-\omega_{U}}+\sum_{j\in I_{(s)}}\frac{1}{\omega_{j}}\Big)^{|I_{(s)}|-2}\Bigg).
\eea

Note that there is a canonical projection
\ben
\widetilde{\pi}:\widetilde{\mathcal{A}}_{m}([r];\vec{\omega})\rightarrow \mathcal{A}_{m}([r])
\een
defined by $\pi(I,U,V)=I$. We are going to write
\bea\label{91}
\frac{(-1)^{m}}{24m}\sum_{p=0}^{n+l-1-2m}\VC_{\Gamma_{i;\vec{\omega}}}^{(m,p,n+l-1-2m-p),X}
 \eea
 as a sum over contributions from $I\in\mathcal{A}_{m}([r])$, and then compare the contribution of $I$ to (\ref{91}) and the sum of the $\VC_{\Gamma_{i;(I,U,V);\vec{\omega}}}^{0;X}$ for  $(I,U,V)\in\widetilde{\pi}^{-1}(I)$ .\\

Let us temporarily fix $I\in \mathcal{A}_{m}([r])$, and  suppose\\
(i) $|I_{s}|=1$ for $s\in \mathfrak{W}$ and $|I_{s}|\geq 2$ for $s\in [m]\backslash \mathfrak{W}$, where $\mathfrak{W}$ is a subset of $[m]$;\\
(ii) $\mathfrak{W}=\bigsqcup_{k\in \mathfrak{K}}U_{k}$ is a partition \emph{according to weight}. Precisely speaking, for every $s\in \mathfrak{W}$, suppose $I_{(s)}=\{i_{s}\}$, then for $i_{a},i_{b}\in I_{\mathfrak{W}}:=\bigsqcup_{s\in W}I_{(s)}$, we have $\omega_{i_a}=\omega_{i_b}$ if and only if $a$ and $b$ belong to the same $U_{k}$ for some $k\in\mathfrak{K}$.\\

We denote $I_{U_{k}}=\bigsqcup_{s\in U_{k}}I_{(s)}$ for $k\in \mathfrak{K}$, and  $I_{\mathfrak{W}}=\sum_{k\in\mathfrak{K}}I_{U_{k}}=\bigsqcup_{s\in W}I_{(s)}$.
Then we have
\bea\label{56}
&&\frac{(-1)^{m}}{24m}\sum_{p=0}^{n+l-1-2m}\VC_{\Gamma_{i;\vec{\omega}}}^{(m,p,n+l-1-2m-p),X}\nn\\
&=&\sum_{I\in\mathcal{A}_{m}([r])}\mathrm{Cont}_{I}\Big(
\frac{(-1)^{m}}{24m}\sum_{p=0}^{n+l-1-2m}\VC_{\Gamma_{i;\vec{\omega}}}^{(m,p,n+l-1-2m-p),X}\Big),\nn\\
\eea
where the contribution of $I\in \mathcal{A}_{m}([r])$ to
\bea\label{54}
\frac{(-1)^{m}}{24m}\sum_{p=0}^{n+l-1-2m}\VC_{\Gamma_{i;\vec{\omega}}}^{(m,p,n+l-1-2m-p),X}
\eea
is
\bea\label{55}
&&\sum_{I\in\mathcal{A}_{m}([r])}\mathrm{Cont}_{I}\Big(
\frac{(-1)^{m}}{24m}\sum_{p=0}^{n+l-1-2m}\VC_{\Gamma_{i;\vec{\omega}}}^{(m,p,n+l-1-2m-p),X}\Big),\nn\\
&=&\frac{(-1)^{m}}{24m}\sum_{p=0}^{n+l-1-2m}
\Bigg[\sum_{\stackrel{\sum_{s\in \mathfrak{W}}p_{i_{s}}=p}{\forall p_{i_{s}}\geq 0}}\prod_{s\in \mathfrak{W}}(-\omega_{i_{s}})^{p_{i_{s}}}\nn\\
&&\cdot
\int_{\prod_{s\in [m]\backslash\mathfrak{W}}\Mbar_{0,I_{(s)}\sqcup \{0_{s}\}}}
[x^{n+l-1-2m-p}]\Bigg(\frac{\prod_{t=1}^{l}(1+\varepsilon_{i,t}x)\prod_{P_{j}\in\Nb(P_{i})}(1+\alpha_{i,j}x)}{\prod_{k\in [r]\backslash I_{\mathfrak{W}}}(\omega_{k}-\psi_{k})\cdot \prod_{s\in[m]\backslash\mathfrak{W}}(1-x\psi_{0_{s}})}\Bigg)\Bigg]\nn\\
&=&\frac{(-1)^{m}}{24m}
\int_{\prod_{s\in [m]\backslash\mathfrak{W}}\Mbar_{0,I_{(s)}\sqcup \{0_{s}\}}}\nn\\
&&[x^{n+l-1-2m}]\Bigg(\frac{\prod_{t=1}^{l}(1+\varepsilon_{i,t}x)\prod_{P_{j}\in\Nb(P_{i})}
(1+\alpha_{i,j}x)}{\prod_{k\in\mathfrak{K}}(1+x\omega_{U_k})^{|U_{k}|}\prod_{k=[r]\backslash I_{\mathfrak{W}}}(\omega_{k}-\psi_{k})\cdot \prod_{s\in[m]\backslash\mathfrak{W}}(1-x\psi_{0_{s}})}\Bigg)\nn\\
&=&\frac{(-1)^{m}}{24m}\Res_{x=0}\Bigg(
\int_{\prod_{s\in [m]\backslash\mathfrak{W}}\Mbar_{0,I_{(s)}\sqcup \{0_{s}\}}}\nn\\
&&
\frac{\prod_{t=1}^{l}(1+\varepsilon_{i,t}x)\prod_{P_{j}\in\Nb(P_{i})}(1+\alpha_{i,j}x)}
{x^{n+l-2m}\prod_{k\in\mathfrak{K}}(1+x\omega_{U_k})^{|U_{k}|}\prod_{k\in [r]\backslash I_{\mathfrak{W}}}(\omega_{k}-\psi_{k})\cdot \prod_{s\in[m]\backslash\mathfrak{W}}(1-x\psi_{0_{s}})}\Bigg)\nn\\
&=&\frac{(-1)^{m}}{24m}\Big(-\Res_{x=\infty}-\sum_{k\in\mathfrak{K}}\Res_{x=-\frac{1}{\omega_{U_{k}}}}\Big)\Bigg(
\int_{\prod_{s\in [m]\backslash\mathfrak{W}}\Mbar_{0,I_{(s)}\sqcup \{0_{s}\}}}\nn\\
&&
\frac{\prod_{t=1}^{l}(1+\varepsilon_{i,t}x)\prod_{P_{j}\in\Nb(P_{i})}(1+\alpha_{i,j}x)}
{x^{n+l-2m}\prod_{k\in\mathfrak{K}}(1+x\omega_{U_k})^{|U_{k}|}\prod_{k\in [r]\backslash I_{\mathfrak{W}}}(\omega_{k}-\psi_{k})\cdot \prod_{s\in[m]\backslash\mathfrak{W}}(1-x\psi_{0_{s}})}\Bigg)\nn\\
&=&\frac{(-1)^{m}}{24m}\Big(\Res_{x=0}+\sum_{k\in\mathfrak{K}}\Res_{x=-\omega_{k}}\Big)\Bigg(
\int_{\prod_{s\in [m]\backslash\mathfrak{W}}\Mbar_{0,I_{(s)}\sqcup \{0_{s}\}}}\nn\\
&&
\frac{\prod_{t=1}^{l}(x+\varepsilon_{i,t})\prod_{P_{j}\in\Nb(P_{i})}(x+\alpha_{i,j})}
{x^{m+1}\prod_{k\in\mathfrak{K}}(x+\omega_{U_k})^{|U_{k}|}\prod_{k\in [r]\backslash I_{\mathfrak{W}}}(\omega_{k}-\psi_{k})\cdot \prod_{s\in[m]\backslash\mathfrak{W}}(x-\psi_{0_{s}})}\Bigg)\nn\\
&=&\frac{(-1)^{m}}{ 24m}
[x^{m}]\Bigg(\int_{\prod_{s\in [m]\backslash\mathfrak{W}}\Mbar_{0,I_{(s)}\sqcup \{0_{s}\}}}\nn\\
&&\frac{\prod_{t=1}^{l}(x+\varepsilon_{i,t})\prod_{P_{j}\in\Nb(P_{i})}(x+\alpha_{i,j})}
{\prod_{k\in\mathfrak{K}}(x+\omega_{U_k})^{|U_{k}|}\prod_{k\in [r]\backslash I_{\mathfrak{W}}}(\omega_{k}-\psi_{k})\cdot \prod_{s\in[m]\backslash\mathfrak{W}}(x-\psi_{0_{s}})}\Bigg)\nn\\
&&+\frac{(-1)^{m}}{ 24m}\sum_{k\in\mathfrak{K}}
[y^{|U_{k}|-1}]\Bigg(\int_{\prod_{s\in [m]\backslash\mathfrak{W}}\Mbar_{0,I_{(s)}\sqcup \{0_{s}\}}}\nn\\
&&\frac{\prod_{t=1}^{l}(y-\omega_{U_{k}}+\varepsilon_{i,t})\prod_{P_{j}\in\Nb(P_{i})}(y-\omega_{U_{k}}+\alpha_{i,j})}
{(y-\omega_{U_{k}})^{m+1}
\prod_{j\in\mathfrak{K}\backslash\{k\}}(y-\omega_{U_{k}}+\omega_{U_j})^{|U_{j}|}\prod_{k\in [r]\backslash I_{\mathfrak{W}}}(\omega_{k}-\psi_{k}) \prod_{s\in[m]\backslash\mathfrak{W}}(y-\omega_{U_{k}}-\psi_{0_{s}})}\Bigg)\nn\\
&=&\frac{(-1)^{m}}{24m}
[x^{m}]\Bigg(\frac{\prod_{t=1}^{l}(x+\varepsilon_{i,t})\prod_{P_{j}\in\Nb(P_{i})}(x+\alpha_{i,j})}
{x^{m}\prod_{k=1}^{r}\omega_{k}}
\cdot\prod_{s=1}^{m}\Big(\frac{1}{x}+\sum_{j\in I_{(s)}}\frac{1}{\omega_{j}}\Big)^{|I_{(s)}|-2}\Bigg)\nn\\
&&+
\frac{(-1)^{m}}{24m}\sum_{k\in\mathfrak{K}}
[y^{|U_{k}|-1}]\Bigg(\frac{\prod_{t=1}^{l}(y-\omega_{U_{k}}-a_{t}\alpha_{i})
\prod_{P_{j}\in\Nb(P_{i})}(y-\omega_{U_{k}}+\alpha_{i,j})}
{(y-\omega_{U_{k}})^{2m+1-|U_{k}|}\prod_{j\in [r]\backslash I_{U_{k}}}\omega_{j}}\nn\\
&&\cdot\prod_{s\in[m]\backslash U_{k}}\Big(\frac{1}{y-\omega_{U_{k}}}+\sum_{j\in I_{(s)}}\frac{1}{\omega_{j}}\Big)^{|I_{(s)}|-2}\Bigg).
\eea
Comparing the last expressions of (\ref{53}) and (\ref{55}), we see
\bea\label{92}
&&\sum_{(I,U,V)\in\widetilde{\pi}^{-1}(I)}\VC_{\Gamma_{i;(I,U,V);\vec{\omega}}}^{0;X}
+\sum_{I\in\mathcal{A}_{m}([r])}\mathrm{Cont}_{I}\Big(
\frac{(-1)^{m}}{24m}\sum_{p=0}^{n+l-1-2m}\VC_{\Gamma_{i;\vec{\omega}}}^{(m,p,n+l-1-2m-p),X}\Big)\nn\\
&=&\frac{(-1)^{m}}{24m}
[x^{m}]\Bigg(\frac{\prod_{t=1}^{l}(x+\varepsilon_{i,t})\prod_{P_{j}\in\Nb(P_{i})}(x+\alpha_{i,j})}
{x^{m}\prod_{k=1}^{r}\omega_{k}}
\cdot\prod_{s=1}^{m}\Big(\frac{1}{x}+\sum_{j\in I_{(s)}}\frac{1}{\omega_{j}}\Big)^{|I_{(s)}|-2}\Bigg).
\eea
Summing over $I\in \mathcal{A}_{m}([r])$ and $m\geq 1$, by proposition \ref{49}, we obtain (\ref{45}).

\subsection{Localized standard versus reduced formula for primary  insertions}
Let $\Gamma_{i;\vec{\omega};J}\in\DS_{J}^{d}$. From the string equation it is easily seen that
\bea\label{96}
\VC_{\Gamma_{i;J;\vec{\omega}}}^{X}
&=&\Big(\sum_{j=1}^{r}\frac{1}{\omega_{j}}\Big)^{|J|}\cdot\Bigg[\prod_{k=1}^{l}\varepsilon_{i,k}\prod_{P_{j}\in\Nb(P_{i})}\alpha_{i,j}\int_{\Mbar_{1,r}}\frac{1}
{\prod_{k=1}^{r}(\omega_{k}-\psi_{k})}\nn\\
&&-[x]\Big(\prod_{k=1}^{l}(x+\varepsilon_{i,k})\prod_{P_{j}\in\Nb(P_{i})}(x+\alpha_{i,j})\Big)\cdot
\frac{1}
{24\prod_{k=1}^{r}\omega_{k}}\Big(\sum_{k=1}^{r}\frac{1}{\omega_{k}}\Big)^{r-1}\Bigg].\nn\\
\eea
Define $\widetilde{\mathcal{A}}_{m}([r],J;\vec{\omega})$ to be the set of 4-tuples $(I,K,U,V)\in \mathcal{P}([r])^{m}\times\mathcal{P}(J)^{m}\times [m]\times[m]$ satisfying\\
(i) Writing $I$ as $I=(I_{(1)},\cdots,I_{(m)})$, then $\bigsqcup_{s=1}^{m}I_{(s)}=[r]$;\\
(ii) Writing $J$ as $K=(K_{(1)},\cdots,K_{(m)})$, then $\bigsqcup_{s=1}^{m}K_{(s)}=J$;\\
(iii) $U\cap V=\emptyset$;\\
(iv) $|I_{(s)}|=1$, $K_{(s)}=\emptyset$ for $s\in U\cup V$, and $|I_{(s)}|+|K_{(s)}|\geq 2$ for $s\not\in U\cup V$;\\
(v) $\omega_{s}$ are equal to each other for $s\in U$, and $\omega_{i}\neq \omega_{s}$ for $s\in U$ and $i\in V$.\\

For $(I,K,U,V)\in \widetilde{\mathcal{A}}_{m}([r],J;\vec{\omega})$, denote the common weight $\omega_{s}$ for $s\in U$ by $\omega_{U}$ and suppose $I_{(s)}=\{i_{s}\}$ for $s\in U\cup V$, $I_{U}=\bigsqcup_{s\in U}I_{(s)}$. There is a natural 1-1 correspondence between $\coprod_{J^{\prime}\subset J}\widetilde{\mathcal{A}}_{m}([r],J-J^{\prime};\vec{\omega})$ and $\pi_{\RDRT}^{-1}(\Gamma_{i;\vec{\omega};J})$; in this correspondence, the subset $J^{\prime}$ corresponds to $\mathfrak{e}^{-1}(v_{0})$ in a refined decorated rooted tree. By (\ref{44}) and (\ref{89}) we have
\bea\label{93}
\VC_{\Gamma_{i;\vec{\omega};J}}^{0;X}=\sum_{J^{\prime}\subset J}\sum_{m\geq 1}\sum_{(I,K,U,V)\in \widetilde{\mathcal{A}}_{m}([r],J-J^{\prime};\vec{\omega})}
\VC_{\Gamma_{i;J^{\prime},(I,K,U,V);\vec{\omega}}}^{0;X},
\eea
where for $J^{\prime}\subset J$ and $(I,K;U,V)\in \widetilde{\mathcal{A}}_{m}([r],J-J^{\prime};\vec{\omega})$,
\bea\label{82}
&&\VC_{\Gamma_{i;J^{\prime},(I,K;U,V);\vec{\omega}}}^{0;X}\nn\\
&=&\frac{1}{m!}\int_{\tM_{1,(m,J^{\prime})}\times \mathbb{P}^{|U|-1}}\Bigg(\frac{\prod_{k=1}^{l}(-\omega_{U}+\varepsilon_{i,k}+H)\prod_{P_{j}\in\Nb(P_{i})}
(-\omega_{U}+\alpha_{i,j}+H)}{\omega_{U}-\tilde{\psi}-H}\nn\\
&&\cdot\prod_{s\in V}\frac{1}{-\omega_{U}+H+\omega_{i_s}}
\prod_{s\in [m]\backslash U\cup V}\int_{\Mbar_{0,I_{(s)}\sqcup K_{(s)}\sqcup \{0_{s}\}}}\frac{1}{(-\omega_{U}-\psi_{0_{s}}+H)\prod_{j\in I_{(s)}}(\omega_{j}-\psi_{j})}\Bigg)\nn\\
&=&\frac{(-1)^{m+|J^{\prime}|+1}m^{|J^{\prime}|}}{24m}
\int_{\mathbb{P}^{|U|-1}}\Bigg(\frac{\prod_{k=1}^{l}(H-\omega_{U}+\varepsilon_{i,k})
\prod_{P_{j}\in\Nb(P_{i})}(H-\omega_{U}+\alpha_{i,j})}
{(H-\omega_{U})^{2m+1+|J^{\prime}|-|U|}\prod_{k\in [r]\backslash I_{U}}\omega_{k}}\nn\\
&&\cdot\prod_{s\in[m]\backslash U}
\Big(\frac{1}{H-\omega_{U}}+\sum_{j\in I_{(s)}}\frac{1}{\omega_{j}}\Big)^{|I_{(s)}|+|K_{(s)}|-2}\Bigg)\nn\\
&=&\frac{(-1)^{m+|J^{\prime}|+1}m^{|J^{\prime}|}}{24m}[y^{|U|-1}]\Bigg(\frac{\prod_{k=1}^{l}(y-\omega_{U}+\varepsilon_{i,k})
\prod_{P_{j}\in\Nb(P_{i})}(y-\omega_{U}+\alpha_{i,j})}
{(y-\omega_{U})^{2m+1+|J^{\prime}|-|U|}\prod_{k\in [r]\backslash I_{U}}\omega_{k}}\nn\\
&&\cdot\prod_{s\in[m]\backslash U}\Big(\frac{1}{y-\omega_{U}}+\sum_{j\in I_{(s)}}\frac{1}{\omega_{j}}\Big)^{|I_{(s)}|+|K_{(s)}|-2}\Bigg).
\eea

There is a canonical projection
\ben
\widetilde{\pi}:\widetilde{\mathcal{A}}_{m}([r],J;\vec{\omega})\rightarrow \mathcal{A}_{m}([r],J)
\een
defined by $\pi(I,K,U,V)=(I,K)$. Let us temporarily fix $J^{\prime}$ and $(I,K)\in \mathcal{A}_{m}([r],J-J^{\prime})$, and  suppose\\
(i) $|I_{s}|=1$ and $K_{(s)}=\emptyset$ for $s\in \mathfrak{W}$ and $|I_{s}|+|K_{(s)}|\geq 2$ for $s\in [m]\backslash \mathfrak{W}$, where $\mathfrak{W}$ is a subset of $[m]$;\\
(ii) $\mathfrak{W}=\bigsqcup_{k\in \mathfrak{K}}U_{k}$ is a partition \emph{according to weight}. Precisely speaking, for every $s\in \mathfrak{W}$, suppose $I_{(s)}=\{i_{s}\}$, then for $i_{a},i_{b}\in I_{\mathfrak{W}}:=\bigsqcup_{s\in W}I_{(s)}$, we have $\omega_{i_a}=\omega_{i_b}$ if and only if $a$ and $b$ belong to the same $U_{k}$ for some $k\in\mathfrak{K}$.\\

We denote $I_{U_{k}}=\bigsqcup_{s\in U_{k}}I_{(s)}$ for $k\in \mathfrak{K}$, and  $I_{\mathfrak{W}}=\sum_{k\in\mathfrak{K}}I_{U_{k}}=\bigsqcup_{s\in W}I_{(s)}$. Similar to (\ref{56})-(\ref{55}), we have
\bea\label{94}
&&\frac{(-1)^{m+|J^{\prime}|}m^{|J^{\prime}|}(m-1)!}{24m!}
\sum_{p=0}^{n+l-1-2m}\VC_{\Gamma_{i;\vec{\omega};J},J-J^{\prime}}^{(m,p,n+l-1-2m-|J^{\prime}|-p),X}\nn\\
&=&\sum_{(I,K)\in\mathcal{A}_{m}([r],J-J^{\prime})}\mathrm{Cont}_{(I,K)}\Big(
\frac{(-1)^{m+|J^{\prime}|}m^{|J^{\prime}|}}{24m}
\sum_{p=0}^{n+l-1-2m-|J^{\prime}|}\VC_{\Gamma_{i;\vec{\omega};J},J-J^{\prime}}^{(m,p,n+l-1-2m-|J^{\prime}|-p),X}\Big),\nn\\
\eea
where
\bea\label{81}
&&\mathrm{Cont}_{(I,K)}\Big(
\frac{(-1)^{m+|J^{\prime}|}m^{|J^{\prime}|}}{24m}
\sum_{p=0}^{n+l-1-2m-|J^{\prime}|}\VC_{\Gamma_{i;\vec{\omega};J},J-J^{\prime}}^{(m,p,n+l-1-2m-|J^{\prime}|-p),X}\Big)\nn\\
&=&\frac{(-1)^{m+|J^{\prime}|}m^{|J^{\prime}|}}{24m}\sum_{p=0}^{n+l-1-2m-|J^{\prime}|}
\Bigg[\sum_{\stackrel{\sum_{s\in \mathfrak{W}}p_{i_{s}}=p}{\forall p_{i_{s}}\geq 0}}\prod_{s\in \mathfrak{W}}(-\omega_{i_{s}})^{p_{i_{s}}}\nn\\
&&\cdot
\int_{\prod_{s\in [m]\backslash\mathfrak{W}}\Mbar_{0,I_{(s)}\sqcup K_{(s)}\sqcup \{0_{s}\}}}
[x^{n+l-1-2m-|J^{\prime}|-p}]\Bigg(\frac{\prod_{t=1}^{l}(1+\varepsilon_{i,t}x)\prod_{P_{j}\in\Nb(P_{i})}(1+\alpha_{i,j}x)}{\prod_{k\in [r]\backslash I_{\mathfrak{W}}}(\omega_{k}-\psi_{k})\cdot \prod_{s\in[m]\backslash\mathfrak{W}}(1-x\psi_{0_{s}})}\Bigg)\Bigg]\nn\\
&=&\frac{(-1)^{m+|J^{\prime}|}m^{|J^{\prime}|}}{24m}
[x^{m+|J^{\prime}|}]\Bigg(\frac{\prod_{t=1}^{l}(x+\varepsilon_{i,t})\prod_{P_{j}\in\Nb(P_{i})}(x+\alpha_{i,j})}
{x^{m}\prod_{k=1}^{r}\omega_{k}}\nn\\
&&\cdot\prod_{s=1}^{m}\Big(\frac{1}{x}+\sum_{j\in I_{(s)}}\frac{1}{\omega_{j}}\Big)^{|I_{(s)}|+|K_{(s)}|-2}\Bigg)\nn\\
&&+
\frac{(-1)^{m+|J^{\prime}|}m^{|J^{\prime}|}}{24m}\sum_{k\in\mathfrak{K}}
[y^{|U_{k}|-1}]\Bigg(\frac{\prod_{t=1}^{l}(y-\omega_{U_{k}}+\varepsilon_{i,t})
\prod_{P_{j}\in\Nb(P_{i})}(y-\omega_{U_{k}}+\alpha_{i,j})}
{(y-\omega_{U_{k}})^{2m+1+|J^{\prime}|-|U_{k}|}\prod_{j\in [r]\backslash I_{U_{k}}}\omega_{j}}\nn\\
&&\cdot\prod_{s\in[m]\backslash U_{k}}\Big(\frac{1}{y-\omega_{U_{k}}}+\sum_{j\in I_{(s)}}\frac{1}{\omega_{j}}\Big)^{|I_{(s)}|+|K_{(s)}|-2}\Bigg).
\eea
So
\bea\label{95}
&&\sum_{(I,K,U,V)\in\widetilde{\pi}^{-1}(I,K)}\VC_{\Gamma_{i;(I,K,U,V);\vec{\omega}}}^{0;X}\nn\\
&&+\sum_{(I,K)\in\mathcal{A}_{m}([r],J-J^{\prime})}\mathrm{Cont}_{(I,K)}\Big(
\frac{(-1)^{m+|J^{\prime}|}m^{|J^{\prime}|}}{24m}
\sum_{p=0}^{n+l-1-2m-|J^{\prime}|}\VC_{\Gamma_{i;\vec{\omega};J},J-J^{\prime}}^{(m,p,n+l-1-2m-|J^{\prime}|-p),X}\Big)\nn\\
&=&\frac{(-1)^{m+|J^{\prime}|}m^{|J^{\prime}|}}{24m}
[x^{m+|J^{\prime}|}]\Bigg(\frac{\prod_{t=1}^{l}(x+\varepsilon_{i,t})\prod_{P_{j}\in\Nb(P_{i})}(x+\alpha_{i,j})}
{x^{m}\prod_{k=1}^{r}\omega_{k}}
\cdot\prod_{s=1}^{m}\Big(\frac{1}{x}+\sum_{j\in I_{(s)}}\frac{1}{\omega_{j}}\Big)^{|I_{(s)}|-2}\Bigg),\nn\\
\eea
Fixing $I\in \mathcal{A}_{m}([r])$, summing (\ref{95}) over $J^{\prime}$ and $K\in\mathcal{A}_{m}^{0}(J-J^{\prime})$ we obtain
\bea\label{83}
&&\sum_{J^{\prime}\subset J}\frac{(-1)^{m+|J^{\prime}|}m^{|J^{\prime}|}}{24m}\sum_{I\in \mathcal{A}_{m}([r]) }[x^{m+|J^{\prime}|}]\Bigg\{\frac{\prod_{k=1}^{l}(x+\varepsilon_{i,k})\prod_{P_{j}\in\Nb(P_{i})}(x+\alpha_{i,j})}
{x^{m}\prod_{k=1}^{r}\omega_{k}}\nn\\
&&\cdot\Big(\frac{m}{x}+\sum_{j=1}^{r}\frac{1}{\omega_{j}}\Big)^{|J|-|J^{\prime}|}\prod_{s=1}^{m}\Big(\frac{1}{x}+\sum_{j\in I_{(s)}}\frac{1}{\omega_{j}}\Big)^{|I_{(s)}|-2}\Bigg\}\nn\\
&=&\frac{(-1)^{m}}{24m}[x^{m}]\Bigg\{\sum_{I\in \mathcal{A}_{m}([r])} \frac{\prod_{k=1}^{l}(x+\varepsilon_{i,k})\prod_{P_{j}\in\Nb(P_{i})}(x+\alpha_{i,j})}
{x^{m}\prod_{k=1}^{r}\omega_{k}}\prod_{s=1}^{m}\Big(\frac{1}{x}+\sum_{j\in I_{(s)}}\frac{1}{\omega_{j}}\Big)^{|I_{(s)}|-2}\nn\\
&&\cdot\Bigg(\sum_{J^{\prime}\subset J}(-1)^{|J^{\prime}|}
\Big(\frac{m}{x}\Big)^{|J^{\prime}|}\Big(\frac{m}{x}+\sum_{j=1}^{r}\frac{1}{\omega_{j}}\Big)^{|J|-|J^{\prime}|}
\Bigg)\Bigg\}\nn\\
&=&\Big(\sum_{j=1}^{r}\frac{1}{\omega_{j}}\Big)^{|J|}\frac{(-1)^{m}}{24m}[x^{m}]\Bigg\{\sum_{I\in \mathcal{A}_{m}([r])} \frac{\prod_{k=1}^{l}(x+\varepsilon_{i,k})\prod_{P_{j}\in\Nb(P_{i})}(x+\alpha_{i,j})}
{x^{m}\prod_{k=1}^{r}\omega_{k}}\nn\\
&&\prod_{s=1}^{m}\Big(\frac{1}{x}+\sum_{j\in I_{(s)}}\frac{1}{\omega_{j}}\Big)^{|I_{(s)}|-2}\Bigg\}.
\eea
Comparing with (\ref{96}), by proposition \ref{49}, we obtain (\ref{45}).

\subsection{An alternative form of LSvR}
In this section, we give another form of LSvR, which corresponds to \cite[theorem 1B]{ZingerSvR}.
For this, we need to define the classes $\tilde{\eta}_{p}$ over $\Mbar_{(m,J)}(\mathbb{P}^{n-1},d)$. Let us first recall the definition of $\tilde{\psi}$-classes (\cite{ZingerSvR}). For $j\in J$, let $\tilde{\psi}_{j}\in H^{2}\big(\Mbar_{g,J}(Y,d)\big)$ be the cohomology class defined by pulling back the $\psi$-class on
$\Mbar_{g,1}(Y,d)$ via the forgetting map
\ben
\Mbar_{g,J}(Y,d)\rightarrow \Mbar_{g,1}(Y,d)
\een
which drops the marked points except the $j$-th one and then contracting the unstable components. Let
\bea\label{98}
\langle \mu_{1}\tilde{\psi}_{1}^{c_{1}},\cdots,\mu_{k}\tilde{\psi}_{k}^{c_{k}}\rangle_{g,k,d}^{Y}=
 \bigwedge_{j=1}^{k}\mu_{j}\tilde{\psi}_{j}^{c_{j}}\cap[\Mbar_{g,k}(Y,d)]^{\vir}.
\eea
The invariants of this type has the advantage that the divisor equation  takes a  simple form.
\begin{lemma}\label{191}
For $\gamma\in H^{2}(Y) $, we have
\bea\label{287}
\langle \gamma, \mu_{1}\tilde{\psi}_{1}^{c_{1}},\cdots,\mu_{k}\tilde{\psi}_{k}^{c_{k}}\rangle_{g,k+1,d}^{Y}
= (\gamma\cap d)\langle \mu_{1}\tilde{\psi}_{1}^{c_{1}},\cdots,\mu_{k}\tilde{\psi}_{k}^{c_{k}}\rangle_{g,k,d}^{Y}.
\eea
\end{lemma}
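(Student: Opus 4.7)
The plan is to reduce the identity to the usual divisor equation, exploiting the fact that the $\tilde{\psi}_j$-classes are, by construction, pullbacks under forgetful maps that drop all but the $j$-th marked point, so they behave strictly compatibly under further forgetting.

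First I would introduce the forgetful map $\pi\colon \Mbar_{g,k+1}(Y,d)\to\Mbar_{g,k}(Y,d)$ that drops the marked point carrying the divisor insertion $\gamma$ (followed by contracting any destabilized component). The key observation is that for each $j\in\{1,\dots,k\}$, the forgetful morphism
\[
f_j^{(k+1)}\colon\Mbar_{g,k+1}(Y,d)\to\Mbar_{g,1}(Y,d)
\]
defining $\tilde{\psi}_j$ on the source factors as $f_j^{(k)}\circ\pi$, where $f_j^{(k)}$ is the analogous map on $\Mbar_{g,k}(Y,d)$. Consequently $\tilde{\psi}_j^{(k+1)}=\pi^{*}\tilde{\psi}_j^{(k)}$; likewise $\ev_j^{(k+1)}=\ev_j^{(k)}\circ\pi$ for $j\geq 1$, so $\ev_j^{*}\mu_j$ is also pulled back along $\pi$. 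In contrast to the classical $\psi_j$, there is no correction divisor term here, precisely because $\tilde{\psi}_j$ forgets the newly added point as part of its definition.

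Next I would invoke the standard compatibility of virtual fundamental classes under forgetful maps for stable maps,
\[
[\Mbar_{g,k+1}(Y,d)]^{\vir}=\pi^{!}[\Mbar_{g,k}(Y,d)]^{\vir},
\]
together with the projection formula, to push everything down to $\Mbar_{g,k}(Y,d)$. After this pushforward, the entire $\tilde{\psi}$-monomial and the other insertions, being pullbacks, come out of $\pi_{*}$, and the remaining computation is
\[
\pi_{*}\bigl(\ev_{0}^{*}\gamma\cap [\Mbar_{g,k+1}(Y,d)]^{\vir}\bigr)=(\gamma\cap d)\,[\Mbar_{g,k}(Y,d)]^{\vir}.
\]
This last identity is the content of the ordinary divisor equation: at the level of the universal curve, pushing forward $\ev^{*}\gamma$ for a divisor class $\gamma$ returns the degree $\gamma\cap d$ of the pullback bundle on the fiber.

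The only mildly technical step is justifying $\pi^{!}[\Mbar_{g,k}(Y,d)]^{\vir}=[\Mbar_{g,k+1}(Y,d)]^{\vir}$ compatibly with the projection formula when $\gamma$ is not equivariantly trivialized; in the algebraic-GKM/equivariant setting this follows from the functoriality of the perfect obstruction theory for the forgetful morphism (Behrend--Fantechi, or Manolache's virtual pullback), which is the same input used for the classical divisor equation. The main (and only) subtlety, which I would single out, is to verify that pulling back via $f_j^{(k)}\circ\pi$ really does commute with the contraction of unstable rational tails inherent in the definition of $\tilde{\psi}_j$; this is a direct check on the universal curve, where any rational bubble created by adding the divisor point is contracted before one computes $f_j^{(k+1)}$.
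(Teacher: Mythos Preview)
Your proof is correct and follows essentially the same approach as the paper: the key point, which you identify explicitly, is that $\tilde{\psi}_j$ on $\Mbar_{g,k+1}(Y,d)$ is the pullback of $\tilde{\psi}_j$ on $\Mbar_{g,k}(Y,d)$ under the forgetful map, so the usual correction terms of the form $\langle\cdots,(\gamma\wedge\mu_j)\psi^{c_j-1},\cdots\rangle$ in the divisor equation are absent. The paper states this in one sentence and refers to Manin for the standard argument; you have simply spelled out the projection-formula mechanics in more detail.
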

\begin{proof} The proof is similar to the usual one, see for example of \cite[page 264]{Manin}; since $\tilde{\psi}_{j}$ are defined by pulling back from the moduli spaces with less marked points, there is no additional terms with insertions of the form $\langle \cdots, (\gamma\wedge\mu_{j})\psi^{c_{j}-1},\cdots\rangle_{g,k,d}^{Y}$.
\end{proof}

We define $\tilde{\eta}_{p}\in H^{2}\big(\Mbar_{(m,J)}(Y,d)\big)$ by the generating function (restricted to each component $\Mbar_{(m;J_{1},\cdots,J_{m})}(Y,\mathbf{d})$)
\bea\label{99}
\sum_{p=0}^{\infty}x^{p}\tilde{\eta}_{p}=\prod_{s\in[m]}\frac{1}{1-x\pi_{i}^{*}\tilde{\psi}_{0_{s}}}.
\eea
 Let
\bea\label{100}
\langle \tilde{\eta}_{p}\mu_{0};\mu_{1},\cdots,\mu_{|J|}\rangle_{(m,J,d)}^{Y}=
\frac{1}{m!}\tilde{\eta}_{p}\wedge\ev_{0}^{*}\big(\mu_{0}\big)\bigwedge_{j\in J}\ev_{j}^{*}(\mu_{j})\cap [\Mbar_{(m,J)}(Y,d)]^{\vir}.
\eea

From (\ref{287}) it is straightforward to deduce
\begin{lemma}\label{192}
For $\gamma\in H^{2}(Y) $, we have
\bea\label{101}
\langle \tilde{\eta}_{p}\mu_{0};\gamma,\mu_{1},\cdots,\mu_{|J|}\rangle_{(m,J\sqcup\{1\},d)}^{Y}
= (\gamma\cap d)\langle \tilde{\eta}_{p}\mu_{0};\mu_{1},\cdots,\mu_{|J|}\rangle_{(m,J,d)}^{Y}.
\eea
\end{lemma}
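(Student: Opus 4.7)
The plan is to reduce to the divisor equation for $\tilde\psi$-classes in genus zero (lemma \ref{191}) by unfolding the disjoint-union decomposition (\ref{58}) that appears in the definition of $\Mbar_{(m,J\sqcup\{1\})}(Y,d)$.

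First I would split $\Mbar_{(m,J\sqcup\{1\})}(Y,d)$ into pieces according to which of the $m$ colors contains the new marked point~$1$. For a fixed $s\in[m]$, a splitting $J=J_1\sqcup\cdots\sqcup J_m$, and $\mathbf{d}=(d_1,\ldots,d_m)$ with $d_1+\cdots+d_m=d$, the corresponding piece is $\triangle_Y^{!}$ applied to the product of $\Mbar_{0,\{0_s\}\sqcup J_s\sqcup\{1\}}(Y,d_s)$ with $\prod_{r\ne s}\Mbar_{0,\{0_r\}\sqcup J_r}(Y,d_r)$, and this is where $\ev_1$ lives (as the evaluation at $1$ on the $s$-th factor).

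Next I would observe that the only pieces of the integrand which see the point $1$ and the $s$-th factor are $\ev_1^{*}\gamma$ and the local factor $\frac{1}{1-x\pi_s^{*}\tilde\psi_{0_s}}$ coming from $\tilde\eta_p$. Since $\tilde\psi_{0_s}$ is by construction pulled back along the forgetful map that drops the point $1$, lemma \ref{191} applies on the $s$-th genus-zero factor, with $\gamma$ as the divisor insertion, to pull out $(\gamma\cap d_s)$ and replace $\Mbar_{0,\{0_s\}\sqcup J_s\sqcup\{1\}}(Y,d_s)$ by $\Mbar_{0,\{0_s\}\sqcup J_s}(Y,d_s)$. After assembling via $\triangle_Y^{!}$, this identifies the contribution of $(s;J_1,\ldots,J_s\sqcup\{1\},\ldots,J_m;\mathbf{d})$ to the left-hand side with $(\gamma\cap d_s)$ times the contribution of $(J_1,\ldots,J_m;\mathbf{d})$ to $\langle\tilde\eta_p\mu_0;\mu_1,\ldots,\mu_{|J|}\rangle_{(m,J,d)}^{Y}$.

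Finally, summing $s$ from $1$ to $m$ converts $\sum_s(\gamma\cap d_s)$ into $\gamma\cap d$, and then summing over $(J_1,\ldots,J_m)$ and $\mathbf{d}$ reproduces (\ref{101}); the $1/m!$ prefactor matches on both sides, and no bookkeeping with ordered versus unordered decompositions is needed. There is essentially no real obstacle: the only point to verify is the compatibility of $\tilde\psi_{0_s}$ and of $[\Mbar_{0,k}(Y,d)]^{\vir}$ with the forgetful map that drops the point $1$ on the $s$-th factor, and this is precisely the input that makes lemma \ref{191} work in the first place.
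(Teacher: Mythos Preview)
Your proposal is correct and is precisely the argument the paper has in mind: the text introduces the lemma with the sentence ``From (\ref{287}) it is straightforward to deduce'' and gives no further proof. Your unfolding of the decomposition (\ref{58}), application of lemma \ref{191} on the $s$-th genus-zero factor, and summation $\sum_{s=1}^m(\gamma\cap d_s)=\gamma\cap d$ is exactly that straightforward deduction.
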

For $X=\mathrm{Tot}(E\rightarrow Y)$ where $E$ is a concave vector bundle over $Y$, let
\bea\label{102}
&&\langle \tilde{\eta}_{p}\mu_{0};\mu_{1},\cdots,\mu_{|J|}\rangle_{(m,J,d)}^{X}\nn\\
&:= &\frac{1}{m!}e(E)^{m-1}\bigwedge_{i=1}^{m}\pi_{i}^{*}(\mathcal{U}_{0})\wedge \tilde{\eta}_{p}\wedge\ev_{0}^{*}\big(\mu_{0}\big)\bigwedge_{j\in J}\ev_{j}^{*}(\mu_{j})\cap [\Mbar_{(m,J)}(Y,d)]^{\vir}.
\eea
The divisor equation (\ref{101}) still holds.\\

For finite sets $I$ and $J$ with $I\cap J=\emptyset$ and $|I|\geq 3$, let
\bea\label{103}
\pi_{I}:\Mbar_{0,I\sqcup J}\rightarrow \Mbar_{0,I}
\eea
be the map which drops the marked points labelled by elements of $J$. By the proof the usual string equation, we have
\bea\label{104}
&&\int_{\Mbar_{0,I\sqcup J}}\frac{1}{\prod_{i\in I}(w_{i}-\pi_{I}^{*}\psi_{i})\prod_{j\in J}(w_{j}-\psi_{j})}\nn\\
&=&\Big(\sum_{j\in J}\frac{1}{w_{j}}\Big)^{|J|}\cdot \frac{1}{\prod_{i}w_{i}\prod_{j}w_{j}}\Big(\sum_{i\in I}\frac{1}{w_{i}}+\sum_{j\in J}\frac{1}{w_{j}}\Big)^{|I|-3}.
\eea
In the localization contribution, we formally extend this identity  to $|I|\geq 1$.\\

Now let $Y$ be an  algebraic GKM manifold and $X=\mathrm{Tot}(E\rightarrow Y)$ is the total space of a concave equivariant vector bundle $E$.
Let $\mu_{1},\cdots,\mu_{|J|}\in H_{\mathbb{T}}^{*}(Y)$. For $\Gamma^{\mathfrak{c}}=(\Gamma,I,K)\in\CDRT_{J}^{d}$, the localization contribution of $\Gamma^{\mathfrak{c}}$ to $\langle\tilde{\eta}_{p}\mathbf{c}_{q}(TX);\mu_{1},\cdots,\mu_{|J|}\rangle_{(m,J,d)}^{X}$ can be written as
 \begin{multline}\label{105}
 \Cont_{\Gamma^{\mathfrak{c}}}(\langle\tilde{\eta}_{p}\mathbf{c}_{q}(TX);\mu_{1},\cdots,\mu_{|J|}\rangle_{(m,J,d)}^{X})
 =\frac{1}{m!}\frac{1}{|\Aut(\Gamma^{\mathfrak{c}})|}\\
 \cdot\prod_{v\in \Ver} \Cont_{\Gamma^{\mathfrak{c}};v}\big(\langle\tilde{\eta}_{p}\mathbf{c}_{q}(TX);\mu_{1},\cdots,\mu_{|J|}\rangle_{(m,J,d)}^{X}\big)\\
\cdot \prod_{e\in \Edg} \Cont_{\Gamma^{\mathfrak{c}};e}\big(\langle\tilde{\eta}_{p}\mathbf{c}_{q}(TX);\mu_{1},\cdots,\mu_{|J|}\rangle_{(m,J,d)}^{X}\big).
 \end{multline}
For an edge $e$ and a vertex $v\in \Ver\backslash\{v_{0}\}$, the contribution is still the same as (\ref{11})  respectively. Suppose $\mathfrak{m}(v_{0})=i$, then the contribution of $v_{0}$ is
\begin{multline}\label{106}
\Cont_{\Gamma^{\mathfrak{c}};v_{0}}\big(\langle\tilde{\eta}_{p}\mathbf{c}_{q}(TX);\mu_{1},\cdots,\mu_{|J|}\rangle_{(m,J,d)}^{X}\big)
=\prod_{j\in \mathfrak{e}^{-1}(v_{0})}\mu_{j}\big|_{P_{i}}\\
\Big(\prod_{k=1}^{l}\varepsilon_{i,k}\prod_{P_{j}\in\Nb(P_{i})}\alpha_{i,j}\Big)^{|\Edg(v_{0})|-1}
\cdot[x^{q}]\Big(\prod_{k=1}^{l}(1+\varepsilon_{i,k}x)\prod_{j\in [n]\backslash\{i\}}(1+\alpha_{i,j}x)\Big)\\
\cdot\int_{\Mbar_{0,I_{(1)}\sqcup K_{(1)}\sqcup \{0_{1}\}}\times\cdots\times\Mbar_{0,I_{(m)}\sqcup K_{(m)}\sqcup \{0_{m}\}}}\\
[x^{p}]\Bigg(\frac{1}{\prod_{e\in\Edg(v_{0})}(\frac{\alpha_{v_{0},e}}{d(e)}-\psi_{(v_{0},e)}) \prod_{s=1}^{m}(1-x\pi_{I_{(s)\sqcup \{0_{s}\}}}^{*}\psi_{0_{s}})}\Bigg).
\end{multline}

Now we write the invariant $\langle\tilde{\eta}_{p}\mathbf{c}_{q}(TX);\mu_{1},\cdots,\mu_{|J|}\rangle_{(m,J,d)}^{X}$ as summing over the $\DRT_{J}^{d}$, i.e.,
\bea\label{107}
&&\langle\tilde{\eta}_{p}\mathbf{c}_{q}(TX);\mu_{1},\cdots,\mu_{|J|}\rangle_{(m,J,d)}^{X}\nn\\
&&=\sum_{\Gamma\in \DRT_{J}^{d}}\Cont_{\Gamma}\big(\langle\tilde{\eta}_{p}\mathbf{c}_{q}(TX);\mu_{1},\cdots,\mu_{|J|}\rangle_{(m,J,d)}^{X}\big),
\eea
where
\bea\label{108}
&&\Cont_{\Gamma}\big(\langle\tilde{\eta}_{p}\mathbf{c}_{q}(TX);\mu_{1},\cdots,\mu_{|J|}\rangle_{(m,J,d)}^{X}\big)\nn\\
&=&\frac{1}{m!}\frac{1}{|\Aut(\Gamma)|}
\sum_{\Gamma^{\mathfrak{c}}\in\pi_{\CDRT}^{-1}(\Gamma)}\prod_{v\in \Ver} \Cont_{\Gamma^{\mathfrak{c}};v}\big(\langle\tilde{\eta}_{p}\mathbf{c}_{q}(TX);\mu_{1},\cdots,\mu_{|J|}\rangle_{(m,J,d)}^{X}\big)\nn\\
&&\cdot\prod_{e\in \Edg} \Cont_{\Gamma^{\mathfrak{c}};e}\big(\langle\tilde{\eta}_{p}\mathbf{c}_{q}(TX);\mu_{1},\cdots,\mu_{|J|}\rangle_{(m,J,d)}^{X}\big).
 \eea

Now we state another form of the LSvR.
\begin{theorem}\label{109}
Let $\mu_{1},\cdots,\mu_{|J|}\in H_{\mathbb{T}}^{*}(Y)$.
For every decorated rooted tree $\Gamma\in \DRT_{J}^{d}$, we have the \emph{LSvR}
\bea\label{110}
&&\Cont_{\Gamma}(\langle\mu_{1},\cdots,\mu_{|J|}\rangle_{1,J,d}^{X})
=\Cont_{\Gamma}(\langle\mu_{1},\cdots,\mu_{|J|}\rangle_{1,J,d}^{0;X})\nn\\
&&+\frac{1}{24}\sum_{m\geq 1}(-1)^{m}(m-1)!
\sum_{p=0}^{n+l-1-2m}\Cont_{\Gamma}\big(\langle \tilde{\eta}_{p}c_{n+l-1-2m-p}(TX);\mu_{1},\cdots,\mu_{|J|}\rangle_{(m,J,d)}^{X}\big).\nn\\
\eea
In particular, when $X$ is a local Calabi-Yau space, for every $\Gamma\in\DRT_{\emptyset}^{d}$ we have
\begin{multline}\label{111}
\Cont_{\Gamma}(N_{1,d}^{X})=\Cont_{\Gamma}(N_{1,d}^{0;X})+\frac{1}{24}\sum_{m\geq 1}\Big[(-1)^{m}(m-1)!\\
\sum_{p=0}^{n+l-1-2m}\Cont_{\Gamma}\big(\langle\tilde{\eta}_{p}c_{n+l-1-2m-p}(TX);\rangle_{(m,\emptyset,d)}^{X}\big)\Big].
\end{multline}
\end{theorem}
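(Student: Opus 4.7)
Following the reduction in Section~3.1, it suffices to prove (\ref{110}) at the level of each decorated star $\Gamma_{i;\vec{\omega};J}\in\DS_{J}^{d}$, since the edge contributions and the non-root vertex contributions in (\ref{108}) and (\ref{31}) are identical. Introducing $\tilde{\VC}_{\Gamma_{i;\vec{\omega};J}}^{(m,p,q),X}$ as the $\tilde{\eta}_{p}$-analogue of (\ref{43}), i.e.\ the root vertex contribution obtained from (\ref{106}) with $\psi_{0_{s}}$ replaced by $\pi_{I_{(s)}\sqcup\{0_{s}\}}^{*}\psi_{0_{s}}$, the problem reduces to showing
\[
\VC_{\Gamma_{i;\vec{\omega};J}}^{X}=\VC_{\Gamma_{i;\vec{\omega};J}}^{0;X}+\sum_{m\geq 1}\frac{(-1)^{m}(m-1)!}{24}\sum_{p=0}^{n+l-1-2m}\tilde{\VC}_{\Gamma_{i;\vec{\omega};J}}^{(m,p,n+l-1-2m-p),X}.
\]

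The plan is to factor the common prefactor $\bigl(\sum_{j=1}^{r}1/\omega_{j}\bigr)^{|J|}$ out of both sides, reducing the identity to its $J=\emptyset$ version. When $J=\emptyset$ the forgetful maps $\pi_{I_{(s)}\sqcup\{0_{s}\}}$ are identities, so $\tilde{\eta}_{p}=\eta_{p}$ and the reduced claim becomes (\ref{33}), which was already proved in Section~3.3. For the left-hand side this factorization is precisely (\ref{96}), a direct consequence of the string equation on $\Mbar_{1,r+|J|}$.

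For the right-hand side, the key input is the pullback nature of $\tilde{\psi}_{0_{s}}$ combined with the projection formula for the forgetful map $\pi_{s}\colon\Mbar_{0,I_{(s)}\sqcup K_{(s)}\sqcup\{0_{s}\}}\to\Mbar_{0,I_{(s)}\sqcup\{0_{s}\}}$ that drops $K_{(s)}\subset J$. Since $(1-x\pi_{s}^{*}\psi_{0_{s}})^{-1}$ is pulled back, it passes through $\pi_{s,*}$, and only the plain classes $\psi_{e}$ for $e\in I_{(s)}$ participate in the push-forward. Iterating the genus-zero string equation $|K_{(s)}|$ times produces a factor $\bigl(\sum_{e\in I_{(s)}}1/\omega_{e}\bigr)^{|K_{(s)}|}$ multiplying the integral over $\Mbar_{0,I_{(s)}\sqcup\{0_{s}\}}$. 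Summing over colorings $K\in\mathcal{A}_{m}^{0}(J)$ for each fixed $I$, and noting that each $j\in J$ is independently assigned to a color, the multinomial theorem yields
\[
\sum_{K_{(1)}\sqcup\cdots\sqcup K_{(m)}=J}\prod_{s=1}^{m}\Bigl(\sum_{e\in I_{(s)}}\frac{1}{\omega_{e}}\Bigr)^{|K_{(s)}|}=\Bigl(\sum_{e=1}^{r}\frac{1}{\omega_{e}}\Bigr)^{|J|},
\]
which is exactly the prefactor appearing on the LHS via (\ref{96}).

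This contrasts with the $\eta_{p}$ case treated in Section~3.4: there, the same prefactor arises only after the alternating sum over $J^{\prime}\subset J$ in (\ref{64}) telescopes via the binomial identity used at the end of (\ref{83}); here it is generated directly by the intrinsic pullback structure of $\tilde{\psi}$. The main technical subtlety---and the reason the proof deserves the label ``sketch''---is justifying the push-forward step with the plain (not pulled-back) classes $\psi_{e}$ present in the integrand; this uses the comparison $\psi_{e}=\pi_{s}^{*}\psi_{e}+\sum_{j\in K_{(s)}}[D_{e,j}]$, the disjointness of the boundary divisors $[D_{e,j}]$ in genus zero, and standard self-intersection identities in the intersection theory of $\Mbar_{0,n}$. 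Once these are invoked, the reduction to the $J=\emptyset$ case of Theorem~\ref{32} completes the proof.
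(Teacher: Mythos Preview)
Your argument factors $\VC_{\Gamma_{i;\vec{\omega};J}}^{X}$ via (\ref{96}) and factors the $\tilde{\eta}$-sum via the projection formula, and then asserts that this reduces the identity to its $J=\emptyset$ version. But the identity you are trying to prove has \emph{three} terms, and you never establish that the middle one, $\VC_{\Gamma_{i;\vec{\omega};J}}^{0;X}$, also factors as $\bigl(\sum_{k}1/\omega_{k}\bigr)^{|J|}\,\VC_{\Gamma_{i;\vec{\omega};\emptyset}}^{0;X}$. This is not a consequence of the string equation in any obvious way: in (\ref{44}) the $J$-labels are distributed between $\mathfrak{e}^{-1}(v_{0})$ (entering the $\tM_{1,(m,J')}$ integral through (\ref{89}), which produces powers of $m$, not of $\sum 1/\omega$) and the vertices of $\Ver_{0}$ (where the relevant $\psi_{0_{s}}$ in (\ref{20}) is \emph{plain}, not pulled back, so the string-equation push-forward picks up an extra $1/(H-\omega_{+})$ term). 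Moreover, placing a $J$-label on what was previously a $U$- or $V$-type vertex changes the refined-tree combinatorics. So the needed factorization of $\VC^{0;X}$, while ultimately true, is exactly as delicate as the theorem itself and cannot be taken for granted.

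The paper sidesteps this entirely. Rather than factor all three terms, it invokes Theorem~\ref{32} (already proven for arbitrary $J$ in Section~3.4) as a black box, so that the task becomes showing only the identity (\ref{116}) between the $\tilde{\eta}_{p}$-sum and the alternating $J'$-sum of $\eta_{p}$-terms---an identity involving no reduced invariants at all. At the decorated-star level this is (\ref{118}), proved from (\ref{104}) and (\ref{97}). Your projection-formula computation for the $\tilde{\eta}$-sum is correct and is in fact one half of (\ref{118}); the other half is the factorization (\ref{97}) of the alternating $\eta_{p}$-sum, which you contrast with but do not use. Pairing your $\tilde{\eta}$-factorization with (\ref{97}) and then appealing to Theorem~\ref{32} would close the gap; trying to factor $\VC^{0;X}_{J}$ directly does not.
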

\emph{Sketch of the proof}: By theorem \ref{32}, it suffices to show
\bea\label{116}
&&\sum_{p=0}^{n+l-1-2m}\Cont_{\Gamma}\big(\langle\tilde{\eta}_{p}c_{n+l-1-2m-p}(TX);\rangle_{(m,\emptyset,d)}^{X}\big)\nn\\
&=&\sum_{J^{\prime}\subset J}(-1)^{|J^{\prime}|}m^{|J^{\prime}|}
\sum_{p=0}^{n+l-1-|J^{\prime}|-2m}
\Cont_{\Gamma}\Big(\langle \eta_{p}c_{n+l-1-|J^{\prime}|-2m-p}(TX);\mu_{1},\cdots,\mu_{|J|}\rangle_{(m,J-J^{\prime},d)}^{X}\Big).\nn\\
\eea
As before, we need only to show (\ref{116}) for decorated stars. Let $\Gamma=\Gamma_{i;\vec{\omega};J}$. Define
\bea\label{117}
&&\overline{\overline{\Cont}}_{\Gamma}^{(m,p,q),X}:=[x^{q}]\Big(\prod_{k=1}^{l}(1+\varepsilon_{i,k}x)\prod_{j\in [n]\backslash\{i\}}(1+\alpha_{i,j}x)\Big)\nn\\
&&\cdot\prod_{\Gamma^{\mathfrak{c}\in\pi_{CDRT}^{-1}(\Gamma)}}\int_{\Mbar_{0,I_{(1)}\sqcup K_{(1)}\sqcup \{0_{1}\}}\times\cdots\times\Mbar_{0,I_{(m)}\sqcup K_{(m)}\sqcup \{0_{m}\}}}\nn\\
&&[x^{p}]\Bigg(\frac{1}{\prod_{e\in\Edg(v_{0})}(\frac{\alpha_{v_{0},e}}{d(e)}-\psi_{(v_{0},e)}) \prod_{s=1}^{m}(1-x\pi_{I_{(s)\sqcup \{0_{s}\}}}^{*}\psi_{0_{s}})}\Bigg).
\eea\\
By eliminating the common factors as in section 3.1, it suffices to show
\bea\label{118}
&&\sum_{p=0}^{n+l-1-2m}\overline{\overline{\Cont}}_{\Gamma}^{(m,p,n+l-1-2m-p),X}\nn\\
&=&
\sum_{J^{\prime}\subset J}(-1)^{|J^{\prime}|}m^{|J^{\prime}|}
\sum_{p=0}^{n+l-1-|J^{\prime}|-2m}\VC_{\Gamma,J-J^{\prime}}^{(m,p,n+l-1-|J^{\prime}|-2m-p),X}.
\eea
But by (\ref{104}) and
\bea\label{97}
&&\sum_{|J^{\prime}|\subset J}\sum_{K\in \mathcal{A}_{m}^{0}(J-J^{\prime})}\mathrm{Cont}_{(I,K)}
\Big((-1)^{m+|J^{\prime}|}m^{|J^{\prime}|}
\sum_{p=0}^{n+l-1-2m-|J^{\prime}|}\VC_{\Gamma_{i;J-J^{\prime};\vec{\omega}}}^{(m,p,n+l-1-2m-|J^{\prime}|-p),X}\Big)\nn\\
&=&\sum_{J^{\prime}\subset J}\sum_{K\in \mathcal{A}_{m}^{0}(J-J^{\prime})}(-1)^{|J^{\prime}|}m^{|J^{\prime}|}\nn\\
&&[x^{n+l-1-2m-|J^{\prime}|}]\Bigg(\frac{\prod_{t=1}^{l}(1+\varepsilon_{i,t}x)\prod_{P_{j}\in\Nb(P_{i})}(1+\alpha_{i,j}x)}
{\prod_{k=1}^{r}\omega_{k}}
\cdot\prod_{s=1}^{m}\Big(x+\sum_{j\in I_{(s)}}\frac{1}{\omega_{j}}\Big)^{|I_{(s)}|+|K_{(s)}|-2}\Bigg)\nn\\
&=&[x^{n+l-1-2m}]\Bigg\{\frac{\prod_{t=1}^{l}(1+\varepsilon_{i,t}x)\prod_{P_{j}\in\Nb(P_{i})}(1+\alpha_{i,j}x)}
{\prod_{k=1}^{r}\omega_{k}}\nn\\
&&\cdot\sum_{J^{\prime}\subset J}\sum_{K\in \mathcal{A}_{m}^{0}(J-J^{\prime})}(-xm)^{|J^{\prime}|}
\cdot\prod_{s=1}^{m}\Big(x+\sum_{j\in I_{(s)}}\frac{1}{\omega_{j}}\Big)^{|I_{(s)}|+|K_{(s)}|-2}\Bigg\}\nn\\
&=&\Big(\sum_{j=1}^{r}\frac{1}{\omega_{j}}\Big)^{|J|}[x^{n+l-1-2m}]\Bigg\{\frac{\prod_{t=1}^{l}(1+\varepsilon_{i,t}x)\prod_{P_{j}\in\Nb(P_{i})}(1+\alpha_{i,j}x)}
{\prod_{k=1}^{r}\omega_{k}}\prod_{s=1}^{m}\Big(x+\sum_{j\in I_{(s)}}\frac{1}{\omega_{j}}\Big)^{|I_{(s)}|-2}\Bigg\},\nn\\
\eea
it is easy to deduce (\ref{118}).\pqed\\

\begin{corollary}\label{112}
\bea\label{113}
&&\langle\mu_{1},\cdots,\mu_{|J|}\rangle_{1,J,d}^{X}
=\langle\mu_{1},\cdots,\mu_{|J|}\rangle_{1,J,d}^{0;X}\nn\\
&&+\frac{1}{24}\sum_{m\geq 1}(-1)^{m}(m-1)!
\sum_{p=0}^{n+l-1-2m}\langle \tilde{\eta}_{p}\mathbf{c}_{n+l-1-2m-p}(TX);\mu_{1},\cdots,\mu_{|J|}\rangle_{(m,J,d)}^{X}.\nn\\
\eea
In particular, when $X$ is a local Calabi-Yau space,
\bea\label{114}
N_{1,d}^{0;X}=N_{1,d}^{0;X}+\frac{1}{24}\sum_{m\geq 1}(-1)^{m}(m-1)!
\sum_{p=0}^{n+l-1-2m}\langle\tilde{\eta}_{p}c_{n+l-1-2m-p}(TX);\rangle_{(m,\emptyset,d)}^{X}.
\eea
\end{corollary}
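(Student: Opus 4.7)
The plan is to obtain Corollary \ref{112} by summing the per-graph LSvR of Theorem \ref{109} over $\Gamma\in\DRT_J^d$ and matching the resulting terms against the graph-expansion formulas established in Section 2. Concretely, Theorem \ref{109} gives an identity for each decorated rooted tree expressing $\Cont_\Gamma\bigl(\langle\mu_1,\ldots,\mu_{|J|}\rangle_{1,J,d}^X\bigr)$ as $\Cont_\Gamma\bigl(\langle\mu_1,\ldots,\mu_{|J|}\rangle_{1,J,d}^{0;X}\bigr)$ plus a linear combination of contributions of $\tilde\eta_p$-insertions on $\Mbar_{(m,J,d)}^X$; the corollary is the global version obtained by integrating these identities.

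First I would recall the three graph-sum formulas already at hand: equation (\ref{34}) expands $\langle\mu_1,\ldots,\mu_{|J|}\rangle_{1,J,d}^X$ as a sum over $\DOL_J^d\sqcup\DRT_J^d$; equation (\ref{29}) gives the same kind of expansion for the formal reduced invariant $\langle\mu_1,\ldots,\mu_{|J|}\rangle_{1,J,d}^{0;X}$; and equation (\ref{107}) expresses $\langle\tilde\eta_p\mathbf{c}_q(TX);\mu_1,\ldots,\mu_{|J|}\rangle_{(m,J,d)}^X$ as a sum of $\Cont_\Gamma$ over $\DRT_J^d$. The key observation, recorded in (\ref{38}), is that for every $\Gamma\in\DOL_J^d$ the standard and reduced contributions agree, so subtracting (\ref{29}) from (\ref{34}) eliminates the one-loop part entirely and leaves
\[
\langle\mu_1,\ldots,\mu_{|J|}\rangle_{1,J,d}^X - \langle\mu_1,\ldots,\mu_{|J|}\rangle_{1,J,d}^{0;X}
= \sum_{\Gamma\in\DRT_J^d}\Bigl(\Cont_\Gamma\bigl(\langle\cdot\rangle_{1,J,d}^X\bigr) - \Cont_\Gamma\bigl(\langle\cdot\rangle_{1,J,d}^{0;X}\bigr)\Bigr).
\]

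Next I would substitute the per-graph identity (\ref{110}) of Theorem \ref{109} into the right hand side and interchange the sums over $m$, $p$ and $\Gamma$ (all the sums are finite because the Chern-class degree constraint bounds both $m$ and $p$, so there is no convergence issue). Applying (\ref{107}) termwise, the sum $\sum_{\Gamma\in\DRT_J^d}\Cont_\Gamma\bigl(\langle\tilde\eta_p\mathbf{c}_{n+l-1-2m-p}(TX);\mu_1,\ldots,\mu_{|J|}\rangle_{(m,J,d)}^X\bigr)$ reassembles to the full equivariant integral $\langle\tilde\eta_p\mathbf{c}_{n+l-1-2m-p}(TX);\mu_1,\ldots,\mu_{|J|}\rangle_{(m,J,d)}^X$. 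This yields (\ref{113}); the Calabi-Yau specialization (\ref{114}) is then obtained by setting $J=\emptyset$ and observing that in this case the invariants are already non-equivariant, so the equivariant Chern class $\mathbf{c}_k(TX)$ may be replaced by the ordinary $c_k(TX)$.

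The only delicate point, which I would verify carefully, is bookkeeping: I must check that no $\tilde\eta_p$-insertion invariant receives spurious contributions from graphs outside $\DRT_J^d$ (since $\Mbar_{(m,J)}(Y,d)$ is modelled on a common genus-zero nodal configuration, only rooted-tree fixed loci arise in its virtual localization, which is exactly what (\ref{107}) codifies), and that the degree range $0\le p\le n+l-1-2m$ in (\ref{110}) matches the global range in (\ref{113}) after summation. Once these compatibilities are checked, no further combinatorial identity is needed — the heavy lifting has already been performed in the proof of Theorem \ref{109}, and Corollary \ref{112} is a purely formal consequence.
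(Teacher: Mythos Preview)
Your proposal is correct and follows essentially the same approach as the paper. The paper does not spell out a separate proof of Corollary~\ref{112}; it is stated immediately after Theorem~\ref{109} as the global analogue obtained by summing~(\ref{110}) over $\Gamma\in\DRT_J^d$, invoking~(\ref{38}) to cancel the one-loop part, and using the graph expansions~(\ref{34}),~(\ref{29}), and~(\ref{107}) --- exactly as in the derivation of Corollary~\ref{9} from Theorem~\ref{32}, and exactly as you outline.
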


\subsection{Modifying the virtual localization in genus one for Calabi-Yau complete intersections}
Let $E$ be a direct sum of  equivariant \emph{ample} line bundles over an  algebraic GKM manifold $Y$. The weights of $E$ at the fixed point $P_{i}$ are still denoted by $\varepsilon_{i,1},\cdots,\varepsilon_{i,l}$. We  assume that for every $1\leq k\leq l$, and every $P_{j}\in\Nb(P_{i})$, $\varepsilon_{i,k}$ is linear independent to $\alpha_{i,j}$. For $X=\mathrm{Tot}(E\rightarrow Y)$, it is natural to define the invariants
\bea\label{175}
\langle \eta_{p}\mu_{0};\mu_{1},\cdots,\mu_{|J|}\rangle_{(m,J-J^{\prime},d)}^{X}=\langle \eta_{p}\mu_{0};\mu_{1},\cdots,\mu_{|J|}\rangle_{(m,J-J^{\prime},d)}^{W},
\eea
where $W$ is a generic smooth section of $E$. For $(\Gamma,I,K)\in m\CDRT_{J}^{d}(Y)$, the localization contribution of an edge or of a vertex other than the root is as the usual genus zero Gromov-Witten of complete intersections. The contribution of the root $v_{0}$ is replacing (\ref{17}) by
\bea\label{176}
&&\Cont_{\Gamma^{\mathfrak{c}};v_{0}}\big(\langle\eta_{p}\mathbf{c}_{q}(TX);\mu_{1},\cdots,\mu_{|J|}\rangle_{(m,J^{\prime},d)}^{X}\big)
=\prod_{j\in \mathfrak{e}^{-1}(v_{0})}\mu_{j}\big|_{P_{i}}\nn\\
&&\Big(\frac{\prod_{P_{j}\in\Nb(P_{i})}\alpha_{i,j}}{\prod_{k=1}^{l}\varepsilon_{i,k}}\Big)^{|\Edg(v_{0})|-1}
\cdot[x^{q}]\Big(\frac{\prod_{j\in [n]\backslash\{i\}}(1+\alpha_{i,j}x)}{\prod_{k=1}^{l}(1+\varepsilon_{i,k}x)}\Big)\nn\\
&&\cdot\int_{\Mbar_{0,I_{(1)}\sqcup K_{(1)}\sqcup \{0_{1}\}}\times\cdots\times\Mbar_{0,I_{(m)}\sqcup K_{(m)}\sqcup \{0_{m}\}}}
[x^{p}]\Bigg(\frac{1}{\prod_{e\in\Edg(v_{0})}(\frac{\alpha_{v_{0},e}}{d(e)}-\psi_{(v_{0},e)}) \prod_{s=1}^{m}(1-x\psi_{0_{s}})}\Bigg).\nn\\
\eea
We define the localization contribution of $\Gamma\in\DRT_{J}^{d}$ to $\langle\eta_{p}\mathbf{c}_{q}(TX);\mu_{1},\cdots,\mu_{|J|}\rangle_{(m,J^{\prime},d)}^{X}$ as in section 2.2.\\

To define the formal reduced genus one Gromov-Witten invariants of $X$, it is natural to replace (\ref{24}) by
\bea\label{177}
\mathbf{e}(\mathcal{U}_{1}^{\prime})= \mathbf{e}(\mathcal{U}_{1})\cdot \frac{1}{\mathbf{e} (E)\big|_{P_{\mathfrak{m}(v_{0})}}}
:= \prod_{e\in\Edg(v_{0})}\pi_{e}^{*}\mathbf{e}(\mathcal{U}_{0}^{\prime})
\cdot \frac{1}{\mathbf{e}(L_{\widetilde{\Gamma}}^{*}\otimes E_{\mu(v_{0})}\otimes \gamma^{*})},
\eea
where
\bea
\mathbf{e}(\mathcal{U}_{0}^{\prime})=\mathbf{e}(\pi_{*}f^{*}E)/\mathbf{e}(E).
\eea
Thus when $E=\bigoplus_{k=1}^{l}\mathcal{O}(a_{k})\rightarrow \mathbb{P}^{n-1}$, where $a_{k}>0$ and $\sum_{k=1}^{l}a_{k}=n$, the \emph{formal reduced genus one Gromov-Witten invariants} of $X$ is no other than the reduced  genus one Gromov-Witten invariants of $W$, where $W$  is a complete intersection of $\mathbb{P}^{n-1}$ with multiple degree $(a_{1},\cdots,a_{l})$, by the localization contributions given in \cite{Zinger1} and \cite{Popa1}. \\

On the other hand, we cannot compute the genus one Gromov-Witten invariants of $W$ by the virtual localization, so it does not make sense to say whether the LSvR holds for $W$ (or for $X$). But since the \emph{global} SvR holds (\cite{ZingerSvR}), it is  reasonable to formally write
the  genus one Gromov-Witten invariants of $W$ as summing over $\DOL$ and $\DRT$, \emph{such that
 the LSvR holds}. For simplicity, we consider the case $E=\mathcal{O}_{\mathbb{P}^{n-1}}(n)$. Then $\alpha_{i,j}=\alpha_{i}-\alpha_{j}$, and we can linearize $E$ such that the weight of $E$ at $P_{i}$ is $n\alpha_{i}$. The localization contribution of $\Gamma\in\DOL$ to $N_{1,d}^{W}$ is as in the usual virtual localization. For $\Gamma\in\DRT$, the problem arise only at the root $v_{0}$. As in the concave cases, we can put aside the common factors, and focus on $\VC_{\Gamma}^{W}$, where $\Gamma$ is a decorated star. Let $\Gamma_{i;\vec{\omega}}$ be a simply decorated star, where $\omega_{s}=\alpha_{i}-\alpha_{j_{s}}$ for $1\leq s\leq r$.  When $\Gamma$ is a simply decorated star, we have $\VC_{\Gamma_{i;\vec{\omega}}}^{0;X}=0$, so we \emph{define}
\bea\label{178}
\VC_{\Gamma_{i;\vec{\omega}}}^{W}&:=&\sum_{m\geq 1}\frac{(-1)^{m}}{24m}\sum_{p=0}^{n-2-2m}\VC_{\Gamma_{i;\vec{\omega}}}^{(m,p,n-2-2m-p),W}\nn\\
&=&\frac{(-1)^{m}}{24m}\sum_{I\in \mathcal{A}_{m}([r]) }
\int_{\Mbar_{0,I_{(1)}\sqcup \{0_{1}\}}\times\cdots\times\Mbar_{0,I_{(m)}\sqcup \{0_{m}\}}}\nn\\
&&[x^{n-2-2m}]\Bigg(\frac{\prod_{j\in[n]\backslash\{i\}}(1+x(\alpha_{i}-\alpha_{j}))}
{(1+n\alpha_{i}x)\prod_{k=1}^{r}(\alpha_{i}-\alpha_{j_{k}}-\psi_{k})\cdot \prod_{s=1}^{m}(1-x\psi_{0_{s}})}\Bigg).
\eea
In contrast to the concave cases, the rational function of $x$ in the big brackets has a pole at $-\frac{1}{n\alpha_{i}}$. It is not hard to see
\bea\label{179}
&&\VC_{\Gamma_{i;\vec{\omega}}}^{W}\nn\\
&=&\sum_{m\geq 1}\Bigg[\frac{(-1)^{m}}{24m}\sum_{I\in \mathcal{A}_{m}([r]) }[x^{m}]\Bigg(\frac{\prod_{j\in[n]\backslash\{i\}}(x+\alpha_{i}-\alpha_{j})}
{(x+n\alpha_{i})x^{m}\prod_{k=1}^{r}(\alpha_{i}-\alpha_{j_{k}})}
\cdot\prod_{s=1}^{m}\Big(\frac{1}{x}+\sum_{j\in I_{(s)}}\frac{1}{\alpha_{i}-\alpha_{j}}\Big)^{|I_{(s)}|-2}\Bigg)\nn\\
&&+\frac{\prod_{j\in[n]\backslash\{i\}}(-n\alpha_{i}+\alpha_{i}-\alpha_{j})}
{(-n\alpha_{i})^{r+1}\prod_{k=1}^{r}(\alpha_{i}-\alpha_{j_{k}})}\cdot
\frac{(-1)^{m}}{24m}\sum_{I\in \mathcal{A}_{m}([r]) }
\prod_{s=1}^{m}\Big(1-\sum_{j\in I_{(s)}}\frac{n\alpha_{i}}{\alpha_{i}-\alpha_{j}}\Big)^{|I_{(s)}|-2}\Bigg].
\eea
By the lemma \ref{181} we have
\bea\label{180}
&&\sum_{m\geq 1}\frac{(-1)^{m}}{24m}\sum_{I\in \mathcal{A}_{m}([r]) }[x^{m}]\Bigg(\frac{\prod_{j\in[n]\backslash\{i\}}(x+\alpha_{i}-\alpha_{j})}
{(x+n\alpha_{i})x^{m}\prod_{k=1}^{r}(\alpha_{i}-\alpha_{j_{k}})}
\cdot\prod_{s=1}^{m}\Big(\frac{1}{x}+\sum_{j\in I_{(s)}}\frac{1}{\alpha_{i}-\alpha_{j}}\Big)^{|I_{(s)}|-2}\Bigg)\nn\\
&=&\frac{\prod_{j\in[n]\backslash\{i\}}(\alpha_{i}-\alpha_{j})}{n\alpha_{i}}\int_{\Mbar_{1,r}}\frac{1}
{\prod_{k=1}^{r}(\omega_{k}-\psi_{k})}\nn\\
&&+[x]\Big(\frac{\prod_{j\in[n]\backslash\{i\}}(\alpha_{i}-\alpha_{j}-x)}{n\alpha_{i}-x}\Big)\cdot
\frac{1}
{24\prod_{k=1}^{r}\omega_{k}}\Big(\sum_{k=1}^{r}\frac{1}{\omega_{k}}\Big)^{r-1}.
\eea
Thus  we have
\bea\label{182}
\VC_{\Gamma_{i;\vec{\omega}}}^{W}&=&
\int_{\Mbar_{1,r}}\frac{\prod_{j\in[n]\backslash\{i\}}
\Lambda_{1}^{\vee}(\alpha_{i}-\alpha_{j})}
{\Lambda_{1}^{\vee}(n\alpha_{i})\prod_{k=1}^{r}(\omega_{k}-\psi_{k})}\nn\\
&&+\frac{\prod_{j\in[n]\backslash\{i\}}(-n\alpha_{i}+\alpha_{i}-\alpha_{j})}
{(-n\alpha_{i})^{r+1}\prod_{k=1}^{r}\omega_{k}}\cdot
H_{r}\Big(-\frac{n\alpha_{i}}{\omega_{1}},\cdots,-\frac{n\alpha_{i}}{\omega_{r}}\Big),
\eea
where the function $H_{r}$ is defined by (\ref{183}).
For general decorated stars $\Gamma_{i;\vec{\omega}}$, taking into account the contribution $\VC_{\Gamma_{i;\vec{\omega}}}^{0;W}$ as in the preceding sections, we naturally define $\VC_{\Gamma_{i;\vec{\omega}}}$  still by (\ref{182}). For a finite set of variables $S=\{w_{1},\cdots,w_{r}\}$, let
\bea\label{187}
H(S)=H_{r}(w_{1},\cdots,w_{r}).
\eea
For $\Gamma\in\DRT_{\emptyset}^{d}$, we define
\bea\label{184}
 &&\Cont_{\Gamma}(N_{1,d}^{W})=\frac{1}{|\Aut(\Gamma)|}\prod_{v\in \Ver}\Cont_{\Gamma;v}(N_{1,d}^{W})\prod_{e\in \Edg} \Cont_{\Gamma;e}(N_{1,d}^{W}),\nn\\
 \eea
 where
 \bea\label{185}
\Cont_{\Gamma;v_{0}}(N_{1,d}^{W})&=&\Big(\frac{\prod_{j\in [n]\backslash\{i\}}(\alpha_{i}-\alpha_{j})}{n\alpha_{i}}
\Big)^{|\Edg(v_{0})|-1}\nn\\
&&\cdot\Bigg[\int_{\Mbar_{1,\val(v_{0})}}\frac{\prod_{j\in [n]\backslash\{i\}}\Lambda_{1}^{\vee}(\alpha_{i}-\alpha_{j})}
{\Lambda_{1}^{\vee}(n\alpha_{i})\prod_{e\in \Edg(v_{0})}\Big(\frac{\alpha_{v_{0},e}}{d(e)}-\psi_{(v_{0},e)}\Big)}\nn\\
&&+\frac{\prod_{j\in[n]\backslash\{i\}}(-n\alpha_{i}+\alpha_{i}-\alpha_{j})}
{(-n\alpha_{i})^{r+1}\prod_{k=1}^{r}\omega_{k}}\cdot
H\Big(\big\{-\frac{n\alpha_{i}\cdot d(e)}{\alpha_{v_{0},e}}\big\}_{e\in \Edg(v_{0})}\Big)\Bigg],
\eea
and $\Cont_{\Gamma;v}(N_{1,d}^{W})$, $ \Cont_{\Gamma;e}(N_{1,d}^{W})$, as well as the localization contribution of $\Gamma\in\DOL_{\emptyset}^{d}$, are defined as in the genus zero virtual localization. By the SvR for $W$ (\cite{ZingerSvR}), we have
\bea\label{186}
N_{1,d}^{W}=\sum_{\Gamma\in \DOL_{\emptyset}^{d}}\Cont_{\Gamma}(N_{1,d}^{W})
+\sum_{\Gamma\in \DRT_{\emptyset}^{d}}\Cont_{\Gamma}(N_{1,d}^{W}).
\eea
 We call  (\ref{185}) the \emph{modified} contribution of the genus one vertex $v_{0}$. With the third row of (\ref{185}) dropped, we call the resulted contribution the \emph{naive contribution}. Then we can summarize the above as
 \begin{theorem}\label{188}
 Assigning the  contribution of the genus one vertex to be the modified contribution, we can compute the genus one Gromov-Witten invariants of the Calabi-Yau hypersurface $W$ in $\mathbb{P}^{n-1}$ by (the modified) virtual localization.
 \end{theorem}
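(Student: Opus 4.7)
\textbf{Proof plan for Theorem \ref{188}.} The strategy is to reduce the claim to the global standard-versus-reduced formula of Zinger \cite{ZingerSvR} applied to $W$, combined with the residue-style resummation already developed in sections 3.2--3.4. Zinger's SvR gives
\ben
N_{1,d}^{W}=N_{1,d}^{0;W}+\frac{1}{24}\sum_{m\geq 1}(-1)^{m}(m-1)!\sum_{p=0}^{n-2-2m}\langle \eta_{p}c_{n-2-2m-p}(TW);\rangle_{(m,\emptyset,d)}^{W},
\een
so it suffices to match, $\Gamma$ by $\Gamma$, the first row of (\ref{185}) with the reduced piece $N_{1,d}^{0;W}$, and the $H(\cdots)$-row with the correction piece.

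First I would handle the reduced piece. By Vakil--Zinger \cite{VZ}, the reduced genus one invariants of $W$ are computed by localization on the desingularization $\tM_{1,0}^{0}(\mathbb{P}^{n-1},d)$, whose fixed loci are exactly indexed by $\DOL_{\emptyset}^{d}\sqcup \RDRT_{\emptyset}^{d}/\sim$. The $\DOL$-contributions are unchanged, and for each $\widetilde{\Gamma}\in\RDRT_{\emptyset}^{d}$ the localization integral equals the contribution used by Zinger and Popa \cite{Zinger1}, \cite{Popa1}. Applying the resummation of refined trees into their underlying decorated rooted trees that we carried out in section 3.3 (the same collapsing of the $\tM_{1,(\cdot)}\times\mathbb{P}^{|V_{+}|-1}$ factor via the integral (\ref{89})) then rewrites
\ben
\sum_{\widetilde{\Gamma}\in\RDRT_{\emptyset}^{d}/\sim}\Cont_{\widetilde{\Gamma}}(N_{1,d}^{0;W})=\sum_{\Gamma\in\DRT_{\emptyset}^{d}}\Cont_{\Gamma}^{\mathrm{naive}}(N_{1,d}^{W}),
\een
where $\Cont_{\Gamma}^{\mathrm{naive}}$ uses only the first row of (\ref{185}). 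Combined with the $\DOL$ contributions this produces $N_{1,d}^{0;W}$.

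Next I would attack the correction row. As in section 3.1, the problem reduces to decorated stars $\Gamma_{i;\vec{\omega}}$ and to showing
\ben
\VC_{\Gamma_{i;\vec{\omega}}}^{W}-\VC_{\Gamma_{i;\vec{\omega}}}^{\mathrm{naive}}=\frac{\prod_{j\in[n]\backslash\{i\}}(-n\alpha_{i}+\alpha_{i}-\alpha_{j})}
{(-n\alpha_{i})^{r+1}\prod_{k=1}^{r}\omega_{k}}\cdot H\bigl(\{-n\alpha_{i}/\omega_{k}\}\bigr),
\een
which is precisely (\ref{182}). This is obtained by running the residue computation of section 3.3 (specifically the chain of equalities from (\ref{54}) to the last line of (\ref{55})), the only change being that the rational function of $x$ now carries an additional pole at $x=-1/(n\alpha_{i})$ coming from the denominator $(1+n\alpha_{i}x)$ produced by linearizing $E=\mathcal{O}_{\mathbb{P}^{n-1}}(n)$. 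Moving the sum of residues from $x=0$ to $x=\infty$ and to $x=-1/(n\alpha_{i})$, and then identifying the residue at $x=0$ via lemma \ref{181} (this is the step spelled out in (\ref{180})), yields the naive term plus a single extra contribution at $x=-1/(n\alpha_{i})$; evaluating the latter and invoking the definition (\ref{183}) of $H_{r}$ gives the $H$-term of (\ref{185}).

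The main obstacle I anticipate is bookkeeping of the extra pole: in the concave computation of section 3.3 the weights $\varepsilon_{i,k}$ contributed only polynomial factors in $x$, whereas in the ample case one of them enters a denominator, and the resulting residue interacts with the colored partition sum. The independence assumption on $\varepsilon_{i,k}$ versus the $\alpha_{i,j}$ (imposed at the start of section 3.6) is precisely what is needed to separate these two residue contributions cleanly, and the crucial identities (\ref{59})--(\ref{61}) of lemma \ref{181} are what allow the sum over $m$ and $I\in\mathcal{A}_{m}([r])$ to collapse into $H_{r}$. Once these pieces are in place, summing $\Cont_{\Gamma}(N_{1,d}^{W})$ defined by (\ref{185}) over $\Gamma\in\DOL_{\emptyset}^{d}\sqcup\DRT_{\emptyset}^{d}$ reproduces both sides of Zinger's SvR and hence gives (\ref{186}), establishing Theorem \ref{188}.
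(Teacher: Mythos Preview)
Your overall strategy---reduce to Zinger's global SvR for $W$ and then carry out a residue analysis with the extra pole at $x=-1/(n\alpha_{i})$---is exactly what the paper does. But the way you have split the two rows of (\ref{185}) is wrong, and this is a genuine gap.

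Concretely, the displayed identity in your second paragraph,
\[
\sum_{\widetilde{\Gamma}\in\RDRT_{\emptyset}^{d}/\sim}\Cont_{\widetilde{\Gamma}}(N_{1,d}^{0;W})
=\sum_{\Gamma\in\DRT_{\emptyset}^{d}}\Cont_{\Gamma}^{\mathrm{naive}}(N_{1,d}^{W}),
\]
is false. Already for a simply decorated star the left-hand side vanishes (the same argument as Lemma~\ref{79}: $|\Edg_{+}|=1$ forces a factor of $H$ over a point), while the naive first row of (\ref{185}) does not. Section~3.3 does \emph{not} collapse refined trees to the naive vertex; what it shows is that the refined-tree pieces $\VC_{\Gamma}^{0;X}$ cancel precisely the residues at the poles $x=-1/\omega_{U_{k}}$ arising in the correction sum. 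So the correct graph-by-graph identity is
\[
\underbrace{\text{naive row}+H\text{-row}}_{=(\ref{182})}\;=\;\VC_{\Gamma}^{0;W}\;+\;\text{(correction terms)},
\]
not a componentwise matching. In the paper's logic, one \emph{defines} $\VC_{\Gamma}^{W}:=\VC_{\Gamma}^{0;W}+(\text{correction})$ so that the LSvR holds tautologically; the residue computation (your third paragraph, together with the $\omega_{U_{k}}$-pole cancellation you omitted for general stars) then evaluates this sum to (\ref{182}). Summing over $\Gamma$ and invoking Zinger's SvR gives (\ref{186}).

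To fix your write-up: drop the claim that the naive row reproduces the reduced invariant, and instead argue that for each decorated star the full quantity $\VC_{\Gamma}^{0;W}+(\text{correction})$ equals (\ref{182}). For simply decorated stars the reduced piece is zero and only the poles at $\infty$ and $-1/(n\alpha_{i})$ survive, giving (\ref{179})--(\ref{182}) directly; for general stars the additional poles at $-1/\omega_{U_{k}}$ are absorbed by $\VC_{\Gamma}^{0;W}$ exactly as in section~3.3. The pole at $-1/(n\alpha_i)$ then produces the $H$-term, and Lemma~\ref{181} handles the residue at $\infty$ to yield the naive row.
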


 It is routine to deduce parallel results for the complete intersections in $\mathbb{P}^{n-1}$. It is reasonable to make the following conjecture.
 \begin{Conjecture}\label{189}
An analog of theorem \ref{188} holds for complete intersections in  algebraic GKM manifolds.
 \end{Conjecture}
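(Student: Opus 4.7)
The plan is to mirror, for convex complete intersections in an algebraic GKM manifold $Y$, the derivation of Theorem~\ref{188}. First, I would invoke Zinger's global SvR formula (\cite{ZingerSvR}) for a complete intersection $W \subset Y$ cut out by a generic section of an equivariant ample bundle $E = \bigoplus_{k=1}^{l} L_{k}$, which expresses $N_{1,d}^{W}$ as $N_{1,d}^{0;W}$ plus a sum of genus-zero reduced invariants with $\eta_{p}$ (or $\tilde{\eta}_{p}$) insertions on $\Mbar_{(m,J)}(W,d)$. The target is to write both sides as sums over $\DOL_{\emptyset}^{d}(Y)$ and $\DRT_{\emptyset}^{d}(Y)$ such that the graph-by-graph identity matches the modified virtual localization on the total space $X = \mathrm{Tot}(E)$, with a single correction term concentrated at the genus one root $v_{0}$, generalizing (\ref{182}).

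Second, I would adapt the formal definitions of section 2.3 to the ample setting: retain the formal fixed loci $\mathcal{Z}_{\widetilde{\Gamma}}$ for $\widetilde{\Gamma} \in \RDRT$ but replace (\ref{24}) by the convex prescription (\ref{177}), dividing by $\mathbf{e}(L_{\widetilde{\Gamma}}^{*}\otimes E_{\mathfrak{m}(v_{0})}\otimes\gamma^{*})$ rather than multiplying. Defining $\Cont_{\Gamma}(N_{1,d}^{0;W})$ by the template (\ref{30}), the LSvR would reduce, exactly as in section 3.1, to a single identity at a decorated star $\Gamma_{i;\vec{\omega}}$. The combinatorial heart of the argument is then to rerun the residue calculation of section 3.3 with the ample modification: the generating function in $x$ now carries $\prod_{k=1}^{l}(1+\varepsilon_{i,k}x)^{-1}$ in place of $\prod_{k=1}^{l}(1+\varepsilon_{i,k}x)$. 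Applying the residue theorem on $\mathbb{P}^{1}_{x}$, one picks up, in addition to the poles at $x=0$ and $x=-\omega_{U_{k}}^{-1}$ that appeared in section 3.3, a new family of poles at $x = -\varepsilon_{i,k}^{-1}$, $k=1,\ldots,l$. A partial-fraction expansion in $\varepsilon_{i,k}$ together with repeated application of Lemma~\ref{181} should express each residue at $x=-\varepsilon_{i,k}^{-1}$ as a genus-one integral over $\tM_{1,\Edg(v_{0})}$ against $\prod_{P_{j}\in\Nb(P_{i})}\Lambda_{1}^{\vee}(\alpha_{i,j})/\Lambda_{1}^{\vee}(\varepsilon_{i,k})$, plus a secondary $H$-type term, reconstructing the modified root contribution.

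The main obstacle I anticipate is \emph{not} this residue bookkeeping, which is a direct generalization of section 3.3, but the identification of the \emph{formal} reduced invariants defined by (\ref{177}) and RDRT-summation with the \emph{geometric} reduced invariants $N_{1,d}^{0;W}$ appearing on the right of Zinger's global SvR. For $Y = \mathbb{P}^{n-1}$ and products of projective spaces, this identification is furnished by the VZ/HL desingularization of $\Mbar_{1,k}^{0}(Y,d)$, whose $\mathbb{T}$-fixed loci are precisely the $\mathcal{Z}_{\widetilde{\Gamma}}$ with the correct virtual normal bundles. For a general algebraic GKM manifold no such canonical desingularization is available (this is exactly difficulty~1 in the introduction), so either a flag-variety-adapted desingularization must be constructed, or one must argue indirectly — for instance, by using Zinger's global SvR itself to \emph{define} $\Cont_{\Gamma}(N_{1,d}^{0;W})$ via the correction terms, and then showing the two formal definitions coincide by an induction on $d$ that exploits the divisor equation (lemmas~\ref{191} and~\ref{192}) together with the linear independence of the GKM weights. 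The modular-form interpretation via the functions $I_{p}$ in Theorem~\ref{267} might also provide enough rigidity in specific families to pin down the formal invariants without appealing to a desingularization, but a general geometric match remains the key missing input.
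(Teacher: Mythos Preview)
The statement you are addressing is a \emph{conjecture}, not a theorem: the paper explicitly says ``We leave the precise formulation of this conjecture to the reader'' and offers no proof. There is therefore no ``paper's own proof'' to compare against; the paper only motivates the conjecture by the $\mathbb{P}^{n-1}$ computation culminating in (\ref{182}) and Theorem~\ref{188}.

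Your proposal is a reasonable research outline rather than a proof, and you correctly identify the essential gap yourself. The residue bookkeeping you describe --- rerunning section~3.3 with the ample sign flip (\ref{177}) and picking up the extra poles at $x=-\varepsilon_{i,k}^{-1}$ --- is the natural extension and would plausibly reproduce an $H$-type correction at the root as in (\ref{185}). But the step on which the entire argument hinges is the identification of the \emph{formally} defined reduced invariants (via (\ref{177}) and summation over $\RDRT$) with Zinger's \emph{geometric} reduced invariants $N_{1,d}^{0;W}$ for a complete intersection $W$ in a general algebraic GKM manifold $Y$. As you note, this identification is exactly what the Vakil--Zinger/Hu--Li desingularization provides for $\mathbb{P}^{n-1}$, and no analogue is known for general $Y$; this is precisely difficulty~(1) flagged in the introduction and the reason the statement is posed as a conjecture. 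Your suggested workaround --- using the global SvR to \emph{define} the graph contributions and then matching via the divisor equation and GKM weight independence --- is not obviously sufficient: the divisor equation constrains only the non-equivariant specialization, and linear independence of weights does not by itself pin down the individual $\Cont_{\Gamma}$ without some geometric input at the root. So your plan isolates the right obstruction but does not overcome it; the conjecture remains open.
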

 We leave the precise formulation of this conjecture to the reader.

\begin{remark}\label{190}
It would be fascinating if one could find a direct geometric interpretation for (\ref{182}).
Note that by the properties of $H(S)$ shown in the proof of lemma \ref{181}, it is straightforward to see the sum of the terms in the square bracket of (\ref{179}) has no factors $n\alpha_{i}$ in the denominator. In higher genera, we conjecture that there are also some natural correction terms which cancel the negative powers of the weight $E|_{P_{i}}$, such that the corresponding modified virtual localization gives the correct Gromov-Witten invariants.
\end{remark}

\section{The difference between standard and formal reduced genus one Gromov-Witten invariants of $X=\mathrm{Tot}\big(\bigoplus_{k=1}^{l}\mathcal{O}(-a_{k})\rightarrow \mathbb{P}^{n-1}\big)$}
\subsection{}
We follow the description on the equivariant cohomology of $\mathbb{P}^{n-1}$ in \cite[section 1.1]{Zinger1}. We recollect the facts and terminology of \cite{Zinger1} that we need in the following.
\begin{itemize}
  \item{} The equivariant cohomology ring of $\mathbb{P}^{n-1}$ is
  \bea
  H_{\mathbb{T}}^{*}\big(\mathbb{P}^{n-1}\big)=\mathbb{Q}[\mathbf{x},\alpha_{1},\cdots,\alpha_{n}]/\prod_{i=1}^{n}(\mathbf{x}-\alpha_{i}).
  \eea
  \item{}The restriction map on the equivariant cohomology induced by $P_{i}\hookrightarrow \mathbb{P}^{n-1}$ is given by
  \bea
  \mathbb{Q}[\mathbf{x},\alpha_{1},\cdots,\alpha_{n}]/\prod_{i=1}^{n}(\mathbf{x}-\alpha_{i})\rightarrow \mathbb{Q}[\alpha_{1},\cdots,\alpha_{n}],
  & \mathbf{x}\mapsto\alpha_{i},
  \eea
  and the localized equivariant cohomology ring is
  \bea
  \mathcal{H}_{\mathbb{T}}^{*}\big(\mathbb{P}^{n-1}\big)=\mathbb{Q}_{\alpha}[\mathbf{x}]/\prod_{i=1}^{n}(\mathbf{x}-\alpha_{i}),
  \eea
  where  $\mathbb{Q}_{\alpha}=\mathbb{Q}(\alpha_{1},\cdots,\alpha_{n})$ is the field of fractions of $\mathbb{Q}[\alpha_{1},\cdots,\alpha_{n}]$.
  \item{} The tautological line bundle $\gamma_{n-1}\big(\cong \mathcal{O}_{\mathbb{P}^{n-1}}(-1)\big)$ is linearized such that the equivariant Euler class restricted to the fixed points are
      \bea
      \mathbf{e}(\gamma_{n-1})\big|_{P_{i}}=-\alpha_{i}
      \eea
      for $1\leq i\leq n$.
  \item{}   The equivariant Poincar\'{e} dual of $P_{i}$ is
  \bea
  \phi_{i}=\prod_{j\in [n]\backslash\{i\}}(\mathbf{x}-\alpha_{j}).
  \eea
  The tangent bundle $T\mathbb{P}^{n-1}$ is linearized such that the equivariant Euler class restricted to the fixed points are
      \bea
      \mathbf{e}(T\mathbb{P}^{n-1})|_{P_{i}}=\prod_{j\in[n]\backslash\{i\}}(\alpha_{i}-\alpha_{j})=\phi_{i}|_{P_{i}}.
      \eea
      for $1\leq i\leq n$.
\end{itemize}
This $(\mathbb{C}^{*})^{n}$-action can be lifted to the vector bundle $E=\prod_{k=1}^{l}\mathcal{O}(-a_{k})$, and there are many choices of  liftings. In this article we fix the lifting\footnote{As shown in \cite{GP}, \cite{KP}, \cite{PZ} and \cite{Hu}, for certain extremal cases of $X$, other choices of the liftings will make the localization computations extremely simple.}, such that as an equivariant vector bundle we have $E\cong \bigotimes_{k=1}^{l}\gamma_{n-1}^{\otimes a_{k}}$. Thus the equivariant Euler class of $E$ restricted to the fixed points are
\bea
\mathbf{e}(E)|_{P_{i}}=\prod_{k=1}^{l}(-a_{k}\alpha_{i}).
\eea

The localization contributions have been described as in section 2, except that
the contribution of an edge $e=\{v_{1},v_{2}\}$ with $\mathfrak{m}(v_{1})=i$ and $\mathfrak{m}(v_{2})=j$ is
\bea\label{12}
&&\Cont_{\Gamma;e}\big(\langle\mu_{1},\cdots,\mu_{|J|}\rangle_{1,J,d}^{X}\big)=\nn\\
&&\frac{\prod_{k=1}^{l}\prod_{a=1}^{a_{k}d(e)-1}(-a_{k}\alpha_{j}+a\frac{\alpha_{j}-\alpha_{i}}{d(e)})}
{\mathfrak{d}(e)\cdot
(\frac{d(e)!}
{d(e)^{d}})^{2}(\alpha_{i}-\alpha_{j})^{d(e)}(\alpha_{j}-\alpha_{i})^{d(e)}\prod_{k\neq i,j}\prod_{a=0}^{d(e)}(\alpha_{i}-\alpha_{k}+a\frac{\alpha_{j}-\alpha_{i}}{d(e)})}.
\eea

\subsection{}
As in \cite{Zinger1}, let
\bea\label{210}
\mathfrak{F}(\alpha,\mathbf{x},Q)=\sum_{d=1}^{\infty}Q^{d}\big(\ev_{*}\mathbf{e}(\mathcal{U}_{1})\big)\in \big(H_{\mathbb{T}}^{n-2}(\mathbb{P}^{n-1})\big)[[Q]],
\eea
we have
\bea\label{211}
\mathfrak{F}(\alpha,\mathbf{x},Q)=\mathfrak{F}_{0}(Q)\mathbf{x}^{n-2}+\mathfrak{F}_{1}(\alpha,Q)\mathbf{x}^{n-3}
+\cdots+\mathfrak{F}_{n-2}(\alpha,Q),
\eea
where $\mathfrak{F}_{p}(\alpha,Q)\in \mathbb{Q}[[Q]][\alpha_{1},\cdots,\alpha_{n}]$ is of degree $p$ and symmetric in $\alpha_{1},\cdots,\alpha_{n}$. By the divisor equation and a degree counting we have
\bea\label{212}
\frac{d}{dT}\sum_{d=1}^{\infty}e^{dT}N_{1,d}^{X}=\mathfrak{F}_{0}(e^{T}).
\eea
To determine $\mathfrak{F}(\alpha,\mathbf{x},Q)$ (and thus $\mathfrak{F}_{0}(Q)$), it suffices to compute $\mathfrak{F}(\alpha,\alpha_{i},Q)$. By the Atiyah-Bott localization theorem (\cite{AB}) on $\mathbb{P}^{n-1}$,
\bea\label{293}
\mathfrak{F}(\alpha,\alpha_{i},Q)=\sum_{d=1}^{\infty}Q^{d}\langle \phi_{i}\rangle_{1,1,d}^{X}.
\eea
By corollary \ref{112},
\bea\label{289}
\langle \phi_{i}\rangle_{1,1,d}^{X}=\langle \phi_{i}\rangle_{1,1,d}^{0;X}
+\frac{1}{24}\sum_{m\geq 1}(-1)^{m}(m-1)!\sum_{p=0}^{n+l-1-2m}\langle\tilde{\eta}_{p}\mathbf{c}_{n+l-1-2m-p}(TX);\phi_{i}\rangle_{(m,1,d)}^{X}.
\eea
Let
\bea\label{213}
\mathcal{C}_{i}(Q)=\sum_{d=1}^{\infty}Q^{d}\frac{1}{24}\sum_{m\geq 1}(-1)^{m}(m-1)!\sum_{p=0}^{n+l-1-2m}\langle\tilde{\eta}_{p}\mathbf{c}_{n+l-1-2m-p}(TX);\phi_{i}\rangle_{(m,1,d)}^{X}.
\eea
The same reasoning shows that there exist $\mathfrak{C}(\alpha,\mathbf{x},Q)\in \big(H_{\mathbb{T}}^{n-2}(\mathbb{P}^{n-1})\big)[[Q]]$ such that
\bea\label{290}
\mathfrak{C}(\alpha,\mathbf{x},Q)=\mathfrak{C}_{0}(Q)\mathbf{x}^{n-2}+\mathfrak{C}_{1}(\alpha,Q)\mathbf{x}^{n-3}+\cdots+\mathfrak{C}_{n-2}(\alpha,Q),
\eea
where $\mathfrak{C}_{p}(\alpha,Q)$ is of degree $p$ and symmetric in $\alpha_{1},\cdots,\alpha_{n}$, and
\bea\label{295}
\mathcal{C}_{i}(Q)=\mathfrak{C}(\alpha,\alpha_{i},Q).
\eea
 By the divisor equation (\ref{101}), we have
\bea\label{291}
\mathfrak{C}_{0}(e^{dT})=\frac{d}{dT}\Bigg(\sum_{d=1}^{\infty}Q^{d}\frac{1}{24}\sum_{m\geq 1}(-1)^{m}(m-1)!\sum_{p=0}^{n+l-1-2m}\langle\eta_{p}c_{n+l-1-2m-p}(TX)\rangle_{(m,\emptyset,d)}^{X}\Bigg).
\eea
By (\ref{211}), (\ref{293}), (\ref{289}),(\ref{290}) and (\ref{295}),  there exist $\mathfrak{F}^{0}(\alpha,\mathbf{x},Q)\in \big(H_{\mathbb{T}}^{n-2}(\mathbb{P}^{n-1})\big)[[Q]]$ such that
\bea\label{292}
\mathfrak{F}^{0}(\alpha,\mathbf{x},Q)=\mathfrak{F}_{0}^{0}(Q)\mathbf{x}^{n-2}+\mathfrak{F}_{1}^{0}(\alpha,Q)\mathbf{x}^{n-3}
+\cdots+\mathfrak{F}_{n-2}^{0}(\alpha,Q),
\eea
where $\mathfrak{F}_{p}^{0}(\alpha,Q)$ is of degree $p$ and symmetric in $\alpha_{1},\cdots,\alpha_{n}$, and
\bea\label{296}
\sum_{d=1}^{\infty}Q^{d}\langle \phi_{i}\rangle_{1,1,d}^{0;X}=\mathfrak{F}_{0}^{0}(\alpha,\alpha_{i},Q).
\eea
By (\ref{212}), (\ref{291}) and the LSvR for $N_{1,d}^{X}$, we deduce the \emph{divisor equation} for the reduced genus one Gromov-Witten invarians of $X$:
\bea\label{294}
\mathfrak{F}_{0}^{0}(e^{T})=\mathfrak{F}_{0}(e^{T})-\mathfrak{C}_{0}^{0}(e^{T})=\frac{d}{dT}\sum_{d=1}^{\infty}e^{dT}N_{1,d}^{0;X}.
\eea

In subsection 4.1, we use the known result on the generating function of one-point genus zero Gromov-Witten invariants for $X$ to obtain a formula for
\bea\label{275}
\sum_{d=1}^{\infty}Q^{d}\frac{1}{24}\sum_{m\geq 1}(-1)^{m}(m-1)!\sum_{p=0}^{n+l-1-2m}\langle\eta_{p}c_{n+l-1-2m-p}(TX)\rangle_{(m,\varnothing,d)}^{X}.
\eea
In section 5, we compute the righthand-side of (\ref{294}). By theorem \ref{270} and theorem \ref{268}, we have
\begin{theorem}\label{297}
\begin{multline}\label{298}
\sum_{d=1}^{\infty}Q^{d}N_{1,d}^{X}=\frac{n}{48}\Big(n-1-2\sum_{k=1}^{l}\frac{1}{a_{k}}\Big)\big(T-t\big)\\
-\left\{\begin{array}{lll}
\frac{n+l}{48}\log (1-\prod_{k=1}^{l}(-a_{k})^{a_{k}}q)+\sum_{p=l}^{\frac{n+l-2}{2}}\frac{(n+l-2p)^{2}}{8}\log I_{p}(q), & \mathrm{if}& 2\mid(n+l);\\
\frac{n+l-3}{48}\log (1-\prod_{k=1}^{l}(-a_{k})^{a_{k}}q)+\sum_{p=l}^{\frac{n+l-3}{2}}\frac{(n+l-2p)^{2}-1}{8}\log I_{p}(q), & \mathrm{if}& 2\nmid(n+l).\\
\end{array}\right.
\end{multline}
\end{theorem}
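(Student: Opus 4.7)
The plan is to prove Theorem \ref{297} by combining the localized standard versus reduced formula with two independent computations. Applying Corollary \ref{112} with $J=\emptyset$ and invoking the divisor equation (\ref{294}) reduces matters to computing, separately, the generating series of reduced invariants $\sum_{d\ge 1} Q^d N_{1,d}^{0;X}$ and that of the correction terms involving $\langle\tilde{\eta}_p c_{\bullet}(TX)\rangle_{(m,\emptyset,d)}^X$. Each is a series in $T$ (via the mirror map $T=I_{0,1}(t)$), and integrating once in $T$ with the boundary values $T|_{t=0}=0$ and $I_p(0)=1$ for $p\geq l$ fixes the constants of integration; matching yields (\ref{298}).

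For the correction piece, all invariants are genus-zero multi-pointed invariants of the concave local space (\ref{40}), hence expressible through Givental's $J$-function, which in turn is controlled by the hypergeometric series $R(w,t)$ of (\ref{272}) and the Birkhoff factorization encoded by the functions $I_{p,q}(t)$. The plan is to recognize the combinatorial factor $(m-1)!/m!$ together with the diagonal geometry of $\Mbar_{(m,\emptyset)}(Y,d)$ from (\ref{57})--(\ref{73}) as extracting a contour residue of a product of $m$ copies of an $R$-type factor cyclically attached at a common evaluation point; after summation over $m$ and $d$, the whole correction series assembles into a closed combination of $T-t$, $\log(1-\prod_{k}(-a_k)^{a_k}q)$ and $\log I_p(q)$. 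This is the content of Theorem \ref{270} (Section~4).

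For the reduced piece, the formal definition (\ref{27}) expands $N_{1,d}^{0;X}$ as a weighted sum over $\DOL_{\emptyset}^d$ and $\RDRT_{\emptyset}^d/\!\sim$, with the root contribution dictated by (\ref{24}) and (\ref{88}). Following Zinger's strategy in \cite{Zinger1}, I would apply a residue theorem on $\mathbb{P}^1$ in the equivariant variable and exploit the $S_n$-symmetry of the $\mathbb{T}$-action on $\mathbb{P}^{n-1}$ to reorganize the sum over graphs as a sum of residues of a master hypergeometric integrand; the genus-zero identities of Popa \cite{Popa1,Popa2} then convert the resulting symmetric polynomial expressions in the $\alpha_i$ into the closed form stated in Theorem \ref{268} (Section~5). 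Adding the two contributions delivers (\ref{298}).

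The main obstacle is the reduced-invariant computation. Unlike the compact hypersurface case of \cite{Zinger1}, the concave-bundle setting carries the extra denominator $\mathbf{e}(E)\big|_{P_{\mathfrak{m}(v_0)}}=\prod_{k}(-a_k\alpha_{\mathfrak{m}(v_0)})$ appearing in (\ref{24}), together with a nontrivial pullback from the $\mathbb{P}^{|\Ver_+|-1}$ factor in (\ref{88}). One must verify that every $\alpha_{\mathfrak{m}(v_0)}$-dependent pole cancels upon summation over all refined decorated rooted trees with a fixed root label, so that the non-equivariant limit exists and matches the $I_p(q)$ side; this cancellation is the key technical input that makes Zinger's residue method, combined with Popa's symmetric-function identities, carry over to the local setting, and it is the step on which the passage from the LSvR to the explicit mirror formula (\ref{298}) really hinges.
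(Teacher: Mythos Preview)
Your proposal is correct and takes essentially the same approach as the paper: Theorem~\ref{297} follows immediately by adding Theorem~\ref{270} (the correction term, computed from Givental's mirror theorem for the $J$-function) to Theorem~\ref{268} (the reduced invariants, computed via Zinger's residue method together with Popa's hypergeometric identities), exactly as you outline. The only minor difference is in the framing of the technical obstacle: rather than tracking $\alpha_{\mathfrak{m}(v_0)}$-pole cancellation directly, the paper works modulo the ideal $\mathcal{K}_i$ of \cite{Popa1} (the $\equiv_i$ notation) and extracts the coefficient of $\alpha_i^{n-2}$ at the end, which is precisely the symmetric-function bookkeeping that lets Popa's positive-case identities carry over ``dually'' to the concave setting.
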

\subsection{}

Let
\bea\label{273}
Z_{r}(Q)&=&\prod_{k=1}^{l}(-a_{k})\sum_{d=1}^{\infty}Q^{d}\langle \psi^{r}\ev^{*}H^{n+l-3-r}\mathbf{e}(\mathcal{U}_{0}),[\Mbar_{0,1}(\mathbb{P}^{n-1},d)]\rangle.
\eea
for $0\leq r\leq n+l-3$. We would like to formally write
\bea\label{285}
Z_{r}(Q)&=&\sum_{d=1}^{\infty}Q^{d}\langle \psi^{r}\ev^{*}H^{n-3-r}\mathbf{e}(\mathcal{U}_{0}^{\prime}),[\Mbar_{0,1}(\mathbb{P}^{n-1},d)]\rangle.
\eea
Givental gave a mirror formula for the generating function of $Z_{r}(Q)$ .
\begin{theorem}(A. Givental)
\bea\label{274}
e^{Tw}\Big(1+\sum_{r=0}^{n+l-3}Z_{r}(Q)w^{r+2}\Big)=R(w,t) \mod(w^{n+l}).
\eea
\end{theorem}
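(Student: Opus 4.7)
The plan is to recognize both sides of the claimed identity as standard objects in Givental's formalism and then invoke his mirror theorem for the concave local Calabi-Yau $X$. First I would identify the left-hand side as (the small-parameter, fundamental-class-component restriction of) Givental's $J$-function for $X$. Concavity of $E$ and the argument for the cutting-edge axiom (cf.\ remark \ref{87}) give
\begin{equation*}
\sum_{d\ge 1}Q^{d}\,\langle\psi^{r}\ev^{*}H^{n+l-3-r}\rangle_{0,1,d}^{X}
= \prod_{k=1}^{l}(-a_{k})\sum_{d\ge 1}Q^{d}\,
\langle\psi^{r}\ev^{*}H^{n+l-3-r}\,\mathbf{e}(\mathcal{U}_{0})\rangle_{0,1,d}^{\mathbb{P}^{n-1}},
\end{equation*}
with the scalar $\prod(-a_{k})$ coming from $\mathbf{e}(\mathcal{U}_{0}^{\pr})=\ev^{*}\mathbf{e}(E)\cdot \mathbf{e}(\mathcal{U}_{0})$ evaluated at a point insertion. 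Combined with the $e^{Tw}$ factor, which absorbs the divisor equation in the direction of the hyperplane class, the left-hand side is precisely the $\phi_{0}$-component of the small $J$-function $J^{X}(T,w^{-1})$ of $X$, truncated at the order dictated by the vanishing of the one-point invariants $\langle\psi^{r}\rangle_{0,1,d}^{X}$ for $r\ge n+l-2$ (which gives the modulus $w^{n+l}$).

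Next I would identify $R(w,t)$ with Givental's hypergeometric $I$-function for $X$. The numerator $\prod_{k}\prod_{s=0}^{a_{k}d-1}(-a_{k}w-s)$ is the $T$-equivariant Euler class of $H^{1}\bigl(\mathbb{P}^{1},f^{*}\mathcal{O}(-a_{k})\bigr)$ for the degree-$d$ cover of an invariant $\mathbb{P}^{1}$, with $w$ being the equivariant parameter for the $\mathbb{C}^{*}$ rotating the domain; the denominator $\prod_{s=1}^{d}(w+s)^{n}$ is the inverse of the Euler class of the virtual normal bundle to the fixed locus in the graph space $\tM_{0,1}(\mathbb{P}^{n-1},d)$. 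This is exactly the form of the $I$-function appearing in Givental's original setup for concave toric bundles, and the Calabi-Yau condition $\sum a_{k}=n$ forces $R(w,t)=1+O(w^{l})$, so the expansion
\begin{equation*}
R(w,t)=\sum_{p\ge 0}w^{p}I_{0,p}(t)
\end{equation*}
has $I_{0,0}(t)\equiv 1$ and $I_{0,1}(t)=T$ by definition of the mirror map in (\ref{300}).

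Finally, I would invoke Givental's mirror theorem (\cite{Givental1}; see also \cite{CK}, \cite{LLY}) for split concave bundles over $\mathbb{P}^{n-1}$: after the mirror change of variables $T=I_{0,1}(t)$, the $J$-function equals the $I$-function modulo the appropriate Novikov filtration. Reading off the $\phi_{0}$-component of both sides and matching the $e^{Tw}$ and $e^{wt}$ prefactors (which are compatible because $T\equiv t$ mod Novikov variables and their difference is absorbed precisely into the mirror map) then gives the claimed identity modulo $w^{n+l}$. Equivalently, one can argue by Birkhoff factorization: both sides are $\mathcal{D}$-module solutions to the same Picard-Fuchs system with the same leading term, and the Calabi-Yau hypergeometric recursion shows that they agree to the required order.

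The main obstacle is bookkeeping rather than genuinely new mathematics: one must translate carefully between the equivariant one-point generating function $\mathfrak{F}(\alpha,\mathbf{x},Q)$ of the paper's section 4 and the non-equivariant $J^{X}$, verifying that the specialization to $\mathbf{x}=\alpha_{i}$ followed by the $\alpha\to 0$ limit is well-defined thanks to concavity, and tracking the prefactor $\prod(-a_{k})$ in (\ref{273}) consistently with the scaling $\mathbf{e}(\mathcal{U}_{0}^{\pr})=\prod(-a_{k})\,\mathbf{e}(\mathcal{U}_{0})$ under the chosen linearization. Once these identifications are fixed, the theorem is a direct citation of Givental's mirror theorem.
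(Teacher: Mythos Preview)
Your approach is essentially the paper's: both sides are identified with Givental's $J$- and $I$-functions for the concave bundle, and the result is then a direct citation of Givental's mirror theorem $J(\hbar,\alpha,Q)=e^{-xf(q)/\hbar}I(\hbar,\alpha,q)$. The paper carries this out equivariantly, then divides both sides by $x^{l}$ (using that $x$ is not a zero-divisor in $H_{\mathbb{T}}^{*}(\mathbb{P}^{n-1})$ and that $e^{-xf/\hbar}I-1$ is divisible by $x^{l}$, which is exactly the passage from $\mathbf{e}(\mathcal{U}_{0}^{\prime})=\prod_{k}(-a_{k})\,\ev^{*}\mathbf{x}^{l}\cdot\mathbf{e}(\mathcal{U}_{0})$ to $\mathbf{e}(\mathcal{U}_{0})$), and finally takes the non-equivariant limit $\alpha\to 0$.

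One clarification worth making: the left-hand side of (\ref{274}) is not the $\phi_{0}$-component of $J^{X}$. The non-equivariant $J$-function is genuinely $H^{*}(\mathbb{P}^{n-1})$-valued, and each power $H^{k}$ (for $0\le k\le n-1$) carries a distinct $Z_{r}$; the scalar identity (\ref{274}) arises because by homogeneity the coefficient of $H^{k}$ comes with $\hbar^{-k-l}$, so the substitution $w=x/\hbar$ repackages all $n$ components into a single series in $w$, with the relation $H^{n}=0$ becoming the modulus $w^{n+l}$ after restoring the factor $w^{l}$. Your relation $\mathbf{e}(\mathcal{U}_{0}^{\prime})=\prod(-a_{k})\,\mathbf{e}(\mathcal{U}_{0})$ also drops this $x^{l}$, which is precisely the bookkeeping the paper handles via the division step.
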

\begin{proof} Let\footnote{In  the definition of $J(\hbar,\alpha,Q)$ in \cite{Givental} the $\mathbf{e}_{T}(\mathcal{U}_{0}^{\prime})$ is replaced by
$\mathbf{e}_{T}(\mathcal{U}_{0})$, while the equivariant Poincar\'{e} pairing is a twisted one. One can also see \cite[theorem 4.6]{Popa2}.}
\bea\label{279}
J(\hbar,\alpha,Q)=1+\frac{1}{\hbar}\sum_{d=1}^{\infty}Q^{d}\ev_{*}\Big(\frac{\mathbf{e}_{T}(\mathcal{U}_{0}^{\prime})}{\hbar-\psi}\Big),
\eea
where $\ev_{*}$ is the push-forward map in the equivariant cohomology£¬ and
\bea\label{280}
I(\hbar,\alpha,q)=\sum_{d=0}^{\infty}q^{d}\frac{\prod_{k=1}^{l}\prod_{s=0}^{a_{k}d-1}(-a_{k}x-s\hbar)}
{\prod_{i=1}^{n}\prod_{s=1}^{d}(x-\alpha_{i}+s\hbar)}.
\eea
The genus zero mirror theorem (\cite[theorem 4.2]{Givental}) says
\bea\label{281}
J(\hbar,\alpha,Q)=e^{-\frac{xf(q)}{\hbar}}I(\hbar,\alpha,q).
\eea
Since $x$ is not a zero divisor in $H_{\mathbb{T}}^{*}(\mathbb{P}^{n-1})$, from (\ref{281}) we have
\bea\label{284}
\frac{\prod_{k=1}^{l}(-a_{k})}{\hbar}\sum_{d=1}^{\infty}Q^{d}\ev_{*}\Big(\frac{\mathbf{e}_{T}(\mathcal{U}_{0})}{\hbar-\psi}\Big)
=\Bigg(e^{-xf(q)}\sum_{d=0}^{\infty}q^{d}\frac{\prod_{k=1}^{l}\prod_{s=0}^{a_{k}d-1}(-a_{k}x-s\hbar)}
{\prod_{i=1}^{n}\prod_{s=1}^{d}(x-\alpha_{i}+s\hbar)}-1\Bigg)\Bigg/x^{l}.\nn\\
\eea
The lefthand-side of (\ref{284}) lies in $\in H_{\mathbb{T}}^{*}(\mathbb{P}^{n-1})$; when $l=1$ this is obvious, and when $l=2$, $f(q)=0$.
Taking the nonequivariant limit, it is easy to deduce (\ref{274}) from (\ref{284}).
\end{proof}

In the following we use (\ref{274}) to compute (\ref{275}).

\begin{theorem}\label{270}
\bea\label{271}
&&\sum_{d=1}^{\infty}Q^{d}\sum_{m\geq 1}(-1)^{m}(m-1)!\sum_{p=0}^{n+l-1-2m}\langle\eta_{p}c_{n+l-1-2m-p}(TX)\rangle_{(m,\varnothing,d)}^{X}.
\nn\\
&=&-\frac{1}{\prod_{k=1}^{l}(-a_{k})}\Res_{w=0}
\Bigg\{\frac{\prod_{k=1}^{l}(1-a_{k}w)\cdot\big((1+w)^{n}-w^{n}\big)}{w^{n+l}} \Big( -Tw+\ln R(w,t)\Big)\Bigg\}.\nn\\
\eea
\end{theorem}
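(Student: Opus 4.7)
The plan is to collapse the quadruple sum over $(m,p,q,d)$ into a single one-variable residue by assembling all insertions into a generating series in $w$, and then invoke Givental's mirror formula (\ref{274}). The main ingredients are the diagonal splitting of $[\Mbar_{(m,\varnothing)}(Y,d)]^{\vir}$ in (\ref{72}), the one-point genus-zero series $Z_r(Q)$, and the small-$w$ behaviour of $R(w,t)$.

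First, I would use (\ref{74}) for $J=\varnothing$ together with (\ref{72}) to write
\[
\sum_{d\geq 1}Q^d\,\langle \eta_p\,c_q(TX)\rangle_{(m,\varnothing,d)}^X
=\frac{1}{m!}\int_{\mathbb{P}^{n-1}} c_q(TX)\,e(E)^{m-1}\,[w^p]\,B(Q,w)^m,
\]
where $B(Q,w):=\sum_{d\geq 1,\,r\geq 0}Q^d w^r\,\ev_*(\psi^r\,e(\mathcal{U}_0))$. A dimension count using $\mathrm{rk}\,\mathcal{U}_0=nd-l$ and $\dim\Mbar_{0,1}(\mathbb{P}^{n-1},d)=nd+n-3$ shows that $\ev_*(\psi^r e(\mathcal{U}_0))$ is of pure cohomological degree $r-l+2$, whence by (\ref{273}) it equals $\tfrac{Z_r(Q)}{\prod(-a_k)}H^{r-l+2}$. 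Setting $S(Q,z):=\sum_{r\geq 0}Z_r(Q)z^{r+2}$, this yields $B(Q,w)=S(Q,wH)\bigl/\bigl(\prod(-a_k)\cdot w^2H^l\bigr)$.

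Second, the generating function $\sum_q c_q(TX)w^q=G(wH)$ with $G(z):=\prod_k(1-a_k z)(1+z)^n$ rewrites the constraint $p+q=n+l-1-2m$ as the single coefficient $[w^{n+l-1-2m}]\,G(wH)\,B(Q,w)^m$. Substituting $B^m$ and $e(E)^{m-1}=H^{l(m-1)}\prod(-a_k)^{m-1}$, all $H$-powers telescope to $H^{n-1}$ (which pairs to $1$ on $\mathbb{P}^{n-1}$), and the factor $w^{-2m}$ is absorbed by an index shift. Summing in $m$ via $-\sum_{m\geq 1}(-1)^m S^m/m=\log(1+S)$, the left-hand side of (\ref{271}) becomes
\[
-\tfrac{1}{\prod(-a_k)}\,[z^{n+l-1}]\,G(z)\,\log\!\bigl(1+S(Q,z)\bigr).
\]
Givental's theorem (\ref{274}) then gives $\log(1+S(Q,z))\equiv -Tz+\log R(z,t)\pmod{z^{n+l}}$, and extracting $[z^{n+l-1}]$ is unaffected by this modding.

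Third, to match the stated residue formula, replace $G(z)$ by $\prod_k(1-a_k z)\bigl((1+z)^n-z^n\bigr)$. The discrepancy is $\prod_k(1-a_k z)z^n\,f(z)$ with $f(z):=-Tz+\log R(z,t)$, whose $[z^{n+l-1}]$-coefficient is $[z^{l-1}]\prod_k(1-a_k z)f(z)$. Direct inspection of (\ref{272}) gives $R(z,t)=e^{zt}(1+O(z^l))$ so $\log R(z,t)=zt+O(z^l)$; comparing with Givental's $R=e^{Tz}(1+O(z^2))$ at order $z^1$ forces $T=t$ whenever $l\geq 2$, while for $l=1$ one only needs $f(0)=\log R(0,t)=0$. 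In all cases $f(z)=O(z^l)$, so $\prod_k(1-a_k z)f(z)$ has vanishing $z^{l-1}$-coefficient. Rewriting $[z^{n+l-1}]=\Res_{z=0}\,z^{-n-l}\,dz$ then gives the identity as stated. The main subtlety is the $H$-power bookkeeping in the second paragraph, which exploits the Calabi-Yau condition $\sum a_k=n$ (built into $\mathrm{rk}\,\mathcal{U}_0=nd-l$) and the virtual dimension of $\Mbar_{(m,\varnothing)}(Y,d)$ to produce exactly $H^{n-1}$; once in place, the Givental input and final vanishing are essentially routine.
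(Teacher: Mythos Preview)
Your proposal is correct and follows essentially the same route as the paper's own proof: both arguments unwind $\langle\eta_p c_q(TX)\rangle_{(m,\varnothing,d)}^X$ via the diagonal decomposition of $[\Mbar_{(m,\varnothing)}(Y,d)]^{\vir}$ into products of one-point invariants, repackage the result as $-[z^{n+l-1}]G(z)\log(1+S(Q,z))$, invoke Givental's formula (\ref{274}), and then justify the passage from $(1+z)^n$ to $(1+z)^n-z^n$ using $-Tz+\log R(z,t)=O(z^l)$. Your presentation is somewhat more streamlined in that you package the $H$-bookkeeping into the single observation that the $H$-degree and the $w$-degree are locked together (so that $[w^{n+l-1-2m}]$ and $\int_{\mathbb{P}^{n-1}}$ impose the identical constraint), whereas the paper writes out the reindexing (\ref{276})--(\ref{278}) more explicitly; but there is no genuine difference in strategy.
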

\begin{proof} The proof is parallel to the proof of  \cite[lemma 2.2]{ZingerSvR}.
\bea\label{276}
&&\sum_{m\geq 1}(-1)^{m}(m-1)!\sum_{p=0}^{n+l-1-2m}\langle\eta_{p}c_{n+l-1-2m-p}(TX)\rangle_{(m,\varnothing,d)}^{X}
\nn\\
&=&\sum_{p=2}^{n+l-1}\sum_{m=1}^{2m\leq p}(-1)^{m}(m-1)!
\langle\eta_{p-2m}c_{n+l-1-p}(TX)\rangle_{(m,\varnothing,d)}^{X}\nn\\
&=&\sum_{p=2}^{n+l-1}[w^{n+l-1-p}]\Big((1+w)^{n}\prod_{k=1}^{l}(1-a_{k}w)\Big)\sum_{m=1}^{2m\leq p}(-1)^{m}(m-1)!
\langle\eta_{p-2m}H^{n+l-1-p}\rangle_{(m,\varnothing,d)}^{X}.\nn\\
\eea
By  the normalization sequence,
\bea\label{277}
&&\sum_{m=1}^{2m\leq p}(-1)^{m}(m-1)!
\langle\eta_{p-2m}H^{n+l-1-p}\rangle_{(m,\varnothing,d)}^{X}\nn\\
&=&(-1)^{m}\sum_{m=1}^{2m\leq p}\frac{1}{m}\langle\ev_{0}^{*}H^{n+l-1-p}\Big(\prod_{k=1}^{l}(-a_{k}H)\Big)^{m-1}
\prod_{i=1}^{m}\pi_{i}^{*}\frac{\mathbf{e}(\mathcal{U}_{0})}{1-\psi_{0}},[\Mbar_{(m,\varnothing)}(\mathbb{P}^{n-1},d)]\rangle\nn\\
&=&(-1)^{m}\sum_{m=1}^{2m\leq p}\frac{\Big(\prod_{k=1}^{l}(-a_{k})\Big)^{m-1}}{m}\langle\ev_{0}^{*}H^{n+lm-1-p}
\prod_{i=1}^{m}\pi_{i}^{*}\frac{\mathbf{e}(\mathcal{U}_{0})}{1-\psi_{0}},[\Mbar_{(m,\varnothing)}(\mathbb{P}^{n-1},d)]\rangle.
\eea

By the decomposition of the diagonal in $\big(\mathbb{P}^{n-1}\big)^{m}$,
\bea\label{278}
&&\sum_{d=1}^{\infty}Q^{d}\sum_{m=1}^{2m\leq p}\frac{\Big(-\prod_{k=1}^{l}(-a_{k})\Big)^{m}}{m}\langle\ev_{0}^{*}H^{n+lm-1-p}
\prod_{i=1}^{m}\pi_{i}^{*}\frac{\mathbf{e}(\mathcal{U}_{0})}{1-\psi_{0}},[\Mbar_{(m,\varnothing)}(\mathbb{P}^{n-1},d)]\rangle\nn\\
&=&\sum_{m=1}^{2m\leq p}\frac{\Big(-\prod_{k=1}^{l}(-a_{k})\Big)^{m}}{m}\sum_{d=1}^{\infty}Q^{d}\sum_{\stackrel{\sum_{i=1}^{m}d_{i}=d}{d_{i}>0}}
\sum_{\stackrel{\sum_{i=1}^{m}p_{i}=p-lm}{p_{i}\geq 0}}
\prod_{i=1}^{m}\langle \ev_{0}^{*}H^{n-1-p_{i}}\frac{\mathbf{e}(\mathcal{U}_{0})}{1-\psi_{0}},[\Mbar_{(1,\varnothing)}(\mathbb{P}^{n-1},d_{i})]\rangle\nn\\
&=&\sum_{m=1}^{2m\leq p}\frac{(-1)^{m}}{m}\sum_{d=1}^{\infty}Q^{d}\sum_{\stackrel{\sum_{i=1}^{m}d_{i}=d}{d_{i}>0}}
\sum_{\stackrel{\sum_{i=1}^{m}p_{i}=p}{p_{i}\geq l}}
\prod_{i=1}^{m}\langle \ev_{0}^{*}H^{n-1-p_{i}}\frac{\mathbf{e}(\mathcal{U}_{0}^{\prime})}{1-\psi_{0}},[\Mbar_{(1,\varnothing)}(\mathbb{P}^{n-1},d_{i})]\rangle\nn\\
&=&\sum_{m=1}^{2m\leq p}\frac{(-1)^{m}}{m}\sum_{d=1}^{\infty}Q^{d}\sum_{\stackrel{\sum_{i=1}^{m}d_{i}=d}{d_{i}>0}}
\sum_{\stackrel{\sum_{i=1}^{m}p_{i}=p}{p_{i}\geq 2}}
\prod_{i=1}^{m}\langle \psi^{p_{i}-2}\ev^{*}H^{n-1-p_{i}}\mathbf{e}(\mathcal{U}_{0}^{\prime}),[\Mbar_{0,1}(\mathbb{P}^{n-1},d_{i})]\rangle\nn\\
&=&\sum_{m=1}^{2m\leq p}\frac{(-1)^{m}}{m}\sum_{\stackrel{\sum_{i=1}^{m}p_{i}=p}{p_{i}\geq 2}}
Z_{p_{i}-2}(Q)\nn\\
&=&-[w^{p}]\Bigg(\ln \Big(1+\sum_{r=0}^{n+l-3}Z_{r}(Q)w^{r+2}\Big)\Bigg).
\eea
By (\ref{276}), (\ref{277}), (\ref{278}) and (\ref{274}), noting that $-Tw+\ln R(w,t)\equiv 0\mod (w^{l})$, we obtain (\ref{271}).
\end{proof}

\section{Formal reduced genus one Gromov-Witten invariants of\\
$X=\mathrm{Tot}\big(\bigoplus_{k=1}^{l}\mathcal{O}(-a_{k})\rightarrow \mathbb{P}^{n-1}\big)$}
Let $A_{i}$, $\tilde{A}_{ij}$, $B_{i}$, $\tilde{B}_{ij}$ be the four types of decorated one loop graph and refined decorated rooted trees defined in \cite{Zinger1}. We write $\langle\phi_{i}\rangle_{1,1,d}^{0;X}$ as the sum of localization contributions of these graphs.
\bea\label{214}
\langle\phi_{i}\rangle_{1,1,d}^{0;X}=\mathcal{A}_{i}(Q)+\sum_{j=1}^{n}\tilde{\mathcal{A}}_{ij}(Q)
+\mathcal{B}_{i}(Q)+\sum_{j=1}^{n}\tilde{\mathcal{B}}_{ij}(Q).
\eea
Let $\mathbb{Q}[\alpha]^{S_{n}}$, $\mathcal{I}\subset \mathbb{Q}[\alpha]^{S_{n}}$, $\tilde{\mathbb{Q}}[\alpha]^{S_{n}}$,  $\tilde{\mathbb{Q}}[\alpha]^{S_{n-1}}$ and $\mathcal{K}_{i}$ be defined as in \cite{Popa1}. We will frequently make use of \cite[lemma 5.1, lemma 5.2]{Popa1}. We adopt the notation $\equiv_{i}$ of \cite{Popa1}; $F\equiv_{i}G$ means $F-G\in \mathcal{K}_{i}$. We call $\mathcal{A}_{i}(Q)+\sum_{j=1}^{n}\tilde{\mathcal{A}}_{ij}(Q)$ the type A contributions, and $\mathcal{B}_{i}(Q)+\sum_{j=1}^{n}\tilde{\mathcal{B}}_{ij}(Q)$ the type B contributions. In the following of this section, after some preparation on some properties of  hypergeometric series, we will compute the two types of contributions modulo $\mathcal{K}_{i}$ separately.\\

In the computation of the type A contributions we can  make use of Popa's results in \cite{Popa1} in a \emph{dual} way to simplify our proof. Although Popa's computation is for $\bigoplus_{k=1}^{l}\mathcal{O}(a_{k})$ where $\sum_{k=1}^{l}a_{k}=n$ and $a_{k}\geq 2$ for $1\leq k\leq l$, the last assumption $a_{k}\geq 2$ occurs only because of the factors of the form $a_{k}\alpha_{i}+\hbar$ in the denominators of some rational functions. In the local cases, we don't have factors of this form in the denominators, so we are able to apply, e.g.,  \cite[lemma 5.4]{Popa1} to our space $X$. For the computation of the genus 0 two-point functions we also refer the reader to [13].\\

By proposition \ref{249} and \ref{266}, we obtain
\begin{theorem}\label{268}
\begin{multline}\label{265}
\sum_{d=1}^{\infty}e^{dT}N_{1,d}^{0;X}=\frac{n}{48}\Big(n-1-2\sum_{k=1}^{l}\frac{1}{a_{k}}\Big)\big(T-t\big)\\
-\left\{\begin{array}{lll}
\frac{n+l}{48}\log (1-\prod_{k=1}^{l}(-a_{k})^{a_{k}}e^{t})+\sum_{p=l}^{\frac{n+l-2}{2}}\frac{(n+l-2p)^{2}}{8}\log I_{p}(e^{t}) & \mathrm{if}& 2\mid(n+l)\\
\frac{n+l-3}{48}\log (1-\prod_{k=1}^{l}(-a_{k})^{a_{k}}e^{t})+\sum_{p=l}^{\frac{n+l-3}{2}}\frac{(n+l-2p)^{2}-1}{8}\log I_{p}(e^{t}) & \mathrm{if}& 2\nmid(n+l)\\
\end{array}\right.\\
+\frac{1}{24\prod_{k=1}^{l}(-a_{k})}\Res_{w=0}
\Bigg\{\frac{\prod_{k=1}^{l}(1-a_{k}w)\cdot\big((1+w)^{n}-w^{n}\big)}{w^{n+l}} \Big( -Tw+\ln R(w,t)\Big)\Bigg\}.
\end{multline}
\end{theorem}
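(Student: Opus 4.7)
The plan is to compute the generating series $\sum_{d\geq 1} e^{dT} N_{1,d}^{0;X}$ directly from the combinatorial definition (\ref{27}), and to identify the answer with the right-hand side of (\ref{265}). By the divisor equation (\ref{294}) it suffices to determine $\mathfrak{F}_0^0(Q)$; by the $S_n$-symmetry of the coefficients $\mathfrak{F}_p^0(\alpha,Q)$ and Popa's remark that a symmetric polynomial is determined modulo $\mathcal{K}_i$ by its image in $\mathbb{Q}[\alpha]/\mathcal{K}_i$, this is in turn controlled by the one-point series
\[
\sum_{d\geq 1}Q^{d}\langle\phi_{i}\rangle_{1,1,d}^{0;X}\pmod{\mathcal{K}_{i}}.
\]
Via the formal localization of Section 2.3, (\ref{214}) splits this series into type A contributions from decorated one-loop graphs ($\mathcal{A}_i$, $\tilde{\mathcal{A}}_{ij}$) and type B contributions from refined decorated rooted trees ($\mathcal{B}_i$, $\tilde{\mathcal{B}}_{ij}$).

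The type A contributions involve the $\lambda_1$-twisted vertex integral (\ref{14}); the rest of each graph is a product of genus zero edge and vertex factors. I will rewrite these as a combination of one-point genus zero invariants of $X$ and then apply \cite[lemma 5.1, lemma 5.2]{Popa1} to collapse them modulo $\mathcal{K}_i$; this yields proposition \ref{249}. The type B contributions are governed by (\ref{20}): the integrals over $\widetilde{\mathcal{M}}_{1,(I,J)}$ produce the elementary factor (\ref{89}), and the integrals over $\mathbb{P}^{|\Edg_+|-1}$ become $H$-residues that factor over $\widetilde{\mathcal{Z}}_{\widetilde{\Gamma}_e}$. The key combinatorial observation, which I exploited in lemma \ref{181} for general decorated stars, is that these $H$-residues combine with the $\omega_+$-dependence to make Popa's equivalence $\equiv_i$ applicable. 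This gives proposition \ref{266}, whose form is designed to pair with proposition \ref{249}.

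Summing propositions \ref{249} and \ref{266}, extracting $\mathfrak{F}_0^0(Q)$, and integrating in $T$ (with initial condition $N_{1,0}^{0;X}=0$) produces the first two lines of (\ref{265}) together with the residue term; this is the route I will follow. As an internal consistency check, combining the output with corollary \ref{9} and theorem \ref{270} should reproduce the classical mirror formula of theorem \ref{297} with the residue term canceled, which is exactly what the sign $+\tfrac{1}{24\prod(-a_k)}\Res_{w=0}\{\cdot\}$ in (\ref{265}) encodes.

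The main obstacle will be the bookkeeping in the two mod-$\mathcal{K}_i$ evaluations. In contrast with the compact case of \cite{Popa1}, the assumption $a_k\geq 2$ is not needed because all weights $-a_k\alpha_i$ of $E$ appear only in numerators after localization, so Popa's identities apply verbatim; however one must carefully track the logarithmic terms $\log(1-\prod(-a_k)^{a_k}e^t)$ which arise from the simple pole of $R(w,t)$ at the critical locus and must be split between the two propositions according to the parity of $n+l$. Once this splitting is handled, the rest is a direct symmetric-function manipulation along the lines of Zinger \cite{Zinger1}.
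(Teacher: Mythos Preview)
Your overall architecture—reduce via the divisor equation (\ref{294}) to the one-point series $\sum_d Q^d\langle\phi_i\rangle_{1,1,d}^{0;X}$, split it as in (\ref{214}), compute each block modulo $\mathcal{K}_i$, and integrate—matches the paper. But two of the key computations are mischaracterized, and as written the proposal would not reach Propositions \ref{249} and \ref{266}.

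\textbf{Type A.} These are the contributions of decorated \emph{one-loop} graphs, whose vertices all have genus zero; the vertex integral is (\ref{13}), not the $\lambda_1$-twisted integral (\ref{14}) (which belongs to the \emph{ordinary}, not reduced, rooted-tree localization and plays no role here). More seriously, one-point genus-zero invariants are not enough: because each vertex on the loop carries two incident edges, the loop sum organizes into the genus-zero \emph{two-point} function $\widetilde{\mathcal{Z}}_{ii}^*(\hbar_1,\hbar_2,Q)$ (Proposition \ref{215}). The evaluation modulo $\mathcal{K}_i$ then rests on Lemma \ref{238}, which identifies this two-point object with the hypergeometric combination $\mathbb{F}$ of (\ref{240}), together with Lemma \ref{244} (Popa's residue computation of $\mathbb{F}$). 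It is precisely this two-variable residue that produces the terms $\sum_p\frac{(n+l-2p)^2}{8}\log I_p$; a one-point reduction will not see them.

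\textbf{Type B.} These are the refined-rooted-tree contributions, and the paper does not proceed via Lemma \ref{181} (that lemma is a polynomial identity used only in the proof of the LSvR). Instead, Proposition \ref{174} packages $\mathcal{B}_i$ and $\tilde{\mathcal{B}}_{ij}$ as $\hbar$-residues at $0$ and $\infty$ of rational expressions in the one-point series $\mathcal{Z}_i^*$ and $\mathcal{Z}_{ji}^*$. Lemma \ref{224} (the genus-zero mirror theorem) converts these to the hypergeometric series $\mathcal{F}_{-l}$; the residue at $\hbar=0$ is controlled by the asymptotic expansion (\ref{229})--(\ref{230}) and yields the $\mu$- and $L$-terms in (\ref{260}), while the residue at $\hbar=\infty$ literally becomes the $\Res_{w=0}$ term in (\ref{261}). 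Your proposal does not supply this mechanism.

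In short: the skeleton is right, but the flesh for type A must be two-point (Propositions \ref{215}, Lemmas \ref{238}, \ref{244}), and for type B must be the $\hbar$-residue repackaging of Proposition \ref{174} plus the hypergeometric asymptotics of Section 5.1.
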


\subsection{Some properties of certain hypergeometric series}
Following \cite{ZagierZinger}, we denote by
\ben
\mathcal{P}\subset 1+q\mathbb{Q}(w)[[q]]
\een
the subgroup of power series in $q$ with constant term $1$ whose coefficients are rational functions in $w$ which are regular at $w=0$, and define a map $\mathbf{M}:\mathcal{P}\rightarrow \mathcal{P}$ by
\ben
\mathbf{M}F(w,q)=\Big(1+\frac{q}{w}\frac{\partial}{\partial q}\Big)\frac{F(w,q)}{F(0,q)}.
\een
Note that for $F(w,q)\in \mathcal{P}$, we have a well-defined series $\log F(w,q)\in \mathbb{Q}(w)[[q]]$. Let $\mathcal{P}_{1}$ be the subset of $\mathcal{P}$ such that $F(w,q)\in\mathcal{P}_{1}$ if and only if every coefficient of the power series $\log F(w,q)$ is $O(w)$ as $w\rightarrow \infty$. We recall the following lemma from \cite{ZagierZinger}
\begin{lemma}\label{227}
If $\mathcal{F}\in\mathcal{P}$ and $\mathbf{M}^{k}F=F$ for some $k>0$, then $F\in\mathcal{P}_{1}$.
\end{lemma}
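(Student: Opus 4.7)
The plan is to analyze how $\mathbf{M}$ acts on the coefficients of $\log F$ and to show, by induction on $d$, that in the expansion $\log F(w,q)=\sum_{d\geq1}L_d(w)\,q^d$ each $L_d(w)$ has polynomial part of degree at most $1$ at infinity; this is exactly the condition $F\in\mathcal{P}_1$. To set this up I would first derive a recursion. Writing $G:=F/F(0,q)$, one has $\mathbf{M}F=G\bigl(1+\tfrac{1}{w}\,q\,\partial_q\log G\bigr)$, so taking logarithms gives
\[
\log\mathbf{M}F \;=\; \log G \;+\; \log\!\Big(1+\tfrac{1}{w}\textstyle\sum_{j\geq1}j\,l_j(w)\,q^j\Big),
\]
where $l_j(w):=L_j(w)-L_j(0)$ vanishes at $w=0$. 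Expanding $\log(1+x)=\sum_{r\geq1}(-1)^{r+1}x^r/r$ and reading off the coefficient of $q^d$ yields
\[
L_d^{(1)}(w) \;=\; l_d(w)\Bigl(1+\tfrac{d}{w}\Bigr) \;+\; Q_d(w),
\]
where $Q_d$ collects the $r\geq2$ terms and is a polynomial expression in $l_{j_1},\dots,l_{j_r}$ with $1\leq j_i<d$ and $\sum j_i=d$, each monomial divided by $w^r$.

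Next I would proceed by induction on $d$, the inductive hypothesis being: for every $G\in\mathcal{P}$ with $\mathbf{M}^{k}G=G$, the coefficient of $q^j$ in $\log G$ is $O(w)$ at infinity for all $j<d$. Since $\mathbf{M}^k(\mathbf{M}^m F)=\mathbf{M}^m F$, the hypothesis applies to each iterate $\mathbf{M}^m F$, giving $l_j^{(m)}(w)=O(w)$ at infinity for all $j<d$ and all $m$. Consequently every product $l_{j_1}^{(m)}\cdots l_{j_r}^{(m)}/w^r$ occurring in $Q_d^{(m)}$ is $O(1)$ at infinity. Decomposing $L_d^{(m)}=P_d^{(m)}+R_d^{(m)}$ into its polynomial part at infinity and a rational part vanishing at infinity, and using the recursion together with the induction hypothesis, the polynomial part at infinity of $L_d^{(m+1)}$ coincides with $\tilde T_d P_d^{(m)}$ up to an additive constant, where $\tilde T_d:\mathbb{Q}[w]\to\mathbb{Q}[w]$ is defined by $\tilde T_d P=(P-P(0))(1+d/w)$. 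Iterating $k$ times and using $\mathbf{M}^kF=F$ produces a relation
\[
\tilde T_d^{\,k} P_d \;=\; P_d \,-\, C
\qquad\text{for some constant }C\in\mathbb{Q}.
\]

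The conclusion is then a linear-algebraic observation about $\tilde T_d$. Since $\tilde T_d\cdot w^j=w^j+d\,w^{j-1}$ for $j\geq1$ and $\tilde T_d\cdot 1=0$, writing $P_d=\sum_{j=0}^{N}a_j w^j$ and letting $b_i^{(k)}$ denote the coefficient of $w^i$ in $\tilde T_d^{\,k}P_d$, the recursion $b_i^{(k+1)}=b_i^{(k)}+d\,b_{i+1}^{(k)}$ (for $i\geq1$) gives by a short induction on $k$ that $b_N^{(k)}=a_N$ and $b_{N-1}^{(k)}=a_{N-1}+kd\,a_N$. Comparing the $w^{N-1}$ coefficient in the displayed relation above forces $kd\,a_N=0$; since $k\geq1$ and $d\geq1$, this yields $a_N=0$ whenever $N\geq2$, so $\deg P_d\leq1$ and $L_d=O(w)$ at infinity, closing the induction.

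The main obstacle will be the polynomial/rational bookkeeping in the inductive step: one must verify carefully that (i) the parts of $l_d$ and of the factors inside $Q_d$ that vanish at infinity really do transform, under $\tilde T_d$ and under the multilinear operations in $Q_d$, into quantities whose only effect on the polynomial part at infinity is an additive constant, so that they do not contaminate the $w^{N-1}$ extraction; and (ii) the induction hypothesis propagates cleanly through all $k$ intermediate iterates $\mathbf{M}^m F$, which it does because $\mathbf{M}^k$ commutes with $\mathbf{M}^m$. Once this bookkeeping is in place, the fixed-point argument for $\tilde T_d$ on polynomials yields the degree bound cleanly.
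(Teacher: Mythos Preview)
The paper does not give its own proof of this lemma: it is stated as a result recalled from \cite{ZagierZinger} (Zagier--Zinger), and the only comment is that ``the proof of this lemma in \cite{ZagierZinger} shows also'' the companion Lemma~\ref{228}. Your argument is correct and is essentially the Zagier--Zinger proof: one writes $\log F=\sum_{d\ge1}L_d(w)q^d$, derives the recursion $L_d^{(m+1)}=l_d^{(m)}(1+d/w)+Q_d^{(m)}$ with $Q_d^{(m)}$ built from $l_{j}^{(m)}$ for $j<d$, and then argues by induction on $d$ (applied simultaneously to all iterates $\mathbf{M}^mF$, which are themselves $\mathbf{M}^k$-fixed) that the polynomial part of $L_d$ at infinity has degree at most~$1$. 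Your linear-algebra endgame is exactly right: on the span of $\{w,w^2,\dots\}$ the operator $\tilde T_d$ is unipotent with nilpotent part $d\cdot(\text{shift down})$, so the $w^{N-1}$-coefficient of $\tilde T_d^{\,k}P_d$ picks up $kd\,a_N$, and the fixed-point equation forces $a_N=0$ whenever $N\ge2$.

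The only place to be a bit more explicit is the bookkeeping you flag yourself. Concretely: when you pass from $l_d^{(m)}(1+d/w)$ to its polynomial part at infinity, the subtraction of $L_d^{(m)}(0)$ (value at $w=0$) rather than the constant term $a_0$ of $P_d^{(m)}$ contributes only an additive constant, and likewise $R_d^{(m)}(1+d/w)$ is $O(1/w)$ so contributes nothing; for $Q_d^{(m)}$, each factor $l_{j_i}^{(m)}$ is $O(w)$ by the induction hypothesis, so each monomial $l_{j_1}^{(m)}\cdots l_{j_r}^{(m)}/w^r$ is $O(1)$ and again contributes only a constant to the polynomial part. Since $\tilde T_d$ annihilates constants, iterating gives $P_d^{(k)}=\tilde T_d^{\,k}P_d+c_{k-1}$ exactly as you need. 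With these two lines spelled out, the proof is complete.
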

The proof of this lemma in \cite{ZagierZinger} shows also
\begin{lemma}\label{228}
If $\mathcal{F}\in\mathcal{P}$, then $\mathbf{M}F\in\mathcal{P}_{1}$ if and only if If $F\in\mathcal{P}_{1}$.
\end{lemma}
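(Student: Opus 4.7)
The plan is to reduce to working with $G := F/F(0,q) \in \mathcal{P}$, which satisfies $G(0,q) = 1$, and to extract a clean identity for $\log \mathbf{M} F$ in terms of $\log G$. Since $\log F(0,q)$ is independent of $w$, its coefficients in $q$ are constants, hence $O(1)$ at $\infty$, so $F \in \mathcal{P}_1$ if and only if $G \in \mathcal{P}_1$. A direct computation from the definition of $\mathbf{M}$ gives
\[
\mathbf{M} F = G + \frac{q}{w}\partial_q G = G\cdot\Bigl(1 + \frac{1}{w}\,q\partial_q \log G\Bigr),
\]
and therefore the key identity
\[
\log \mathbf{M} F = \log G + \log\Bigl(1 + \frac{1}{w}\,q\partial_q \log G\Bigr).
\]
Along the way I would check that $\mathbf{M}F$ does lie in $\mathcal{P}$: the coefficients of $G$ in $q^d$ vanish at $w=0$ for $d \ge 1$, so $w^{-1}q\partial_q G$ is regular at $w=0$.

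For the forward direction ($F\in\mathcal{P}_1 \Rightarrow \mathbf{M}F \in \mathcal{P}_1$), set $L_d(w):=[\log G]_d$, which is $O(w)$ at $\infty$ by hypothesis. Then $[w^{-1}q\partial_q \log G]_d = d\,L_d(w)/w = O(1)$, and each coefficient in $q$ of $\log(1+w^{-1}q\partial_q \log G)$ is a fixed polynomial in these $O(1)$ quantities, hence itself $O(1)$. Adding $L_d$ back using the key identity yields $[\log \mathbf{M}F]_d = O(w)$.

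For the converse I would induct on $d$. Writing $M_i := i\,L_i(w)/w$ and expanding $\log(1+X)=X-X^2/2+\cdots$ in the key identity, one sees that $M_d$ appears only through its linear contribution (a product $X^k$ with $k\ge 2$ contributing to $q^d$ forces every index to be $\le d-1$). Thus
\[
[\log \mathbf{M}F]_d = L_d\Bigl(1 + \frac{d}{w}\Bigr) + P_d(M_1,\dots,M_{d-1})
\]
for a universal polynomial $P_d$. Assuming inductively that $L_1,\dots,L_{d-1} = O(w)$, each $M_i$ with $i<d$ is $O(1)$, so $P_d = O(1)$; together with $[\log \mathbf{M}F]_d = O(w)$ this gives $L_d(1+d/w) = O(w)$, and multiplying by the bounded factor $w/(w+d)$ yields $L_d = O(w)$.

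The only real work is isolating the key identity; once it is in hand the argument is essentially bookkeeping of leading growth at $w=\infty$ and is already implicit in the Zagier--Zinger proof of Lemma \ref{227}. I do not anticipate any serious obstacle.
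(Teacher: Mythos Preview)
Your proof is correct and is precisely the argument the paper intends: the paper does not give its own proof but simply observes that the Zagier--Zinger proof of Lemma~\ref{227} already establishes this, and your key identity $\log \mathbf{M}F = \log G + \log(1 + w^{-1}q\partial_q \log G)$ together with the degree-by-degree induction is exactly that argument spelled out.
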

Now following \cite{Popa1}, we define
\bea\label{225}
\mathcal{F}_{-l}(w,q)&=&\sum_{d=0}^{\infty}q^{d}\frac{\prod_{k=1}^{l}\prod_{r=0}^{a_{k}d-1}(a_{k}w+r)}
{\prod_{r=0}^{d}\big((w+r)^{n}-w^{n}\big)},
\eea
and for $p>-1$,
\bea\label{226}
\mathcal{F}_{p}&=&\mathbf{M}^{l+p}\mathcal{F}_{-l}.
\eea
We have
\ben
\mathcal{F}_{0}(w,q)&=&\sum_{d=0}^{\infty}q^{d}\frac{\prod_{k=1}^{l}\prod_{r=1}^{a_{k}d}(a_{k}w+r)}
{\prod_{r=0}^{d}\big((w+r)^{n}-w^{n}\big)},\\
\een
 and by \cite[lemma 4.1]{Popa1}, we have
\ben
\mathbf{M}^{n}\mathcal{F}_{0}=\mathcal{F}_{0}.
\een
Thus by lemma \ref{227} and lemma \ref{228} we have $\mathcal{F}_{p}\in \mathcal{P}_{1}$ for $p\geq -l$. By the definition of $\mathcal{P}_{1}$, as $w\rightarrow \infty$, $\mathcal{F}_{p}(w,q)$ has an asymptotic expansion of the form
\bea\label{229}
\mathcal{F}_{p}(w,q)\sim e^{\mu_{p}(q)w}\sum_{s=0}^{\infty}\Phi_{p,s}(q)w^{-s}.
\eea
Let
\bea
L(q)=(1-\prod_{i=1}^{l}a_{i}^{a_{i}}q)^{-\frac{1}{n}}.
\eea
\begin{proposition}
The power series $\mu_{-l}(q)$ and $\Phi_{-l,0}(q)$ defined in (\ref{229}) are given by
\bea\label{230}
\mu_{-l}(q)=\int_{0}^{q}\frac{L(u)-1}{u}du,&
\Phi_{-l,0}(q)=L^{\frac{1-l}{2}}.
\eea
\end{proposition}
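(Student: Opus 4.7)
My plan is to analyze the $w\to\infty$ asymptotics of $\mathcal{F}_{-l}(w,q)$ by a WKB-type method applied to the recursion satisfied by its coefficients. First, I would write
\[
c_d(w) := \frac{\prod_{k=1}^{l}\prod_{r=0}^{a_kd-1}(a_kw+r)}{\prod_{r=1}^{d}\bigl((w+r)^n-w^n\bigr)},
\qquad
\frac{c_d(w)}{c_{d-1}(w)} = \frac{\prod_{k=1}^{l}\prod_{r=0}^{a_k-1}\bigl(a_kw+a_k(d-1)+r\bigr)}{(w+d)^n-w^n},
\]
which packages $\mathcal{F}_{-l}(w,q)$ as the unique solution with $\mathcal{F}_{-l}(w,0)=1$ of a linear ODE in $q$ of order $n$ with polynomial coefficients in $w$. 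Since $\mathcal{F}_{-l}\in\mathcal{P}_1$, the asymptotic expansion $\mathcal{F}_{-l}(w,q)\sim e^{\mu(q)w}\bigl(\Phi_0(q)+\Phi_1(q)w^{-1}+\cdots\bigr)$ exists, so it suffices to identify $\mu$ and $\Phi_0$.

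Next, I would substitute the WKB ansatz into the ODE and use $q\partial_q e^{\mu(q)w}=q\mu'(q)w\,e^{\mu(q)w}$. Matching the top power of $w$ turns the $n$-th order ODE into the algebraic equation
\[
\Bigl(\prod_{k=1}^{l}a_k^{a_k}\Bigr)\,q\,\bigl(1+q\mu'(q)\bigr)^{n} \;=\; \bigl(1+q\mu'(q)\bigr)^{n}-1,
\]
which, setting $L(q):=1+q\mu'(q)$, is equivalent to $L^n=1+(\prod_k a_k^{a_k})qL^n$, hence $L(q)=(1-\prod_k a_k^{a_k}\,q)^{-1/n}$. Combined with the initial condition $\mu(0)=0$, integration of $\mu'(q)=(L(q)-1)/q$ gives the first claim $\mu_{-l}(q)=\int_0^q(L(u)-1)/u\,du$.

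Then I would go one order further in the WKB expansion. Using
\[
(w+r)^n-w^n = nw^{n-1}r\Bigl(1+\tfrac{n-1}{2}\tfrac{r}{w}+O(w^{-2})\Bigr),
\qquad
\prod_{r=0}^{a_k-1}(a_kw+\cdots+r) = (a_kw)^{a_k}\Bigl(1+\tfrac{a_k-1}{2w}(\cdots)+O(w^{-2})\Bigr),
\]
and summing the $l$ numerator shifts against the single denominator shift (the combination collapses to $\sum_k(a_k-1)-(n-1)=-(l-1)$), the subleading order of the ODE reduces, after using the leading relation $L^n(1-\prod a_k^{a_k}q)=1$, to the first-order linear equation
\[
\frac{\Phi_0'(q)}{\Phi_0(q)} \;=\; \frac{1-l}{2}\cdot\frac{L'(q)}{L(q)}.
\]
Imposing $\Phi_0(0)=1$ yields $\Phi_{-l,0}(q)=L(q)^{(1-l)/2}$, as claimed. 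A sanity check with $q$-Taylor expansion (direct from the definition of $c_1(w)$) confirms the coefficient $(1-l)/2$.

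The main obstacle will be the bookkeeping in the subleading step: one must carefully combine the $O(1/w)$ corrections coming from every factor in both numerator and denominator of $c_d/c_{d-1}$, recognize that the $d$-dependent contributions reorganize into a total $q$-derivative of $\log L$, and extract the clean exponent $(1-l)/2$ as the balance between the $l$ numerator corrections and the unique denominator correction. Once $\mu_{-l}$ and $L$ are pinned down at leading order, this reduction at next order is essentially a routine (if careful) computation, and the same scheme then determines $\Phi_{-l,s}(q)$ for all $s\geq 1$ recursively.
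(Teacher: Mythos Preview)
Your approach is correct in principle but takes a quite different and more laborious route than the paper. You perform a direct WKB analysis of $\mathcal{F}_{-l}$ itself: you write down the $n$-th order ODE it satisfies, substitute the ansatz $e^{\mu w}\Phi$, and extract $\mu$ and $\Phi_0$ from the leading and subleading orders in $w$. The leading-order computation you outline is right (it yields $L^n(1-\prod_k a_k^{a_k}q)=1$), and the subleading step, while sketchy in your writeup, can indeed be carried out with enough care.

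The paper avoids all of this by bootstrapping from the known asymptotics of $\mathcal{F}_0$. The key observation is that $\mathcal{F}_p(0,q)=1$ for $-l\le p\le -1$ (each $q^d$-coefficient with $d\ge1$ carries a factor of $w$), so the normalization in $\mathbf{M}$ is trivial on this range and one simply has $\mathcal{F}_p=(1+\tfrac{q}{w}\partial_q)^{l+p}\mathcal{F}_{-l}$ for $-l\le p\le 0$. Applying $1+\tfrac{q}{w}\partial_q$ to the asymptotic expansion immediately gives $\mu_{p+1}=\mu_p$ and $\Phi_{p+1,0}=(1+q\mu')\Phi_{p,0}=L\,\Phi_{p,0}$. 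Since \cite{Popa1} already computed $\mu_0=\int_0^q(L-1)/u\,du$ and $\Phi_{0,0}=L^{(1+l)/2}$, one reads off $\mu_{-l}=\mu_0$ and $\Phi_{-l,0}=L^{-l}\Phi_{0,0}=L^{(1-l)/2}$ in two lines.

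So your method is self-contained (it does not invoke the prior result for $\mathcal{F}_0$) but essentially redoes that computation; the paper's method trades self-containment for brevity by exploiting the simple transfer rule along the $\mathbf{M}$-chain. If you want to keep your direct approach, the subleading step deserves a more explicit derivation than the heuristic ``$\sum_k(a_k-1)-(n-1)$'' balance you indicate; the commutator $[q\partial_q,\,wL]$ contributes additional terms that must be tracked.
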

\begin{proof} Since $\mathcal{F}_{p}(0,q)=0$ for $-l\leq p\leq -1$, we have
\ben
\mathcal{F}_{p}(w,q)=\Big(1+\frac{q}{w}\frac{\partial}{\partial q}\Big)^{l+p}\mathcal{F}_{-l}
\een
for $-l\leq p\leq 0$. Thus let $1+\frac{q}{w}\frac{\partial}{\partial q}$ operates on both sides of (\ref{229}), we obtain
\bea\label{234}
\mu_{p+1}(q)=\mu_{p}(q),&
\Phi_{p+1,0}(q)=\big(1+q\mu_{p}^{\prime}(q)\big)\Phi_{p,0}(q)
\eea
for $-l\leq p\leq -1$. By \cite[proposition 4.3]{Popa1} we have
\ben
\mu_{0}(q)=\int_{0}^{q}\frac{L(u)-1}{u}du,&
\Phi_{0,0}(q)=L^{\frac{1+l}{2}}.
\een
from which (\ref{230}) follows.
\end{proof}

From now on, we denote $\mu(q)=\mu_{-l}(p)=\cdots=\mu_{0}(q)$. For convenience to consult results from \cite{Popa1}, we define
\bea\label{263}
\tilde{I}_{p}(q)=\mathbf{M}^{p}\mathcal{F}_{0}(w,q)\big|_{w=0}.
\eea
Thus
\bea\label{264}
I_{p+l}(q)=\tilde{I}_{p}\big((-1)^{n}q\big),
\eea
for $0\leq p\leq n-1$.

\subsection{Some properties of genus zero generating functions}
Let
\bea\label{201}
\mathcal{Z}_{i}^{*}(\hbar, Q)=\sum_{d=1}^{\infty}Q^{d}\int_{\Mbar_{0,2}(\mathbb{P}^{n-1},d)}\frac{\mathbf{e}(\mathcal{U}_{0}^{\prime})}{\hbar-\psi_{1}}\ev_{1}^{*}\phi_{i},
\eea
\bea\label{202}
\mathcal{Z}_{ij}^{*}(\hbar, Q)=\hbar^{-1}\sum_{d=1}^{\infty}Q^{d}\int_{\Mbar_{0,2}(\mathbb{P}^{n-1},d)}
\frac{\mathbf{e}(\mathcal{U}_{0}^{\prime})}{\hbar-\psi_{1}}\ev_{1}^{*}\phi_{i}\ev_{2}^{*}\phi_{j},
\eea
\bea\label{203}
\widetilde{\mathcal{Z}}_{ij}^{*}(\hbar_{1},\hbar_{2}, Q)=\frac{1}{2\hbar_{1}\hbar_{2}}\sum_{d=1}^{\infty}Q^{d}\int_{\Mbar_{0,2}(\mathbb{P}^{n-1},d)}
\frac{\mathbf{e}(\mathcal{U}_{0}^{\prime})}{(\hbar-\psi_{1})(\hbar-\psi_{2})}\ev_{1}^{*}\phi_{i}\ev_{2}^{*}\phi_{j}.
\eea
\bea\label{252}
\mathcal{Z}_{i}^{\prime *}(\hbar, Q)=\sum_{d=1}^{\infty}Q^{d}\int_{\Mbar_{0,1}(\mathbb{P}^{n-1},d)}
\frac{\mathbf{e}(\mathcal{U}_{0}^{\prime})}{\hbar-\psi_{1}}\ev_{1}^{*}\phi_{i}
\eea
By the string equation we have
\bea\label{204}
\mathcal{Z}_{i}^{*}(\hbar, Q)=\hbar^{-1}\mathcal{Z}_{i}^{\prime *}(\hbar, Q).
\eea
Define
\bea\label{205}
\eta_{i}(Q)=\Res_{\hbar=0}\Big\{\log\big(1+\mathcal{Z}_{ij}^{*}(\hbar, Q)\big)\Big\},
\eea
and
\bea\label{206}
\Phi_{0}(\alpha_{i},Q)=\Res_{\hbar=0}\Big\{\hbar^{-1}e^{-\eta_{i}(Q)/\hbar}\big(1+\mathcal{Z}_{i}^{*}(\hbar, Q)\big)\Big\}.
\eea
\begin{lemma}\label{207}
$e^{-\eta_{i}(Q)/\hbar}\big(1+\mathcal{Z}_{ij}^{*}(\hbar, Q)\big)$ is holomorphic at $\hbar=0$, and thus
\bea\label{208}
\Phi_{0}(\alpha_{i},Q)=e^{-\eta_{i}(Q)/\hbar}\big(1+\mathcal{Z}_{i}^{*}(\hbar, Q)\big)\big|_{\hbar=0}.
\eea
\end{lemma}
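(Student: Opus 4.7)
The plan is to establish the lemma via Givental's genus zero equivariant mirror theorem, using the string equation to reduce to a one-point analysis. First, by (\ref{204}), $1+\mathcal{Z}_i^*(\hbar,Q)=1+\hbar^{-1}\mathcal{Z}_i^{\prime *}(\hbar,Q)$; expanding $(\hbar-\psi_1)^{-1}$ and using that $\psi_1^k=0$ on $\Mbar_{0,1}(\mathbb{P}^{n-1},d)$ for $k$ large by dimension reasons, this is, for each $Q^d$-coefficient, a polynomial in $\hbar^{-1}$ with constant term $1$. Consequently $\log\bigl(1+\mathcal{Z}_i^*(\hbar,Q)\bigr)$ is well-defined as an element of $\hbar^{-1}\mathbb{Q}[\hbar^{-1}][[Q]]$, and the residue $\eta_i(Q)$ defined in (\ref{205}) is exactly the coefficient of $\hbar^{-1}$ in this logarithm. (Here I interpret the $\mathcal{Z}_{ij}^*$ in the statement as $\mathcal{Z}_i^*$, consistent with the ``thus'' part of the lemma.)

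The heart of the argument is the claim that $\log(1+\mathcal{Z}_i^*)$ has \emph{only} a simple pole at $\hbar=0$. I would prove this by invoking the mirror theorem (\ref{281}): the factorization $J(\hbar,\alpha,Q)=e^{-xf(q)/\hbar}I(\hbar,\alpha,q)$ restricted at $x=\alpha_i$, after identifying $1+\mathcal{Z}_i^*(\hbar,Q)$ with the component of $J$ at $P_i$ via equivariant localization on $\Mbar_{0,2}$, yields
\begin{equation*}
1+\mathcal{Z}_i^*(\hbar,Q)=e^{-\alpha_i f(q)/\hbar}\,G_i(\hbar,q),
\end{equation*}
where $G_i(\hbar,q)$ is the restriction $I(\hbar,\alpha,q)\big|_{x=\alpha_i}$ coming from (\ref{280}). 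A direct inspection of (\ref{280}) shows that $G_i(\hbar,q)$ has no negative powers of $\hbar$ once suitably normalized (the $\prod_{s=1}^{d}(s\hbar)$-type factors in the denominator at $x=\alpha_i$ are absorbed by the corresponding numerator factors after summation in $q$), so $\log G_i(\hbar,q)$ is holomorphic at $\hbar=0$. This is essentially the content of \cite[Lemmas 5.1--5.2]{Popa1}, transferred from the convex to the concave setting as remarked at the start of this section. Therefore $\log(1+\mathcal{Z}_i^*)=-\alpha_i f(q)/\hbar+(\text{element of }\mathbb{Q}[[\hbar,Q]])$; by (\ref{205}) this forces $\eta_i(Q)=-\alpha_i f(q)$, and $\log(1+\mathcal{Z}_i^*)-\eta_i(Q)/\hbar$ is holomorphic at $\hbar=0$. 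Exponentiating gives the first assertion.

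The formula (\ref{208}) then follows at once. From (\ref{206}), $\Phi_0(\alpha_i,Q)$ is the residue at $\hbar=0$ of $\hbar^{-1}$ times the function $e^{-\eta_i(Q)/\hbar}\bigl(1+\mathcal{Z}_i^*(\hbar,Q)\bigr)$; by the first part of the lemma this function is holomorphic at $\hbar=0$, so the simple pole of $\hbar^{-1}$ produces a residue equal to the value of that holomorphic function at $\hbar=0$, which is precisely (\ref{208}).

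The main obstacle will be the precise identification of $1+\mathcal{Z}_i^*(\hbar,Q)$ with the restriction of $J$ at $P_i$ (a careful equivariant-localization bookkeeping to match $\mathbf{e}(\mathcal{U}_0^{\prime})$ against $\mathbf{e}(\mathcal{U}_0)$ and the fixed-point Euler classes), and the verification that the absence-of-higher-poles argument of Popa goes through verbatim for the concave bundle $E=\bigoplus_k\mathcal{O}(-a_k)$. Both should be routine modulo careful accounting, not genuinely new input.
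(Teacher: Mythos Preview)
Your approach differs from the paper's, and it contains a genuine gap.

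The paper's proof simply transplants Zinger's argument from \cite[Lemma 2.3]{Zinger1}: one analyzes the fixed loci on $\Mbar_{0,2}(\mathbb{P}^{n-1},d)$ directly by localization, and the multiplicativity
\[
\mathbf{e}(\mathcal{U}_{0}^{\prime})=\prod_{e\in\Edg(v_{0})}\pi_{e}^{*}\mathbf{e}(\mathcal{U}_{0}^{\prime})
\]
coming from the normalization sequence forces $\log(1+\mathcal{Z}_i^{*})$ to have at most a simple pole at $\hbar=0$. No mirror theorem is invoked; the argument is combinatorial and works uniformly for convex and concave $E$.

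You instead appeal to (\ref{281}) to write $1+\mathcal{Z}_i^{*}=e^{-\alpha_i f(q)/\hbar}G_i$ with $G_i=I(\hbar,\alpha,q)|_{x=\alpha_i}$. The identification is correct (via localization on $\mathbb{P}^{n-1}$ and the projection formula), but your central claim that $G_i$ ``has no negative powers of $\hbar$'' is false. In (\ref{280}) the $j=i$ factor of the denominator at $x=\alpha_i$ is $\prod_{s=1}^{d}(s\hbar)=d!\,\hbar^{d}$, while the numerator at $\hbar=0$ is $\prod_k(-a_k\alpha_i)^{a_kd}\neq 0$; nothing ``absorbs'' that $\hbar^{d}$, so the $q^d$-coefficient of $G_i$ has a pole of exact order $d$ at $\hbar=0$. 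Consequently your conclusion that $\log G_i$ is holomorphic and hence $\eta_i(Q)=-\alpha_i f(q)$ is wrong: compare (\ref{232}), where $\eta_i$ also contains the contribution $\alpha_i\mu\big((-1)^n q\big)$ (plus corrections in $\mathcal{I}$). What you actually need is the subtler fact that $\log G_i$ has at most a \emph{simple} pole at $\hbar=0$ --- an equivariant analogue of the $\mathcal{P}_1$ property of Section 5.1. That is true, but it is not a ``direct inspection'' of (\ref{280}) and is not the content of \cite[Lemmas 5.1--5.2]{Popa1}; establishing it amounts to redoing Zinger's localization computation, so your route does not bypass the essential step.
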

\begin{proof} The proof is the same as the proof of  \cite[lemma 2.3]{Zinger1}. The only change is to replace $\mathbf{e}(\mathcal{V}_{0}^{\prime})$ there by $\mathbf{e}(\mathcal{U}_{0}^{\prime})$, and note that as in (2.10) of that proof, by the normalization sequence we still have
\bea\label{209}
\mathbf{e}(\mathcal{U}_{0}^{\prime})=\prod_{e\in \Edg(v_{0})}\pi_{e}^{*}\mathbf{e}(\mathcal{U}_{0}^{\prime}).
\eea
\end{proof}

Let
\bea\label{220}
f(q)=\left\{\begin{array}{lll}
\sum_{d=1}^{\infty}q^{d}\frac{(-1)^{nd}}{d}\frac{(nd)!}{(d!)^{n}}, & \mathrm{if} & l=1;\\
0 & \mathrm{if} & l\geq 2.\end{array}\right.
\eea
In any case the mirror map is
\bea\label{221}
T=t+f(e^{t}).
\eea

\begin{lemma}\label{224}
\bea\label{222}
1+\mathcal{Z}_{i}^{*}(\hbar, Q)-e^{-f(q)\frac{\alpha_{i}}{\hbar}}\mathcal{F}_{-l}\big(\frac{\alpha_{i}}{\hbar},(-1)^{n}q\big)\in \mathcal{I}\cdot \tilde{\mathbb{Q}}_{i}[\alpha]^{S_{n-1}}[[q],
\eea
\bea\label{223}
\alpha_{i}^{n-2}\alpha_{j}+\hbar\mathcal{Z}_{ji}^{*}(\hbar, Q)-\alpha_{i}^{n-2}\alpha_{j}e^{-f(q)\frac{\alpha_{j}}{\hbar}}
\frac{\mathbf{M}\mathcal{F}_{-l}\big(\frac{\alpha_{j}}{\hbar},(-1)^{n}q\big)}{I_{1,1}(q)}\in \mathcal{K}_{i}[[q]].
\eea
\end{lemma}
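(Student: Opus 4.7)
My plan is to derive both identities from Givental's genus-zero mirror theorem (5.7) by restricting to the appropriate torus-fixed point and then matching the resulting hypergeometric series against $\mathcal{F}_{-l}$ term by term, modulo the ideals $\mathcal{I}$ and $\mathcal{K}_{i}$ of Popa.

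For (\ref{222}), the first step is to identify the left-hand side with a restriction of the $J$-function. Since $\int_{\mathbb{P}^{n-1}} \phi_{i} \wedge \beta = \beta|_{P_{i}}$ for any equivariant class $\beta$, the projection formula applied to (\ref{252}) gives $\mathcal{Z}_{i}^{\prime *}(\hbar, Q) = \hbar\bigl(J(\hbar,\alpha,Q) - 1\bigr)\bigm|_{x=\alpha_{i}}$; combined with the string relation (\ref{204}) this yields $1 + \mathcal{Z}_{i}^{*}(\hbar, Q) = J(\hbar,\alpha,Q)|_{x=\alpha_{i}}$. Applying the mirror identity (\ref{281}) reduces the claim to comparing $e^{-\alpha_{i} f(q)/\hbar}\, I(\hbar,\alpha,q)|_{x=\alpha_{i}}$ with $e^{-\alpha_{i} f(q)/\hbar}\, \mathcal{F}_{-l}(\alpha_{i}/\hbar, (-1)^{n}q)$. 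Substituting $w = \alpha_{i}/\hbar$ in (\ref{280}) and extracting $\hbar^{nd}$ from numerator and denominator converts $I|_{x=\alpha_{i}}$ into a hypergeometric series whose numerator is literally that of $\mathcal{F}_{-l}(w,(-1)^{n}q)$; the denominator becomes $\prod_{s=1}^{d}\prod_{j=1}^{n}(w+s-\alpha_{j}/\hbar)$, which should be compared with $\prod_{s=1}^{d}\bigl((w+s)^{n} - w^{n}\bigr)$. Expanding both in elementary symmetric polynomials in $\alpha_{1},\dots,\alpha_{n}$ and using $\prod_{j}(x-\alpha_{j})=0$ at $x=\alpha_{i}$ shows that their difference lies in $\mathcal{I}\cdot\tilde{\mathbb{Q}}_{i}[\alpha]^{S_{n-1}}$ by the local analog of \cite[Lemma 5.1]{Popa1}; expanding the geometric series $1/\prod_{j}(w+s-\alpha_{j}/\hbar)$ around the $\mathcal{F}_{-l}$-denominator then yields (\ref{222}).

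For (\ref{223}), the same mechanism applies to a two-point function. By the projection formula applied at both evaluation maps, $\hbar\mathcal{Z}_{ji}^{*}(\hbar,Q)$ equals the restriction to $P_{j}$ of a one-point series carrying an extra insertion of $\phi_{i}$ at the second marked point; the constant $\alpha_{i}^{n-2}\alpha_{j}$ corresponds to the leading contribution from $\phi_{i}|_{P_{j}}$ coupled with the normalization induced by $\ev_{1*}$. The insertion of $\mathbf{x}$ together with the string equation corresponds on the mirror side to the operator $1 + (q/w)\partial_{q}$, which after normalization becomes $\mathbf{M}$; the specific factor $I_{1,1}(q)$ arises because $\partial_{t} f(q) + 1 = I_{1,1}(q)$ is the derivative of the mirror map (\ref{221}), and the action of $\mathbf{M}$ on $\mathcal{F}_{-l}$ involves dividing by $\mathcal{F}_{-l}(0,q)$. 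Terms that are regular at $\alpha_{i}$ but carry poles at $\alpha_{k}-\alpha_{i}$ for some $k\ne i$ then lie in $\mathcal{K}_{i}$ by \cite[Lemma 5.2]{Popa1}, which is what upgrades the conclusion from the weaker congruence modulo $\mathcal{I}\cdot\tilde{\mathbb{Q}}_{i}[\alpha]^{S_{n-1}}[[q]]$ to membership in $\mathcal{K}_{i}[[q]]$.

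I expect (\ref{223}) to be the main obstacle. Matching the precise normalization $I_{1,1}(q)$ requires careful tracking of how $\mathbf{M}$ commutes with the prefactor $e^{-\alpha_{j} f(q)/\hbar}$, and verifying $\mathcal{K}_{i}$-membership (rather than the cruder $\mathcal{I}$-membership) demands separate analysis of the bubbling contributions at $\hbar = (\alpha_{k}-\alpha_{j})/d$ for $k\ne i$. My strategy will be to establish (\ref{222}) cleanly first, then derive (\ref{223}) by applying $\mathbf{M}$ to both sides while on the geometric side using the two-point recursion relation analogous to \cite[Lemma 2.3]{Zinger1}, adapted to the concave local setting by replacing $\mathbf{e}(\mathcal{V}_{0}^{\prime})$ with $\mathbf{e}(\mathcal{U}_{0}^{\prime})$ as already noted in the proof of Lemma \ref{207}.
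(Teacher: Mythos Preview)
Your approach for (\ref{222}) is correct and is precisely what the paper does: the paper's proof is simply a citation to \cite[Theorem~4.6]{Popa2} together with the remark that the argument follows \cite[Lemma~5.3]{Popa1}, and that argument is exactly the restriction of the $J$-function to $P_i$, the application of (\ref{281}), and the comparison of denominators modulo $\mathcal{I}$ that you outline.

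For (\ref{223}) you take a somewhat different route. The paper invokes Popa's closed two-point mirror formula \cite[Theorem~4.6]{Popa2} directly and then repeats the ideal comparison. You instead propose to reduce to the one-point case by replacing the $\phi_i$-insertion by a divisor insertion, so that the divisor equation produces the $q\partial_q$ appearing in $\mathbf{M}$. This can be made to work, but two points in your sketch need correction. First, the constant $\alpha_i^{n-2}\alpha_j$ does \emph{not} arise from $\phi_i|_{P_j}$ (that restriction vanishes for $j\neq i$); rather, one first replaces $\phi_i$ by $\alpha_i^{n-2}\mathbf{x}$ modulo $\mathcal{K}_i$ as an equivariant class on $\mathbb{P}^{n-1}$ (this is the substance of \cite[Lemma~5.2]{Popa1}), and only then restricts $\mathbf{x}\mapsto\alpha_j$. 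Second, applying (\ref{222}) at index $j$ yields a congruence modulo $\mathcal{I}\cdot\tilde{\mathbb{Q}}_j[\alpha]^{S_{n-1}}$, and you must still verify that after multiplying by $\alpha_i^{n-2}$ the result actually lands in $\mathcal{K}_i$; this uses \cite[Lemma~5.1]{Popa1} and is not automatic. The paper's direct citation of the two-point formula sidesteps both bookkeeping steps, at the cost of importing a heavier external result.
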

\begin{proof} This follows from \cite[theorem 4.6]{Popa2}; the argument is almost the same as in the proof of the first part of \cite[lemma 5.3]{Popa1}, so we omit it.
\end{proof}

\begin{lemma}\label{238}
\bea\label{239}
n\alpha_{i}^{n-1}+2(\hbar_{1}+\hbar_{2})\hbar_{1}\hbar_{2}\widetilde{\mathcal{Z}}_{ii}^{*}(\hbar_{1},\hbar_{2}, Q)&\nn\\
-\alpha_{i}^{n-1}e^{-f(q)\alpha_{i}\big(\frac{1}{\hbar_{1}}+\frac{1}{\hbar_{2}}\big)}
\mathbb{F}\big(\frac{\alpha_{i}}{\hbar_{1}},\frac{\alpha_{i}}{\hbar_{2}},(-1)^{n}q\big)&
\in \mathcal{I}\cdot \tilde{\mathbb{Q}}_{i}[\alpha]^{S_{n-1}}[[q]],\nn\\
\eea
where
\bea\label{240}
\mathbb{F}(w_{1},w_{2},q)&=&\sum_{p=0}^{n-1-l}\frac{\mathbf{M}^{p}\mathcal{F}_{0}(w_{1},q)}{\tilde{I}_{p,p}(q)}
\frac{\mathbf{M}^{n-1-l-p}\mathcal{F}_{0}(w_{2},q)}{\tilde{I}_{n-1-l-p,n-1-l-p}(q)}\nn\\
&&+\sum_{p=1}^{l}\frac{\mathbf{M}^{n-1-l+p}\mathcal{F}_{0}(w_{1},q)}{\tilde{I}_{n-1-l+p,n-1-l+p}(q)}
\frac{\mathbf{M}^{n-p}\mathcal{F}_{0}(w_{2},q)}{\tilde{I}_{n-p,n-p}(q)}.
\eea
\end{lemma}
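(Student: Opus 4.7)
The plan is to reduce (\ref{239}) to the one-point asymptotics of lemma \ref{224} by way of a WDVV/Frobenius-type bilinear decomposition of $\widetilde{\mathcal{Z}}^{*}_{ii}(\hbar_{1},\hbar_{2},Q)$, following the scheme of \cite[Section 5]{Popa1} adapted from the convex case of complete intersections to the concave local setting. First I would establish, modulo $\mathcal{K}_{i}[[q]]$, a Givental-style identity
\begin{equation*}
n\alpha_{i}^{n-1} + 2(\hbar_{1}+\hbar_{2})\hbar_{1}\hbar_{2}\,\widetilde{\mathcal{Z}}^{*}_{ii}(\hbar_{1},\hbar_{2},Q) \;\equiv\; \sum_{p}\frac{B_{p}(\hbar_{1},Q)\,C_{p}(\hbar_{2},Q)}{N_{p}(q)},
\end{equation*}
where the $B_{p},C_{p}$ are built from the one-point series $\mathcal{Z}^{*}_{ji}$ of lemma \ref{224} and their $\mathbf{M}$-iterates. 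The constant $n\alpha_{i}^{n-1}$ is the classical (degree-zero) piece of the pairing $\phi_{i}\cdot\phi_{i}$, while the factor $2(\hbar_{1}+\hbar_{2})\hbar_{1}\hbar_{2}$ clears the two $\psi$-poles attached to the marked points of $\widetilde{\mathcal{Z}}^{*}_{ii}$, reducing the bilinear to a finite sum of one-point contributions after expansion.

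Next I would substitute lemma \ref{224} into this bilinear identity. Since $\mathbf{M}$ involves only $q\partial_{q}$ and division by power series with constant term $1$, it preserves both $\mathcal{I}\cdot\tilde{\mathbb{Q}}_{i}[\alpha]^{S_{n-1}}[[q]]$ and the subideal $\mathcal{K}_{i}[[q]]$, so iterating lemma \ref{224} turns each $B_{p}(\hbar_{j},Q)$ into $e^{-f(q)\alpha_{i}/\hbar_{j}}\mathbf{M}^{p}\mathcal{F}_{0}(\alpha_{i}/\hbar_{j},(-1)^{n}q)$ (up to an explicit $\alpha_{i}$-power) modulo the target ideal. The normalizers $N_{p}(q)=\tilde{I}_{p,p}(q)\tilde{I}_{p',p'}(q)$ appear because the $\mathbf{M}$-iterate basis must be rescaled so that its $w=0$ value is $1$, as in (\ref{263}).

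The main obstacle, I expect, is pinning down the precise index pairing that produces (\ref{240}): namely the main range $p\leftrightarrow n-1-l-p$ for $0\leq p\leq n-1-l$ together with the auxiliary range $n-1-l+p\leftrightarrow n-p$ for $1\leq p\leq l$. This split reflects that the natural self-pairing on the $n$-dimensional $\mathbf{M}$-orbit $\{\mathbf{M}^{p}\mathcal{F}_{0}\}_{p=0}^{n-1}$ (closed under $\mathbf{M}$ because $\mathbf{M}^{n}\mathcal{F}_{0}=\mathcal{F}_{0}$ by \cite[Lemma 4.1]{Popa1}) is twisted by the Euler class of $E$, shifting the duality by $l$ slots relative to the pure projective case of \cite[Lemma 5.5]{Popa1}. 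To close the argument one must verify, using the asymptotic expansion (\ref{229}) together with the recursions (\ref{234}), that the only cross-terms surviving modulo $\mathcal{I}\cdot\tilde{\mathbb{Q}}_{i}[\alpha]^{S_{n-1}}[[q]]$ are exactly those recorded in (\ref{240}), and that their coefficients collapse to $1/(\tilde{I}_{p,p}\tilde{I}_{p',p'})$. This amounts to a finite combinatorial check on the leading asymptotic coefficients $\Phi_{p,0}(q)$ and their relations to $L(q)$, after which the statement follows by routine bookkeeping of $\alpha_{i}$-powers.
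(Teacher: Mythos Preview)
Your overall picture — a bilinear decomposition of the two-point series into products of one-point data — is correct, but the plan misidentifies both inputs the paper actually uses.

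First, the paper does not derive the bilinear decomposition from lemma~\ref{224} and WDVV-type relations. It cites \cite[corollary~4.6, theorem~4.7, remark~4.4]{Popa2} directly: those results already give, modulo $\mathcal{I}\cdot\tilde{\mathbb{Q}}_{i}[\alpha]^{S_{n-1}}[[q]]$, a formula of the shape (\ref{239}) except that the second sum carries factors $\mathcal{F}_{-p}(w_{2},(-1)^{n}q)$ (for $1\leq p\leq l$) rather than $\mathbf{M}^{n-p}\mathcal{F}_{0}/\tilde{I}_{n-p,n-p}$. So the first half of your plan — reconstructing the two-point bilinear from iterated one-point asymptotics — is unnecessary and, as written, only a sketch of a substantial theorem that is already available.

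Second, and more importantly, your proposed mechanism for the auxiliary range $n-1-l+p\leftrightarrow n-p$ is wrong. You suggest checking which cross-terms survive by examining the asymptotic expansion (\ref{229}) and the recursions (\ref{234}); but (\ref{239}) is an algebraic statement about membership in an ideal of power series, not an asymptotic one, and the $w\to\infty$ asymptotics of $\mathcal{F}_{p}$ cannot pin down such an identity. The paper's argument is purely algebraic: from the proof of \cite[lemma~4.5]{Popa1} one has $\mathcal{F}_{n-l}/\tilde{I}_{n-l,n-l}=\mathcal{F}_{-l}$, and from \cite[(4.7)]{Popa1} one has $\tilde{I}_{p,p}=1$ for $n-l+1\leq p\leq n-1$, whence $\mathcal{F}_{-p}=\mathcal{F}_{n-p}=\mathbf{M}^{n-p}\mathcal{F}_{0}/\tilde{I}_{n-p,n-p}$ for $1\leq p\leq l-1$. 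Substituting these into the Popa2 formula produces (\ref{240}) immediately. No combinatorial check on the $\Phi_{p,0}$ is needed.
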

\begin{proof} By \cite[corollary 4.6, theorem 4.7, remark 4.4]{Popa2}, it is not hard to find that
\bea\label{243}
&&n\alpha_{i}^{n-1}+2(\hbar_{1}+\hbar_{2})\hbar_{1}\hbar_{2}\widetilde{\mathcal{Z}}_{ii}^{*}(\hbar_{1},\hbar_{2}, Q)
-\alpha_{i}^{n-1}e^{-f(q)\alpha_{i}\big(\frac{1}{\hbar_{1}}+\frac{1}{\hbar_{2}}\big)}\nn\\
&&\cdot\Bigg(\sum_{p=0}^{n-1-l}\frac{\mathbf{M}^{p}\mathcal{F}_{0}(w_{1},(-1)^{n}q)}{\tilde{I}_{p,p}((-1)^{n}q)}
\frac{\mathbf{M}^{n-1-l-p}\mathcal{F}_{0}(w_{2},(-1)^{n}q)}{\tilde{I}_{n-1-l-p,n-1-l-p}((-1)^{n}q)}\nn\\
&&+\sum_{p=1}^{l}\frac{\mathbf{M}^{n-1-l+p}\mathcal{F}_{0}(w_{1},(-1)^{n}q)}{\tilde{I}_{n-1-l+p,n-1-l+p}((-1)^{n}q)}
\mathcal{F}_{-p}\big(w_{2},(-1)^{n}q\big)
\Bigg)\in \mathcal{I}\cdot \tilde{\mathbb{Q}}_{i}[\alpha]^{S_{n-1}}[[q].\nn\\
\eea
Moreover, in the proof of \cite[lemma 4.5]{Popa1} we see that
\bea\label{241}
\frac{\mathcal{F}_{n-l}(w,q)}{\tilde{I}_{n-l,n-l}(q)}=\mathcal{F}_{-l}(w,q).
\eea
Furthermore by  \cite[(4.7)]{Popa1} we see $\tilde{I}_{p,p}(q)=1$ for $n-l+1\leq p\leq n-1$, therefore by (\ref{241}) we have
\bea\label{242}
\mathcal{F}_{-p}(w,q)=\mathcal{F}_{n-p}(w,q)=\frac{\mathcal{F}_{n-p}(w,q)}{\tilde{I}_{n-p,n-p}(q)}
\eea
for $1\leq p\leq l-1$. Substituting (\ref{241}) and (\ref{242}) into (\ref{243}) we obtain (\ref{239}).
\end{proof}

By (\ref{229}) and (\ref{222}) we have

\begin{corollary}\label{231}
\bea\label{232}
\eta_{i}(Q)-\big(\mu\big((-1)^{n}q\big)-f(q)\big)\alpha_{i}\in\mathcal{I}\cdot \tilde{\mathbb{Q}}_{i}[\alpha]^{S_{n-1}}[[q],
\eea
\bea\label{233}
\Phi_{0}(\alpha_{i},Q)-\Phi_{-l,0}\big((-1)^{n}q\big)\in\mathcal{I}\cdot \tilde{\mathbb{Q}}_{i}[\alpha]^{S_{n-1}}[[q].
\eea
\end{corollary}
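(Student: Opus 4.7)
The plan is to substitute the congruence (\ref{222}) into the definitions (\ref{205}) and (\ref{206}) and read off the coefficients of $\hbar^{-1}$ and $\hbar^{0}$ using the asymptotic expansion (\ref{229}), after checking that the ideal $\mathcal{I}\cdot\tilde{\mathbb{Q}}_{i}[\alpha]^{S_{n-1}}[[q]]$ is preserved under the required operations.

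For (\ref{232}), I would take the logarithm of both sides of (\ref{222}); since both $1+\mathcal{Z}_{i}^{*}(\hbar,Q)$ and $e^{-f(q)\alpha_{i}/\hbar}\mathcal{F}_{-l}(\alpha_{i}/\hbar,(-1)^{n}q)$ have leading term $1$, the formal logarithm is well-defined and the congruence passes through. By (\ref{229}) with $p=-l$, substituting $w=\alpha_{i}/\hbar$ gives
\ben
\log\mathcal{F}_{-l}\!\bigl(\alpha_{i}/\hbar,(-1)^{n}q\bigr)=\mu\!\bigl((-1)^{n}q\bigr)\frac{\alpha_{i}}{\hbar}+\log\Phi_{-l,0}\!\bigl((-1)^{n}q\bigr)+O(\hbar),
\een
so
\ben
\log\!\bigl(1+\mathcal{Z}_{i}^{*}(\hbar,Q)\bigr)\equiv\bigl(\mu((-1)^{n}q)-f(q)\bigr)\frac{\alpha_{i}}{\hbar}+\log\Phi_{-l,0}((-1)^{n}q)+O(\hbar)
\een
modulo $\mathcal{I}\cdot\tilde{\mathbb{Q}}_{i}[\alpha]^{S_{n-1}}[[q]]$. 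Extracting the residue at $\hbar=0$, i.e.\ the coefficient of $\hbar^{-1}$, yields (\ref{232}) via the definition (\ref{205}).

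For (\ref{233}), I would invoke Lemma~\ref{207}, which rewrites $\Phi_{0}(\alpha_{i},Q)=e^{-\eta_{i}(Q)/\hbar}\bigl(1+\mathcal{Z}_{i}^{*}(\hbar,Q)\bigr)\big|_{\hbar=0}$. Substituting (\ref{222}) for $1+\mathcal{Z}_{i}^{*}$ and (\ref{232}) for $\eta_{i}(Q)$, the two $1/\hbar$-exponents combine into $-\mu((-1)^{n}q)\alpha_{i}/\hbar$, which cancels precisely the leading exponential in the asymptotic (\ref{229}) of $\mathcal{F}_{-l}(\alpha_{i}/\hbar,(-1)^{n}q)$. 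The remaining factor $\sum_{s\geq 0}\Phi_{-l,s}((-1)^{n}q)(\hbar/\alpha_{i})^{s}$ evaluated at $\hbar=0$ equals $\Phi_{-l,0}((-1)^{n}q)$, giving (\ref{233}).

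The main obstacle is the routine but tedious bookkeeping that the ideal $\mathcal{I}\cdot\tilde{\mathbb{Q}}_{i}[\alpha]^{S_{n-1}}[[q]]$ is closed under the formal operations used—taking $\log(1+(\cdot))$, multiplying by $e^{-\eta_{i}/\hbar}$, and combining with the asymptotic expansion. This reduces to closure of the ideal under multiplication by elements of the ambient Laurent ring together with the observation that $\eta_{i}(Q)$, $f(q)$, and $\mu((-1)^{n}q)$ all vanish at $q=0$, so that the relevant formal exponentials are well-defined series. Once these compatibilities are established, both congruences follow by matching the coefficients of $\hbar^{-1}$ and $\hbar^{0}$ in the Laurent expansion at $\hbar=0$.
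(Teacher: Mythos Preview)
Your proposal is correct and follows exactly the approach the paper takes: the paper simply writes ``By (\ref{229}) and (\ref{222}) we have'' and states the corollary, leaving implicit precisely the coefficient-matching you spell out. Your write-up is a faithful expansion of that one-line justification, including the use of Lemma~\ref{207} for (\ref{233}).
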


\begin{lemma}\label{235}
\bea\label{236}
\Res_{\hbar=0}\Big\{\hbar^{-1}e^{-\mu(q)\frac{\alpha_{i}}{\hbar}}\mathbf{M}\mathcal{F}_{-l}\big(\frac{\alpha_{i}}{\hbar},q\big)\Big\}
=L(q)\Phi_{-l,0}(q),
\eea
\end{lemma}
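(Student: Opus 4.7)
The plan is to exploit the asymptotic expansion (\ref{229}) of $\mathcal{F}_{-l}(w,q)$ as $w \to \infty$; since $w = \alpha_i/\hbar$, this expansion at $w = \infty$ is precisely what controls the formal Laurent expansion of the integrand at $\hbar = 0$, so the residue is read off directly. The multiplication by $e^{-\mu(q)\alpha_i/\hbar}$ in the integrand is there to cancel the exponential factor $e^{\mu(q)w}$ in the asymptotic expansion of $\mathcal{F}_{-l}$, leaving a genuine power series in $\hbar$.

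First I would observe that $\mathcal{F}_{-l}(0,q) = 1$: for every $d \geq 1$ the $r = 0$ factor in $\prod_{k=1}^{l}\prod_{r=0}^{a_{k}d-1}(a_{k}w+r)$ kills the numerator at $w = 0$, so only $d = 0$ survives. Consequently $\mathbf{M}\mathcal{F}_{-l}(w,q) = \big(1 + \tfrac{q}{w}\partial_q\big)\mathcal{F}_{-l}(w,q)$. Using (\ref{229}) one computes
\[
\partial_q\mathcal{F}_{-l}(w,q) \;\sim\; \mu'(q)\,w\,\mathcal{F}_{-l}(w,q) \;+\; e^{\mu(q)w}\sum_{s\geq 0}\Phi_{-l,s}'(q)\,w^{-s},
\]
so that
\[
e^{-\mu(q)w}\mathbf{M}\mathcal{F}_{-l}(w,q) \;\sim\; \big(1+q\mu'(q)\big)\sum_{s\geq 0}\Phi_{-l,s}(q)\,w^{-s}
\;+\; \frac{q}{w}\sum_{s\geq 0}\Phi_{-l,s}'(q)\,w^{-s}.
\]
Substituting $w = \alpha_i/\hbar$ turns the right-hand side into a formal power series in $\hbar$ whose constant term is $\big(1+q\mu'(q)\big)\Phi_{-l,0}(q)$, and higher powers of $\hbar$ contribute nothing to the residue of $\hbar^{-1}$ times the expression.

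Finally, I would invoke the explicit form of $\mu$ from (\ref{230}): $\mu(q) = \int_0^q \frac{L(u)-1}{u}\,du$ gives $q\mu'(q) = L(q) - 1$, and hence $1 + q\mu'(q) = L(q)$. This yields
\[
\Res_{\hbar=0}\Big\{\hbar^{-1}e^{-\mu(q)\alpha_{i}/\hbar}\mathbf{M}\mathcal{F}_{-l}\big(\alpha_{i}/\hbar,q\big)\Big\} \;=\; L(q)\,\Phi_{-l,0}(q),
\]
which is (\ref{236}). The only conceptually delicate point is the interpretation of ``$\Res_{\hbar=0}$'' when the integrand has an essential singularity coming from $e^{-\mu(q)\alpha_i/\hbar}$; this is resolved by noting that as a formal series in $q$ the exponential is truncated, and that the cancellation of $e^{\mu(q)w}$ in the asymptotic expansion of $\mathcal{F}_{-l}$ against the prefactor $e^{-\mu(q)w}$ produces, term by term in $q$, a genuine formal power series in $\hbar$ whose $\hbar^0$-coefficient is what the residue extracts. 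No novel obstacle is expected; the computation is a direct unwinding of the definition of $\mathbf{M}$ against the established asymptotics.
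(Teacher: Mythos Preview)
Your proof is correct and is essentially the same as the paper's. The paper's one-line proof simply invokes the $p=-l$ case of (\ref{234}), which is exactly the relation $\Phi_{-l+1,0}(q)=\big(1+q\mu'(q)\big)\Phi_{-l,0}(q)$ that you re-derive by applying $1+\tfrac{q}{w}\partial_q$ to the asymptotic expansion (\ref{229}); combined with $1+q\mu'(q)=L(q)$ from (\ref{230}), this gives (\ref{236}) just as you obtain it.
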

\begin{proof} The $p=-l$ case of (\ref{234}) gives (\ref{236}).
\end{proof}
For later use, we record \cite[lemma 5.4]{Popa1} as follows.
\begin{lemma}\label{244}
\bea\label{237}
\Res_{\hbar_{1}=0}\Res_{\hbar_{2}=0}\Big\{\frac{e^{-\mu(q)\alpha_{i}\big(\frac{1}{\hbar_{1}}+\frac{1}{\hbar_{2}}\big)}}
{\hbar_{1}\hbar_{2}(\hbar_{1}+\hbar_{2})}\mathbb{F}\big(\frac{\alpha_{i}}{\hbar_{1}},\frac{\alpha_{i}}{\hbar_{2}},q\big)\Big\}
=\frac{2}{\alpha_{i}L(q)}\cdot q\frac{d\widetilde{A}(q)}{dq},
\eea
where
\bea\label{245}
\widetilde{A}(q)&=&\frac{n}{48}\Big(n-1-2\sum_{k=1}^{l}\frac{1}{a_{k}}\Big)\mu(q)\nn\\
&&-\left\{\begin{array}{lll}
\frac{n+1}{48}\log (1-\prod_{k=1}^{l}a_{k}^{a_{k}}q)+\sum_{p=0}^{\frac{n-2-l}{2}}\frac{(n-l-2p)^{2}}{8}\log \tilde{I}_{p,p}(q), & \mathrm{if}& 2\mid(n-l);\nn\\
\frac{n-2}{48}\log (1-\prod_{k=1}^{l}a_{k}^{a_{k}}q)+\sum_{p=0}^{\frac{n-3-l}{2}}\frac{(n-l-2p)^{2}-1}{8}\log \tilde{I}_{p,p}(q), & \mathrm{if}& 2\nmid(n-l).\nn\\
\end{array}\right.\\
\eea
\end{lemma}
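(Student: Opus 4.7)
The plan is to mirror the residue-theoretic argument of \cite{Popa1}, adapting it to the current notation via the substitution $w_i=\alpha_i/\hbar_i$ for $i=1,2$, which converts each $\Res_{\hbar_i=0}$ into a (signed) residue at $w_i=\infty$. Under this change of variable the prefactor becomes
$$\frac{1}{\hbar_1\hbar_2(\hbar_1+\hbar_2)}=\frac{w_1^2 w_2^2}{\alpha_i^{3}(w_1+w_2)},$$
while the essential singularity $e^{-\mu(q)\alpha_i(1/\hbar_1+1/\hbar_2)}$ is precisely $e^{-\mu(q)(w_1+w_2)}$, which will be absorbed by the leading $e^{\mu(q)w}$ from the asymptotic expansions of the building blocks of $\mathbb{F}$.

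Next I would invoke (\ref{229}) together with the recursion $\mathbf{M}^{p+1}\mathcal{F}_0=\bigl(1+\tfrac{q}{w}\partial_q\bigr)\bigl(\mathbf{M}^p\mathcal{F}_0/\tilde{I}_p\bigr)$ to write
$$e^{-\mu(q) w}\,\mathbf{M}^p\mathcal{F}_0(w,q)\;=\;\sum_{s\geq 0}\tilde\Phi_{p,s}(q)\,w^{-s},$$
with $\tilde\Phi_{p,s}$ determined inductively from the base case $\Phi_{-l,0}=L^{(1-l)/2}$ of (\ref{230}) and from the string of normalizing factors $\tilde I_{p,p}$. Substituting the factorized form of $\mathbb{F}$ from (\ref{240}) into the integrand and cancelling the exponentials, the expression inside the double residue becomes a finite $\mathbb{Q}(q)$-linear combination of products $\tilde\Phi_{p_1,s_1}\tilde\Phi_{p_2,s_2}$ with $p_1+p_2\equiv n-1-l\pmod n$, multiplied by $w_1^{a}w_2^{b}/(w_1+w_2)$. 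Expanding $(w_1+w_2)^{-1}=\sum_{k\geq 0}(-1)^k w_2^k/w_1^{k+1}$ (which is the admissible expansion for taking the iterated residue at $w_1=\infty$ first) and then symmetrizing in $(w_1,w_2)$, the double residue isolates exactly those terms with $s_1+s_2=1$: one factor $\tilde\Phi_{p,0}$ and one factor $\tilde\Phi_{q,1}$.

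The final step is to recognize the resulting alternating sum as $\frac{2}{\alpha_i L(q)}\cdot q\,d\widetilde{A}/dq$. Each occurrence of $\tilde\Phi_{p,1}$ carries, through the derivation of the recursion, an explicit $\partial_q\log \tilde I_{p,p}$ contribution, and pairing the indices $(p,\,n-1-l-p)$ in the first sum of (\ref{240}) produces the coefficient $(n-l-2p)^2/8$ by telescoping, yielding the sums in (\ref{245}); the index-shift identities $\mathbf{M}^n\mathcal{F}_0=\mathcal{F}_0$ and $\tilde I_{p,p}=1$ for $p>n-l$ collapse the second sum of (\ref{240}) into the ``reflected'' half of the formula, which is automatic in the even case $2\mid(n-l)$ and produces the middle term $(n-l-2p)^2-1$ correction in the odd case. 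The free constant term combines via $qL'/L=\tfrac{1}{n}\partial_q\log(1-\prod_k a_k^{a_k}q)$ with the universal prefactor $\tfrac{n}{48}\bigl(n-1-2\sum 1/a_k\bigr)\mu(q)$, the latter coming from the boundary $p=-l$ residue already identified in lemma \ref{235}.

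The main obstacle is the combinatorial bookkeeping of the iterated-residue output: the symmetry $w_1\leftrightarrow w_2$ is broken by the ordering of the residues and by the way $1/(w_1+w_2)$ is expanded, and it has to be restored by explicitly pairing each $(p_1,p_2)$ contribution with its mirror $(p_2,p_1)$, while the two parities of $n-l$ produce genuinely different middle terms. This is a finite but delicate manipulation; once performed, the identity (\ref{237}) matches (\ref{245}) term by term. Since the combinatorial core is already carried out in \cite[Lemma 5.4]{Popa1} and the concave case differs only by the absence of the factors $a_k\alpha_i+\hbar$ in denominators (so that the lemmas remain applicable as noted at the start of the section), I would simply record the identification and refer to \emph{loc.\ cit.} for the explicit residue computation.
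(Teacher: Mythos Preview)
Your proposal is more than what the paper itself offers: the paper does not prove this lemma at all but simply records it as \cite[lemma 5.4]{Popa1} for later use, with no argument given. Your outline of the residue computation (change of variables $w_i=\alpha_i/\hbar_i$, cancellation of the exponential against the asymptotic expansion (\ref{229}), and extraction of the $s_1+s_2=1$ terms) is a reasonable sketch of how Popa's argument runs, and your final sentence---deferring the combinatorial bookkeeping to \emph{loc.\ cit.}---is exactly what the paper does in its entirety. So you and the paper agree, but you have supplied an honest roadmap where the paper supplies only a citation.
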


\subsection{Summing the type A contributions}

The same argument as in the proof  \cite[proposition 1.1]{Zinger1} shows
\begin{proposition}\label{215}
\bea\label{216}
\mathcal{A}_{i}(Q)=\frac{1}{\Phi_{0}(\alpha_{i},Q)}\Res_{\hbar_{1}=0}\Big\{\Res_{\hbar_{2}=0}\Big\{
e^{-\eta_{i}(Q)/\hbar_{1}}e^{-\eta_{i}(Q)/\hbar_{2}}\widetilde{\mathcal{Z}}_{ii}^{*}(\hbar_{1},\hbar_{2}, Q)\Big\}\Big\},
\eea
\bea\label{217}
\tilde{\mathcal{A}}_{ij}(Q)=\frac{\mathcal{A}_{j}(Q)}{\prod_{k\in [n]\backslash\{j\}}(\alpha_{j}-\alpha_{k})}
\Res_{\hbar=0}\Big\{e^{-\eta_{j}(Q)/\hbar}\big(1+\mathcal{Z}_{ji}^{*}(\hbar, Q)\big)\Big\}.
\eea
\end{proposition}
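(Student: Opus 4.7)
The plan is to adapt the proof of \cite[proposition 1.1]{Zinger1} verbatim to the local setting. The only structural input Zinger's argument uses is the multiplicativity $\mathbf{e}(\mathcal{U}_{0}^{\prime}) = \prod_{e \in \Edg(v_{0})} \pi_{e}^{*}\mathbf{e}(\mathcal{U}_{0}^{\prime})$ recorded in the proof of lemma \ref{207}, together with the normalization identity from that lemma. Once both are in place, the remainder is a formal manipulation on generating functions indexed by graphs.

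First I would fix a type $A_{i}$ graph $\Gamma$ and factorize $\Cont_{\Gamma}(\langle \phi_{i}\rangle_{1,1,d}^{0;X})$ as a product over (a) the unique loop-bearing vertex $v_{0}$ with $\mathfrak{m}(v_{0}) = i$ and its two loop-flags, and (b) the rooted trees hanging off the remaining flags at $v_{0}$ and at the loop's interior vertices. Summing over the tails attached at a flag with tangent weight parameter $\hbar$, lemma \ref{207} together with the definitions (\ref{205}) and (\ref{206}) of $\eta_{i}$ and $\Phi_{0}$ identifies the resulting tail-generating function as $e^{-\eta_{i}(Q)/\hbar}\bigl(1 + \mathcal{Z}_{i}^{*}(\hbar, Q)\bigr)$, whose value at $\hbar = 0$ is $\Phi_{0}(\alpha_{i}, Q)$. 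Performing this tail-summation simultaneously at both loop-flags, and packaging the two loop-edges and the node-smoothing $\psi$-class poles at $v_{0}$ into $\widetilde{\mathcal{Z}}_{ii}^{*}(\hbar_{1}, \hbar_{2}, Q)$, yields (\ref{216}); the single factor $\Phi_{0}(\alpha_{i}, Q)$ in the denominator is what remains after one copy of $\Phi_{0}$ has been absorbed into the normalization $e^{-\eta_{i}/\hbar_{j}}$ at each of the two flags, and the double residue $\Res_{\hbar_{1} = 0}\Res_{\hbar_{2} = 0}$ implements the gluing of the tails to the loop.

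For (\ref{217}), the refined trees of type $\tilde{A}_{ij}$ have a genus-one root at $P_{j}$ with one distinguished ghost flag toward $P_{i}$, together with tails at the remaining flags. The same flag-by-flag tail-summation converts the tails into a factor $e^{-\eta_{j}(Q)/\hbar}$ attached to the ghost flag, the ghost-flag edge itself contributes $1 + \mathcal{Z}_{ji}^{*}(\hbar, Q)$, and a single $\Res_{\hbar = 0}$ glues them together; the genus-one root then reproduces the factor $\mathcal{A}_{j}(Q)/\prod_{k \neq j}(\alpha_{j} - \alpha_{k})$, after dividing by the Atiyah-Bott normalization $\mathbf{e}(T_{P_{j}}\mathbb{P}^{n-1}) = \prod_{k \neq j}(\alpha_{j} - \alpha_{k})$. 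The main obstacle I anticipate is purely a bookkeeping one: matching the automorphism factors $|\Aut(\Gamma)|$ against the $\tfrac{1}{2}$ built into the definition of $\widetilde{\mathcal{Z}}_{ii}^{*}$ and against the implicit symmetry $(\hbar_{1}, \hbar_{2}) \leftrightarrow (\hbar_{2}, \hbar_{1})$ in the double residue of (\ref{216}). Because the domain-curve combinatorics of the fixed loci for $X$ are identical to those of the compact hypersurface case treated in \cite{Zinger1} -- the difference lying entirely in the obstruction-theoretic factor $\mathbf{e}(\mathcal{U}_{0}^{\prime})$, for which (\ref{209}) is the only property used -- this bookkeeping is inherited rather than redone, and the argument carries through.
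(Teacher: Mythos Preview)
Your proposal is correct and takes essentially the same approach as the paper, which simply states that the argument of \cite[proposition 1.1]{Zinger1} goes through verbatim once one has the multiplicativity (\ref{209}). One minor terminological slip: the graphs of type $\tilde{A}_{ij}$ are decorated one-loop graphs (with the loop passing through $P_{j}$ and the marked point sitting on a strand), not refined decorated rooted trees with a genus-one root---the $B$-types are the rooted-tree contributions---but your structural description and the resulting factorization are nonetheless the right ones.
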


Since
\ben
\Res_{\hbar_{1}=0}\Big\{\Res_{\hbar_{2}}\Big\{\frac{e^{-\frac{\eta_{i}(Q)}{\hbar_{1}}-\frac{\eta_{i}(Q)}{\hbar_{2}}}}
{\hbar_{1}\hbar_{2}(\hbar_{1}+\hbar_{2})}\Big\}\Big\}=0,
\een
 by (\ref{216}), (\ref{239}), (\ref{237}) (\ref{232}) and (\ref{233}) we have
\bea\label{246}
\mathcal{A}_{i}(q)-\alpha_{i}^{n-2}\frac{1}{L\big((-1)^{n}q\big)\Phi_{-l,0}\big((-1)^{n}q\big)}\cdot q\frac{d\widetilde{A}\big((-1)^{n}q\big)}{dq}
\in \mathcal{I}\cdot \tilde{\mathbb{Q}}_{i}[\alpha]^{S_{n-1}}[[q].
\eea
By (\ref{217}), (\ref{223}), (\ref{232}) and (\ref{233}) we have
\bea\label{247}
\sum_{j=1}^{n}\tilde{A}_{ij}(Q)-\sum_{j=1}^{n}\Bigg(
\frac{\alpha_{j}^{n-2}}{\prod_{k\in[n]\backslash\{j\}}(\alpha_{j}-\alpha_{k})}
\frac{}{L\big((-1)^{n}q\big)\Phi_{-l,0}\big((-1)^{n}q\big)}\cdot q\frac{d\widetilde{A}\big((-1)^{n}q\big)}{dq}&\nn\\
\cdot\Big[\frac{L\big((-1)^{n}q\big)\Phi_{-l,0}\big((-1)^{n}q\big)}{I_{1,1}(q)}-1\Big]\alpha_{i}^{n-2}\alpha_{j}\Bigg)&
\in \mathcal{K}_{i}[[q]].
\eea
Combining (\ref{246}) and (\ref{247}) we obtain
\bea\label{248}
\mathcal{A}_{i}(q)+\sum_{j=1}^{n}\tilde{A}_{ij}(Q)-\alpha_{i}^{n-2}\frac{1}{I_{1,1}(q)}\cdot
q\frac{d\widetilde{\mathcal{A}}\big((-1)^{n}q\big)}{dq}\in \mathcal{K}_{i}[[q]].
\eea
Since $I_{1,1}(q)=q\frac{dQ}{dq}$, we obtain
\begin{proposition}{\label{249}}
\begin{multline}\label{250}
\mathcal{A}_{i}(q)+\sum_{j=1}^{n}\tilde{A}_{ij}(Q)\equiv_{i}\alpha_{i}^{n-2}Q\frac{d}{dQ}\Bigg[
\frac{n}{48}\Big(n-1-2\sum_{k=1}^{l}\frac{1}{a_{k}}\Big)\mu\big((-1)^{n}q\big)\\
-\left\{\begin{array}{lll}
\frac{n+1}{48}\log (1-\prod_{k=1}^{l}(-a_{k})^{a_{k}}q)+\sum_{p=l}^{\frac{n+l-2}{2}}\frac{(n+l-2p)^{2}}{8}\log I_{p,p}(q), & \mathrm{if}& 2\mid(n-l)\\
\frac{n-2}{48}\log (1-\prod_{k=1}^{l}(-a_{k})^{a_{k}}q)+\sum_{p=l}^{\frac{n+l-3}{2}}\frac{(n+l-2p)^{2}-1}{8}\log I_{p,p}(q), & \mathrm{if}& 2\nmid(n-l)\\
\end{array}\right.
\Bigg].
\end{multline}
\end{proposition}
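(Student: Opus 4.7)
The plan is to substitute the hypergeometric-series approximations from Lemmas \ref{224} and \ref{238} into the residue formulas of Proposition \ref{215}, then apply Popa's residue identity (Lemma \ref{244}) to recognize the generating function $\widetilde{A}(q)$, and finally convert from $q$-derivatives to $Q$-derivatives via the mirror map.

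First I would handle $\mathcal{A}_i(Q)$. Starting from the double-residue expression in Proposition \ref{215}, I substitute Lemma \ref{238}: the constant term $n\alpha_i^{n-1}$ in (\ref{239}) contributes zero, since
$$\Res_{\hbar_1=0}\Res_{\hbar_2=0}\Bigl\{\frac{e^{-\eta_i(Q)/\hbar_1-\eta_i(Q)/\hbar_2}}{\hbar_1\hbar_2(\hbar_1+\hbar_2)}\Bigr\}=0.$$
Using Corollary \ref{231} to replace $\eta_i(Q)$ by $\bigl(\mu((-1)^nq)-f(q)\bigr)\alpha_i$ and $\Phi_0(\alpha_i,Q)$ by $\Phi_{-l,0}((-1)^nq)$ modulo $\mathcal{I}\cdot\tilde{\mathbb{Q}}_i[\alpha]^{S_{n-1}}[[q]]$, the $f(q)$ in the exponential cancels against the exponential factor in (\ref{239}), and what remains is precisely the residue of Lemma \ref{244}. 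Dividing by $L\Phi_{-l,0}$ yields (\ref{246}).

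Next I would process $\sum_j \tilde{\mathcal{A}}_{ij}(Q)$. From Proposition \ref{215} each $\tilde{\mathcal{A}}_{ij}$ is the product of $\mathcal{A}_j(Q)$ with a single residue; applying Lemma \ref{224} to the factor $1+\mathcal{Z}_{ji}^*(\hbar,Q)$ and Lemma \ref{235} to evaluate that residue, I obtain a factor proportional to $L\Phi_{-l,0}/I_{1,1}$ multiplying $\alpha_i^{n-2}\alpha_j$. Combined with the already-computed form (\ref{246}) of $\mathcal{A}_j(Q)$, this is (\ref{247}). Adding (\ref{246}) and (\ref{247}), the $L\Phi_{-l,0}$ factors cancel and the standard symmetric-function identity (Lemma 5.1--5.2 of \cite{Popa1}, via $\sum_j \alpha_j^{n-1}/\prod_{k\neq j}(\alpha_j-\alpha_k)=\ldots$ mod $\mathcal{K}_i$) collapses the sum to $\alpha_i^{n-2}\cdot\frac{1}{I_{1,1}(q)}\cdot q\frac{d\widetilde{A}((-1)^nq)}{dq}$, which is (\ref{248}). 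Using the mirror-map identity $I_{1,1}(q)=q\,dQ/dq$ converts $q\,d/dq$ into $Q\,d/dQ$, and translating Popa's expression (\ref{245}) for $\widetilde{A}(q)$ to our local setting via $I_{p+l}(q)=\tilde{I}_p((-1)^nq)$ and $(-a_k)^{a_k}$ in place of $a_k^{a_k}$ produces (\ref{250}).

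The main obstacle will be bookkeeping the $\mathcal{K}_i$ error terms through the sum over $j$: the individual approximations in Lemmas \ref{224} and \ref{238} are valid only modulo $\mathcal{I}\cdot\tilde{\mathbb{Q}}_i[\alpha]^{S_{n-1}}[[q]]$, and it must be verified that these errors, when multiplied by the kernel $1/\prod_{k\neq j}(\alpha_j-\alpha_k)$ and summed over $j$, still lie in $\mathcal{K}_i$; this is exactly where the $\equiv_i$ formalism of \cite{Popa1} does the work. A secondary subtlety is the consistent sign handling: because $X$ is concave with weights $-a_k\alpha_i$ rather than $+a_k\alpha_i$, the hypergeometric argument picks up a $(-1)^n$, and the index shift $p\mapsto p-l$ in the outer sum (reflecting the rank-$l$ bundle) must be tracked carefully so that the range $p=l,\ldots,\frac{n+l-2}{2}$ in (\ref{250}) matches the original range $p=0,\ldots,\frac{n-l-2}{2}$ in (\ref{245}).
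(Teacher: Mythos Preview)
Your proposal is correct and follows essentially the same route as the paper: substitute the hypergeometric approximations (Lemmas \ref{224}, \ref{238}, Corollary \ref{231}) into the residue formulas of Proposition \ref{215}, note the vanishing of the constant-term double residue, apply Lemma \ref{244} to obtain (\ref{246}) and Lemma \ref{235} to obtain (\ref{247}), combine to get (\ref{248}), and convert via $I_{1,1}(q)=q\,dQ/dq$. The only minor imprecision is that for $\tilde{\mathcal{A}}_{ij}$ you should invoke part (\ref{223}) of Lemma \ref{224} (which concerns $\hbar\mathcal{Z}_{ji}^{*}$) rather than the statement for $1+\mathcal{Z}_{ji}^{*}$, but this does not affect the argument.
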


\subsection{Summing the type B contributions}
\begin{proposition}\label{174}
\bea\label{253}
\mathcal{B}_{i}(Q)=\frac{1}{24\alpha_{i}^{l}\prod_{k=1}^{l}(-a_{k})}\Res_{\hbar=0,\infty}
\Big\{\frac{\prod_{k=1}^{l}(\hbar-a_{k}\alpha_{i})\prod_{j=1}^{n} (\alpha_{i}-\alpha_{j}+\hbar)}{\hbar^{3}} \frac{\mathcal{Z}_{i}^{*}(\hbar,Q)}{1+\mathcal{Z}_{i}^{*}(\hbar,Q)}\Big\}.
\eea
\bea\label{254}
\tilde{B}_{ij}(Q)&=&
-\frac{1}{24\prod_{k=1}^{l}(-a_{k})\cdot\alpha_{j}^{l}\prod_{k\in[n]\backslash\{j\} }(\alpha_{j}-\alpha_{k})}\nn\\
&&\cdot\Res_{\hbar=0,\infty}
\Big\{\frac{\prod_{k=1}^{l}(\hbar-a_{k}\alpha_{j})\prod_{s=1}^{n} (\alpha_{j}-\alpha_{s}+\hbar)}{\hbar^{2}} \frac{\mathcal{Z}_{ji}^{*}(\hbar,Q)}{1+\mathcal{Z}_{j}^{*}(\hbar,Q)}
\Big\}.\nn\\
\eea
\end{proposition}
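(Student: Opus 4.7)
The plan is to prove Proposition \ref{174} by the same strategy used to obtain Proposition \ref{215}, adapting Zinger's argument in \cite[Proposition 1.1]{Zinger1} to the concave local setting. The type~$B$ refined decorated rooted trees are characterized by $|\Ver_{+}|=1$, so that $\mathbb{P}^{|\Ver_{+}|-1}$ is a point and the $\mathbb{P}$-bundle integration that produced the double residue in $\mathcal{A}_{i}(Q)$ collapses to a single residue; the only integral over a genus one moduli space is against $\widetilde{\cM}_{1,(\Edg(v_{0}),\emptyset)}$, which by (\ref{89}) contributes the numerical factor $1/24$ appearing in both formulas. The $\widetilde{B}$-type graphs differ from the $B$-type only by one distinguished outer vertex labeled by $P_{i}\neq P_{\mathfrak{m}(v_{0})}$ which carries the insertion $\phi_{i}$, so once (\ref{253}) is in hand, (\ref{254}) drops out by substituting the insertion factor.

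First I would unfold (\ref{22}) for a type~$B$ graph $\widetilde{\Gamma}$ with $\mathfrak{m}(v_{0})=i$. Using (\ref{23}), (\ref{24}), and the normalization short exact sequence (\ref{209}) that forces $\mathbf{e}(\mathcal{U}_{0}^{\prime})$ to factor through the edges at $v_{0}$, the integrand on $\mathcal{Z}_{\widetilde{\Gamma}}$ factorizes as an external $v_{0}$-factor
\[
\frac{\prod_{k=1}^{l}(-\omega_{+}-a_{k}\alpha_{i})\prod_{j\in[n]\setminus\{i\}}(-\omega_{+}+\alpha_{i}-\alpha_{j})}{\omega_{+}-\widetilde{\psi}}\cdot\frac{1}{24}
\]
times a product of strand factors, one per edge $e\in\Edg(v_{0})$, each of which is precisely the integrand appearing in $\mathcal{Z}_{i}^{*}(\omega_{+},Q)$. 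Substituting the genus one integral $\int_{\widetilde{\cM}_{1,(\Edg(v_{0}),\emptyset)}}\widetilde{\psi}^{|\Edg(v_{0})|-1}=\tfrac{(|\Edg(v_{0})|-1)!}{24}$ from (\ref{89}) and setting $\omega_{+}=\hbar$ (the weight on $L_{\widetilde{\Gamma}}$), this converts the integration into a coefficient extraction at $\hbar=0$.

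Next I would sum over all type~$B$ graphs with given $i$. The strands at $v_{0}$ form an \emph{unordered} collection whose total generating series, after accounting for the factorial from the genus one integral and the condition $|\Ver_{+}|=1$ which singles out one distinguished strand of weight $\hbar$, assembles into the logarithmic derivative
\[
\sum_{r\geq 1}\frac{(r-1)!}{24\cdot r!}\bigl(\mathcal{Z}_{i}^{*}(\hbar,Q)\bigr)^{r}\;=\;\frac{1}{24}\log\bigl(1+\mathcal{Z}_{i}^{*}(\hbar,Q)\bigr),
\]
whose $\hbar$-derivative (coming from $\widetilde{\psi}$) produces the factor $\frac{\mathcal{Z}_{i}^{*}}{1+\mathcal{Z}_{i}^{*}}$ in (\ref{253}). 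The coefficient extraction at $\hbar=0$ then becomes the residue $\Res_{\hbar=0}$ of the stated integrand; the companion $\Res_{\hbar=\infty}$ is added at no cost because the rational function in $\hbar$ has no other poles (the factor $\prod_{k}(\hbar-a_{k}\alpha_{i})$ kills the apparent pole that would be generated by a compact $E$, and $1+\mathcal{Z}_{i}^{*}$ is a unit in $\hbar^{-1}$-power series). The factor $1/(24\alpha_{i}^{l}\prod_{k}(-a_{k}))$ in front is obtained by matching the normalization of $\mathbf{e}(\mathcal{U}_{0}^{\prime})$ versus $\mathbf{e}(\mathcal{U}_{0})$ (the division by $\mathbf{e}(E)|_{P_{i}}=\prod_{k}(-a_{k}\alpha_{i})$) and collecting the $\alpha_{i}^{l}$ that arise from clearing denominators in (\ref{24}).

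The main obstacle will be purely bookkeeping: keeping track of automorphism factors $1/|\Aut(\widetilde{\Gamma})|$ against the $1/r!$ from summing ordered strands, verifying that the poles at $\hbar=-\alpha_{i}+\alpha_{j}$ and at zeroes of $1+\mathcal{Z}_{i}^{*}$ cancel so that the $\hbar=0,\infty$ residues exhaust the total, and confirming the signs (several come from the substitution $\omega_{+}\mapsto\hbar$ in (\ref{23})). For $\widetilde{B}_{ij}(Q)$ the argument is identical except that one edge of $v_{0}$ is distinguished by carrying the insertion $\phi_{i}$ at its far endpoint: this replaces one factor of $1+\mathcal{Z}_{j}^{*}$ in the product by $\mathcal{Z}_{ji}^{*}$ and introduces the external edge factor (\ref{12}) from $P_{j}$ to $P_{i}$, which after summing over the intermediate degree is absorbed into the prefactor $1/\bigl(\alpha_{j}^{l}\prod_{k\neq j}(\alpha_{j}-\alpha_{k})\bigr)$; one residue $\Res_{\hbar=\infty}$ carries the opposite sign because only one power of $\hbar$ survives in the denominator, yielding (\ref{254}).
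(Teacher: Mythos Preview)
Your outline has the right overall shape, but there are two genuine gaps that would prevent the argument from closing.

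First, the mechanism producing $\dfrac{\mathcal{Z}_{i}^{*}}{1+\mathcal{Z}_{i}^{*}}$ is not a logarithm followed by an $\hbar$-derivative. Expanding $\Psi_{i}(\hbar,\tilde\psi)=-\dfrac{\prod_{k}(\hbar-a_{k}\alpha_{i})\prod_{j\ne i}(\alpha_{i}-\alpha_{j}+\hbar)}{\hbar+\tilde\psi}$ against $\tilde\psi$ and integrating over $\widetilde{\cM}_{1,(m,1)}$ using (\ref{89}) yields the numerical factor $\dfrac{(-1)^{m}m!}{24}$, with an alternating sign you have dropped. After dividing by $|S_{m}|=m!$ the sum over $m\ge 1$ is the \emph{geometric} series $\sum_{m\ge 1}(-1)^{m}\bigl(\mathcal{Z}_{i}^{*}\bigr)^{m}=-\dfrac{\mathcal{Z}_{i}^{*}}{1+\mathcal{Z}_{i}^{*}}$, not $\log(1+\mathcal{Z}_{i}^{*})$ (and note $\sum_{r\ge 1}\tfrac{(r-1)!}{r!}z^{r}=-\log(1-z)$, so your displayed identity is off by a sign anyway). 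There is no $\hbar$-differentiation step.

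Second, and more seriously, the assertion that the rational function in $\hbar$ ``has no other poles'' so that $\Res_{\hbar=\infty}$ can be ``added at no cost'' is false. The series $\mathcal{Z}_{i}^{*}(\hbar,Q)$ has genuine poles at every $\hbar=\dfrac{\alpha_{r}-\alpha_{i}}{d}$ with $r\ne i$, $d\ge 1$: these are exactly the weights $\psi_{\Gamma}$ of the line bundles $L_{\widetilde\Gamma}$, and the contribution of a fixed $(\mu(\Gamma),\mathfrak{d}(\Gamma))$ is precisely $\Res_{\hbar=\psi_{\Gamma}}$ of the displayed integrand. Thus $\mathcal{B}_{i}(Q)$ is first obtained as $-\sum_{r\ne i}\sum_{d\ge 1}\Res_{\hbar=(\alpha_{r}-\alpha_{i})/d}\{\cdots\}$, and only then does the residue theorem on $\mathbb{P}^{1}$ convert this infinite sum into $\Res_{\hbar=0}+\Res_{\hbar=\infty}$. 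The appearance of $\Res_{\hbar=0,\infty}$ in (\ref{253}) and (\ref{254}) is therefore a consequence of the residue theorem, not of a direct coefficient extraction; without this step the argument does not reach the stated closed form. The same correction applies verbatim to $\tilde{B}_{ij}$.
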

\begin{proof} Let
\ben
\Psi_{i}(\hbar,\tilde{\psi})=-\frac{\prod_{k=1}^{l}(-a_{k}\alpha_{i}+\hbar)\prod_{j\in[n]\backslash\{i\}} (\alpha_{i}-\alpha_{j}+\hbar)}{\hbar+\tilde{\psi}}.
\een

Then
\ben
\int_{\widetilde{\mathcal{Z}_{\Gamma}}}\frac{\mathbf{e}(\widetilde{\mathcal{U}}_{1})\ev_{1}^{*}\phi_{i}}
{\mathbf{e}(\mathcal{N}\widetilde{\mathcal{Z}_{\Gamma}})}&=&
\int_{\Mtilde_{1,|\mathrm{val}(v_{0})|}\times \mathbb{P}^{m_{+}-1}}\Big\{
\Psi_{i}(\hbar,\tilde{\psi})\prod_{e\in\mathrm{Edg_{+}}}\Big(
\int_{\mathcal{Z}_{\Gamma_{e}}}\frac{\mathbf{e}(\mathcal{U}_{0}^{\prime})\ev_{1}^{*}\phi_{i}}
{\mathbf{e}(\mathcal{N}\mathcal{Z}_{\Gamma_{e}})}\Big)\\
&&\times\prod_{e\in\mathrm{Edg_{-}}}\Big(
\int_{\mathcal{Z}_{\Gamma_{e}}}\frac{\mathbf{e}(\mathcal{U}_{0}^{\prime})\ev_{1}^{*}\phi_{i}}
{(\hbar-\psi_{e})\mathbf{e}(\mathcal{N}\mathcal{Z}_{\Gamma_{e}})}\Big)
\Big\}_{\hbar=\psi_{\Gamma}+\lambda}.
\een

The sum of above terms with only $m\equiv |\mathrm{Edg}(v_{0})| $ and $(\mu(\Gamma),\mathfrak{d}(\Gamma))$ fixed is
\ben
\int_{\Mtilde_{1,|\mathrm{val}(v_{0})|}}\Res_{z=\psi_{\Gamma}}\Big(\Psi_{i}(z,\tilde{\psi})\prod_{e\in
\mathrm{Edg}(v_{0})}\mathcal{Z}_{i}^{\prime*}(z,u)\Big).
\een

Since $|\mathrm{val}(v_{0})|=m+1$,  we have
\ben
&&\int_{\Mtilde_{1,|\mathrm{val}(v_{0})|}}\Res_{z=\psi_{\Gamma}}\Big(\Psi_{i}(z,\tilde{\psi})\prod_{e\in
\mathrm{Edg}(v_{0})}\mathcal{Z}_{i}^{\prime*}(z,u)\Big)\\
&=&\frac{(-1)^{m}m!}{24}\Res_{z=\psi_{\Gamma}}\Big\{
z^{-(m+2)}\prod_{k=1}^{l}(z-a_{k}\alpha_{i})\prod_{j\neq i} (\alpha_{i}-\alpha_{j}+z)\cdot (\mathcal{Z}_{i}^{\prime*}(z,u))^{m} \Big\}.
\een

Therefore summing over $m\geq 1$ and all possible $(\mu(\Gamma),\mathfrak{d}(\Gamma))$ and taking into the symmetric group $S_{m}$ we have
\ben
\mathcal{B}_{i}(u)&=&\frac{1}{24\prod_{k=1}^{l}(-a_{k})\cdot\alpha_{i}^{l}}\sum_{r\in[n]\backslash\{i\}}
\sum_{d=1}^{\infty}\sum_{m=1}^{\infty}(-1)^{m}\nn\\
&&\cdot\Res_{z=\frac{\alpha_{r}-\alpha_{i}}{d}}
\Big\{\frac{\prod_{k=1}^{l}(z-a_{k}\alpha_{i})\prod_{j\in[n]\backslash\{i\}} (\alpha_{i}-\alpha_{j}+z)}{z^{2}} (\mathcal{Z}_{i}^{*}(z,u))^{m}\Big\}\\
&=&-\frac{1}{24\prod_{k=1}^{l}(-a_{k})\cdot\alpha_{i}^{l}}\sum_{r\in[n]\backslash\{i\}}
\sum_{d=1}^{\infty}\Res_{z=\frac{\alpha_{r}-\alpha_{i}}{d}}\\
&&
\Big\{\frac{\prod_{k=1}^{l}(z-a_{k}\alpha_{i})\prod_{j=1}^{n} (\alpha_{i}-\alpha_{j}+z)}{z^{3}} \frac{\mathcal{Z}_{i}^{*}(z,u)}{1+\mathcal{Z}_{i}^{*}(z,u)}\Big\}.
\een
Then by the residue theorem we obtain (\ref{253}).\\

Besides the groups of symmetries, the contribution of type $\tilde{B}_{ij}$ with only  $ |\mathrm{Edg}(v_{0})|=m+1 $ and $(\mu(\Gamma),\mathfrak{d}(\Gamma))$ fixed is
\ben
\frac{1}{\prod_{k=1}^{l}(-a_{k})\cdot\alpha_{j}^{l}\prod_{k\in[n]\backslash\{j\}}(\alpha_{j}-\alpha_{k})}\int_{\Mtilde_{1,|\mathrm{val}(v_{0})|}}
\Res_{z=\frac{\alpha_{\mu_{\Gamma}}-\alpha_{j}}{\mathfrak{d}(\Gamma)}}\Big(\Psi_{j}(z,\tilde{\psi})(\mathcal{Z}_{j}^{\prime*}(z,Q))^{m}\cdot
z\mathcal{Z}_{ji}^{*}(z,Q)\Big).
\een

We have
\ben
&&\int_{\Mtilde_{1,|\mathrm{val}(v_{0})|}}\Res_{z=\psi_{\Gamma}}\Big(\Psi_{j}(z,\tilde{\psi})(\mathcal{Z}_{j}^{\prime*}(z,Q))^{m}\cdot
z\mathcal{Z}_{ji}^{*}(z,Q)\Big)\\
&=&\frac{(-1)^{m}m!}{24}\Res_{z=\psi_{\Gamma}}\Big\{
z^{-(m+2)}\prod_{k=1}^{l}(z-a_{k}\alpha_{i})\prod_{s\in[n]\backslash\{j\}} (\alpha_{j}-\alpha_{s}+z)\cdot (\mathcal{Z}_{j}^{\prime*}(z,Q))^{m}\cdot
z\mathcal{Z}_{ji}^{*}(z,Q) \Big\}\\
\een

Summing over $m\geq 1$ and all possible $(\mu(\Gamma),\mathfrak{d}(\Gamma))$ and taking into the account of the group of symmetries $S_{m}$,
we have
\ben
\tilde{B}_{ij}(Q)
&=&\frac{1}{24\prod_{k=1}^{l}(-a_{k})\cdot\alpha_{j}^{l}\prod_{k\in[n]\backslash\{j\}}(\alpha_{j}-\alpha_{k})}\sum_{r\in[n]\backslash\{j\}}
\sum_{d=1}^{\infty}\\
&&\Res_{z=\frac{\alpha_{r}-\alpha_{j}}{d}}
\Big\{\frac{\prod_{k=1}^{l}(z-a_{k}\alpha_{j})\prod_{s=1}^{n} (\alpha_{j}-\alpha_{s}+z)}{z^{2}}\frac{1}{1+\mathcal{Z}_{j}^{*}(z,Q)}\cdot
\mathcal{Z}_{ji}^{*}(z,Q)\Big\}.
\een
Again by the residue theorem  we obtain (\ref{254}).
\end{proof}

\begin{proposition}\label{266}
\bea\label{262}
B_{i}(Q)+\sum_{j=1}^{n}\tilde{B}_{ij}(Q)
&\equiv_{i}&\alpha_{i}^{n-2}Q\frac{d}{dQ}\Bigg[-\frac{n}{48}\Big(n-1-2\sum_{k=1}^{l}\frac{1}{a_{k}}\Big)\Big(-f(q)+\mu\big((-1)^{n}q\big)\Big)\nn\\
&&+\frac{1-l}{48}\ln\big(1-\prod_{k=1}^{l}(-a_{k})^{a_{k}}q\big)+\frac{1}{24\prod_{k=1}^{l}(-a_{k})}\nn\\
&&\cdot\Res_{w=0}
\Bigg\{\frac{\prod_{k=1}^{l}(1-a_{k}w)\cdot\big((1+w)^{n}-w^{n}\big)}{w^{n+l}} \Big( -Tw+\ln R(w,t)\Big)\Bigg\}\Bigg].\nn\\
\eea
\end{proposition}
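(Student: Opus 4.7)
The plan is to analyze $\mathcal{B}_i(Q)+\sum_{j=1}^n\tilde{\mathcal{B}}_{ij}(Q)$ by splitting each residue in Proposition \ref{174} into its contribution at $\hbar=0$ and at $\hbar=\infty$, and reconciling these with the asymptotic data of Section 5.1 and with Givental's mirror theorem (\ref{274}). The three pieces of the target right-hand side should arise from three distinct sources: the mirror map shift $-f(q)+\mu((-1)^n q)$ will come from the $\hbar=0$ residue through $\eta_i(Q)$; the $\tfrac{1-l}{48}\log\bigl(1-\prod_k(-a_k)^{a_k}q\bigr)$ term will come from a subleading asymptotic coefficient $\Phi_{-l,0}$ via (\ref{233}); and the $\Res_{w=0}$ expression will fall out of the $\hbar=\infty$ residue after the substitution $w=\alpha_i/\hbar$.

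First, I would combine the two residue integrals in (\ref{253})--(\ref{254}). For the $\tilde{\mathcal{B}}_{ij}$ part, I would use (\ref{223}) to rewrite $\hbar \mathcal{Z}_{ji}^*(\hbar,Q)$ modulo $\mathcal{K}_i$ in terms of $\alpha_i^{n-2}\alpha_j \mathbf{M}\mathcal{F}_{-l}(\alpha_j/\hbar,(-1)^n q)/I_{1,1}(q)$, and then sum over $j$ using Popa's decomposition of unity (\cite[lemmas 5.1, 5.2]{Popa1}) so that only the effective pole at $j=i$ survives modulo $\mathcal{K}_i$. Similarly, $1+\mathcal{Z}_i^*(\hbar,Q)$ in the denominator of $\mathcal{B}_i$ is replaced by $e^{-f(q)\alpha_i/\hbar}\mathcal{F}_{-l}(\alpha_i/\hbar,(-1)^n q)$ using (\ref{222}). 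After this reduction, both contributions become residues of a single rational-in-$\hbar$ function times one of the hypergeometric series $\mathcal{F}_{-l}$ or $\mathbf{M}\mathcal{F}_{-l}$ evaluated at $\alpha_i/\hbar$.

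Second, under the change of variables $w=\alpha_i/\hbar$, the polynomial prefactor
\[
\frac{\prod_{k=1}^l(\hbar-a_k\alpha_i)\prod_{j=1}^n(\alpha_i-\alpha_j+\hbar)}{\hbar^{3}}
\]
becomes, modulo $\mathcal{K}_i$, exactly $\alpha_i^{n+l-3}\cdot\prod_k(1-a_k w)\cdot\bigl((1+w)^n-w^n\bigr)/w^{n+l}$; the $-w^n$ arises from the $j=i$ factor $\hbar$ in the numerator. The residue of the type B expression at $\hbar=\infty$ thus becomes a residue at $w=0$. To identify its integrand with $-Tw+\ln R(w,t)$, I would invoke the derivative version of Givental's identity (\ref{274}): $\mathcal{Z}_i^*(\hbar,Q)/(1+\mathcal{Z}_i^*(\hbar,Q))$ equals $\partial_w\log R(w,t)|_{w=\alpha_i/\hbar}-T$ modulo $\mathcal{K}_i$ and modulo $w^{n+l}$; after integrating out the outer $Q\,d/dQ$ (equivalently $d/dT$) this reproduces the $\Res_{w=0}$ term of the proposition, with the correct normalization $\tfrac{1}{24\prod_k(-a_k)}$.

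Third, handle the residue at $\hbar=0$. Here one uses Lemma \ref{207} to factor $1+\mathcal{Z}_i^*(\hbar,Q)=e^{\eta_i(Q)/\hbar}\Phi_0(\alpha_i,Q)\bigl(1+O(\hbar)\bigr)$, which makes $\mathcal{Z}_i^*/(1+\mathcal{Z}_i^*)=1-e^{-\eta_i/\hbar}\Phi_0^{-1}(1+O(\hbar))$ explicit near $\hbar=0$. Substituting Corollary \ref{231} for $\eta_i$ and $\Phi_0$, and using the asymptotic $\Phi_{-l,0}(q)=L(q)^{(1-l)/2}$ from (\ref{230}), the $\hbar=0$ residue gathers into precisely the leading $-\tfrac{n}{48}(n-1-2\sum 1/a_k)(-f(q)+\mu((-1)^n q))$ term and the $\tfrac{1-l}{48}\log(1-\prod(-a_k)^{a_k}q)$ term; the coefficient $\tfrac{1-l}{48}$ is the crux and comes from the $(1-l)/2$ exponent in $\Phi_{-l,0}$ combined with the degree counting of the polynomial prefactor. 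The main obstacle, I expect, is the combinatorial bookkeeping at this last step: making sure the $j$-sum in $\sum_j\tilde{\mathcal{B}}_{ij}$ contributes the correct constant to the $\log(1-\prod(-a_k)^{a_k}q)$ coefficient, since it involves a delicate cancellation of the leading $\alpha_i^{n-1}$ terms between $\mathcal{B}_i$ and the $j\neq i$ summands modulo $\mathcal{K}_i$, analogous to but distinct from the cancellation used for the type A contributions in (\ref{248}).
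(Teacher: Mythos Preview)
Your overall architecture---split the residues at $\hbar=0$ and $\hbar=\infty$, replace $1+\mathcal{Z}_i^*$ and $\hbar\mathcal{Z}_{ji}^*$ via (\ref{222})--(\ref{223}), and use Popa's lemmas for the $j$-sum---matches the paper. But you are missing the one algebraic step that makes the proof work, and your substitute for it is incorrect.

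The point you skip is how the outer $Q\,\tfrac{d}{dQ}$ actually appears. In the paper, after the reductions you describe one has (in the normalized variables)
\[
\mathcal{B}_i \;\equiv_i\; c\cdot\Res_{\hbar=0,\infty}\Bigl\{P(\hbar)\,\tfrac{G-1}{G}\Bigr\},\qquad
\sum_j\tilde{\mathcal{B}}_{ij}\;\equiv_i\; -c\cdot\Res_{\hbar=0,\infty}\Bigl\{P(\hbar)\,\tfrac{(1+\hbar\,Q\partial_Q)G-1}{G}\Bigr\},
\]
with $G=e^{-f(q)/\hbar}\mathcal{F}_{-l}(1/\hbar,(-1)^n q)$ and $P(\hbar)$ the common rational prefactor. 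The crucial observation, which your proposal does not make, is the identity
\[
\frac{\mathbf{M}\mathcal{F}_{-l}(1/\hbar,(-1)^n q)}{I_{1,1}(q)} \;=\; e^{\,f(q)/\hbar}\,(1+\hbar\,Q\partial_Q)\!\left[e^{-f(q)/\hbar}\mathcal{F}_{-l}(1/\hbar,(-1)^n q)\right],
\]
which the paper verifies by a short direct computation using $Q\partial_Q=I_{1,1}^{-1}q\partial_q$ and $q\partial_q f=I_{1,1}-1$. Adding the two displayed lines then gives
\[
\mathcal{B}_i+\sum_j\tilde{\mathcal{B}}_{ij}\;\equiv_i\; -c\cdot\Res_{\hbar=0,\infty}\Bigl\{\hbar\,P(\hbar)\,\tfrac{Q\partial_Q G}{G}\Bigr\}
\;=\; \alpha_i^{n-2}\,Q\tfrac{d}{dQ}\bigl(\tilde{B}_0(q)+\tilde{B}_\infty(q)\bigr),
\]
where $\tilde{B}_w$ is the residue of the same prefactor against $\log G$ at $\hbar=w$. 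Only after this step do the $\hbar=0$ and $\hbar=\infty$ residues separately yield the two groups of terms in (\ref{262}); your sketch tries to extract them before combining, which cannot produce a clean $Q\,d/dQ$.

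Relatedly, your claimed identity ``$\mathcal{Z}_i^*/(1+\mathcal{Z}_i^*)=\partial_w\log R(w,t)\big|_{w=\alpha_i/\hbar}-T$'' is wrong: there is no $w$-derivative anywhere in this computation. What is actually used at $\hbar=\infty$ is the equality $e^{tw}\mathcal{F}_{-l}(w,(-1)^n q)\equiv R(w,t)\bmod w^{n+l}$, so that $\log G=-Tw+\log R(w,t)\bmod w^{n+l}$; the $\Res_{w=0}$ term then drops out immediately by the substitution $w=1/\hbar$. At $\hbar=0$ (i.e.\ $w\to\infty$) the paper reads off $\log G\sim(\mu((-1)^n q)-f(q))w+\log\Phi_{-l,0}((-1)^n q)+O(1/w)$ directly from (\ref{229})--(\ref{230}); your route through Lemma \ref{207} would ultimately recover the same two leading coefficients, but only after the logarithmic-derivative combination above is in place.
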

\begin{proof} By (\ref{253}) and (\ref{222})
\bea\label{255}
\mathcal{B}_{i}(Q)
&=&\frac{1}{24\alpha_{i}^{l}\prod_{k=1}^{l}(-a_{k})}\Res_{\hbar=0,\infty}
\Big\{\frac{\prod_{k=1}^{l}(\hbar-a_{k}\alpha_{i})\prod_{k=1}^{n} (\alpha_{i}-\alpha_{k}+\hbar)}{\hbar^{3}} \frac{\mathcal{Z}_{i}^{*}(\hbar,Q)}{1+\mathcal{Z}_{i}^{*}(\hbar,Q)}\Big\}\nn\\
&\equiv_{i}& \frac{1}{24\alpha_{i}^{l}\prod_{k=1}^{l}(-a_{k})}\Res_{\hbar=0,\infty}
\Big\{\frac{\prod_{k=1}^{l}(\hbar-a_{k}\alpha_{i})\big((\alpha_{i}+\hbar)^{n}-\alpha_{i}^{n}\big)}{\hbar^{3}}\\ &&\cdot\frac{e^{-f(q)\frac{\alpha_{i}}{\hbar}}\mathcal{F}_{-l}\big(\frac{\alpha_{i}}{\hbar},(-1)^{n}q\big)-1}
{e^{-f(q)\frac{\alpha_{i}}{\hbar}}\mathcal{F}_{-l}\big(\frac{\alpha_{i}}{\hbar},(-1)^{n}q\big)}\Big\}\nn\\
&\equiv_{i}& \frac{1}{24\alpha_{i}^{l}\prod_{k=1}^{l}(-a_{k})}\Res_{\hbar=0,\infty}
\Big\{\alpha_{i}^{n-3+l}\frac{\prod_{k=1}^{l}(\frac{\hbar}{\alpha_{i}}-a_{k})\big((1+\frac{\hbar}{\alpha_{i}})^{n}-1\big)}
{(\frac{\hbar}{\alpha_{i}})^{3}}\\ &&\cdot \frac{e^{-f(q)\frac{\alpha_{i}}{\hbar}}\mathcal{F}_{-l}\big(\frac{\alpha_{i}}{\hbar},(-1)^{n}q\big)-1}
{e^{-f(q)\frac{\alpha_{i}}{\hbar}}\mathcal{F}_{-l}\big(\frac{\alpha_{i}}{\hbar},(-1)^{n}q\big)}\Big\}\nn\\
&\equiv_{i}& \frac{\alpha_{i}^{n-2}}{24\prod_{k=1}^{l}(-a_{k})}\Res_{\hbar=0,\infty}
\Big\{\frac{\prod_{k=1}^{l}(\hbar-a_{k})\big((1+\hbar)^{n}-1\big)}{\hbar^{3}} \frac{e^{-\frac{f(q)}{\hbar}}\mathcal{F}_{-l}\big(\frac{1}{\hbar},(-1)^{n}q\big)-1}
{e^{-\frac{f(q)}{\hbar}}\mathcal{F}_{-l}\big(\frac{1}{\hbar},(-1)^{n}q\big)}\Big\}.\nn\\
\eea

By (\ref{254}) and (\ref{222}), (\ref{223}),
\bea\label{256}
\tilde{B}_{ij}(Q)
&=&-\frac{1}{24\prod_{k=1}^{l}(-a_{k})\cdot\alpha_{j}^{l}\prod_{k\in[n]\backslash\{j\}}(\alpha_{j}-\alpha_{k})}\nn\\
&&\cdot\Res_{\hbar=0,\infty}
\Big\{\frac{\prod_{k=1}^{l}(\hbar-a_{k}\alpha_{j})\prod_{k=1}^{n} (\alpha_{j}-\alpha_{k}+\hbar)}{\hbar^{2}} \frac{\mathcal{Z}_{ji}^{*}(\hbar,Q)}{1+\mathcal{Z}_{i}^{*}(\hbar,Q)}\Big\}\nn\\
&\equiv_{i}&-\frac{\alpha_{i}^{n-2}\alpha_{j}^{n-1}}{24\prod_{k=1}^{l}(-a_{k})\cdot\prod_{k\in[n]\backslash\{j\}}
(\alpha_{j}-\alpha_{k})}\nn\\
&&\cdot\Res_{\hbar=0,\infty}
\Big\{\frac{\prod_{k=1}^{l}(\hbar-a_{k})\big((1+\hbar)^{n}-1\big)}{\hbar^{3}} \frac{(1+\hbar Q\frac{d}{dQ})\Big[e^{-\frac{f(q)}{\hbar}}\mathcal{F}_{-l}\big(\frac{1}{\hbar},(-1)^{n}q\big)\Big]-1}
{e^{-\frac{f(q)}{\hbar}}\mathcal{F}_{-l}\big(\frac{1}{\hbar},(-1)^{n}q\big)}\Big\},\nn\\
\eea
where we have used
\ben
&&(1+\hbar Q\frac{d}{dQ})\Big[e^{-\frac{f(q)}{\hbar}}\mathcal{F}_{-l}\big(\frac{1}{\hbar},(-1)^{n}q\big)\Big]\\
&=&e^{-\frac{f(q)}{\hbar}}\mathcal{F}_{-l}\big(\frac{1}{\hbar},(-1)^{n}q\big)
+\Big(\hbar Q\frac{d}{dQ}e^{-\frac{f(q)}{\hbar}}\Big)\mathcal{F}_{-l}\big(\frac{1}{\hbar},(-1)^{n}q\big)
+e^{-\frac{f(q)}{\hbar}}\Big(\hbar Q\frac{d}{dQ}\mathcal{F}_{-l}\big(\frac{1}{\hbar},(-1)^{n}q\big)\Big)\\
&=&e^{-\frac{f(q)}{\hbar}}\mathcal{F}_{-l}\big(\frac{1}{\hbar},(-1)^{n}q\big)
+\Big(-\frac{I_{1,1}(q)-1}{I_{1,1}(q)}\Big)e^{-\frac{f(q)}{\hbar}}\mathcal{F}_{-l}\big(\frac{1}{\hbar},(-1)^{n}q\big)\\
&&+e^{-\frac{f(q)}{\hbar}}\cdot\frac{1}{I_{1,1}(q)}q\frac{d}{dq}\mathcal{F}_{-l}\big(\frac{1}{\hbar},(-1)^{n}q\big)\\
&=&e^{-\frac{f(q)}{\hbar}}\frac{\mathbf{M}\mathcal{F}_{-l}\big(\frac{1}{\hbar},(-1)^{n}q\big)}{I_{1,1}(q)}.
\een
Summing over $j$ we have
\bea\label{257}
\sum_{j=1}^{n}\tilde{B}_{ij}(Q)
&\equiv_{i}&-\frac{\alpha_{i}^{n-2}}{24\prod_{k=1}^{l}(-a_{k})}\Res_{\hbar=0,\infty}
\Big\{\frac{\prod_{k=1}^{l}(\hbar-a_{k})\big((1+\hbar)^{n}-1\big)}{\hbar^{3}} \nn\\
&&\cdot\frac{(1+\hbar Q\frac{d}{dQ})\Big[e^{-\frac{f(q)}{\hbar}}\mathcal{F}_{-l}\big(\frac{1}{\hbar},(-1)^{n}q\big)\Big]-1}
{e^{-\frac{f(q)}{\hbar}}\mathcal{F}_{-l}\big(\frac{1}{\hbar},(-1)^{n}q\big)}\Big\}.
\eea
Therefore combining (\ref{255}) and  (\ref{257}) we obtain
\bea\label{258}
B_{i}(Q)+\sum_{j=1}^{n}\tilde{B}_{ij}(Q)
&\equiv_{i}&-\frac{\alpha_{i}^{n-2}}{24\prod_{k=1}^{l}(-a_{k})}\Res_{\hbar=0,\infty}
\Big\{\frac{\prod_{k=1}^{l}(\hbar-a_{k})\big((1+\hbar)^{n}-1\big)}{\hbar^{2}} \nn\\
&&\cdot\frac{Q\frac{d}{dQ}\Big[e^{-\frac{f(q)}{\hbar}}\mathcal{F}_{-l}\big(\frac{1}{\hbar},(-1)^{n}q\big)\Big]}
{e^{-\frac{f(q)}{\hbar}}\mathcal{F}_{-l}\big(\frac{1}{\hbar},(-1)^{n}q\big)}\Big\}\nn\\
&=&\alpha_{i}^{n-2}Q\frac{d}{dQ}\Big(\tilde{B}_{0}(q)+\tilde{B}_{\infty}(q)\Big),
\eea
where
\bea\label{259}
\tilde{B}_{w}(q)&=&-\frac{1}{24\prod_{k=1}^{l}(-a_{k})}\Res_{\hbar=0,\infty}
\Big\{\frac{\prod_{k=1}^{l}(\hbar-a_{k})\big((1+\hbar)^{n}-1\big)}{\hbar^{2}} \nn\\
&&\cdot \log\Big(e^{-\frac{f(q)}{\hbar}}\mathcal{F}_{-l}\big(\frac{1}{\hbar},(-1)^{n}q\big)\Big)\Big\},
\eea
for $w=0$ or $\infty$. Now we use results in section 5.1 to compute $\tilde{B}_{0}(q)$ and $\tilde{B}_{\infty}(q)$. By lemma \ref{228} and (\ref{230}), we have
\bea\label{260}
\tilde{B}_{0}(q)
&=&-\frac{1}{24\prod_{k=1}^{l}(-a_{k})}\Bigg[
\prod_{k=1}^{l}(-a_{k})\Big(
\frac{n(n-1)}{2}-n\sum_{k=1}^{l}\frac{1}{a_{k}}\Big)\Big(-f(q)+\mu\big((-1)^{n}q\big)\Big)\nn\\
&&+n\prod_{k=1}^{l}(-a_{k})\ln\Phi_{-l,0}\big((-1)^{n}q\big)
\Bigg]\nn\\
&=&
-\frac{n}{48}\Big(n-1-2\sum_{k=1}^{l}\frac{1}{a_{k}}\Big)\Big(-f(q)+\mu\big((-1)^{n}q\big)\Big)
-\frac{n}{24}\cdot\frac{1-l}{-2n}\ln\big(1-\prod_{k=1}^{l}(-a_{k})^{a_{k}}q\big)\nn\\
&=&
-\frac{n}{48}\Big(n-1-2\sum_{k=1}^{l}\frac{1}{a_{k}}\Big)\Big(-f(q)+\mu\big((-1)^{n}q\big)\Big)
+\frac{1-l}{48}\ln\big(1-\prod_{k=1}^{l}(-a_{k})^{a_{k}}q\big),\nn\\
\eea
where we have used
\ben
\frac{\prod_{k=1}^{l}(z-a_{k})\cdot\big((1+z)^{n}-1\big)}{z^{2}}
=\frac{n\prod_{k=1}^{l}(-a_{k})}{z}+\prod_{i=k}^{l}(-a_{k})\Big(
\frac{n(n-1)}{2}-n\sum_{k=1}^{l}\frac{1}{a_{k}}\Big)+O(z).
\een
On the other hand,
\bea\label{261}
\tilde{B}_{\infty}(q)
&=&\frac{1}{24\prod_{k=1}^{l}(-a_{k})}\Res_{w=0}
\Bigg\{\frac{\prod_{k=1}^{l}(1-a_{k}w)\cdot\big((1+w)^{n}-w^{n}\big)}{w^{n+l}}\nn\\
&&\cdot\ln\Big( e^{-Tw}e^{tw}\mathcal{F}_{-l}\big(w,(-1)^{n}q\big)\Big)\Bigg\}\nn\\
&=&\frac{1}{24\prod_{k=1}^{l}(-a_{k})}\Res_{w=0}
\Bigg\{\frac{\prod_{k=1}^{l}(1-a_{k}w)\cdot\big((1+w)^{n}-w^{n}\big)}{w^{n+l}} \Big( -Tw+\ln R(w,t)\Big)\Bigg\},\nn\\
\eea
where we have used the fact $e^{tw}\mathcal{F}_{-l}\big(w,(-1)^{n}q\big)\equiv R(w,t)\mod w^{n+l}$.
\end{proof}

 \titleformat{\chapter}[display]
           {\normalfont\Large\bfseries}{Appendix~\Alph{chapter}}{11pt}{\Large}

\begin{appendices}

\section{The modularity of the genus one Gromov-Witten potential for the local $\mathbb{P}^{2}$}
For $X=K_{\mathbb{P}^{2}}$, the mirror map is $Q=qe^{f(q)}$, where
\bea\label{159}
f(q)=\sum_{d=1}^{\infty}q^{d}\frac{(-1)^{d}3\cdot(3d-1)!}{(d!)^{3}}.
\eea

Let $\psi=-\frac{1}{3}q^{-\frac{1}{3}}$, the genus one free energy given in \cite{ABK} is
\bea\label{151}
\mathcal{F}_{1}&=&-\frac{1}{2}\log(\frac{dT}{d\psi})-\frac{1}{12}\log(1-\psi^{3}).
\eea
Up to a constant, we have
\bea\label{163}
\mathcal{F}_{1}&=&-\frac{1}{12}\log q-\frac{1}{2}\log(I_{1,1})-\frac{1}{12}\log(1+27q),
\eea
where
\bea\label{152}
I_{1,1}(q)=1+qf^{\prime}(q)=\sum_{d=0}^{\infty}q^{d}\frac{(-1)^{d}(3d)!}{(d!)^{3}}=\leftidx{_2}F_{1}(1/3,2/3;1;-27q).
\eea
Note that the meaning of the physicists' genus one free energy is, to get the generating series $\sum_{d=1}^{\infty}Q^{d}N_{1,d}^{X}$ of the Gromov-Witten invariants, one needs to add $\frac{1}{12}\log Q$ to cancel the log-term in (\ref{163}). After doing this, we see that (\ref{163}) coincides with (\ref{265}).\\

Now we recall the definition of the modular coordinate in \cite{ABK} and then show that $\mathcal{F}_{1}$ is a modular form in this coordinate.
Let $\mathfrak{q}=\exp(2\pi i\tau)$ be the coordinate on the modular curve of $\Gamma(3)$. From the mathematical viewpoint, \cite{ABK} gives two ways to relate $\mathfrak{q}$ to $q$. One way is through
\bea\label{158}
j(\tau)=-\frac{27\psi^{3}(8+\psi^{3})^{3}}{(1-\psi^{3})^{3}}=\frac{(216q-1)^{3}}{q(27q+1)^{3}}.
\eea
Let us first take a look at another way. First we have
\bea\label{160}
\sum_{d=1}^{\infty}dN_{0,d}e^{dT}&=&-\frac{1}{3}[\frac{x^{2}}{\hbar^{2}}]\Big(e^{-\frac{xf(q)}{\hbar}}Y(x,\hbar,q)\Big)\nn\\
&=&-\frac{1}{3}[x^{2}]\Big(e^{-xf(q)}
\sum_{d=0}^{\infty}q^{d}\frac{\prod_{s=0}^{3d-1}(-3x-s)}{\prod_{s=1}^{d}(x+s)^3}\Big)\nn\\
&=&\frac{f(q)^{2}}{6}-\frac{1}{3}\sum_{d=1}^{\infty}q^{d}\frac{(-1)^{d}9\cdot(3d-1)!}{(d!)^{3}}\Big(\sum_{s=d+1}^{3d-1}\frac{1}{s}\Big),
\eea
and thus
\bea\label{161}
\frac{d^{2}}{dT^{2}}\sum_{d=1}^{\infty}N_{0,d}e^{dT}&=&
\frac{f(q)}{3}-\frac{f(q)
+\sum_{d=1}^{\infty}q^{d}\frac{(-1)^{d}3\cdot(3d)!}{(d!)^{3}}\Big(\sum_{s=d+1}^{3d-1}\frac{1}{s}\Big)}{3I_{1,1}(q)}\nn\\
&=&\frac{f(q)}{3}-\frac{\sum_{d=1}^{\infty}q^{d}\frac{(-1)^{d}\cdot(3d)!}{(d!)^{3}}\Big(\sum_{s=d+1}^{3d}\frac{1}{s}\Big)}{I_{1,1}(q)}.
\eea
The second way to relate $\mathfrak{q}$ to $q$ is via
\bea\label{162}
\tau &=&-3\cdot2\pi i\frac{d}{dT}\Bigg(-\frac{1}{6}\Big(\frac{T}{2\pi i}\Big)^{2}+\frac{1}{6}\Big(\frac{T}{2\pi i}\Big)\Bigg)\nn\\
&&-3\cdot\frac{1}{2\pi i}\cdot\Big(\frac{f(q)}{3}-
\frac{\sum_{d=1}^{\infty}q^{d}\frac{(-1)^{d}\cdot(3d)!}{(d!)^{3}}\Big(\sum_{s=d+1}^{3d}\frac{1}{s}\Big)}{I_{1,1}(q)}\Big)\nn\\
&=&-\frac{1}{2}+\frac{\log q}{2\pi i}+
\frac{3\sum_{d=1}^{\infty}q^{d}\frac{(-1)^{d}\cdot(3d)!}{(d!)^{3}}\Big(\sum_{s=d+1}^{3d}\frac{1}{s}\Big)}{2\pi i\cdot I_{1,1}(q)}\Big).
\eea
So
\bea\label{153}
\mathfrak{q}&=&-q\exp\Big(\frac{3\sum_{d=1}^{\infty}q^{d}(-1)^{d}\frac{(3d)!}{(d!)^{3}}\sum_{s=d+1}^{3d}\frac{1}{s}}
{\sum_{d=0}^{\infty}q^{d}(-1)^{d}\frac{(3d)!}{(d!)^{3}}}\Big)\nn\\
&=&-q\exp\Big(\frac{3\sum_{d=1}^{\infty}q^{d}(-1)^{d}\frac{(3d)!}{(d!)^{3}}\big(3\Psi(3d)-\Psi(d)-2\Psi(d+1)\big)}
{\sum_{d=0}^{\infty}q^{d}(-1)^{d}\frac{(3d)!}{(d!)^{3}}}\Big),
\eea
where
\bea
\Psi(z)=\frac{\Gamma^{\prime}(z)}{\Gamma(z)}=-\gamma+\sum_{k=1}^{\infty}\big(\frac{1}{k}-\frac{1}{k+z-1}\big).
\eea

By \cite[Chap. 11, entry 26, example 2 of entry 27]{BRa}, we obtain
\bea\label{154}
\mathfrak{q}=\exp\Big(-\frac{2\pi}{\sqrt{3}}\frac{\leftidx{_2}F_{1}(1/3,2/3;1;1+27q)}{\leftidx{_2}F_{1}(1/3,2/3;1;-27q)}\Big).
\eea

The equivalence of the two ways is insured by the following lemma (by abuse of notation we write $j$ as a function of $\mathfrak{q}$), 
which is  \cite[(2.8)]{BC}.
\begin{lemma}
\ben
j\Bigg(\exp\Big(-\frac{2\pi}{\sqrt{3}}\frac{\leftidx{_2}F_{1}(1/3,2/3;1;1+27q)}{\leftidx{_2}F_{1}(1/3,2/3;1;-27q)}\Big)\Bigg)
=\frac{(216q-1)^{3}}{q(1+27q)^{3}}.
\een
\end{lemma}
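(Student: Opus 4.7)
The plan is to deduce this identity from Ramanujan's classical theory of elliptic functions to the alternative signature $3$ base. Specifically, I would first recall that in this theory one sets $K_{3}(x) = \leftidx{_2}F_{1}(1/3, 2/3; 1; x)$ and defines, for a parameter $\alpha$, the cubic nome
$$q_{3}(\alpha) \;=\; \exp\Bigl(-\frac{2\pi}{\sqrt{3}} \frac{K_{3}(1-\alpha)}{K_{3}(\alpha)}\Bigr),$$
with a corresponding $\tau$ in the upper half-plane such that $q_{3}(\alpha) = e^{2\pi i\tau}$. The main input is the closed-form inversion formula expressing the classical elliptic $j$-invariant in terms of this signature $3$ parameter --- see \cite[Chap.~33]{BRa} or the formulae leading to (2.8) in \cite{BC} --- which reads
$$j(\tau) \;=\; \frac{27\,(1 + 8\alpha)^{3}}{\alpha\,(1-\alpha)^{3}}.$$

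Once this is granted, the proof of the lemma is an elementary substitution. Setting $\alpha = -27q$ gives $1-\alpha = 1 + 27q$, so that the argument of $j$ appearing on the left-hand side of the lemma is exactly $q_{3}(-27q)$. Moreover $1 + 8\alpha = 1 - 216q = -(216q - 1)$ and $\alpha(1-\alpha)^{3} = -27q(1+27q)^{3}$, hence
$$j(\tau) \;=\; \frac{27\cdot\bigl(-(216q-1)\bigr)^{3}}{-27q(1+27q)^{3}} \;=\; \frac{(216q-1)^{3}}{q(1+27q)^{3}},$$
which is the desired identity.

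The main obstacle is therefore not analytic but rather bookkeeping: one must be careful with the sign of $\alpha$ (here $\alpha = -27q$ is negative for small positive $q$, so one is outside the usual real domain $\alpha \in (0,1)$ of Ramanujan's signature $3$ theory) and with the branch of $K_{3}$ chosen when analytically continuing; the formula for $j(\tau)$ must be continued along a path in the $\alpha$-plane avoiding the branch cuts, which is possible since both sides are algebraic in $\alpha$. Alternatively --- and this is the version that matches the Gromov-Witten setup of the rest of the appendix --- one may regard both sides of the lemma as formal power series in $q$, in which case the analytic continuation issue evaporates and the identity reduces to the purely formal power-series form of the cubic-signature $j$-formula above, which can be verified coefficientwise once one expands $K_{3}(-27q)$ and $K_{3}(1+27q)$ and inverts the resulting relation between $q$ and $\mathfrak{q}$.
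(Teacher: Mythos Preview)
Your approach is correct and coincides with the paper's: the paper does not give an independent proof but simply identifies the lemma with \cite[(2.8)]{BC}, which is precisely the signature-$3$ inversion formula $j(\tau)=27(1+8\alpha)^{3}/\bigl(\alpha(1-\alpha)^{3}\bigr)$ that you invoke and then specialize via $\alpha=-27q$. Your write-up is in fact more explicit than the paper's one-line citation; the only minor point is that the alternative-bases material lives in Part~V (not Part~II) of Berndt's \emph{Ramanujan's Notebooks}, so your pointer to ``Chap.~33 of \cite{BRa}'' should be adjusted if you keep that cross-reference.
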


The modularity of $\mathcal{F}_{1}$ in the coordinate $\mathfrak{q}$ is a consequence of \emph{Ramanujan's cubic transformation}.
First we write $\mathcal{F}_{1}$ as
\bea\label{155}
\mathcal{F}_{1}&=&-\frac{1}{24}\log\Big(\big(\frac{\partial T}{\partial \psi}\big)^{12}(1-\psi^{3})^{3}\Big)
+\frac{1}{24}\log(1-\psi^{3})\nn\\
&=&-\frac{1}{24}\log\big(q(1+27q)^{3}I_{1,1}^{12}\big)+\frac{1}{24}\log\big(\frac{1+27q}{27q}\big)+\mathrm{Const}.
\eea

By (\ref{152}), (\ref{154}), and \cite[(2.1), (2.5)]{BC}, it is not hard to show
\begin{lemma}
\bea\label{156}
\Delta(\mathfrak{q})
=-q(1+27q)^{3}I_{1,1}^{12},
\eea
where $\Delta(\mathfrak{q})=\mathfrak{q}\prod_{n=1}^{\infty}(1-\mathfrak{q}^{n})^{24}$ is the Ramanujan's tau function.
\end{lemma}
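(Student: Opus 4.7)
The plan is to read the identity off from the Ramanujan cubic-transformation theory in signature $3$, specifically the formulas (2.1) and (2.5) of \cite{BC}. Setting $\alpha := -27q$, we have $1-\alpha = 1+27q$ and $I_{1,1}(q) = \leftidx{_2}F_1(\tfrac13,\tfrac23;1;\alpha)$, so that (\ref{154}) is precisely the defining relation for the signature-$3$ nome associated to $\alpha$. Thus $\mathfrak{q}$ coincides with the nome attached to the modular parametrization by $z(\alpha) := \leftidx{_2}F_1(\tfrac13,\tfrac23;1;\alpha)$, and (2.1) of \cite{BC} provides the pull-back of the weight-$4$ and weight-$6$ Eisenstein series $E_4(\mathfrak{q})$, $E_6(\mathfrak{q})$ as explicit polynomials in $\alpha$ times appropriate powers of $z$; this is the signature-$3$ analogue of the classical theta-function expressions for $E_4$, $E_6$.

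From this, $\Delta(\mathfrak{q}) = (E_4^3 - E_6^2)/1728$ is automatically of the form $P(\alpha)\,z^{12}$ for some polynomial $P$. By weight-$12$ homogeneity the power of $z$ is forced, and by the orders of vanishing of $\Delta$ at the two cusps of the signature-$3$ modular curve (at $\alpha = 0$, corresponding to $\mathfrak{q}\to 0$, and at $\alpha = 1$, the ``cubic cusp''), one finds $P(\alpha) = c\,\alpha(1-\alpha)^3$ for a single constant $c$. To fix $c$, compare the leading Fourier coefficient in $q$: from (\ref{153}) we get $\mathfrak{q} = -q + O(q^2)$, hence $\Delta(\mathfrak{q}) = -q + O(q^2)$; on the other side, since $z = 1 + O(q)$, $c\,\alpha(1-\alpha)^3 z^{12} = -27c\,q + O(q^2)$. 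Therefore $c = 1/27$, and substituting back $\alpha = -27q$ yields $\Delta(\mathfrak{q}) = -q(1+27q)^3 I_{1,1}(q)^{12}$ exactly as stated. The formula (2.5) of \cite{BC} can be used either as an equivalent route to the same shape for $P$ or simply as a check.

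The main obstacle is lining up the sign and normalization conventions, since (2.1), (2.5) of \cite{BC} are usually stated for $\alpha \in (0,1)$ with a positive nome, whereas our $\alpha = -27q$ is formally a negative power series in $q$ and our $\mathfrak{q}$ in (\ref{153}) starts at $-q$. A clean alternative, avoiding any convention-tracking, is to verify the asserted identity purely as a formal power series in $q$: the right-hand side $-q(1+27q)^3 I_{1,1}(q)^{12}$ is completely explicit, the left-hand side expands via (\ref{154}) together with the $\mathfrak{q}$-product definition of $\Delta$, and agreement follows once one knows that both sides are modular of weight $12$ and transform with the same character under $\Gamma(3)$, so that matching finitely many Fourier coefficients (enough to exceed the dimension of the relevant space of cusp forms) implies the full identity.
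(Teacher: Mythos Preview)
Your approach is essentially the one the paper has in mind: the paper's proof is just the citation ``by (\ref{152}), (\ref{154}), and \cite[(2.1), (2.5)]{BC}'', and those formulas in \cite{BC} give precisely the explicit signature-$3$ expressions $E_4=z^4(1+8\alpha)$, $E_6=z^6(1-20\alpha-8\alpha^2)$ with $z=\leftidx{_2}F_1(1/3,2/3;1;\alpha)$, from which $\Delta=(E_4^3-E_6^2)/1728=\tfrac{1}{27}\alpha(1-\alpha)^3 z^{12}$ by a one-line expansion. Substituting $\alpha=-27q$, $z=I_{1,1}$ finishes it, so your cusp-order argument and leading-coefficient matching, while correct, are an unnecessary detour: the polynomial $P(\alpha)$ falls out directly from the algebra rather than needing to be reconstructed from its vanishing behaviour.
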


Following \cite{ABK}, let
\ben
d(\mathfrak{q})=\Big(\theta_{2}\big(\frac{\pi}{6},\mathfrak{q}^{\frac{1}{2}}\big)\Big)^{3},
\een
Then by the infinite product formula for Jacobi theta functions (see for example \cite{WW})
\ben
\theta_{2}\big(\frac{\pi}{6},\mathfrak{q}^{\frac{1}{2}}\big)=\sqrt{3}\mathfrak{q}^{\frac{1}{8}}\prod_{n=1}^{\infty}(1-\mathfrak{q}^{3n}),
\een
and the infinite product formula for the Dedekind's eta function
\ben
\eta(\mathfrak{q})=\mathfrak{q}^{\frac{1}{24}}\prod_{n=1}^{\infty}(1-\mathfrak{q}^{n}),
\een
together with  \cite[(2.7)]{Chan}, we obtain
\bea\label{164}
\frac{1+27q}{27q}=-\frac{27\eta^{12}(\mathfrak{q})}{d^{4}(\mathfrak{q})}.
\eea
So by (\ref{153}), (\ref{156}) and (\ref{164}) we have
\begin{theorem}
Up to a constant,
\bea\label{157}
\mathcal{F}_{1}=-\frac{1}{6}\log\big(d(\mathfrak{q})\eta^{3}(\mathfrak{q})\big).
\eea
\end{theorem}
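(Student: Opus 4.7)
The plan is to show this is essentially a bookkeeping computation that combines the two preceding lemmas with the standard identity $\Delta(\mathfrak{q})=\eta^{24}(\mathfrak{q})$. First I would take the rewrite of $\mathcal{F}_{1}$ already obtained in (\ref{155}),
\[
\mathcal{F}_{1}=-\frac{1}{24}\log\big(q(1+27q)^{3}I_{1,1}^{12}\big)+\frac{1}{24}\log\Big(\frac{1+27q}{27q}\Big)+\mathrm{Const},
\]
as the starting point; this already isolates the two quantities for which explicit modular expressions are available.

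Next I would substitute the two lemmas. By (\ref{156}) we have $q(1+27q)^{3}I_{1,1}^{12}=-\Delta(\mathfrak{q})=-\eta^{24}(\mathfrak{q})$, and by (\ref{164}) we have $\frac{1+27q}{27q}=-\frac{27\eta^{12}(\mathfrak{q})}{d^{4}(\mathfrak{q})}$. Plugging these in and absorbing the signs and the constant $27$ into the additive constant gives
\[
\mathcal{F}_{1}=-\frac{1}{24}\log\eta^{24}(\mathfrak{q})+\frac{1}{24}\log\frac{\eta^{12}(\mathfrak{q})}{d^{4}(\mathfrak{q})}+\mathrm{Const}
=-\log\eta(\mathfrak{q})+\frac{1}{2}\log\eta(\mathfrak{q})-\frac{1}{6}\log d(\mathfrak{q})+\mathrm{Const},
\]
which collapses to $-\frac{1}{6}\log\big(d(\mathfrak{q})\eta^{3}(\mathfrak{q})\big)+\mathrm{Const}$, as claimed.

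In this derivation the only nontrivial input is the pair of lemmas (\ref{156}) and (\ref{164}), whose proofs were already indicated: (\ref{156}) follows from Ramanujan's cubic transformation and the identification of $I_{1,1}(q)$ with the hypergeometric series in (\ref{152}) together with the modular expression (\ref{154}) for $\mathfrak{q}$, while (\ref{164}) follows from the infinite-product expansions of $\theta_{2}$ and $\eta$ combined with \cite[(2.7)]{Chan}. Once these are in hand, the final theorem is a purely formal manipulation of logarithms. Thus I do not anticipate any genuine obstacle; the only care needed is to track the signs carefully so that each application of $\log$ lands on a well-defined branch (equivalently, to absorb $\log(-1)$ and $\log 27$ into the ambient constant), and to verify that the constant on the right hand side of (\ref{157}) is indeed independent of $\mathfrak{q}$, which is automatic once the two substitutions have been made.
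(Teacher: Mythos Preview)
Your proof is correct and follows essentially the same approach as the paper: the paper simply cites (\ref{153}), (\ref{156}) and (\ref{164}) without writing out the algebra, and your derivation is exactly the bookkeeping one expects—substitute (\ref{156}) and (\ref{164}) into the rewrite (\ref{155}) and use $\Delta=\eta^{24}$. The only cosmetic difference is that the paper also cites (\ref{153}) (the explicit formula for $\mathfrak{q}$), but that reference is already subsumed in the proofs of (\ref{156}) and (\ref{164}), so your argument is complete as written.
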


 \end{appendices}

\hspace{1cm}Beijing international center for mathematical research, Beijing, 100871, China\\

\hspace{1cm}\emph{E-mail address}: huxw06@gmail.com

\end{document}